\numberwithin{equation}{section}\allowdisplaybreaks
\theoremstyle{plain}
\newtheorem{theorem}{Theorem}[section]
\newtheorem{lemma}[theorem]{Lemma}
\newtheorem{proposition}[theorem]{Proposition}
\newtheorem{corollary}[theorem]{Corollary}
\theoremstyle{definition}
\newtheorem{definition}[theorem]{Definition}
\newtheorem{remark}[theorem]{Remark}
\newenvironment{old}{}{\ignorespacesafterend}\RenewEnviron{old}{} 
\newenvironment{final}{\color{black}}{\ignorespacesafterend}
\newenvironment{draft}{}{\ignorespacesafterend}\RenewEnviron{draft}{} 
\DeclarePairedDelimiter{\Parentheses}{(}{)} \newcommand{\pr}{\Parentheses*}
\DeclarePairedDelimiter{\Brackets}{[}{]} \newcommand{\brk}{\Brackets*}
\DeclarePairedDelimiter{\Braces}{\{}{\}} \newcommand{\brc}{\Braces*}
\DeclarePairedDelimiter{\Chevrons}{\langle}{\rangle} \newcommand{\chv}{\Chevrons*}
\DeclarePairedDelimiter{\Absolute}{|}{|} \newcommand{\abs}{\Absolute*}
\DeclarePairedDelimiter{\Norm}{\|}{\|} \newcommand{\norm}{\Norm*}
\DeclarePairedDelimiter{\Condition}{.}{|}
\newcommand{\cond}{\Condition*}
\newcommand{\eq}{\refstepcounter{equation}(\theequation)}
\newcounter{constant}
\newcommand{\C}[1]{\refstepcounter{constant}\label{#1}}\newcommand{\Cr}[1]{C_{\ref{#1}}}
\newcounter{step}
\newcounter{proof}
\newcommand{\firststep}{\refstepcounter{proof}\refstepcounter{step}\emph{Step \thestep. }} \newcommand{\nextstep}{\refstepcounter{step}\emph{Step \thestep. }}
\newcommand{\E}{\mathbb{E} }
\newcommand{\bbP}{\mathbb{P}}
\newcommand{\Q}{\mathbb{Q}}
\newcommand{\R}{\mathbb{R}}
\newcommand{\N}{\mathbb{N}}
\begin{document}
	\begin{frontmatter}	%
		
		\title{On the subcritical self-catalytic branching Brownian motions} %
		\runtitle{SBBM} %
		\begin{aug} %
			\author{\fnms{Haojie} \snm{Hou}\ead[label=e1]{houhaojie@bit.edu.cn}} %
			\and
			\author{\fnms{Zhenyao} \snm{Sun}\ead[label=e2]{zhenyao.sun@gmail.com}} %
				\address{School of Statistics and Data Science, Nankai University, Tianjin 300071, China \printead{e1}} 
			\address{School of Mathematics and Statistics, Beijing Institute of Technology, Beijing 100081, China \printead{e2}} 
		\end{aug}
		\begin{abstract}
			The self-catalytic branching Brownian motions are extensions of the classical one-dimensional branching Brownian motions by incorporating pairwise branchings catalyzed by the intersection local times of the particle pairs. These processes naturally arise as the moment duals of certain reaction-diffusion equations perturbed by multiplicative space-time white noise. For the subcritical case of the catalytic branching mechanism, we construct the SBBM allowing an infinite number of initial particles. Additionally, we establish the coming down from infinity (CDI) property for these systems and characterize  their CDI rates.
		\end{abstract}
		\begin{keyword}
			\kwd{Branching Brownian motion}
			\kwd{Coming down from infinity}
			\kwd{Duality}
			\kwd{SPDE}
		\end{keyword}
		\begin{keyword}[class=MSC]
			\kwd[Primary ]{60J80}
			\kwd[; secondary ]{60H15}
			\kwd{60F25}
		\end{keyword}
		
	\end{frontmatter}
	
	\section{Introduction}
	
	\subsection{Motivation}
	
	The branching Brownian motion (BBM) is a classical probabilistic model describing a system of particles that move independently according to Brownian motions and branch independently into  random number of offspring at random times. This model serves as a cornerstone of modern probability theory, with applications spanning partial differential equations, statistical physics, and mathematical biology. Foundational works such as \cite{MR0400428} and \cite{MR0494541} established its dual connection to the FKPP equation, while a modern overview is provided in \cite{MRBerestycki2014Topics}.
	
	While the classical BBM has been extensively studied, more complex models have emerged to incorporate intraspecific competition. Examples include the branching coalescing Brownian motions \cite{MR0948717}, where pairs of particles merge randomly based on their pairwise intersection local times; the $N$-BBM \cite{MR3568046}, which maintains a fixed population size through a selection mechanism; the $L$-BBM \cite{MR3492932}, where particles further
	than a distance $L$ from the leading particle are deleted; and BBM with decaying mass \cite{MR4020312}, where each particle carries a mass that decays at a rate proportional to its neighboring mass field.
	
	A further extension of the branching coalescing Brownian motion is the self-catalytic branching Brownian motion (SBBM), first introduced in \cite{MR2698239} and further developed in \cite{MR1813840}.
	In this model, branching events are catalyzed by the intersection local times of particle pairs, allowing not only the competitive but also the cooperative interactions between particles.
	A defining mathematical feature of SBBM is its dual relationship with a family of stochastic reaction-diffusion equations, akin to the connection between BBM and the FKPP equation.
	This duality provides a comprehensive framework for studying both SBBMs and their dual SPDEs.
	
	For example, the foundational work \cite{MR0948717} employed branching coalescing Brownian motion to establish the weak uniqueness of the stochastic FKPP equation.
	This result was later extended in \cite{MR1813840}, which used SBBM to prove the well-posedness of a family of stochastic reaction-diffusion equations. Another important contribution is \cite{MR1339735}, where local-time coalescing Brownian motions (LCBM) were utilized to demonstrate the compact interface property of the Wright-Fisher stochastic heat equation (Wright-Fisher SHE).
	Later, an advanced small-noise asymptotic analysis of the propagation speed of the stochastic FKPP equation was established in \cite{MR2793860}, which, via duality, confirmed the Brunet-Derrida conjecture for the branching coalescing Brownian motions.
	See also \cite{MR4828440} for related results on the effect of small noise on the speed of reaction-diffusion equations with non-Lipschitz drift.
	The duality framework was also employed in \cite{MR3582808} to demonstrate the convergence of the biased voter model to the stochastic FKPP equation.
	The branching coalescing Brownian motions on metric graphs were utilized in \cite{MR4278798} to study the stochastic FKPP equation on metric graphs.
	A two-type coalescing Brownian motion system was applied in \cite{MRFanTough2023quasi} to derive the quasi-stationary distributions of the stochastic FKPP equation on the circle.
	Moreover, the analytical properties of the Wright-Fisher  SHE  were applied in \cite{MR4698025} to prove the coming down from infinity (CDI) property of LCBM.
	Building on this result, \cite{MRBMS24} showed that a system of branching coalescing Brownian motions, allowing for infinitely many offspring in its branching mechanism, has its total population reflected from infinity. This phenomenon was further used in \cite{MRBMS24} to demonstrate
	a regularization-by-noise effect of the Wright-Fisher space-time white noise.
	
	In this article, we continue the study of the SBBM by addressing the following fundamental question:
	\begin{itemize}
		\item[\eq\label{eq:Q}]
		\emph{What happens if there are infinitely many initial particles in an SBBM?}
	\end{itemize}
	This question was previously explored in \cite{MR2162813} in the context of  LCBM  with a comprehensive resolution provided in \cite{MR4698025}.
	We will show that, under the assumptions that the catalytic branching is subcritical and not parity-preserving, an SBBM model supporting infinitely many initial particles can be defined as the limit of a sequence of SBBMs with finitely many initial particles (Theorem \ref{thm:existence-of-Z-t}).
	The law of this limiting process is characterized by the initial trace---a key concept introduced in \cite{MR4698025}---and the branching mechanisms.
	Additionally, we will establish  the CDI result: a necessary and sufficient condition for the finiteness of the number of particles in any region at any time (Theorem \ref{Comming-down-finite-first-moment}); and characterize the corresponding CDI rates in terms of
	a deterministic partial differential equation (Theorem \ref{thm:CDIrate});
	thereby generalizing the results in \cite{MR4698025}.
	Our findings reveal an universal behavior: the CDI rates, while depending on the initial trace and the  mean-field effect of the pairwise interaction, are independent of the precise form of the branching mechanisms.

	\subsection{Main results} \label{sec:MR}
	
	The SBBM model has five parameters:

	\begin{itemize}
		\item[\eq\label{eq:IC}]
		The initial configuration $(x_i)_{i=1}^n$, which is a finite list in $\mathbb R$.
		\item[\eq\label{eq:OBR}]
		The ordinary branching rate  $ \beta_{\mathrm o}\geq 0$;
		\item[\eq\label{eq:OB}]
		The ordinary  offspring  law
		$(p_k)_{k=0}^\infty$, which is a probability measure on $\mathbb{Z}_+$, the space of non-negative integers;
		\item[\eq]
		The catalytic branching rate $\frac{1}{2}\beta_{\mathrm c} > 0$;
		\item[\eq\label{eq:CB}]
		The catalytic  offspring  law $(q_k)_{k=0}^\infty$, which is a probability measure on $\mathbb{Z}_+$.
	\end{itemize}
	The SBBM with the above parameters is a stochastic particle system which evolves according to the following rules \eqref{eq:R1}--\eqref{eq:R4}.
	\begin{itemize}
		\item[\eq\label{eq:R1}]
		At time $0$, there are $n$
		particles located in the real line $\mathbb R$
		whose locations are given by the initial configuration $(x_i)_{i=1}^n$.

		\item[\eq\label{eq:R2}]
		Each particle moves as independent Brownian motions unless one of the following ordinary branching or catalytic branching occurs.
		\item[\eq\label{eq:R3}]
		Each particle induces an ordinary branching according to an independent exponential clock of  rate $\beta_{\mathrm o}$.
		When an ordinary branching occurs, the corresponding particle will be killed and be replaced, at its location of death, by a random number of new particles.  This random number will be independently sampled according to the ordinary  offspring  law  $(p_k)_{k= 0}^\infty$.
		\item[\eq\label{eq:R4}]
		Each unordered pair of particles induces a catalytic branching according to an independent exponential clock of rate $ \frac{1}{2}\beta_{\mathrm c} $  with respect to their intersection local time.
		When a catalytic branching occurs, the corresponding pair of particles will both be killed and be replaced, at their mutual location of death, by a random number of new particles.
		This random number will be independently sampled according to the catalytic  offspring distribution $(q_k)_{k= 0}^\infty$.
	\end{itemize}
	Note  that, producing one  child  in an ordinary branching, or two children in a catalytic branching, does not change the configuration of the particle profile at the occurring time of that branching.
	Therefore, we can assume, without loss of generality, that $p_1 = q_2 = 0$.
	For every $z\in \mathbb R$, we define
	\begin{equation}\label{Def-Phi}
		\Phi(z) := \beta_{\mathrm o} \pr{\sum_{k=0}^\infty p_k (1-z)^k- (1-z)}
	\end{equation}
	and
	\begin{equation}\label{Def-Psi}
		\Psi(z) := \beta_{\mathrm c}\pr{\sum_{k=0}^\infty q_k (1-z)^k - (1-z)^2}
	\end{equation}
	whenever the infinite series on the right hand sides are absolutely summable.
	The functions $\Phi$, and $\Psi$, will be referred to as the ordinary, and the catalytic, branching mechanisms, respectively.
	
	We assume that
	\begin{itemize}
		\item[\eq\label{asp:FFM}]	both of the ordinary offspring law \eqref{eq:OB} and the catalytic offspring law \eqref{eq:CB} have finite first moment, i.e.,
		$\sum_{k\in \mathbb Z_+} k p_k +  \sum_{k\in \mathbb Z_+} k q_k < \infty$.
	\end{itemize}
	We say the ordinary branching is subcritical, critical, or supercritical, according to $\sum_{k\in \mathbb Z_+} kp_k$ is strictly less than, equals to,  or strictly greater than, $1$, respectively.
	We say that the catalytic branching is subcritical, critical, or supercritical, according to $\sum_{k\in \mathbb Z_+} kq_k$ is strictly less than, equals to,  or strictly greater than, $2$, respectively.
	We will only be considering the case when the catalytic branching is subcritical, that is, we assume that
	\begin{equation} \label{asp:A3}
		\sum_{k\in \mathbb Z_+} kq_k < 2.
	\end{equation}

	A priori speaking, a particle system following the rules \eqref{eq:R1}--\eqref{eq:R4} can only be defined up to its explosion time.
	In order to be more precise, let $\tau_0 := 0$, and inductively for every $k\in \mathbb Z_+$, let $\tau_{k+1}$ be the earliest occurring time of a branching after the time $\tau_k$ (if $\tau_k = \infty$, or if there is no branching occurring after the time $\tau_k<\infty$, then set $\tau_{k+1} :=\infty$ for convention.)
	Thanks to the strong Markov property of the Brownian motions, one can construct  a particle system following the rules \eqref{eq:R1}--\eqref{eq:R4} in the time interval $[0,\tau_\infty)$ where $\tau_\infty := \lim_{k\to \infty} \tau_k$
	is  called  the explosion time.
	(We omit the details of the construction since it is tedious but straightforward.)

	Our first result, whose proof is postponed in Section \ref{ss:3.1}, says that this explosion won't really happen.
	
	\begin{proposition} \label{prop:WD}
		Suppose that $\tau_\infty$ is the explosion time of an SBBM with its parameters given by \eqref{eq:IC}--\eqref{eq:CB}.
		Suppose that \eqref{asp:FFM} and \eqref{asp:A3} hold.
		Then, almost surely, $\tau_\infty = \infty$.
	\end{proposition}

	For every  $t\geq 0$, we denote by $I_t$ the collection of unique labels of the particles that are  alive at time $t$.
	(How  we label the particles is not crucial for our purpose.)
	For every $t\geq 0$ and $\alpha \in I_t$, denote by $X^{\alpha}_t$ the spatial location of the particle labeled by $\alpha$ at time $t$.
	For every $t\geq 0$ and $U\in \mathcal B(\mathbb R)$, denote by $Z_t(U) := |\{\alpha \in I_t: X_t^{\alpha} \in U\}|$ the number of alive particles at time $t$ whose locations belong to $U$.
	Here, $\mathcal B(\mathbb R)$ represents the collection of Borel subsets of $\mathbb R$, and $|A|$ represents the cardinality of a given set $A$.

	By convention, at each ordinary or catalytic branching event, the children are considered alive at the branching time, while the parent is not.
	By this convention, $(Z_t)_{t\geq 0}$ is a càdlàg process taking values in $\mathcal N := \mathcal N_{\mathbb R}$, the space of integer-valued Radon measures on $\mathbb R$.
	To be more precise, for any open subset $U$ of $\mathbb R$, we denote by $\mathcal N_{U}$ the collection of integer-valued Radon measures on $U$, equipped with the vague topology, i.e.~the coarsest topology such that the map $\mu \mapsto \mu(f)$
	from $\mathcal N_U$ to $\mathbb R$ is continuous for every $f\in \mathcal C_\mathrm c(U)$.
	Here, $\mathcal C_\mathrm c(U)$ represents the collection of compactly supported continuous functions on $U$; and $\mu(f)$ represents the integration of a measurable function $f$ against a measure $\mu$ whenever it is well-defined.
	It is known that $\mathcal N_U$ is Polish \cite{MR3642325}*{Theorem 4.2}.
	For any Borel measure $\nu$ and non-negative Borel measurable function $g$ on $U$, let $g\cdot \nu$ be the unique Borel measure on $U$ such that $(g\cdot \nu)(A) = \nu(\mathbf 1_A g)$ for any Borel subset $A$ of $U$.
	It is known (c.f.~\cite{MRYan1998Lectures}*{Theorem 6.5.8}) that there exists a sequence $(h_i)_{i\in \mathbb N}$ in $\mathcal C_{\mathrm c}^\infty(\mathbb R)$ such that the vague topology of $\mathcal N$ is compatible with the complete metric
	\begin{equation}\label{eq:dN}
		d_{\mathcal N}(\nu_0, \nu_1):=\sum_{i=1}^\infty \frac{1}{2^i}\pr{1\land \abs{\nu_0 (h_i)- \nu_1(h_i) }}, \quad \nu_0, \nu_1 \in \mathcal N.
	\end{equation}
	We will fix this sequence $(h_i)_{i\in \mathbb N}$ and treat $\mathcal N$ as the separable complete metric space $(\mathcal N, d_{\mathcal N})$ throughout this paper.
	For any $t_0\geq 0$, denote by $\mathbb D([t_0,\infty), \mathcal N)$ the space of $\mathcal N$-valued c\`adl\`ag paths indexed by $[t_0,\infty)$ equipped with the Skorokhod $J_1$-topology in the sense of \cite{MR4226142}*{Lemma A5.3}. It is known that $\mathbb{D}([t_0,\infty), \mathcal{N})$ is also Polish.
	Note that the law of the process $(Z_t)_{t\geq 0}$ induced on $\mathbb D(\mathbb R_+, \mathcal N)$
	is uniquely determined by the parameters listed in \eqref{eq:IC}--\eqref{eq:CB}.
	Any  $\mathcal N$-valued c\`adl\`ag process that shares the same law as $(Z_t)_{t\geq 0}$ will be therefore referred to as an  SBBM with respect to those parameters.
	
	Given a sequence of SBBMs $(Z^{(n)}_\cdot)_{n\in \mathbb N}$, sharing the same branching mechanisms, whose
	initial total populations $Z^{(n)}_0(\mathbb R)$ converge to $\infty$, we are interested in the asymptotic behavior of the process $Z^{(n)}_\cdot$ as $n\uparrow \infty$.
	If one can show the convergence in distribution of such sequence of processes, then it is reasonable to regard the limit as an SBBM with infinitely many initial particles, answering the proposed question \eqref{eq:Q}.
	With this purpose in mind, we introduce several notations:
	Given $\mu_0, \mu_1 \in \mathcal N$, we say $\mu_0$ is dominated by $\mu_1$, and write $\mu_0 \preceq \mu_1$, provided that there exists a $\nu \in \mathcal N$ such that $\mu_0 + \nu = \mu_1$. We say a sequence $(\mu_n)_{n\in \mathbb N}$ in $\mathcal N$ is monotonically increasing provided that $\mu_n \preceq \mu_{n+1}$ for each of $n\in \mathbb N$.
	Let $\mathcal T$ be the collection of the pair $(\Lambda, \mu)$ where $\Lambda$ is a closed subset of $\mathbb R$ and $\mu$ is a  Radon measure on the complement $\Lambda^{\mathrm c} =\mathbb R\setminus \Lambda$.
	Let $\mathcal{T}_{\mathrm{a}}$ be the collection of the pair $(\Lambda, \mu)\in \mathcal T$ where the Radon measure $\mu$ is integer-valued.
	Given a sequence $(\mu_n)_{n\in \mathbb N}$ of Radon measures on $\mathbb R$, we say it converges m-weakly to a pair $(\Lambda, \mu) \in \mathcal T$ provided that, as $n\uparrow \infty$,
	\begin{itemize}
		\item $\mu_n(U) \to \infty$ for any open $U \subset \mathbb R$ such that $U\cap \Lambda \neq \emptyset$; and
		\item the restriction $\mu_n|_{\Lambda^\mathrm{c}}$ converges vaguely to $\mu$.
	\end{itemize}
	This notion of m-weak convergence, which extends the classical vague topology to systematically characterize localized mass explosions through the paired structure $(\Lambda, \mu)$, was originally introduced by Marcus and V\'{e}ron in their foundational work on boundary traces for elliptic equations \cite{MR1658392}, and subsequently adapted to the parabolic framework in \cite{MR1697494}.
	Note that every monotonically increasing sequence $(\mu_n)_{n\in \mathbb N}$ in $\mathcal N$ converges m-weakly to a pair $(\Lambda, \mu) \in \mathcal T_a$ determined by
	\begin{align}
		& \Lambda := \brc{y\in \mathbb R: \lim_{n\to \infty}\mu_n((y-\epsilon, y+\epsilon)) = \infty, \forall \epsilon > 0},
		\\& \mu(A):= \lim_{n\to \infty} \mu_n(A), \quad \text{for any Borel subset of $A\in \Lambda^\mathrm{c}$}.
	\end{align}
	Conversely, for each pair $(\Lambda, \mu) \in \mathcal T_a$, there exists a monotonically increasing sequence $(\mu_n)_{n\in \mathbb N}$ converging m-weakly to it.
	The support of a given $(\Lambda, \mu)\in \mathcal T$ is defined by $\operatorname{supp}(\Lambda, \mu) := \Lambda \cup \operatorname{supp} (\mu)$ where $\operatorname{supp} (\mu)$ represents the support of the measure $\mu$.
	
	We will be focusing on the case where $(Z^{(n)}_\cdot)_{n\in \mathbb N}$ is a sequence of SBBMs, sharing the same branching mechanisms, whose initial values can be any monotonically increasing sequence in $\mathcal N$.
	In this case, the initial values $(Z_0^{(n)})_{n\in \mathbb N}$ converges m-weakly to a pair $(\Lambda, \mu)\in \mathcal T_a$ while it is possible that they fail to converge vaguely in $\mathcal N$ (for example, if $\mu_n = n \cdot \delta_0$, then the m-weak limit is $(\{0\}, \mathbf 0)$ where $\mathbf 0$ denotes the null measure, but the vague limit does not exist).
	Due to this reason, we will focus on establishing the convergence in distribution of the sequence $(Z^{(n)}_\cdot)_{n\in \mathbb N}$ in the path space $\mathbb{D}([t_0, \infty), \mathcal N)$ where $t_0>0$ can be arbitrarily small but away from $0$.
	If such kind of convergence holds, it gives rise to a limiting process $(Z_t)_{t>0}$ indexed by the open time interval $(0,\infty)$.
	We will show that the distribution of the limiting process $(Z_t)_{t>0}$ is uniquely characterized by the pair $(\Lambda, \mu)$ and the branching mechanisms $\Phi$ and $\Psi$.
	It is then reasonable to regard the limiting pair $(\Lambda, \mu)$ as the initial configuration, termed as the initial trace, of the limiting SBBM process $(Z_t)_{t>0}$.
	If the initial values $(Z_0^{(n)})_{n\in \mathbb N}$ converge vaguely in $\mathcal N$, we can also establish the convergence of the sequence $(Z^{(n)}_\cdot)_{n\in \mathbb N}$ in the path space $\mathbb{D}([0, \infty), \mathcal N)$.
	
	\begin{remark} \label{rem:initial_data_convergence}
		While it is natural to investigate the limiting behavior of finite particle systems when their initial configurations $(\mu_n)_{n\in \mathbb N}$ approximate the initial trace without monotonely converging to it, such a general scenario will not be considered in this paper.
		Rather than reflecting an intrinsic limitation of the underlying processes, this restriction is introduced primarily as a methodological simplification to navigate the structural discontinuity inherent in the moment duality technique.
		We will comment on the duality technique in Subsubsection~\ref{sec:PSD} and more on this restriction in Subsubsection~\ref{sec:initial_data_convergence}.
	\end{remark}
	
	To establish the above approximation results, let us assume the existence of the exponential moments of the two offspring  laws:
	\begin{itemize}
		\item[\eq\label{asp:A1}]
		There exists $R>1$ such that $\sum_{k=0}^\infty R^k p_k < \infty$  and  $\sum_{k =0}^\infty R^k q_k < \infty.$
	\end{itemize}
	This assmuption will be used in Proposition \ref{prop:D} for the establishment of the moment duality between the SBBMs and their dual SPDEs.
	Let us also assume the following:
	\begin{itemize}
		\item[\eq\label{asp:A2}]
		The catalytic offspring  law
		is  not parity-preserving, that is, there exists an odd number $k$ such that $q_k > 0$.
	\end{itemize}
	This assumption is crucial for our establishment of the distributional convergence of $(Z_t^{(n)})_{t>0}$ as $n\to\infty$.
	We will comment on this assumption in Subsubsection \ref{sec:DA}.

	\begin{theorem}\label{thm:existence-of-Z-t}
		Let the offspring parameters $\beta_{\mathrm o}$, $(p_k)_{k=0}^\infty$, $\beta_{\mathrm c}$ and $(q_k)_{k=0}^\infty$ be given as in \eqref{eq:OBR}--\eqref{eq:CB}.
		Assume that \eqref{asp:A3}, \eqref{asp:A1} and \eqref{asp:A2} hold.
		Then, for any $(\Lambda, \mu)\in \mathcal T_\mathrm{a}$ and branching mechanisms $\Phi$ and $\Psi$ defined by \eqref{Def-Phi} and \eqref{Def-Psi}, there exists a unique in law $\mathcal N$-valued c\`{a}dl\`{a}g Markov process $(Z_t)_{t>0}$ such that the following statement holds:
		for any
		\begin{itemize}
			\item[\eq\label{eq:Zt-monotone}] a monotonically increasing sequence $(\mu_n)_{n\in \mathbb N}$ in $\mathcal N$ converging m-weakly to $(\Lambda, \mu)$,
			\item[\eq\label{eq:Zt-SBBM}] a sequence of SBBMs $(Z_\cdot^{(n)})_{n\in \mathbb N}$ sharing the same branching mechanisms $\Phi$ and $\Psi$ whose initial values are given by the sequence $(\mu_n)_{n\in \mathbb N}$, and
			\item[\eq\label{eq:Zt-t0}] $t_0>0$,
		\end{itemize}
		the $\mathbb D([t_0,\infty), \mathcal N)$-valued random elements $(Z_t^{(n)} )_{t\geq t_0}$ converges in distribution to $(Z_t)_{t\geq t_0}$ as $n\to \infty$.
		If we assume in addition that $\Lambda = \emptyset$, then the above statement also holds for $t_0=0$ with $Z_0 := \mu$.
	\end{theorem}

	The proof of Theorem \ref{thm:existence-of-Z-t} is postponed to Section \ref{sec:SS5}.
	
	In light of Theorem \ref{thm:existence-of-Z-t}, any $\mathcal N$-valued c\`{a}dl\`{a}g process indexed by $(0,\infty)$  that shares the same law as the process $(Z_t)_{t> 0}$ in Theorem \ref{thm:existence-of-Z-t}
	will be referred to as an SBBM with ordinary branching mechanism $\Phi$, catalytic branching mechanism $\Psi$, and initial trace $(\Lambda, \mu)$.
	
	In the rest of this  subsection,  let $(Z_t)_{t>0}$ be such a process, whose corresponding probability measure and expectation operator  is  denoted by $\mathbb P_{(\Lambda, \mu)}$ and $\mathbb E_{( \Lambda, \mu)}$, respectively.
	We want to investigate the \textit{coming down from infinity} (CDI) property for this process.
	Given a Borel subset $U \subset \mathbb R$, we say the CDI property holds for the process $(Z_t(U))_{t>0}$ provided that:
	\begin{itemize}
		\item[\eq\label{eq:CDIP}] $Z_t(U)<\infty$ for every $t>0$ and $\lim_{t\downarrow 0} Z_t(U) = \infty$ in probability.
	\end{itemize}
	Intuitively, this means that the number of particles in the region $U$ starts from an infinite value, but becomes finite immediately after the system starts to evolve.
	This property was studied by \cite{MR2162813} for the local-time coalescing Brownian motions (LCBM) living on the circle $\mathbb T$ and by \cite{MR4698025} on the real line $\mathbb R$.
	(Note that LCBM is a particular SBBM with parameters $\beta_0 = 0$ and $q_1 = 1$.)
	According to \cite{MR4698025}*{Theorem 1.4}, in the context of the LCBM, $Z_t(U)$ is almost surely finite if and only if $U \cap \operatorname{supp}(\Lambda, \mu)$ is bounded.
	(A set $A\subset \mathbb R$ is said to be bounded if $\sup\brc{|x|: x\in A}<\infty$.)
	Furthermore, the expected number $\mathbb E[Z_t(U)]$ explodes to infinity when $t\downarrow 0$ if and only if $\bar{U}\cap \Lambda \neq \emptyset$.
	(We use $\bar A$ to denote the closure of a given set $A \subset \mathbb R$.)
	Our next  result says that these results hold for the more  general SBBM model.
	
	\begin{theorem} \label{Comming-down-finite-first-moment}
		Let the offspring parameters $\beta_{\mathrm o}$, $(p_k)_{k=0}^\infty$, $\beta_{\mathrm c}$ and $(q_k)_{k=0}^\infty$ be given as in \eqref{eq:OBR}--\eqref{eq:CB}.
		Assume that \eqref{asp:A3}, \eqref{asp:A1} and \eqref{asp:A2} hold.
		Let $(Z_t)_{t> 0}$ be an SBBM, defined via Theorem \ref{thm:existence-of-Z-t}, with ordinary branching mechanism $\Phi$ given by \eqref{Def-Phi}, catalytic branching mechanism $\Psi$ given by \eqref{Def-Psi}, and initial trace $(\Lambda, \mu)$ chosen arbitrarily in $\mathcal T_\mathrm{a}$.
		Then, the following statements hold for any open interval $U\subset \R$:
		\begin{itemize}
			\item[(i)]
			If $U\cap \operatorname{supp}(\Lambda, \mu)$ is unbounded, then $\mathbb{P}_{(\Lambda, \mu)}\pr{Z_t(U)= +\infty, \forall t>0}=1$ (CDI property \eqref{eq:CDIP} does not hold).
			\item[(ii)]
			If $U\cap \operatorname{supp}(\Lambda, \mu)$ is bounded, then $\mathbb{P}_{(\Lambda, \mu)}\pr{Z_t(U) < \infty, \forall t>0 }=1.$
		\end{itemize}
		Moreover, the following statements hold for any open interval $U\subset \R$ such that $U \cap \operatorname{supp}(\Lambda, \mu)$ is bounded:
		\begin{itemize}
			\item[(iii)]
			If $\bar{U}\cap \Lambda =\emptyset,$ then the CDI property \eqref{eq:CDIP} does \emph{not} hold for $(Z_t(U))_{t>0}$.
			\item[(iv)]
			If $\bar{U}\cap \Lambda \neq \emptyset,$ then the CDI property \eqref{eq:CDIP} holds for $(Z_t(U))_{t>0}$.
		\end{itemize}
	\end{theorem}
	
	The proof of Theorem \ref{Comming-down-finite-first-moment}  will be given in Section \ref{Main}.
	From the above theorem, we see that the statement \eqref{eq:CDIP}, i.e.~the CDI property holds for the SBBM $(Z_t)_{t\geq 0}$ in a given open interval $U$, is equivalent to the following condition:
	\begin{itemize}
		\item[\eq\label{eq:CDIC}]
		$U \cap \operatorname{supp}(\Lambda, \mu)$ is bounded and $\bar{U}\cap \Lambda \neq \emptyset$.
	\end{itemize}
	Having established the CDI property in this case, a natural question arises:
	\begin{itemize}
		\item	\emph{At what rate does $Z_t(U)$, the particle population in $U$, explode as $t \downarrow 0$?}
	\end{itemize}
	To characterize this rate, we compare the stochastic particle system to a deterministic partial differential equation.

	Let us denote by $\mathcal C^{1,2}((0,\infty)\times \mathbb R)$ the collection of real-valued functions $(h_t(x))_{t>0,x\in \mathbb R}$ which is continuously differentiable in $t$ and twice continuously differentiable in $x$.
	For every $(\tilde{\Lambda},\tilde{\mu}) \in \mathcal T$, from \cite{MR1429263}*{Theorem 4}, there exists a unique non-negative $v^{(\tilde \Lambda, \tilde \mu)} = (v_t^{(\tilde \Lambda, \tilde \mu)}(x))_{t>0, x\in \mathbb R}\in \mathcal C^{1,2}((0,\infty) \times \mathbb R)$
	satisfying the following equation:
	\begin{equation}\label{PDE}
		\begin{cases}
			\displaystyle
			\partial_t v_t^{(\tilde \Lambda, \tilde \mu)}(x) = \frac{1}{2}\partial_x^2 v_t^{(\tilde \Lambda, \tilde \mu)}(x) - \frac{\Psi'(0+)}{2} v_t^{(\tilde \Lambda, \tilde \mu)}(x)^2, \quad t>0, x\in \mathbb R;
			\\ \text{The Radon measures $v_t(x)\mathrm dx$ on $\mathbb R$ converges m-weakly to $(\tilde \Lambda, \tilde \mu)$ as $t\downarrow 0$.}
		\end{cases}
	\end{equation}
	Here,
	
	\begin{equation}\label{eq:Psi'}
		\Psi'(0+) := \beta_{\mathrm c}\pr{2-\sum_{k=0}^\infty kq_k } \in (0,\infty),
	\end{equation}
	where the condition $\Psi'(0+) \in (0,\infty)$ follows from the subcritical catalytic branching assumption~\eqref{asp:A3}, which ensures $\sum_{k=0}^\infty kq_k < 2$.
	Equation \eqref{PDE}, with $(\tilde \Lambda, \tilde \mu)$ being replaced by the initial trace $(\Lambda, \mu)$ of an SBBM, will be referred to as the CDI Profile Equation, as its solution, $(v^{(\Lambda, \mu)}_t(x))_{t\geq 0, x\in \mathbb R}$, will serve as the leading-order spatial profile against which the particle density is compared to establish the exact CDI rates.
	
	In fact, the equation \eqref{PDE} already appeared in the study of the LCBM \cite{MR4698025} where it was shown that $Z_t(U)$ is asymptotically equivalent to $\int_U v_t(x) \mathrm dx$ in the $L^1$-sense as $t\downarrow 0$.
	We generalize this result for our SBBM model. We emphasize that the ordinary branching mechanism is absent from the CDI Profile Equation \eqref{PDE}. As we will elucidate in Subsubsection \ref{sec:MFE}, this is because the ordinary branching is dominated by the catalytic branching in the short-time asymptotic regime ($t \downarrow 0$), and thus only the latter contributes to this leading-order profile.

	\begin{theorem}\label{thm:CDIrate}
		Let the offspring parameters $\beta_{\mathrm o}$, $(p_k)_{k=0}^\infty$, $\beta_{\mathrm c}$ and $(q_k)_{k=0}^\infty$ be given as in \eqref{eq:OBR}--\eqref{eq:CB}.
		Assume that \eqref{asp:A3}, \eqref{asp:A1} and \eqref{asp:A2} hold.
		Let $(Z_t)_{t> 0}$ be an SBBM, defined via Theorem \ref{thm:existence-of-Z-t}, with ordinary branching mechanism $\Phi$ given by \eqref{Def-Phi}, catalytic branching mechanism $\Psi$ given by \eqref{Def-Psi}, and initial trace $(\Lambda, \mu)$ chosen arbitrarily from $\mathcal T_\mathrm{a}$.
		Suppose that $U\subset \mathbb R$ is an open interval satisfying the condition \eqref{eq:CDIC}. Then
		\[
		\pr{\int_U v^{(\Lambda, \mu)}_{t}(x)\mathrm{d}  x }^{-1}  Z_t(U)\stackrel{t\downarrow 0}{\longrightarrow}1, \quad \mbox{in } L^1  \text{~w.r.t.~} \mathbb{P}_{(\Lambda, \mu)}.
		\]
	\end{theorem}

	The proof of Theorem \ref{thm:CDIrate} will be given in Section \ref{Main}.
	We note here that the CDI rate
	\[
	t\mapsto \int_U v_t^{(\Lambda, \mu)}(x)\mathrm{d} x, \quad t>0
	\]
	depends on the initial trace $(\Lambda, \mu)$ and the constant $\Psi'(0+)$, which captures the mean-field effect of the self-catalytic branching, but is independent of the ordinary branching mechanism $\Phi$ and the precise form of the catalytic branching mechanism $\Psi$.
	We comment on universal behaviour in Subsubsection \ref{sec:MFE}.
	
	\subsection{Perspectives}
	\subsubsection{Discussion of assumptions} \label{sec:DA}

	The subcritical assumption \eqref{asp:A3} on the catalytic branching is crucial for our results. In the supercritical regime, where \(\sum_{k\in \mathbb{Z}_+} kq_k > 2\), a naive physicist's mean-field analysis suggests that, with positive probability, an everywhere explosion occurs in finite time, even when the initial number of particles is finite. This leads to several questions: What is the probability of the explosion? How can the explosion time and the growth of the population before the explosion be characterized? What is the behavior of the system conditioned on non-explosion?
	For (non-spatial) branching processes with pairwise interactions, explosion probability and explosion time are characterized in \cite{MR4929072}*{Theorem 1}; the third question, even in the non-spatial setting, appears to be open.
	
	The critical case, \(\sum_{k\in \mathbb{Z}_+} kq_k = 2\), is perhaps more intriguing. We do not expect the CDI property to hold in this case.
	However, under suitable assumptions and rescaling, we anticipate that the empirical measure of the SBBM with critical catalytic branching is likely to converge to the multiplicative linear SHE, \(\partial_t z = \frac{\Delta}{2}z + z \dot{\xi},\) where \(\dot{\xi}\) denotes the space-time white noise. This conjecture is by analogy with the classical result that critical branching Brownian motion converges to the Dawson-Watanabe superprocess under rescaling \cite{MR1779100}*{Chapter 1, \S1.4}.
	Notably, the multiplicative linear SHE is closely related to the KPZ equation, \(\partial_t h = \frac{1}{2} \partial_x^2 h - \frac{1}{2} (\partial_x h)^2 + \dot{\xi},\) for which we refer our readers to \cite{MR3098078}. This connection invites a further question: Do certain observables of the SBBM with critical catalytic branching belong to the KPZ universality class?

	The non-parity-preserving assumption \eqref{asp:A2} is also crucial for our main results.
	To illustrate this, consider the scenario where \(\beta_{\mathrm{o}} = 0\) and \(q_0 = 1\), in which \eqref{asp:A2} fails to hold.
	The SBBM model with these parameters is referred to as the local-time annihilating Brownian motions.
	Using the methods in this article, it can be shown that for any \(t > 0\), the sequence of \(\mathcal{N}\)-valued random elements \((Z_t^{(n)})_{n \in \mathbb{N}}\) is tight. However, we do not expect the sub-sequential convergence-in-distribution limit to be unique. Without assumption \eqref{asp:A2}, the parity of the total population is preserved: if the initial configuration has an even number of particles, it remains even; if odd, it remains odd. Consequently, subsequences with different initial parities may converge to different limits. We refer interested readers to \cite{MR4212931} for a detailed analysis of such questions in the setting of (instant) annihilating Brownian motions, where particles annihilate immediately upon contact. The local-time case considered here presents additional technical challenges and may exhibit qualitatively different behavior.

	The technical assumption \eqref{asp:A1} is included primarily for convenience, facilitating the application of the duality result in \cite{MR1813840}*{Theorem 1} (see Proposition \ref{prop:D} below).
	We believe this requirement can be significantly weakened, particularly for ordinary branching mechanisms.
	Indeed, when the pairwise interaction is given by the pure coalescing (\(q_1 = 1\)), \cite{MRBMS24} extends the duality result of \cite{MR1813840}*{Theorem 1} without imposing any moment conditions on the ordinary branching.
	Remarkably, their approach even accommodates cases where ordinary branching can produce infinitely many offspring with positive probability.
	\begin{old}
		Exploring the optimal moment conditions on the branching mechanisms necessary for our results to hold is an interesting question, but lies beyond the scope of the current paper.
	\end{old}
	However, extending the novel framework of \cite{MRBMS24} to the more general setting is highly non-trivial. The arguments in \cite{MRBMS24} rely heavily on the monotonicity inherent to the pure coalescing mechanism ($q_1=1$). For the more general self-catalytic branching ($q_1 \neq 1$), this monotonicity is lost. Overcoming this lack of monotonicity to establish the optimal moment conditions for the duality result (Proposition \ref{prop:D}) is an interesting problem, but lies beyond the scope of the current paper.

	\subsubsection{Coming down from infinity}
	The CDI property describes how certain observables in a time-homogeneous dynamical system, starting from an infinite value, become finite immediately after the system starts to evolve.
	For example, the solution to the ordinary differential equation
	\begin{equation}
		\begin{cases}
			\frac{\mathrm{d}}{\mathrm{d} t} x(t) = -x(t)^2, \quad t > 0, \\
			x(0) = \infty,
		\end{cases}
	\end{equation}
	which is explicitly given by \(x(t) = 1/t\), exhibits the CDI property.
	
	Recently, the CDI phenomenon has also been observed across various stochastic dynamical systems. A well-known example is Kingman's coalescent \cites{MR0671034, MR0633178, MR1673235}, where every pair of particles coalesces according to independent exponential clocks.
	Other examples include the $\Lambda$-coalescent, which generalizes Kingman's coalescent by allowing simultaneous mergers of multiple particles \cite{MR1736720}, \cite{MR2599198}; the spatial $\Lambda$-coalescent, where particles perform continuous-time independent random walks on a lattice, and particles occupying the same site coalesce according to the usual $\Lambda$-coalescent \cite{MR2223040}, \cite{MR2892958}; the branching process with pairwise interactions (BPI), which generalizes the Galton-Watson process by incorporating pairwise branchings \cite{MR4929072}; the logistic continuous-state branching processes (logistic CSBP), which is the continuous-state analog of the BPI where the interaction is competitive \cite{MR2134113}, \cite{MR3940763}; the non-linear CSBP, which generalizes the logistic CSBP by allowing more complex density-dependent interactions \cite{MR3983343}, \cite{MR4807290}; the time-changed L\'evy processes, which is closely related to the non-linear CSBP \cite{MR4255235}, \cite{MRbaguley2024structure}; branching random walk with non-local competition \cite{MR4751871};  and, last but not least, the dynamical $\Phi^4_3$ model, which formally solves the 3-dimensional singular SPDE
	\(
	\partial_t \rho = \frac{\Delta}{2} \rho - \rho^3 + m \rho + \dot \xi,
	\)
	where \(\dot \xi\) denotes the space-time white noise \cite{MR3719541}.

	\subsubsection{The CDI profile equation v.s.~the Mean-field equation}
	\label{sec:MFE}
	In Theorem 1.4, we characterized the CDI rate of SBBM using its CDI profile equation \eqref{PDE}.
	To better understand the origin of this equation from system's dynamics, it is helpful to consider the mean-field approximation of the SBBM.
	Broadly speaking, the Mean-Field Equation (MFE) approximates the behavior of a high-dimensional random particle system by averaging over its degrees of freedom. For the SBBM, the degrees of freedom include:
	\begin{itemize}
		\item the movement of the particles;
		\item the occurrence time and the number of children of the ordinary branchings;
		\item the occurrence time and the number of children of the catalytic branchings.
	\end{itemize}
\begin{old}
		We call the following equation
		\begin{equation} \label{eq:TMFE}
			\partial_t \tilde v_t(x) = \frac{\Delta}{2} \tilde v_t(x) + \Phi'(0+) \tilde v_t(x) -  \frac{\Psi'(0+)}{2} \tilde v_t(x)^2,
		\end{equation}
		subjected to the initial condition similar to that of the equation \eqref{PDE},
		the MFE of the SBBM model, with the idea that  $\tilde v_t(x) \mathrm{d} x$  is an approximation of the empirical measure, and that the three terms on the right-hand side of \eqref{eq:TMFE} are the mean-field averages of the three groups of randomness \eqref{MP}--\eqref{CB}, respectively.

		Note that, in Theorem \ref{Comming-down-finite-first-moment}, the rate of CDI is given by the solution to the equation \eqref{PDE} instead of \eqref{eq:TMFE}, where the linear term corresponding to the ordinary branching is absent.
		This is fine, since one can verify that, uniformly in \(x \in \mathbb{R}\), \(v_t(x)\) and \(\tilde v_t(x)\) are asymptotically equivalent as \(t \downarrow 0\).
		By that, we mean
		\(
		\lim_{t \to 0} \sup_{x \in \mathbb{R}} | v_t(x)/\tilde v_t(x) - 1 | = 0.
		\)
	\end{old}
	The MFE for the SBBM model is given by the following reaction-diffusion equation:
	\begin{equation} \label{eq:TMFE}
		\partial_t \tilde v_t(x) = \frac{\Delta}{2} \tilde v_t(x) - \Phi'(0+) \tilde v_t(x) -  \frac{\Psi'(0+)}{2} \tilde v_t(x)^2,
	\end{equation}
	subject to the same initial condition as \eqref{PDE} where
	\begin{align}\label{eq:Phi'}
		\Phi'(0+) & := \beta_{\mathrm o}\pr{1-\sum_{k=0}^\infty kp_k }.
	\end{align}
	Here, the three terms on the right-hand side of \eqref{eq:TMFE} correspond to the mean-field averages of the diffusion, ordinary branching, and catalytic branching, respectively.

	One might wonder why, in Theorem \ref{thm:CDIrate}, the exact rate of CDI is given by the solution of the CDI Profile Equation \eqref{PDE} instead of the MFE \eqref{eq:TMFE}, as the linear term corresponding to the ordinary branching is absent in the former. However, this omission is ultimately inconsequential: one can verify that, uniformly in $x \in \mathbb{R}$, $v_t(x)$ and $\tilde v_t(x)$ are asymptotically equivalent as $t \downarrow 0$.
	To be precise, we have the following lemma whose proof is given in the Appendix \ref{sec:AA}.
	\begin{lemma}\label{lem:AE}
		Let $\tilde \Lambda$ be a closed subset of $\mathbb R$ and $\mu$ a Radon measure on $\tilde \Lambda^{\mathrm c}$.
		Suppose that $v^{(\tilde \Lambda, \tilde \mu)}\in \mathcal C^{1,2}((0,\infty)\times \mathbb R)$ is the unique solution to \eqref{PDE} and $\tilde v_t^{(\tilde \Lambda, \tilde \mu)}\in \mathcal C^{1,2}((0,\infty)\times \mathbb R)$ is the unique solution to \eqref{eq:TMFE} with the same initial condition as in \eqref{PDE}. Then,
		\begin{equation}
			\lim_{t\downarrow 0} \sup_{x\in \mathbb R} \abs{\frac{\tilde v_t^{(\tilde \Lambda, \tilde \mu)}(x)}{v^{(\tilde \Lambda, \tilde \mu)}_t(x)} - 1} = 0.
		\end{equation}
	\end{lemma}
	As a corollary of Lemma \ref{lem:AE}, we see that Theorem \ref{thm:CDIrate} still holds with the solution to the CDI profile equation $v^{(\Lambda, \mu)}$ being replaced by the solution to the mean-field equation $\tilde v^{(\Lambda, \mu)}$.
	In other word, the ordinary branching is dominated by the catalytic branching in the CDI asymptotic regime ($t \downarrow 0$), and only the latter contributes to the leading-order profile.

	\subsubsection{Proof Strategy: Duality} \label{sec:PSD}
	
	In general, we say a Markov process $(X_t)_{t \geq 0}$, with state space $\mathbb{X}$ and transition kernel $(P_t)_{t \geq 0}$, and a Markov process $(Y_t)_{t \geq 0}$, with state space $\mathbb{Y}$ and transition kernel $(Q_t)_{t \geq 0}$, satisfy a dual relationship w.r.t.~a given dual function $H: \mathbb{X} \times \mathbb{Y} \to \mathbb{R}$, if
	\[
	\int_{\mathbb{X}} H(x, y_0) P_t(x_0, \mathrm{d}x)
	=\int_{\mathbb{Y}} H(x_0, y) Q_t(y_0, \mathrm{d}y), \quad x_0 \in \mathbb{X}, y_0 \in \mathbb{Y}.
	\]
	For example, the one-dimensional standard Brownian motion $(B_t)_{t \geq 0}$ and the solution $(h_t)_{t \geq 0}$ to the heat equation
	\[
	\partial_t h_t(x) = \frac{\Delta}{2}h_t(x), \quad t \geq 0, x \in \mathbb{R},
	\]
	which can be regarded as a (deterministic) Markov process with state space $\mathcal{C}_{\mathrm{b}}(\mathbb{R})$,
	satisfy a dual relationship w.r.t.~the dual function $(B, h) \in \mathbb{R} \times \mathcal{C}_{\mathrm{b}}(\mathbb{R}) \mapsto h(B)$.
	Here, $\mathcal{C}_{\mathrm{b}}(\mathbb{R})$ represents the collection of bounded continuous functions on $\mathbb{R}$.
	
	Our proofs of the main theorems rely on a moment duality between the SBBM $(Z_t)_{t \geq 0}$ and the following stochastic reaction-diffusion equation:
	\begin{equation}\label{DSPDE}
		\partial_t u_t(x) = \frac{\Delta}{2}u_t(x) - \Phi(u_t(x)) + \sqrt{\Psi(u_t(x))} \dot{W}_t(x), \quad t \geq 0, x \in \mathbb{R},
	\end{equation}
	where $\dot{W}$ is a space-time white noise.
	The corresponding dual function $H$ is given by
	\begin{equation}\label{eq:MDP}
		H(u, Z) := \prod_{x \in \mathbb{R}} (1 - u(x))^{Z(\{x\})},
		\quad u \in \mathcal{C}(\mathbb{R}, [0, z^*]), Z \in \mathcal{N},
	\end{equation}
	where $z^* \in [1, 2)$ is a constant determined by $\Psi$.
	The well-definedness of the infinite product in~\eqref{eq:MDP} will be addressed in Lemma~\ref{eq:infinite-product}.
	This duality first appeared in \cite{MR0948717} for the LCBM and was later generalized for a large family of SBBM in \cite{MR1813840}.
	As we will see in this paper, this duality can be further generalized to incorporate infinitely many initial particles.
	We will be more precise about the solution theory of the SPDE \eqref{DSPDE}, the possibly infinite product in \eqref{eq:MDP}, and the generalized version of this moment dual, in Sections \ref{sec:Dual}, \ref{sec:DSPDE}, and \ref{sec:SS5}, respectively.
	We note here that, if there is no catalytic branching, i.e.,~$\beta_{\mathrm{c}} = 0$, then this duality degenerates to McKean's duality \cite{MR0400428} between the BBM and the FKPP equations.
	Furthermore, if there is no ordinary branching, i.e.,~$\beta_{\mathrm{o}} = \beta_{\mathrm{c}} = 0$, this duality degenerates to the trivial duality between Brownian motions and the heat equation.
	
	Our proof strategy extends the approach of \cite{MR4698025} to the general SBBM setting, where the lack of monotonicity necessitates substantial technical modifications.
	The same idea is to compare the above moment dual with a Laplacian dual connecting the MFE \eqref{eq:TMFE} to a super-Brownian motion, which is a measure-valued Markov process whose density evolves according to the SPDE:
	\begin{equation} \label{eq:SPDESBM}
		\partial_t \tilde{u}_t(x) = \frac{\Delta}{2} \tilde{u}_t(x) - \Phi'(0+) \tilde{u}_t(x) + \sqrt{\Psi'(0+) \tilde{u}_t(x)} \dot{W}_t(x), \quad t \geq 0, x \in \mathbb{R},
	\end{equation}
	w.r.t.~the dual function $(\tilde{v}, \tilde{u}) \in \mathcal{C}(\mathbb{R})^2 \mapsto \exp\{-\int \tilde{v}(x)\tilde{u}(x)\mathrm{d}x\}$.
	The super-Brownian motions arise originally as the rescaling limit of the empirical measure of the (near) critical BBM, and its study has expanded into a major area of research over the last few decades.
	For the precise dual connection between the PDE \eqref{eq:TMFE} and the SPDE \eqref{eq:SPDESBM}, see \cite{MR0958288} and \cite{MR1429263}.
	For a modern overview of the super-Brownian motions, we refer our readers to \cite{MR2760602}.
	Note that the coefficients $-\Phi'(0+)\tilde{u}_t(x)$ and $\Psi'(0+)\tilde{u}_t(x)$ in the SPDE \eqref{eq:SPDESBM} can be considered as the linearizations of $-\Phi(\tilde{u}_t(x))$ and $\Psi(\tilde{u}_t(x))$, respectively.
	Thus, the behavior of the solutions to the SPDEs \eqref{DSPDE} and \eqref{eq:SPDESBM} are similar at small times provided that they share the same initial value.
	Because of this, many properties of the dual SPDE \eqref{DSPDE}, including its compact support property among others, can be obtained via analytical tools such as the weak comparison principle, the Feynman-Kac formula, It\^{o}'s formula, and the BDG inequality, etc.
	It is exactly those analytical results on the dual SPDE \eqref{DSPDE} that can be translated via duality into the CDI property of the SBBM.
	
	However, as mentioned above, the lack of monotonicity necessitates several technical innovations.
	For example, the result of Theorem \ref{Comming-down-finite-first-moment} (i) for the LCBM is proved in \cite{MR4698025} via a straightforward coupling argument.
	But here, the proof for the SBBM is much more involved relying on the construction of certain super-martingales.
	Another technical thing is that one cannot take the logarithm of the dual function $H$ in \eqref{eq:MDP} as it is done in \cite{MR4698025} for the sake of a better comparison between the moment dual and the Laplacian dual.
	This is because the function $H$ might take negative values in our general settings.

	\subsubsection{Restriction on the monotone approximation} \label{sec:initial_data_convergence}
	As highlighted in Theorem \ref{thm:existence-of-Z-t}, our results regarding the approximation of the SBBMs with infinitely many initial particles are formulated under the strict assumption that the initial configurations $(\mu_n)_{n\in \mathbb N}$ monotonically increase to the initial trace.
	This methodological constraint is intrinsically necessitated by the topological discontinuity inherent in the moment duality framework.
	If one solely assumes that the sequence $(\mu_n)_{n\in \mathbb N}$ converges m-weakly to the initial trace without imposing the monotonicity condition, it is possible that the limit of the dual functional $\prod_{x\in \mathbb{R}}(1-u(x))^{\mu_n(\{x\})}$ as $n \to \infty$ fail to be unique, fundamentally depending on the specific choice of the approximating sequence $\mu_n$.
	
	To illustrate this obstruction, consider a simplified scenario where the dual field satisfies the local asymptotic behavior $1-u(x) \sim x$ as $x \downarrow 0$, and the target initial trace is $(\{0\}, \mathbf 0)$ which contains a singularity at the origin.
	If one constructs an approximating sequence of measures $\mu_n^{(1)} := n \delta_{1/n}$, the corresponding dual functional evaluates to $(1-1/n)^n$, which converges to $e^{-1}$ as $n \to \infty$.
	Conversely, selecting an alternative approximating sequence $\mu_n^{(2)} := 2n \delta_{1/n}$, which is m-weakly equivalent to $\mu_n^{(1)}$ and converges to the same singular trace, yields a dual functional converging to $e^{-2}$.
	This structural discrepancy reveals that the limit of the dual functional cannot be universally defined under the general m-weak approximation.
	
	By restricting our framework to monotone approximations, the limit of the dual functional becomes well-defined (see Lemma~\ref{Lemma: Duality3}), bypassing the aforementioned discontinuity without necessitating delicate \textit{a priori} regularity estimates on the solutions of the dual SPDEs.

	\subsection*{Acknowledgement}
	
	We want to thank Omer Angle, Clayton Barnes, Zhen-Qing Chen, Louis Fan, Pei-Sen Li, Leonid Mytnik, Fan Yang, Xiaolong Zhang, and Xicheng Zhang for helpful conversations.
	
	Haojie Hou is partially supported by the China Postdoctoral Science Foundation
	\\no.~2024M764112.
	Zhenyao Sun is partially supported by National Key R\&D Program of China no.~2023YFA1010100 and National Natural Science Foundation of China no.~12301173.

	Zhenyao Sun is the corresponding author.

	\section{Non-explosion}\label{ss:3.1}
	
	Let $(x_i)_{i=1}^n$, $\beta_{\mathrm o}$, $(p_k)_{k=0}^\infty$, $\beta_{\mathrm c}$ and $(q_k)_{k=0}^\infty$ be given as in \eqref{eq:IC}--\eqref{eq:CB}.
	Assume that \eqref{asp:FFM} and \eqref{asp:A3} hold.
	As it has been mentioned in Subsection \ref{sec:MR}, an SBBM, following the rules \eqref{eq:R1}--\eqref{eq:R4}, is  a priori  defined only up to its explosion time $\tau_\infty$.

	In this section, we will prove Proposition \ref{prop:WD} which says that the explosion won't really happen.
	{We denote by $I_t$  the collection of unique labels of the particles that is alive at time $t$.}
	For every $t\in [0,\tau_\infty)$ and $\alpha \in I_t$, denote by $X^{\alpha}_t$
	the spatial location of the particle labeled by $\alpha$ at time $t$.
	For every $t\in [0,\tau_\infty)$ and $U\in \mathcal B(\mathbb R)$, denote by $Z_t(U) := |\{\alpha \in I_t: X_t^{\alpha} \in U\}|$ the number of alive particles at time $t$ whose locations belong to $U$.
	Also, define
	\begin{align}\label{Def-of-lambda}
		\lambda_{\mathrm o}:= \beta_{\mathrm o} \sum_{k=0}^\infty kp_k \in  [0,\infty),
		\qquad \lambda_{\mathrm c}:= \beta_{\mathrm c} \sum_{k=0}^\infty kq_k \in  [0,\infty),
	\end{align}
	and note that $\Phi'(0+) = \beta_{\mathrm o} - \lambda_{\mathrm o}$.

	Let us warm up with a simpler version of Proposition \ref{prop:WD} by excluding the ordinary branching.
	That is, we first consider the case that $\beta_{\mathrm o} = 0$.
	For every $t\in [0,\infty]$, define $N_t := |\{ k \in \mathbb N: \tau_k < t\}|$ to be the total number of branchings in the time interval $[0,t)$.
	In particular, $N_\infty$ is the total number of branchings that will ever occur in finite time.
	\begin{lemma} \label{lem:SL}
		Suppose that there is no ordinary branching, i.e.~$\beta_{\mathrm o} = 0$, then $N_\infty < \infty$ almost surely. In particular, $\tau_\infty \geq \tau_{N_\infty +1} = \infty$ almost surely.
	\end{lemma}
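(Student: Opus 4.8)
The plan is to reduce the finiteness of $N_\infty$ to the transience of a one‑dimensional random walk with negative drift. Since $\beta_{\mathrm o}=0$, every branching is catalytic, hence it removes an unordered pair of particles and inserts $K$ new ones, where the successive offspring counts $(K_j)_{j\geq1}$ are i.i.d.\ with law $(q_k)_{k\geq0}$ and — by the construction recalled before Proposition~\ref{prop:WD} — independent of the driving Brownian motions and of the catalytic clocks. Let $S_k:=|I_{\tau_k}|$ denote the number of alive particles immediately after the $k$-th branching, defined on $\{\tau_k<\infty\}$; then $S_0=n$ and $S_k=S_{k-1}-2+K_k$, so that, as long as branchings keep occurring, $S_k$ agrees with the genuine random walk $\widehat S_k:=n+\sum_{j=1}^k(K_j-2)$, which is defined for all $k$ from the i.i.d.\ sequence alone.

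The key step is the assertion: if $\tau_k<\infty$ and $S_k\geq2$, then $\tau_{k+1}<\infty$ almost surely; whereas if $\tau_k<\infty$ and $S_k\leq1$, then no further branching ever occurs, so $N_\infty=k$. The second half is immediate, since a catalytic branching requires a pair. For the first half I would use the strong Markov property: on the interval $(\tau_k,\tau_{k+1})$ the particles evolve as $S_k$ independent Brownian motions, and for any pair $(i,j)$ the difference process $t\mapsto X^i_t-X^j_t$ is $\sqrt2$ times a standard Brownian motion, whose local time at $0$ — namely the intersection local time of rule~\eqref{eq:R4} — grows to $+\infty$ almost surely. Hence the independent exponential clock attached to that pair, which rings at rate $\tfrac12\beta_{\mathrm c}>0$ against this local time, rings at some a.s.\ finite time; since $S_k\geq2$ there is at least one such pair, so $\tau_{k+1}<\infty$ a.s.

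Feeding this back by induction on $k$: with $\widehat T:=\inf\{k\geq0:\widehat S_k\leq1\}$, one checks that almost surely $\tau_k<\infty$ and $S_k=\widehat S_k$ for every $k\leq\widehat T$, and therefore $N_\infty=\widehat T$ a.s. It remains to see that $\widehat T<\infty$ a.s., and this is where subcriticality~\eqref{asp:A3} enters: $\mathbb E[K_j-2]=\sum_{k=0}^\infty kq_k-2<0$ while $\mathbb E|K_j-2|<\infty$, so by the strong law of large numbers $\widehat S_k/k\to\sum_{k=0}^\infty kq_k-2<0$, whence $\widehat S_k\to-\infty$ and in particular $\widehat S_k\leq1$ for some $k$, almost surely. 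Thus $N_\infty=\widehat T<\infty$ a.s.; then $\tau_{N_\infty+1}=\infty$ because no branching occurs after the last one, and $\tau_\infty=\lim_k\tau_k=\infty$ since $\tau_k=\infty$ for every $k>N_\infty$. The only point requiring genuine care is the first half of the key step: one must invoke the strong Markov property of the construction to be sure that within each inter‑branching window the particles are honest independent Brownian motions equipped with freshly sampled, independent clocks, so that the "local time hits $+\infty$'' argument applies; the remainder is bookkeeping together with the law of large numbers.
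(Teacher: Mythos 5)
Your argument is correct and follows essentially the same route as the paper: since all branchings are catalytic, the population size at branching times is a random walk with steps $K_j-2$ (the paper phrases this as a Markov chain with the explicit transition matrix $P$ absorbed at $\{0,1\}$), the recurrence of one-dimensional Brownian motion — equivalently, the a.s.\ divergence of the pairwise intersection local time — guarantees the next branching occurs in finite time whenever at least two particles are alive, and subcriticality \eqref{asp:A3} forces absorption in $\{0,1\}$ after finitely many steps. Your use of the strong law of large numbers simply makes explicit what the paper cites as standard Markov chain theory, so there is no substantive difference.
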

	\begin{proof}
		\begin{old}
			In this case, we know that all the  branchings are catalytic branchings.
			It is not hard to see that the process $(Z_{\tau_k}(\mathbb R))_{k=0}^{N_\infty}$ is a Markov chain taking values in $\mathbb Z_+$ stopped at the (possibly infinite)  random step $N_\infty$.
			Observe from \eqref{eq:R1} and \eqref{eq:R4}, the initial value of this Markov chain is $n$, and the corresponding transition matrix is
			\begin{equation}
				P_{i,j} := \mathbf 1_{\{i\geq 2, j \geq i-2\}} q_{j+2- i} + \mathbf 1_{\{i = j = 1\}} + \mathbf 1_{\{i = j = 0\}}, \quad i,j \in \mathbb Z_+.
			\end{equation}
			Since we assumed that $q_2  = 0$,  the only two absorbing states of this Markov chain are $0$ and $1$.
			From \eqref{asp:A1} and the standard theory of the Markov chains, we can verify that a Markov chain with the above transition matrix $P$ will be absorbed by its absorbing states $\{0,1\}$ in finite steps almost surely.
	\end{old}
		Let us consider a filtration $(\mathcal G_k)_{k\in \mathbb N}$ and a $(\mathcal G_k)_{k\in \mathbb N}$-Markov chain $(X_k)_{k\in \mathbb N}$ with transition matrix
		\begin{equation}
			P_{i,j} := \mathbf 1_{\{i\geq 2, j \geq i-2\}} q_{j+2- i} + \mathbf 1_{\{i = j = 1\}} + \mathbf 1_{\{i = j = 0\}}, \quad i,j \in \mathbb Z_+.
		\end{equation}
		Since we assumed that $q_2 =0$, we can verify that $P_{i,i} = 0$ for every integer $i>1$.
		Therefore, $0$ and $1$ are the only two absorbing states of this Markov chain.
		Note that for every $k\in \mathbb N$, almost surely,
		\begin{align}
			& \mathbb E[X_{k+1}| \mathcal G_k] = \begin{cases}
				0, & \quad X_k = 0,
				\\1, &\quad X_k = 1,
				\\ X_k-2+\sum_{j}jq_j ,&\quad X_k >1,
			\end{cases}
			\\&\overset{\eqref{asp:A3}}{\leq} X_k.
		\end{align}
		Therefore, $(X_k)_{k\in \mathbb N}$ is a non-negative $(\mathcal G_k)_{k\in \mathbb N}$-supermartingale.
		By \cite{MR3930614}*{Theorem 4.2.12}, almost surely the process $(X_k)_{k\in \mathbb N}$ converges.
		Combine this with the fact that the process $(X_k)_{k\in \mathbb N}$ takes integer values, we know that the process must eventually stabilize.
		In other word, there are finite random integers $N$ and $m$ such that almost surely $X_k = m$ for every $k\geq N$.
		Recalling that $(X_k)_{k\in \mathbb N}$ is a Makov chain, the random integer $m$ here must take values from the set of the absorbing states $\{0,1\}$.

		Now, since 1-d Brownian motion is recurrent, any pair of particles will eventually meet and undergo catalytic branching almost surely. Therefore, $(Z_{\tau_k}(\mathbb R))_{k=0}^{N_\infty}$ is exactly a Markov chain with the above transition matrix $P$ stopped upon hitting its absorbing states $\{0,1\}$: when there are $i \geq 2$ particles, catalytic branching will eventually occur, removing 2 particles and adding $j$ new particles with probability $q_j$, giving the transition $i \to i-2+j$. The process cannot stop at any state $i \geq 2$ since branching is guaranteed by recurrence; it stops only upon hitting $\{0,1\}$ where no further catalytic branching is possible.
		Since the Markov chain is absorbed by $\{0,1\}$ in finitely many steps almost surely, we have $N_\infty < \infty$ almost surely, as desired.
	\end{proof}
	
	In the rest of this section, let us consider the more general case $\beta_{\mathrm o} \geq 0$.
	Lemma \ref{lem:SL} says that the SBBM can be defined for all finite time if $ \beta_{\mathrm o} = 0$.
	This fact is crucial for a coupling argument below, where we couple two SBBMs together with one of them not allowed to have ordinary branching.
	
	Define $\tau^{\mathrm o}_0 := 0$, and inductively for every $k\in \mathbb Z_+$, let $\tau^{\mathrm o}_{k+1}$ be the earliest occurring time of an ordinary branching after the time $\tau^{\mathrm o}_{k}$ (if $\tau^{\mathrm o}_k = \tau_\infty$, or if $\tau^{\mathrm o}_k<\tau_\infty$ and there is no ordinary branching occurring in the time interval $(\tau^{\mathrm o}_k, \tau_\infty)$, we define $\tau^{\mathrm o}_{k+1} :=\tau_\infty$ by convention.)
	Similarly, denote by $(\tau^{\mathrm c}_k)_{k\in \mathbb Z_+}$ the occurring times of the catalytic branchings.
	
	\begin{lemma}\label{lem:FCB}
		Almost surely, for every $k \in \mathbb Z_+$, there are only finitely many catalytic branchings occurring in the time interval $(\tau^{\mathrm o}_k, \tau^{\mathrm o}_{k+1})$.
	\end{lemma}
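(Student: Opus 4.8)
The plan is to reduce, for each fixed $k$, the catalytic branchings in the window $(\tau^{\mathrm o}_k, \tau^{\mathrm o}_{k+1})$ to the already-settled situation of Lemma \ref{lem:SL}, by means of a coupling that simply ignores the ordinary-branching clocks.

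First I would dispose of the trivial case: on $\{\tau^{\mathrm o}_k = \tau_\infty\}$ the interval $(\tau^{\mathrm o}_k, \tau^{\mathrm o}_{k+1})$ is empty by our convention, so it suffices to argue on $\{\tau^{\mathrm o}_k < \tau_\infty\}$. On this event $Z_{\tau^{\mathrm o}_k}(\mathbb R)$ is a.s.\ finite: the branching times form an a.s.\ non-decreasing sequence with limit $\tau_\infty$, so on $\{\tau^{\mathrm o}_k<\tau_\infty\}$ only finitely many branchings can have occurred by time $\tau^{\mathrm o}_k$, and since each branching produces an a.s.\ finite offspring number, $m:=Z_{\tau^{\mathrm o}_k}(\mathbb R)<\infty$ a.s.\ on this event.

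Next I would invoke the strong Markov property of the SBBM at the stopping time $\tau^{\mathrm o}_k$: conditionally on $\mathcal F_{\tau^{\mathrm o}_k}$ and on $\{\tau^{\mathrm o}_k<\tau_\infty\}$, the time-shifted process is an SBBM with the same five parameters started from the configuration $Z_{\tau^{\mathrm o}_k}$ of $m<\infty$ particles. I would couple this shifted process with the process $\hat Z$ driven by the same Brownian motions and the same catalytic (intersection-local-time) clocks but with every ordinary branching discarded; $\hat Z$ is then an SBBM with the same parameters except $\beta_{\mathrm o}=0$, started from $m$ particles. Before the first ordinary branching of the shifted process no ordinary branching has altered it, so the shifted process and $\hat Z$ coincide on $[0,\sigma)$ with $\sigma:=\tau^{\mathrm o}_{k+1}-\tau^{\mathrm o}_k$; in particular the catalytic branchings of the shifted process on $(0,\sigma)$ are among the catalytic branchings of $\hat Z$. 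By Lemma \ref{lem:SL} --- which holds for any finite number of initial particles, hence for the random but a.s.\ finite $m$ after conditioning on $\mathcal F_{\tau^{\mathrm o}_k}$ --- the process $\hat Z$ undergoes only finitely many catalytic branchings in total, a.s. Unshifting this conclusion gives the claim.

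The main (if mild) obstacle is really the bookkeeping: one must verify that $Z_{\tau^{\mathrm o}_k}$ is a.s.\ a finite configuration on $\{\tau^{\mathrm o}_k<\tau_\infty\}$ (handled above), that ``switching off the ordinary clocks'' yields a bona fide $\beta_{\mathrm o}=0$ SBBM driven by the same randomness which agrees with the true shifted process up to its first ordinary branching, and that Lemma \ref{lem:SL}, stated for a deterministic initial particle number, transfers to the random $m$ by conditioning on $\mathcal F_{\tau^{\mathrm o}_k}$. None of these points is deep, but each is where the argument must be spelled out with care.
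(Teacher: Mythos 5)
Your proposal is correct and follows essentially the same route as the paper: on $\{\tau^{\mathrm o}_k<\tau_\infty\}$ one uses the strong Markov property to couple the time-shifted process with an SBBM having $\beta_{\mathrm o}=0$ so that the two agree on $[0,\tau^{\mathrm o}_{k+1}-\tau^{\mathrm o}_k)$, and then invokes Lemma \ref{lem:SL} to bound the number of catalytic branchings. Your extra bookkeeping (finiteness of $Z_{\tau^{\mathrm o}_k}(\mathbb R)$ and the transfer of Lemma \ref{lem:SL} to a random finite initial configuration by conditioning) is sound and only makes explicit what the paper leaves implicit.
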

	\begin{proof}
		Let us fix an arbitrary $k\in \mathbb Z_+$.  On one hand, almost surely on the event $\{\tau^{\mathrm o}_k = \tau_\infty\}$, the time interval $(\tau^{\mathrm o}_k, \tau^{\mathrm o}_{k+1}) = \emptyset$.
		So obviously, there is no catalytic branching occurring in this empty time interval.
		On the other hand, by the strong Markov property of the Brownian motions, the process $(Z_{t})_{t\in [0,\tau_\infty)}$ can be coupled with a process $(\tilde Z_t)_{t\geq 0}$, which is an SBBM without the ordinary branching, such that almost surely on the event $\{\tau^{\mathrm o}_k < \tau_\infty\}$, $Z_{t+ \tau^{\mathrm o}_k} = \tilde Z_t$ for every $t\in [0, \tau^{\mathrm o}_{k+1}-\tau^{\mathrm o}_k)$.
		From Lemma \ref{lem:SL}, we know that $(\tilde Z_t(\mathbb R))_{t\geq 0}$ jumps only finitely many times. So almost surely on the event $\{\tau^{\mathrm o}_k < \tau_\infty\}$, $(Z_{t}(\mathbb R))_{t\in (\tau^{\mathrm o}_k, \tau^{\mathrm o}_{k+1})}$ jumps only finitely many times.
		The desired result now follows.
	\end{proof}
	
	The following lemma intuitively establishes that a finite-time explosion cannot happen if there are only finitely many ordinary branchings.
	\begin{lemma}
		For every $m\in \mathbb N$, almost surely on the event $\{\tau_m^{\mathrm o} = \tau_\infty\}$, $\tau_m^{\mathrm o}=\tau_\infty = \infty$.
	\end{lemma}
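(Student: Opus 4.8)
\emph{Proof proposal.}
The plan is to show that, almost surely on the event $\{\tau_m^{\mathrm o} = \tau_\infty\}$, only finitely many branchings occur in $[0,\tau_\infty)$. Since the successive branching times $\tau_1 < \tau_2 < \cdots$ are strictly increasing wherever finite, having only finitely many of them finite forces $\tau_k = \infty$ for all large $k$, hence $\tau_\infty = \lim_{k\to\infty}\tau_k = \infty$, and then $\tau_m^{\mathrm o} = \tau_\infty = \infty$ on that event.

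To carry this out, I would first note that on $\{\tau_m^{\mathrm o} = \tau_\infty\}$ the convention defining the ordinary branching times forces $\tau_k^{\mathrm o} = \tau_\infty$ for every $k\geq m$, so at most $m-1$ ordinary branchings occur strictly before $\tau_\infty$, namely those among $\tau_1^{\mathrm o},\dots,\tau_{m-1}^{\mathrm o}$. Next, since $\tau_0^{\mathrm o} = 0$ and $(\tau_k^{\mathrm o})_{k=0}^m$ is non-decreasing with $\tau_m^{\mathrm o} = \tau_\infty$, one has
\[
[0,\tau_\infty) \subseteq \{\tau_0^{\mathrm o},\tau_1^{\mathrm o},\dots,\tau_{m-1}^{\mathrm o}\} \cup \bigcup_{k=0}^{m-1}(\tau_k^{\mathrm o},\tau_{k+1}^{\mathrm o}).
\]
Every catalytic branching time in $[0,\tau_\infty)$ is distinct from $0$ and from every ordinary branching time (at each branching time only one clock rings), so it lies in one of the $m$ open intervals $(\tau_k^{\mathrm o},\tau_{k+1}^{\mathrm o})$, $0\leq k\leq m-1$. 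By Lemma \ref{lem:FCB}, almost surely each of these intervals contains only finitely many catalytic branchings, and therefore only finitely many catalytic branchings occur in $[0,\tau_\infty)$ on the event $\{\tau_m^{\mathrm o} = \tau_\infty\}$.

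Combining the two bounds, almost surely on $\{\tau_m^{\mathrm o} = \tau_\infty\}$ the total number of branchings occurring in $[0,\tau_\infty)$ is finite, that is, $N_\infty < \infty$; as explained above this gives $\tau_\infty = \infty$ and hence $\tau_m^{\mathrm o} = \tau_\infty = \infty$. Since Lemma \ref{lem:FCB} is already available, this argument is essentially bookkeeping, so I do not anticipate a genuine obstacle; the only points warranting a line of justification are that almost surely no branching occurs at time $0$ and no two branchings are simultaneous, both of which follow from the independence of the driving exponential clocks together with the almost-sure continuity and vanishing at time $0$ of the pairwise intersection local times.
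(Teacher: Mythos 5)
Your argument is correct and follows essentially the same route as the paper's proof: decompose $[0,\tau_\infty)$ into the finitely many intervals between consecutive ordinary branching times, apply Lemma \ref{lem:FCB} to get finitely many catalytic branchings in each, conclude that only finitely many branchings occur before $\tau_\infty$, and hence $\tau_\infty=\infty$ by its definition. The only cosmetic difference is that you use open intervals plus the finitely many endpoints (and justify that no catalytic branching coincides with an ordinary one), whereas the paper simply uses the half-open intervals $[\tau_k^{\mathrm o},\tau_{k+1}^{\mathrm o})$.
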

	\begin{proof}
		Almost surely on the event $\{\tau_m^{\mathrm o} = \tau_\infty\}$, there exists an $N \in \mathbb N$ such that $\tau^{\mathrm o}_N < \tau^{\mathrm o}_{N+1} = \tau_\infty$.
		Therefore, almost surely on this event, the time interval $[0,\tau_\infty)$ can be decomposed into finitely many disjoint sub-intervals in the following way:
		\begin{equation} \label{eq:ID}
			[0,\tau_\infty) = \bigcup_{k=0}^{N} [\tau^{\mathrm o}_k,\tau^{\mathrm o}_{k+1}).
		\end{equation}
		Note from Lemma \ref{lem:FCB} that, almost surely on the event $\{\tau_m^\mathrm o = \tau_{\infty}\}$, there are only finitely many catalytic branchings occurring in each of the sub-intervals on the right hand side of \eqref{eq:ID}.
		Therefore, almost surely on the event $\{\tau_m^{\mathrm o} = \tau_\infty\}$, the total number of branchings is the sum of at most $m$ ordinary branchings and finitely many catalytic branchings in each of the finitely many sub-intervals, hence $N_{\tau_\infty} < \infty$, i.e.~ there are only finitely many branchings occurring before the explosion; from how the explosion time $\tau_\infty$ is defined, we must have $\tau_\infty = \infty$, as desired.
	\end{proof}
	
	In the rest of this section, let us fix an arbitrary $m \in \mathbb N$ and define a new process $(Z_t^{\mathrm o,m})_{t\geq 0}$ so that almost surely on the event $\{\tau^{\mathrm o}_m = \tau_\infty\}= \{\tau^{\mathrm o}_m = \tau_\infty=\infty\}$,  $Z_t^{\mathrm o,m} = Z_t$ for every $t\in [0,\infty)$; and almost surely on the event $\{\tau^{\mathrm o}_m < \tau_\infty\}$,
	\begin{equation}
		Z_t^{\mathrm o,m} := \begin{cases}
			Z_t, \quad & t \in [0,\tau^{\mathrm o}_m),
			\\ \tilde Z^{\mathrm o,m}_{t-\tau^{\mathrm o}_m}, \quad & t \in [\tau^{\mathrm o}_m, \infty),
		\end{cases}
	\end{equation}
	where $(\tilde Z^{\mathrm o,m}_{t})_{t\geq 0}$ is  an  SBBM without ordinary branching whose initial value is given by $Z_{\tau^{\mathrm o}_m}$. Loosely speaking, $(Z^{\mathrm o,m}_{t})_{t\geq 0}$
	is an SBBM with at most $m$ many ordinary branchings allowed.
	
	\begin{lemma} \label{lem:LSM}
		The process $(e^{\Phi'(0+)(t\wedge \tau^{\mathrm o}_m)} Z^{\mathrm o,m}_{t}(\mathbb R))_{t\geq 0}$ is a local super-martingale where $\Phi'(0+)$ is given as in \eqref{eq:Phi'}.
	\end{lemma}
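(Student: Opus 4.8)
The plan is to read off the predictable finite-variation part of $(M_t)_{t\ge0}$, where $M_t:=e^{\Phi'(0+)(t\wedge\tau^{\mathrm o}_m)}N_t$ and $N_t:=Z^{\mathrm o,m}_t(\mathbb R)$, and to check that it is non-increasing. First note that $(N_t)_{t\ge0}$ is a pure-jump c\`adl\`ag process: it is constant between branchings, and at a branching it changes by $\xi-1$ at an ordinary branching (with $\xi\sim(p_k)_k$; such events occur, only while $t\le\tau^{\mathrm o}_m$, with predictable intensity $\beta_{\mathrm o}N_{t-}\,\mathrm dt$) and by $\xi-2$ at a catalytic branching (with $\xi\sim(q_k)_k$; such events occur with predictable intensity $\tfrac{\beta_{\mathrm c}}{2}\,\mathrm d\mathcal L_t$, where $\mathcal L_t$ denotes the total accumulated pairwise intersection local time among alive particles, a continuous non-decreasing adapted process). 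Applying the Dynkin--It\^o formula for such jump processes to the function $F(t,N)=e^{\Phi'(0+)(t\wedge\tau^{\mathrm o}_m)}N$, which is $\mathcal C^1$ in $t$, one writes $M_t=M_0+A_t+\tilde M_t$, with $\tilde M$ a local martingale and $A$ the predictable finite-variation part.

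The second step is a short computation showing $A$ is non-increasing. The time derivative of $F$ contributes $\Phi'(0+)e^{\Phi'(0+)t}N_t\,\mathbf 1_{\{t<\tau^{\mathrm o}_m\}}\,\mathrm dt$ to $\mathrm dA_t$, while the compensator of the ordinary-branching jumps contributes $e^{\Phi'(0+)t}\beta_{\mathrm o}N_{t-}\bigl(\sum_k(k-1)p_k\bigr)\mathbf 1_{\{t\le\tau^{\mathrm o}_m\}}\,\mathrm dt=-\Phi'(0+)e^{\Phi'(0+)t}N_{t-}\mathbf 1_{\{t\le\tau^{\mathrm o}_m\}}\,\mathrm dt$, where we used $\beta_{\mathrm o}\sum_k(k-1)p_k=\lambda_{\mathrm o}-\beta_{\mathrm o}=-\Phi'(0+)$ (which is also correct when $\beta_{\mathrm o}=0$). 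Since $N_t=N_{t-}$ for Lebesgue-a.e.\ $t$, these two contributions cancel, and what remains is the compensator of the catalytic jumps,
\[
\mathrm dA_t=e^{\Phi'(0+)(t\wedge\tau^{\mathrm o}_m)}\,\tfrac{\beta_{\mathrm c}}{2}\Bigl(\sum_k(k-2)q_k\Bigr)\,\mathrm d\mathcal L_t=\tfrac12\bigl(\lambda_{\mathrm c}-2\beta_{\mathrm c}\bigr)e^{\Phi'(0+)(t\wedge\tau^{\mathrm o}_m)}\,\mathrm d\mathcal L_t.
\]
By the subcriticality assumption \eqref{asp:A3}, $\lambda_{\mathrm c}=\beta_{\mathrm c}\sum_k kq_k<2\beta_{\mathrm c}$, and since $\mathcal L$ is non-decreasing, $\mathrm dA_t\le0$.

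The third step is the integrability bookkeeping needed to upgrade ``$M=M_0+A+\tilde M$'' to ``$M$ is a local supermartingale''. Here we use the earlier results of this section: by Lemma \ref{lem:SL}, Lemma \ref{lem:FCB}, and the preceding lemma, the process $Z^{\mathrm o,m}$ (which has at most $m$ ordinary branchings) almost surely undergoes only finitely many branchings in $[0,\infty)$; in particular it has only finitely many particles on any time interval, so $\mathcal L_t<\infty$ for all $t$ (the intersection local time of two independent Brownian motions being a.s.\ finite on bounded intervals), which also makes $A_t$ finite-valued. Writing $T_j$ for the time of the $j$-th branching of $Z^{\mathrm o,m}$, it follows that the stopping times $\sigma_j:=j\wedge T_j\wedge\inf\{t\ge0:\mathcal L_t\ge j\}$ increase to $+\infty$ almost surely. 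On $[0,\sigma_j]$ we have $\mathcal L\le j$ (by continuity of $\mathcal L$), so $A_{\cdot\wedge\sigma_j}$ is bounded; and $\sup_{s\le\sigma_j}N_s$ is dominated by $N_0$ plus the sum of the offspring counts of the first $j$ branchings, which is integrable by \eqref{asp:A1}, so $M_{\cdot\wedge\sigma_j}$ is dominated by an integrable random variable. Consequently $\tilde M_{\cdot\wedge\sigma_j}$, being a local martingale dominated by an integrable random variable, is a true martingale, and since $A_{\cdot\wedge\sigma_j}$ is non-increasing and integrable, $(M_{t\wedge\sigma_j})_{t\ge0}$ is a supermartingale for every $j$. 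Letting $j\to\infty$ yields that $(M_t)_{t\ge0}$ is a local supermartingale.

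The principal obstacle is not the (routine) drift computation but this last localization: since Proposition \ref{prop:WD} is not yet available, one cannot localize by the total mass, and must instead localize by the number of branchings, which is precisely why the ``finitely many branchings'' consequence of Lemma \ref{lem:SL}, Lemma \ref{lem:FCB} and the preceding lemma is needed; a secondary point requiring care is the correct identification of the catalytic compensator $\tfrac{\beta_{\mathrm c}}{2}\,\mathrm d\mathcal L_t$ and the a.s.\ finiteness of $\mathcal L$.
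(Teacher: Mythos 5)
Your proof is correct and follows essentially the same route as the paper's: decompose the jumps of $Z^{\mathrm o,m}_\cdot(\mathbb R)$ into ordinary and catalytic parts, identify their compensators ($-\Phi'(0+)\int Z^{\mathrm o,m}_s(\mathbb R)\,\mathrm ds$ up to $\tau^{\mathrm o}_m$, and $-\tfrac12\Psi'(0+)\,\mathrm d L$ for the catalytic jumps), let the exponential factor cancel the ordinary drift, and note that the remaining catalytic term is non-increasing by \eqref{asp:A3}, i.e.\ $\Psi'(0+)>0$. The only difference is one of explicitness: the paper invokes the finitely-many-jumps lemmas and stops at ``local martingale plus non-increasing locally finite-variation part'', whereas you construct the localizing sequence $\sigma_j$ by hand (branching count, accumulated local time, deterministic time), which is consistent with the paper's argument and, if anything, slightly more careful.
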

	\begin{proof}
		Consider the following disjoint decomposition of the time interval $[0,\infty)$:
		\begin{equation} \label{eq:TI}
			[0,\infty) = \pr{ \bigcup_{k=0}^{m-1} [\tau^{\mathrm o}_k, \tau^{\mathrm o}_{k+1} ) } \bigcup [\tau^{\mathrm o}_m, \infty).
		\end{equation}
		Almost surely, in each of the time intervals on the right hand side of \eqref{eq:TI}, we can verify that the integer-valued process $Z^{\mathrm o,m}_\cdot(\mathbb R)$ jumps only finitely many times.
		(To see this, we apply Lemma \ref{lem:FCB} to the intervals $[\tau^{\mathrm o}_k, \tau^{\mathrm o}_{k+1} )$ with $k \in \{0, \dots, m-1\}$, and apply Lemma \ref{lem:SL} for the last interval $[\tau^{\mathrm o}_m, \infty)$.)
		Therefore, $Z^{\mathrm o,m}_\cdot(\mathbb R)$ jumps only finitely many times in the full time interval $[0,\infty)$.
		
		Now, we have the finite sum decomposition
		\begin{equation}\label{eq:TZD}
			Z^{\mathrm o,m}_t(\mathbb R)  - n = \sum_{s\in (0,t]\cap \{\tau^{\mathrm o}_1, \dots, \tau^{\mathrm o}_m\}} \mathbf{\Delta} Z^{\mathrm o,m}_{s}(\mathbb R) + \sum_{s \in (0,t]\setminus \{\tau^{\mathrm o}_1, \dots, \tau^{\mathrm o}_m\}} \mathbf{\Delta} Z^{\mathrm o,m}_{s}(\mathbb R), \quad t\geq 0, \mathrm{a.s.}
		\end{equation}
		where $\mathbf{\Delta} \gamma_t := \gamma_t - \gamma_{t-}$ for every $t\geq 0$ and real-valued c\`adl\`ag process $(\gamma_t)_{t\geq 0}$.
		The first term on the right hand side of \eqref{eq:TZD} are the jumps induced by the ordinary branchings, and the second term are the jumps induced by the catalytic branchings.
		From how those jumps are induced, we can find their compensators.
		In fact, it is not hard to see that the processes:
		\begin{equation}
			M^{\mathrm o, m}_t
			:=  \sum_{s\in (0,t]\cap \{\tau^{\mathrm o}_1, \dots, \tau^{\mathrm o}_m\}} \mathbf{\Delta} Z^{\mathrm o,m}_{s}(\mathbb R) +\Phi'(0+) \int_0^{t\wedge \tau_m^{\mathrm o}} Z^{\mathrm o,m}_s(\mathbb R)  \mathrm ds, \quad t\geq 0,
		\end{equation}
		and
		\begin{equation}
			M^{\mathrm c, m}_t
			:= \sum_{s\in (0,t]\setminus \{\tau^{\mathrm o}_1, \dots, \tau^{\mathrm o}_m\}} \mathbf{\Delta} Z^{\mathrm o,m}_{s}(\mathbb R) +
			\frac{1}{2} \Psi'(0+) L^{\mathrm o,m}_t,
			\quad t\geq 0,
		\end{equation}
		are local-martingales (c.f.~\cite{MRBMS24}*{Lemma 3.3}.)
		Here, $L^{\mathrm o,m}_t$ is the total amount of intersection local times induced by all the unordered pairs of atoms in the process of atomic measures $Z^{\mathrm o,m}_{\cdot}$ up to the time $t\geq 0$. We emphasize that $L^{\mathrm o,m}_t$ is well-defined as an additive functional since $(Z^{\mathrm{o},m}_t)$ undergoes at most $m$ ordinary branchings, and by Lemma \ref{lem:FCB} only finitely many catalytic branchings occur in between, ensuring the total number of particles born up to any finite time $t$ is almost surely finite.
		Also, $\Psi'(0+)>0$ according to \eqref{asp:A3}.
		Substituting these compensators back into the decomposition \eqref{eq:TZD}, we obtain almost surely,
		\begin{equation}
			Z^{\mathrm o,m}_t(\mathbb R)  - n
			= M^{\mathrm o, m}_t + M^{\mathrm c, m}_t  - \Phi'(0+) \int_0^{t\wedge \tau_m^{\mathrm o}}Z^{\mathrm o,m}_s(\mathbb R)  \mathrm ds
			-\frac{1}{2}\Psi'(0+) L^{\mathrm o,m}_t, \quad t\geq 0.
		\end{equation}
		Now, almost surely for every $t\geq 0$,
		\begin{align}
			& e^{\Phi'(0+) (t\wedge \tau^{\mathrm o}_m)} Z^{\mathrm o,m}_{t}(\mathbb R)  - n
			\\&= \int_0^{t} e^{\Phi'(0+)(s\wedge  \tau^{\mathrm o}_m) } \mathrm d Z^{\mathrm o,m}_s(\mathbb R) + \Phi'(0+)  \int_0^{t\wedge \tau^{\mathrm o}_m} Z^{\mathrm o,m}_s(\mathbb R)  e^{\Phi'(0+)s} \mathrm ds
			\\& \label{eq:MD}=  \int_0^{ t } e^{\Phi'(0+) (s\wedge  \tau^{\mathrm o}_m)} \pr{\mathrm{d}  M^{\mathrm o, m}_s + \mathrm{d}  M^{\mathrm c, m}_s}  - \frac{1}{2}\Psi'(0+)\int_0^{t} e^{\Phi'(0+) (s\wedge  \tau^{\mathrm o}_m)}\mathrm{d}  L^{\mathrm o,m}_s.
		\end{align}
		Note that the first term on the right hand side of \eqref{eq:MD} is a local-martingale, while the second term is a non-increasing process with locally finite variation. The desired result follows.
	\end{proof}
	
	For every $t\geq 0$, define a random variable $N_{t}^{\mathrm o, m} := \abs{\brc{ k\in \mathbb N: \tau^{\mathrm o}_k < t, k\leq m }}$ which is the total number of ordinary branchings for the process $(Z^{\mathrm o,m}_t)_{t\geq 0}$ in the time interval $[0,t)$.
	
	\begin{lemma} \label{lem:Not}
		For every $t\geq 0$, $\mathbb E[N_{t}^{\mathrm o, m}] \leq n \beta_{\mathrm o}\int_0^{t}  e^{-\Phi'(0+) s}  \mathrm ds$.
	\end{lemma}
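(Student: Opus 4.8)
The plan is to identify the predictable compensator of the counting process $(N^{\mathrm o,m}_t)_{t\ge 0}$ and then bound it using the supermartingale produced in Lemma~\ref{lem:LSM}.

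First I would observe that, by the very construction of $(Z^{\mathrm o,m}_t)_{t\ge0}$, ordinary branchings occur only strictly before the time $\tau^{\mathrm o}_m$, and that at any time $s<\tau^{\mathrm o}_m$ the instantaneous total rate of ordinary branchings equals $\beta_{\mathrm o}Z^{\mathrm o,m}_s(\mathbb R)$, since each of the $Z^{\mathrm o,m}_s(\mathbb R)$ alive particles carries an independent rate-$\beta_{\mathrm o}$ ordinary-branching clock. Hence, by the same compensation argument used in the proof of Lemma~\ref{lem:LSM} (cf.~\cite[Lemma~3.3]{MRBMS24}), the process
\[
N^{\mathrm o,m}_t-\beta_{\mathrm o}\int_0^{t\wedge\tau^{\mathrm o}_m}Z^{\mathrm o,m}_s(\mathbb R)\,\mathrm ds,\qquad t\ge0,
\]
is a local martingale. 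Since $0\le N^{\mathrm o,m}_t\le m$ is bounded, localising and then applying the bounded and monotone convergence theorems gives the identity
\[
\mathbb E\bigl[N^{\mathrm o,m}_t\bigr]=\beta_{\mathrm o}\,\mathbb E\Bigl[\int_0^{t\wedge\tau^{\mathrm o}_m}Z^{\mathrm o,m}_s(\mathbb R)\,\mathrm ds\Bigr]=\beta_{\mathrm o}\int_0^t\mathbb E\bigl[Z^{\mathrm o,m}_s(\mathbb R)\,\mathbf 1_{\{s<\tau^{\mathrm o}_m\}}\bigr]\,\mathrm ds,
\]
the last equality being Tonelli's theorem (everything is nonnegative).

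It then remains to estimate $\mathbb E[Z^{\mathrm o,m}_s(\mathbb R)\mathbf 1_{\{s<\tau^{\mathrm o}_m\}}]$, and here I would invoke Lemma~\ref{lem:LSM}. The process $Y_t:=e^{\Phi'(0+)(t\wedge\tau^{\mathrm o}_m)}Z^{\mathrm o,m}_t(\mathbb R)$ is a nonnegative local supermartingale, hence, applying conditional Fatou along a localising sequence, a genuine supermartingale; as $Y_0=Z^{\mathrm o,m}_0(\mathbb R)=n$, this gives $\mathbb E[Y_s]\le n$ for every $s\ge0$. On the event $\{s<\tau^{\mathrm o}_m\}$ one has $Y_s=e^{\Phi'(0+)s}Z^{\mathrm o,m}_s(\mathbb R)$, so $e^{\Phi'(0+)s}Z^{\mathrm o,m}_s(\mathbb R)\mathbf 1_{\{s<\tau^{\mathrm o}_m\}}\le Y_s$ by nonnegativity of $Y_s$, and taking expectations yields
\[
e^{\Phi'(0+)s}\,\mathbb E\bigl[Z^{\mathrm o,m}_s(\mathbb R)\,\mathbf 1_{\{s<\tau^{\mathrm o}_m\}}\bigr]\le\mathbb E[Y_s]\le n,
\]
that is, $\mathbb E[Z^{\mathrm o,m}_s(\mathbb R)\mathbf 1_{\{s<\tau^{\mathrm o}_m\}}]\le n\,e^{-\Phi'(0+)s}$. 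Plugging this back into the displayed identity for $\mathbb E[N^{\mathrm o,m}_t]$ gives exactly $\mathbb E[N^{\mathrm o,m}_t]\le n\beta_{\mathrm o}\int_0^t e^{-\Phi'(0+)s}\,\mathrm ds$.

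I do not expect a genuine obstacle here, since the lemma is in essence a quantitative reformulation of Lemma~\ref{lem:LSM}. The only points requiring a little care are the two passages from ``local'' to ``global'': the uniform bound $N^{\mathrm o,m}\le m$ justifies the compensator identity, and nonnegativity of $Y$ justifies treating it as a true supermartingale. The remaining ingredients are routine: Tonelli's theorem and the elementary pointwise bound $e^{\Phi'(0+)s}Z^{\mathrm o,m}_s(\mathbb R)\mathbf 1_{\{s<\tau^{\mathrm o}_m\}}\le Y_s$.
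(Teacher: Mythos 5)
Your proposal is correct and follows essentially the same route as the paper: both arguments combine the compensator of the ordinary-branching counting process (via the same reference, \cite[Lemma 3.3]{MRBMS24}) with the local supermartingale of Lemma~\ref{lem:LSM} and Fatou's lemma to bound $\mathbb E\bigl[e^{\Phi'(0+)(s\wedge\tau^{\mathrm o}_m)}Z^{\mathrm o,m}_s(\mathbb R)\bigr]\le n$, and then integrate. The only cosmetic difference is that the paper works with the right-continuous count $N^{\mathrm o,m}_{t+}\ge N^{\mathrm o,m}_t$, whose compensated version it asserts is a true martingale, whereas you justify the expectation identity by localising and using the bound $N^{\mathrm o,m}\le m$; either way the estimate is identical.
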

	\begin{proof}
		From Lemma \ref{lem:LSM}, the process $(e^{\Phi'(0+)(t\wedge \tau^{\mathrm o}_m)} Z^{\mathrm o,m}_{t}(\mathbb R))_{t\geq 0}$ is a local super-martingale.
		Let $(\rho_k)_{k\in \mathbb N}$ be a sequence of stopping times increasingly converging to $\infty$ such that $(e^{\Phi'(0+)(t\wedge \rho_k \wedge \tau^{\mathrm o}_m)} Z^{\mathrm o,m}_{t\wedge \rho_k}(\mathbb R))_{t\geq 0}$ is a (true) super-martingale for every $k\in \mathbb N$. By Fatou's lemma, for every $t\geq 0$,
		\begin{align}
			& \mathbb E\brk{e^{\Phi'(0+)(t\wedge \tau^{\mathrm o}_m)} Z^{\mathrm o,m}_{t}(\mathbb R)}=\mathbb E\brk{ \lim_{k\uparrow \infty} e^{\Phi'(0+) (t\wedge \rho_k \wedge \tau^{\mathrm o}_m)} Z^{\mathrm o,m}_{t\wedge \rho_k}(\mathbb R)}
			\\&\leq \liminf_{k\to \infty} \mathbb E\brk{ e^{\Phi'(0+) (t\wedge \rho_k \wedge \tau^{\mathrm o}_m)} Z^{\mathrm o,m}_{t\wedge \rho_k}(\mathbb R)} \le  n.
		\end{align}
		In particular, for every $t\geq 0,$
		\begin{equation}
			\mathbb E\brk{\int_0^{t\wedge \tau_m^{\mathrm o}} Z^{\mathrm o,m}_s(\mathbb R)  \mathrm ds}
			\leq \int_0^{t} e^{-\Phi'(0+) s}\mathbb E\brk{e^{\Phi'(0+)(s\wedge \tau^{\mathrm o}_m)} Z^{\mathrm o,m}_s(\mathbb R)}  \mathrm ds
			\leq n \int_0^{t}  e^{-\Phi'(0+) s}  \mathrm ds.
		\end{equation}
		Note that the following holds almost surely for every $t\geq 0$:
		\begin{equation}
			N_t^{\mathrm o, m} \leq N^{\mathrm o, m}_{t+} = \sum_{s\in (0,t] \cap \{\tau^{\mathrm o}_1, \dots, \tau^{\mathrm o}_m\}} 1.
		\end{equation}
		From how the ordinary branchings are induced, we can find the compensator for the process $(N^{\mathrm o, m}_{t+})_{t\geq 0}$.
		In fact, it is not hard to see that the process
		\begin{equation}
			\hat N^{\mathrm o, m}_{t+} : = N^{\mathrm o, m}_{t+} - \int_0^{t\wedge \tau_m^{\mathrm o}}  \beta_{\mathrm o} Z^{\mathrm o,m}_s(\mathbb R)  \mathrm ds, \quad t\geq 0,
		\end{equation}
		is a (true) martingale (c.f.~\cite{MRBMS24}*{Lemma 3.3}.)
		Now we have
		\begin{align}
			& \mathbb E\brk{N_{t}^{\mathrm o, m}} \leq \mathbb E\brk{N^{\mathrm o, m}_{t+}} = \mathbb E\brk{\int_0^{t\wedge \tau_m^{\mathrm o}}  \beta_{\mathrm o} Z^{\mathrm o,m}_s(\mathbb R)  \mathrm ds}
			\leq n \int_0^{t}  \beta_{\mathrm o}  e^{-\Phi'(0+) s}  \mathrm ds,
		\end{align}
		as desired.
	\end{proof}
	
	We are now ready to present the proof of Proposition \ref{prop:WD}.
	\begin{proof}[Proof of Proposition \ref{prop:WD}]
		Note that $m$ is arbitrarily chosen, and by Lemma \ref{lem:Not},
		\begin{equation}
			\mathbb E\brk{\abs{\brc{ k\in \mathbb N: \tau^{\mathrm o}_k < t, k\leq m }}} \leq n \int_0^{t}  \beta_{\mathrm o}  e^{-\Phi'(0+) s}  \mathrm ds, \quad t\geq 0.
		\end{equation}
		Taking $m\uparrow \infty$, we obtain from the Monotone Convergence Theorem that
		\begin{equation}
			\mathbb E\brk{\abs{\brc{ k\in \mathbb N: \tau^{\mathrm o}_k < t }}}\leq n \int_0^{t}  \beta_{\mathrm o}  e^{-\Phi'(0+)s}  \mathrm ds, \quad t\geq 0.
		\end{equation}
		In particular, we can define the almost surely finite random variable
		\begin{equation}
			N_{t}^{\mathrm o, \infty} := \abs{\brc{ k\in \mathbb N: \tau^{\mathrm o}_k < t }} < \infty, \quad t\geq 0.
		\end{equation}
		Therefore, almost surely $\tau_\infty \geq \tau^{\mathrm o}_{k}|_{k = N_{t}^{\mathrm o, \infty} + 1} \geq t$ for every $t\geq 0$.
		This implies the desired result.
	\end{proof}
	
	In the rest of this section, we establish a result which will be used later in Section \ref{sec:SS5}.
	As explained in Subsection \ref{sec:MR}, by Proposition \ref{prop:WD}, $(Z_t)_{t\geq 0}$ is a c\`adl\`ag process taking values in $\mathcal N$.
	It is also clear that $Z_t(\mathbb R)< \infty$ almost surely for every $t\geq 0$.

	\begin{proposition} \label{prop:TM}
		Suppose that $g$ is a smooth function with bounded derivatives of all orders.
		Then the process
		\begin{align}\label{eq:Mgt}
			M_t^g & :=e^{\Phi'(0+)t} Z_t(g) - \frac{1}{2}\int_0^t e^{\Phi'(0+)s} Z_s(g'') \mathrm{d} s
			\\&\qquad + \frac{1}{2} \Psi'(0+)\int_0^t e^{\Phi'(0+)s} \sum_{\{\alpha, \beta\}\subset I_s: \alpha \neq \beta}g(X_s^\alpha)\mathrm{d} L^{\{\alpha, \beta\}}_s, \quad t\geq 0
		\end{align}
		is a (true) martingale.
		Here, $(L^{\{\alpha, \beta\}}_t)_{t\geq 0}$, the intersection local time between any two particles labelled by $\alpha$ and $\beta$, is defined as the unique continuous process such that
		\[
		L_t^{\{\alpha, \beta\}}
		:= \lim_{\epsilon \to 0}\frac{1}{\epsilon}\int_0^t \mathbf 1_{\brc{\{\alpha, \beta\} \subset I_s}\cap \brc{|X_s^\alpha - X_s^\beta | \leq \epsilon}} \mathrm{d} s, \quad\text{a.s.,~}t\geq 0.
		\]
	\end{proposition}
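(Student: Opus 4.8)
The plan is to obtain the semimartingale decomposition of $\pr{Z_t(g)}_{t\ge 0}$, cancel the linear drift produced by the ordinary branching with the integrating factor $e^{\Phi'(0+)t}$, and then promote the resulting local martingale to a true martingale via first-moment estimates. Throughout, recall from Proposition \ref{prop:WD} that $Z_t(\mathbb R)<\infty$ almost surely, so $Z_t$ has finitely many atoms and $Z_t(g)=\sum_{\alpha\in I_t}g(X_t^\alpha)$ is a genuine finite sum; I shall use that $g,g'$ and $g''$ are bounded. To set up the decomposition I would identify the compensators of the three mechanisms driving $Z_\cdot(g)$: between branchings the particles are independent Brownian motions, so It\^o's formula contributes the drift $\tfrac12 Z_s(g'')\,\diff s$ and a continuous local martingale; an ordinary branching at a particle at $x$ replaces $g(x)$ by $kg(x)$ with $k\sim(p_k)$ at rate $\beta_{\mathrm o}$ per particle, so by \eqref{eq:Phi'} these jumps are compensated by $\beta_{\mathrm o}\pr{\sum_k(k-1)p_k}Z_s(g)\,\diff s=-\Phi'(0+)Z_s(g)\,\diff s$; and a catalytic branching at the common location of a pair $\{\alpha,\beta\}$ replaces $2g(X_s^\alpha)$ by $kg(X_s^\alpha)$ with $k\sim(q_k)$ at rate $\tfrac12\beta_{\mathrm c}$ against $L^{\{\alpha,\beta\}}$, so by \eqref{eq:Psi'} these jumps are compensated by $-\tfrac12\Psi'(0+)\sum_{\{\alpha,\beta\}\subset I_s:\alpha\neq\beta}g(X_s^\alpha)\,\diff L^{\{\alpha,\beta\}}_s$. (These are the same kind of computations already used for Lemma \ref{lem:LSM}, cf.~\cite[Lemma 3.3]{MRBMS24}.) Thus $Z_t(g)-Z_0(g)=\mathcal M_t+\tfrac12\int_0^tZ_s(g'')\,\diff s-\Phi'(0+)\int_0^tZ_s(g)\,\diff s-\tfrac12\Psi'(0+)\int_0^t\sum_{\{\alpha,\beta\}\subset I_s:\alpha\neq\beta}g(X_s^\alpha)\,\diff L^{\{\alpha,\beta\}}_s$ for some local martingale $\mathcal M=(\mathcal M_t)_{t\ge0}$; applying integration by parts to $e^{\Phi'(0+)t}Z_t(g)$ cancels $-\Phi'(0+)\int_0^tZ_s(g)\,\diff s$, and rearranging gives $M_t^g=Z_0(g)+\int_0^te^{\Phi'(0+)s}\,\diff\mathcal M_s$, so $\pr{M_t^g}_{t\ge0}$ is a local martingale with $M_0^g=\sum_{i=1}^ng(x_i)$.

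The main work is the first-moment control needed to upgrade this. By Lemma \ref{lem:LSM} and the estimate inside the proof of Lemma \ref{lem:Not}, $e^{\Phi'(0+)(t\wedge\tau_m^{\mathrm o})}Z_t^{\mathrm o,m}(\mathbb R)$ is a nonnegative local supermartingale of expectation at most $n$; since $N_t^{\mathrm o,\infty}<\infty$ almost surely and $Z^{\mathrm o,m}$ coincides with $Z$ on $[0,t]$ once $m>N_t^{\mathrm o,\infty}$, Fatou's lemma yields $\E\brk{Z_t(\mathbb R)}\le ne^{-\Phi'(0+)t}$, hence $\E\brk{\int_0^tZ_s(\mathbb R)\,\diff s}<\infty$. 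Localizing the local martingales in the identity for $Z_\cdot^{\mathrm o,m}(\mathbb R)$ from the proof of Lemma \ref{lem:LSM}, taking expectations, using $Z^{\mathrm o,m}\ge0$ together with the bound on $\E\brk{\int_0^{t\wedge\tau_m^{\mathrm o}}Z_s^{\mathrm o,m}(\mathbb R)\,\diff s}$, and letting first the localization and then $m$ go to infinity (monotone convergence, then Fatou), one gets $\E\brk{L_t}<\infty$, where $L_t:=\sum_{\{\alpha,\beta\}\subset I_s:\alpha\neq\beta}\int_0^t\diff L^{\{\alpha,\beta\}}_s$ is the total pairwise intersection local time on $[0,t]$. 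Finally, the positive jumps of $Z_\cdot(\mathbb R)$ are $(k-1)^+$ at each ordinary and $(k-2)^+$ at each catalytic branching, with compensators $\beta_{\mathrm o}\pr{\sum_k(k-1)^+p_k}Z_s(\mathbb R)\,\diff s$ and $\tfrac12\beta_{\mathrm c}\pr{\sum_k(k-2)^+q_k}\diff L_s$; the constants are finite by \eqref{asp:A1}, so a further localization plus monotone convergence give $\E\brk{\sum_{s\le t}\pr{\mathbf{\Delta} Z_s(\mathbb R)}^+}<\infty$, whence $\E\brk{\sup_{s\le t}Z_s(\mathbb R)}\le n+\E\brk{\sum_{s\le t}\pr{\mathbf{\Delta} Z_s(\mathbb R)}^+}<\infty$.

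It then remains only to conclude: from the explicit form of $M^g$ in \eqref{eq:Mgt} and the boundedness of $g,g''$, one bounds $\sup_{s\le t}\abs{M_s^g}$ by a deterministic constant (depending on $t$, $\|g\|_\infty$, $\|g''\|_\infty$, $\Psi'(0+)$) times $\sup_{s\le t}Z_s(\mathbb R)+\int_0^tZ_s(\mathbb R)\,\diff s+L_t$, which is integrable by the previous paragraph; so the local martingale $\pr{M_s^g}_{s\le t}$ is dominated by an integrable random variable, and localizing plus dominated convergence shows $M^g$ is a true martingale on $[0,t]$ for every $t$. I expect the first-moment step to be the genuine obstacle: one starts only from the fact that $e^{\Phi'(0+)(t\wedge\tau_m^{\mathrm o})}Z_t^{\mathrm o,m}(\mathbb R)$ is a local supermartingale with expectation $\le n$, and must bootstrap — through the compensator identities for the two branching counts and for the total local time, together with the exponential-moment assumption \eqref{asp:A1} — up to the uniform-in-time integrability required to dominate $M^g$.
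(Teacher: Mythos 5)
Your argument is correct and follows the same overall strategy as the paper: obtain the compensator decomposition of $Z_\cdot(g)$ (Brownian part giving $\tfrac12\int Z_s(g'')\diff s$, ordinary jumps compensated by $-\Phi'(0+)\int Z_s(g)\diff s$, catalytic jumps compensated by $-\tfrac12\Psi'(0+)\int\sum g(X_s^\alpha)\diff L_s^{\{\alpha,\beta\}}$), kill the linear drift with the factor $e^{\Phi'(0+)t}$ so that $M^g$ is a local martingale, and then upgrade via first-moment estimates. Where you diverge is in the bookkeeping of that upgrade. The paper takes $g=\mathbf 1_{\mathbb R}$ in its freshly derived decomposition \eqref{eq:Mgt} to get $\mathbb E[e^{\Phi'(0+)t}Z_t(\mathbb R)]\le n$ and $\mathbb E[\int_0^te^{\Phi'(0+)s}\sum\diff L_s^{\{\alpha,\beta\}}]\le 2n/\Psi'(0+)$ (its \eqref{eq:MU1}--\eqref{eq:MU2}), and then checks that the expected quadratic variation of the continuous piece and the expected total variation of the two jump compensators are finite (\eqref{eq:TM1}--\eqref{eq:TM3}), which is a sufficient condition for each compensated piece to be a true martingale; no control of $\sup_{s\le t}Z_s(\mathbb R)$ is needed. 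You instead pull the same two moment bounds from the earlier coupling Lemmas \ref{lem:LSM} and \ref{lem:Not} (legitimate, and logically equivalent), and then close with a uniform domination of $\sup_{s\le t}|M^g_s|$, which forces you to prove the extra estimate $\mathbb E[\sup_{s\le t}Z_s(\mathbb R)]<\infty$ by compensating the positive jumps of $Z_\cdot(\mathbb R)$ — a valid step (between branchings $Z_\cdot(\mathbb R)$ is constant, and $\sum_k(k-1)^+p_k$, $\sum_k(k-2)^+q_k<\infty$ by \eqref{asp:A1}), but one the paper's compensator-integrability route avoids. The trade-off: your domination argument is more robust and conceptually simpler to state, at the cost of one additional moment estimate; the paper's argument is leaner but requires handling the three local-martingale pieces ($m^{g'}$, $m^{\mathrm o,g}$, $m^{\mathrm c,g}$) separately via the point-process formalism.
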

	\begin{proof}
		Define $\Theta^{\mathrm o}:=\{\tau^{\mathrm o}_k: k \in \mathbb N\}\cap [0,\infty)$, and $\Theta^{\mathrm c}:= \{\tau_k^{\mathrm c}: k\in \mathbb N\}\cap [0,\infty)$, the set of occurrence times of the ordinary branchings, and the set of occurrence times of the catalytic branchings, respectively.
		From Proposition \ref{prop:WD}, we know that almost surely for every $t\geq 0$, $\Theta^\mathrm o\cap (0,t]$ and $\Theta^\mathrm c\cap(0,t]$ are finite sets.
		Notice that, between each two consecutive branching times, $Z_\cdot$ evolves as the empirical measure of a system of independent Brownian motions.
		Therefore, we have the decomposition that almost surely for every $t\geq 0$,
		\begin{equation} \label{eq:MMA}
			Z_t(g) - Z_0(g)
			=  \tilde m_t^{g'} + \frac{1}{2} \int_0^t Z_s(g'') \mathrm{d} s + \sum_{s\in \Theta^{\mathrm o}\cap(0,t]}\mathbf{\Delta} Z_s( g) + \sum_{s\in \Theta^{\mathrm c}\cap(0,t]} \mathbf{\Delta} Z_s(g)
		\end{equation}
		where $\tilde m_\cdot^{g'}$ is a continuous local martingale with  quadratic variation
		\begin{equation} \label{eq:Dg}
			\chv{\tilde m^{g'}_{\cdot}}_t = \int_0^t Z_s\pr{(g')^2}\mathrm{d} s, \quad t\geq 0.
		\end{equation}
		Here, $\mathbf \Delta \gamma_s:= \gamma_s - \gamma_{s-}$ for any $s\geq 0$ and real-valued c\`adl\`ag process $(\gamma_t)_{t\geq 0}$.
		Let $\mathcal U$ be the (deterministic) countable set of all possible labels of the particles, and let $\mathcal U_2 := \{\{\alpha, \beta\}\subset \mathcal U: \alpha \neq \beta\}$ be the set of all the possible  unordered pairs of labels.
		It is standard to see that
		\begin{equation}
			\sum_{s\in \Theta^{\mathrm o}\cap(0,t]}
			\mathbf{\Delta} Z_s( g)
			= \int_{\mathbb Z_+\times\mathcal U \times (0,t] } (k-1) g(X^\alpha_{s-}) N^\mathrm o(\mathrm{d} k, \mathrm{d} \alpha,\mathrm{d} s),
			\quad t\geq 0, \text{a.s.}
		\end{equation}
		and
		\begin{equation}
			\sum_{s\in \Theta^{\mathrm c}\cap(0,t]}
			\mathbf{\Delta} Z_s( g)
			= \int_{\mathbb Z_+ \times \mathcal U_2 \times (0,t]} (k-2) g(X^\alpha_{s-}) N^\mathrm c(\mathrm{d} k, \mathrm{d} \{\alpha,\beta\},\mathrm{d} s), \quad t\geq 0, \text{a.s.},
		\end{equation}
		where $N^{\mathrm o}$ is a point process on $\mathbb Z_+\times \mathcal U\times \mathbb R_+$ with compensator
		\[
		\hat N^\mathrm o(\{k\}\times \{\alpha\}\times \mathrm{d} s)
		:= \beta_\mathrm o p_k \mathbf 1_{\{\alpha \in I_s\}}\mathrm{d} s,
		\quad (k,\alpha, s) \in \mathbb Z_+\times \mathcal U\times \mathbb R_+,
		\]
		and $N^\mathrm c$ is a point process on $\mathbb Z_+\times \mathcal U_2\times \mathbb R$ with compensator
		\[
		\hat N^\mathrm c(\{k\}\times \{\{\alpha,\beta\}\}\times \mathrm{d} s)
		:= \frac{1}{2}\beta_c q_k \mathrm{d} L^{\{\alpha, \beta\}}_s,
		\quad (k,\{\alpha, \beta\}, s) \in \mathbb Z_+\times \mathcal U_2\times \mathbb R_+,
		\]
		in the sense of \cite{MR1011252}*{Definition 3.1}.
		Now, by It\^o's
		formula \cite{MR1011252}*{Theorem 5.1}, we have almost surely for every $t\geq 0$,
		\begin{align}
			& e^{\Phi'(0+)t}Z_t(g)-Z_0(g)
			\\&= \int_0^t e^{\Phi'(0+)s}\mathrm{d} \tilde m^{g'}_s + \frac{1}{2} \int_0^t e^{\Phi'(0+)s} Z_s(g'')\mathrm{d} s +  \Phi'(0+)\int_0^t Z_s(g) e^{\Phi'(0+)s}\mathrm{d} s
			\\& \qquad +\int_{\mathbb Z_+\times \mathcal U \times (0,t]} e^{\Phi'(0+)s}(k-1)g(X_{s-}^\alpha) N^\mathrm o(\mathrm{d} k, \mathrm{d} \alpha, \mathrm{d} s)
			\\&\qquad + \int_{\mathbb Z_+ \times \mathcal U_2 \times (0,t]} e^{\Phi'(0+)s}(k-2) g(X_{s-}^\alpha) N^{\mathrm c}(\mathrm{d} k, \mathrm{d} \{\alpha, \beta\}, \mathrm{d} s).
		\end{align}
		Observe that $(M_t^g)_{t\geq 0}$ is a local martingale, since almost surely for every $t\geq 0$,
		\begin{equation} \label{eq:LMg}
			M_t^g = Z_0(g) + m^{g'}_t + m^{\mathrm o, g}_t + m_t^{\mathrm c, g}
		\end{equation}
		where
		\begin{equation}
			m^{g'}_t:=\int_0^t e^{\Phi'(0+)s} \mathrm{d} \tilde m^{g'}_s,
		\end{equation}
		\begin{equation}
			m_t^{\mathrm o,g}
			:= \int_{\mathbb Z_+\times \mathcal U \times (0,t]} e^{\Phi'(0+)s}(k-1)g(X_{s-}^\alpha) \pr{N^\mathrm o(\mathrm{d} k, \mathrm{d} \alpha, \mathrm{d} s)- \hat N^\mathrm o(\mathrm{d} k, \mathrm{d} \alpha, \mathrm{d} s)},
		\end{equation}
		and
		\begin{equation}
			m_t^{\mathrm c,g}
			:=\int_{\mathbb Z_+ \times \mathcal U_2 \times (0,t]} e^{\Phi'(0+)s}(k-2) g(X_{s-}^\alpha) \pr{ N^{\mathrm c}(\mathrm{d} k, \mathrm{d} \{\alpha, \beta\}, \mathrm{d} s)-\hat N^\mathrm c(\mathrm{d} k, \mathrm{d} \{\alpha, \beta\}, \mathrm{d} s)}.
		\end{equation}
		Now, replacing $g$ by $\mathbf 1_{\mathbb R}$ in \eqref{eq:Mgt}, from the fact that $\Psi'(0+)> 0$, it is clear from \cite{MR4226142}*{Theorem 10.5} that  $(e^{\Phi'(0+)t}Z_t(\mathbb R))_{t\geq 0}$ is a local super-martingale.
		Therefore, by the definition of local super-martingales \cite{MR4226142}*{p.~211}, there exists a sequence of optional times $(\rho_k)_{k\in \mathbb N}$ converging increasingly to $\infty$ such that $(e^{\Phi'(0+)t\wedge \rho_k}Z_{t\wedge \rho_k}(\mathbb R))_{t\geq 0}$ is a super-martingale for every $k\in \mathbb N$. By Fatou's lemma, for every $t\geq 0,$
		\begin{align}\label{eq:MU1}
			\mathbb E\brk{e^{\Phi'(0+)t}Z_t(\mathbb R)}
			\leq \liminf_{k\to \infty} \mathbb E\brk{e^{\Phi'(0+){t\wedge \rho_k}}Z_{t\wedge \rho_k}(\mathbb R)}
			\leq Z_0(\mathbb R) = n .
		\end{align}
		There also exists a sequence of optional times $(\rho'_k)_{k\in \mathbb N}$ converging increasingly to $\infty$ such that $(M^{\mathbf 1_{\mathbb R}}_{t\wedge \rho'_k})_{t\geq 0}$ is a martingale for every $k\in \mathbb N$.
		Taking expectation on the both sides of \eqref{eq:Mgt} while replacing $g$ by $\mathbf 1_{\mathbb R}$ and $t$ by $t\wedge \rho'_k$, we obtain that for every $k\in \mathbb N$ and $t\geq 0$,
		\begin{align}
			&n
			= \mathbb E\brk{M^{\mathbf 1_{\mathbb R}}_{t\wedge \rho'_k}}
			\\&=\mathbb E\brk{e^{\Phi'(0+)t\wedge \rho'_k}Z_{t\wedge \rho'_k}(\mathbb R)} + \frac{1}{2} \Psi'(0+) \mathbb E\brk{\int_0^{t\wedge \rho'_k} e^{\Phi'(0+)s}\sum_{\{\alpha,\beta\}\subset I_s: \alpha \neq \beta} \mathrm{d} L^{\{\alpha, \beta\}}_s}.
		\end{align}
		Taking $k\uparrow \infty$, from the monotone convergence theorem, we have for any $t\geq 0,$
		\begin{equation} \label{eq:MU2}
			\mathbb E\brk{\int_0^{t} e^{\Phi'(0+)s}\sum_{\{\alpha,\beta\}\subset I_s: \alpha \neq \beta} \mathrm{d} L^{\{\alpha, \beta\}}_s} \leq 2 n/\Psi'(0+).
		\end{equation}
		From \eqref{eq:MU1}, we can verify that
		\begin{equation} \label{eq:TM1}
			\mathbb E\brk{\int_0^t e^{2\Phi'(0+)s} Z_s\pr{(g')^2} \mathrm{d} s}
			< \infty,
		\end{equation}
		\begin{equation} \label{eq:TM2}
			\mathbb E\brk{\int_{\mathbb Z_+\times \mathcal U \times (0,t]} \abs{e^{\Phi'(0+)s}(k-1) g(X_{s-}^\alpha) } \hat N^{\mathrm o}(\mathrm{d} k, \mathrm{d} \alpha, \mathrm{d} s)} < \infty,
		\end{equation}
		and from \eqref{eq:MU2} that
		\begin{equation} \label{eq:TM3}
			\mathbb E\brk{\int_{\mathbb Z_+\times \mathcal U_2 \times (0,t]} \abs{e^{\Phi'(0+)s}(k-2) g(X_{s-}^\alpha) } \hat N^{\mathrm c}(\mathrm{d} k, \mathrm{d} \{\alpha,\beta\}, \mathrm{d} s)} < \infty.
		\end{equation}
		From \eqref{eq:TM1}, \eqref{eq:TM2} and \eqref{eq:TM3} we can verify that $(m_t^{g'})_{t\geq 0}$, $(m^{\mathrm o,g}_t)_{t\geq 0}$ and $(m^{\mathrm c, g}_t)_{t\geq 0}$ are (true) martingales, respectively.
		The desired result of this proposition follows.
	\end{proof}

	\section{The dual SPDEs}
	\label{sec:Dual}
	Let the parameters $(x_i)_{i=1}^n$, $\beta_{\mathrm o}$, $(p_k)_{k=0}^\infty$, $\beta_{\mathrm c}$ and $(q_k)_{k=0}^\infty$ be given as in \eqref{eq:IC}--\eqref{eq:CB}. Assume that \eqref{asp:A3} and \eqref{asp:A1} hold.
	Due to Proposition \ref{prop:WD} (which is proved in the previous section), an SBBM w.r.t.~ above  parameters can be constructed up to all time.
	{Let $(I_t)_{t\geq 0}$, $(X^{\alpha}_{t})_{\alpha \in I_t,t\geq 0}$ and $(Z_t)_{t\geq 0}$ be the corresponding notations  for this SBBM, given as in Subsection \ref{sec:MR} (right after Proposition \ref{prop:WD}.) }
	
	In this section, we discuss the duality relation between this SBBM and the following 1-d
	stochastic partial differential equation (SPDE)
	\begin{equation} \label{eq:GSPDE}
		\begin{cases}\displaystyle
			\partial_t u_t(x) = \frac{\Delta}{2} u_t(x) - \Phi(u_t(x)) + \sqrt{\Psi(u_t(x))} \dot W_{t,x}, \quad t> 0, x\in \mathbb R,
			\\ \displaystyle
			u_0(x) = f(x), \quad x\in \mathbb R,
		\end{cases}
	\end{equation}
	where  $\Phi$ and $\Psi$  are defined as in \eqref{Def-Phi} and \eqref{Def-Psi} respectively, and $\dot W$ is a space-time white noise  on $[0,\infty)\times \mathbb R$.
	We need to be careful about the solution concept of the SPDE \eqref{eq:GSPDE}.
	In particular, we want the random variable $u_t(x)$, for every $t\geq 0$ and $x\in \mathbb R$, to take its  values in a subinterval of $\mathbb R$ such that $\Psi (u_t(x))$ is non-negative.
	To this end, let us first analyze the function $\Psi (\cdot)$.
	Define
	\[z^* := \inf\{z\in [1,2]: \Psi(z) = 0\}\]
	with the convention that $\inf \emptyset =  \infty$.
	The following analytic lemma suggests that the random field $(u_t(x))_{t\geq 0, x\in \mathbb R}$ should take its value in $[0,z^*]$.
	(We include the proof of this analytic lemma  in Appendix \ref{append-A}.)
	
	\begin{lemma} \label{lem:CC}
		It can be verified that $z^* \in [1,2]$, $\Psi(z^*) = 0$, $\Phi(z^*) \geq 0$, $\Phi(0)=\Psi(0)= 0$, and $\Psi(z)\geq 0$ for every $z\in [0,z^*]$.
		Furthermore, if \eqref{asp:A2} holds then $z^* < 2$.
	\end{lemma}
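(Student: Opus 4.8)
The plan is to reduce everything to elementary facts about the probability generating functions $h(w):=\sum_{k\ge 0}p_kw^k$ and $G(w):=\sum_{k\ge 0}q_kw^k$. Writing $w=1-z$, we have $\Phi(z)=\beta_{\mathrm o}\bigl(h(1-z)-(1-z)\bigr)$ and $\Psi(z)=\beta_{\mathrm c}\bigl(G(1-z)-(1-z)^2\bigr)$, so $\Phi(0)=\beta_{\mathrm o}(h(1)-1)=0$ and $\Psi(0)=\beta_{\mathrm c}(G(1)-1)=0$ follow at once from $h(1)=G(1)=1$. All the remaining assertions concern $z\in[0,2]$, i.e.\ $w\in[-1,1]$, where the two series converge absolutely and uniformly (Weierstrass, since $\sum p_k=\sum q_k=1$); hence $\Phi$ and $\Psi$ are continuous there and the intermediate value theorem is available.

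The heart of the matter is the inequality $G(s)\ge s^2$ for all $s\in[0,1]$, which is exactly $\Psi(z)\ge 0$ for $z\in[0,1]$ and is where the subcriticality assumption \eqref{asp:A3} enters. The computation I would carry out is: using $\sum_{k\ge 0}q_k=1$ and $q_2=0$,
\[
G(s)-s^2=\sum_{k\ge 0}q_k(s^k-s^2)=q_0(1-s^2)+q_1(s-s^2)-\sum_{k\ge 3}q_k(s^2-s^k).
\]
For $k\ge 3$ and $s\in[0,1]$ one has $s^2-s^k=s^2(1-s)(1+s+\cdots+s^{k-3})\le (k-2)s^2(1-s)$, while \eqref{asp:A3} gives $\sum_{k\ge 3}(k-2)q_k=\bigl(\sum_{k}kq_k-2\bigr)+2q_0+q_1<2q_0+q_1$; substituting and extracting the factor $1-s$,
\[
G(s)-s^2\ge(1-s)\bigl[q_0(1+s-2s^2)+q_1 s(1-s)\bigr]\ge 0,
\]
because $1+s-2s^2=(1-s)(1+2s)\ge 0$ on $[0,1]$. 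In particular $\Psi(1)=\beta_{\mathrm c}q_0\ge 0$.

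Next I would locate $z^*$. Evaluating at the endpoints of $[1,2]$ gives $\Psi(1)=\beta_{\mathrm c}q_0\ge 0$ and $\Psi(2)=\beta_{\mathrm c}\bigl(\sum_{k}(-1)^kq_k-1\bigr)\le 0$, the last inequality being strict exactly when $q_k>0$ for some odd $k$. Since $\Psi$ is continuous, $\{z\in[1,2]:\Psi(z)=0\}$ is closed, and it is nonempty: when $\Psi(2)<0$ the intermediate value theorem on $[1,2]$ produces a zero in $[1,2)$ (take $z=1$ if $\Psi(1)=0$), and when $\Psi(2)=0$ the point $2$ is itself a zero. Hence the infimum $z^*$ lies in $[1,2]$ and is attained, so $\Psi(z^*)=0$; moreover under \eqref{asp:A2} we have $\Psi(2)<0$, so $2$ is not a zero and $z^*<2$. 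Finally, if $z^*>1$ then $\Psi$ has no zero on $[1,z^*)$ by minimality and $\Psi(1)>0$ (otherwise $z^*=1$), so continuity forces $\Psi>0$ on $[1,z^*)$; combined with $\Psi\ge 0$ on $[0,1]$ and $\Psi(z^*)=0$ this yields $\Psi\ge 0$ on all of $[0,z^*]$.

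It remains to verify $\Phi(z^*)\ge 0$, that is, $h(w^*)\ge w^*$ for $w^*:=1-z^*\in[-1,0]$. Since $(w^*)^k\ge -|w^*|^k\ge -|w^*|$ for every $k\ge 1$ (using $|w^*|\le 1$) and $\sum_{k\ge 0}p_k=1$, we get $h(w^*)=p_0+\sum_{k\ge 1}p_k(w^*)^k\ge p_0-|w^*|\sum_{k\ge 1}p_k\ge p_0-|w^*|=p_0+w^*\ge w^*$, so $\Phi(z^*)=\beta_{\mathrm o}\bigl(h(w^*)-w^*\bigr)\ge 0$. The only genuinely non-routine step in all of this is the inequality $G(s)\ge s^2$ on $[0,1]$: naive bounds lose too much, and one must group the high-degree terms so that the factor $1-s$ can be pulled out and the subcriticality $\sum_k kq_k<2$ brought to bear; the rest is endpoint evaluation plus the intermediate value theorem.
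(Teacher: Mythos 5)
Your proof is correct, and its overall skeleton matches the paper's: evaluate $\Psi(1)=\beta_{\mathrm c}q_0\geq 0$ and $\Psi(2)=-2\beta_{\mathrm c}\sum_{k\ \mathrm{odd}}q_k\leq 0$, get a zero in $[1,2]$ by continuity so that $z^*\in[1,2]$ with $\Psi(z^*)=0$ (and $z^*<2$ under \eqref{asp:A2} since then $\Psi(2)<0$), deduce $\Psi\geq 0$ on $[1,z^*]$ from the minimality of $z^*$, and get $\Phi(z^*)\geq 0$ from the inequality $w^k-w\geq 0$ for $w\in[-1,0]$, $k\geq 1$ (your bound $h(w^*)\geq p_0+w^*\geq w^*$ is exactly this inequality in generating-function language; the paper states it as $\Phi(z)\geq 0$ for all $z\in[1,2]$).

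The one place where you genuinely diverge is the central inequality $\Psi\geq 0$ on $[0,1]$, i.e.\ $G(s)\geq s^2$ on $[0,1]$. The paper disposes of it in one line: for $s\in(0,1)$ the map $x\mapsto s^x$ is convex, so Jensen's inequality gives $\sum_k q_k s^k\geq s^{\sum_k kq_k}\geq s^2$, the last step using subcriticality and the monotonicity of $x\mapsto s^x$. You instead expand $G(s)-s^2=\sum_k q_k(s^k-s^2)$, bound the $k\geq 3$ terms via $s^2-s^k\leq (k-2)s^2(1-s)$, use $\sum_{k\geq 3}(k-2)q_k\leq 2q_0+q_1$ (your strict inequality is more than needed, and in fact only $\sum_k kq_k\leq 2$ is used, exactly as in the paper's argument), and factor out $(1-s)$. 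Your computation checks out and is more elementary — it avoids invoking convexity/Jensen — but it is longer and, as you note, more delicate in how the terms must be grouped; the Jensen route makes the role of the mean offspring number $\sum_k kq_k\leq 2$ completely transparent and extends verbatim to the critical case. Either argument is acceptable here.
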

	
	We now give the solution concept of SPDE \eqref{eq:GSPDE}.
	Denote by $\mathcal C(\mathbb R, [0, z^*])$ the collection of $[0,z^*]$-valued continuous functions on $\mathbb R$, equipped with the topology of uniform convergence on compact sets.
	Let $f$, the initial value of the SPDE \eqref{eq:GSPDE}, be an arbitrary  element of $\mathcal C(\mathbb R, [0,z^*])$.
	We say $(\tilde \Omega, \tilde{\mathcal F}, (\tilde{\mathcal F}_t)_{t\geq 0}, \tilde{\mathbb P}_f, W)$ is a stochastic basis, if $(\tilde \Omega, \tilde{\mathcal F}, \tilde{\mathbb P}_f)$ is a complete probability space equipped with an augmented filtration $(\tilde {\mathcal F}_t)_{t\geq 0}$ in the sense of \cite{MR4226142}*{Lemma 9.8}, and $W:=(W_t(\phi): t\geq 0, \phi\in L^2(\mathbb R))$ is an $(\tilde {\mathcal F}_t)_{t\geq 0}$-adapted cylindrical Wiener process on $L^2(\mathbb R)$ with covariance structure \[\tilde{\mathbb E}_f[W_t(\phi)W_s(\psi)]=(t\wedge s) \int \phi(x)\psi(x)\mathrm{d} x, \quad t,s\geq 0,\phi,\psi \in L^2(\mathbb R)\] in the sense of \cite{MR3236753}*{Section 4.1.2.}.
	Given a stochastic basis $(\tilde \Omega, \tilde{\mathcal F}, (\tilde{\mathcal F}_t)_{t\geq 0}, \tilde{\mathbb P}_f, W)$, we say an $(\tilde{\mathcal F}_t)_{t\geq 0}$-adapted  $\mathcal C(\mathbb R, [0,z^*])$-valued continuous process $(u_{t})_{t\geq 0}$ solves the SPDE \eqref{eq:GSPDE}, if $u_0 = f$ and for every $(t,x)\in (0,\infty)\times \mathbb R$,  almost surely
	\begin{align}\label{eq:mild}
		u_{t}(x)
		& = \int p_{t}(x-y)f(y)\mathrm dy - \iint_0^t p_{t-s}(x-y) \Phi (u_s(y))\mathrm ds \mathrm dy + {}
		\\&\qquad \iint_0^t p_{t-s}(x-y) \sqrt{\Psi(u_s(y))} W(\mathrm ds\mathrm dy).
	\end{align}
	Here, $p_{t}(x):= (2\pi t)^{-1/2}e^{-x^2/(2t)}$ for $ t>0, x\in \mathbb{R}$
	is the heat kernel, and the third term on the right hand side of \eqref{eq:mild}
	is the stochastic integral driven by the space-time white noise (see \cite{MR0876085} or equivalently \cite{MR3236753}*{Section 4.2.1.}).
	In particular, for any $(\tilde{\mathcal F}_t)_{t\geq 0}$-predictable $L^2(\mathbb R)$-valued process $(H_t)_{t\geq 0}$ satisfying that almost surely $\int_0^t \|H_s\|^2_{L^2(\mathbb R)} \mathrm{d} s<\infty$ for every $t\ge 0$, the stochastic integral
	$(\iint_0^t H_s(y) W(\mathrm{d} s\mathrm{d} y))_{t\geq 0}$ is an $(\tilde{\mathcal F}_t)_{t\geq 0}$-adapted continuous local martingale with quadratic variation $(\int_0^t \|H_s\|^2_{L^2(\mathbb R)}\mathrm{d} s)_{t\geq 0}$.
	Equation \eqref{eq:mild} is also known as the mild form of the SPDE \eqref{eq:GSPDE}.
	
	Let us be more precise about the existence of the solutions.
	In this paper, we will be  only  considering the weak existence.
	By that, we mean the existence of a stochastic basis $(\tilde \Omega, \tilde{\mathcal F}, (\tilde{\mathcal F}_t)_{t\geq 0}, \tilde{\mathbb P}_f, W)$, and an $(\tilde{\mathcal F}_t)_{t\geq 0}$-adapted  $\mathcal C(\mathbb R, [0,z^*])$-valued continuous process $(u_{t})_{t\geq 0}$ solving the SPDE \eqref{eq:GSPDE}.
	
	Let us also be more precise about the uniqueness of the solutions.
	We will be only considering the  uniqueness in law.
	We say the uniqueness in law  holds for the SPDE \eqref{eq:GSPDE} if any two solutions sharing the same initial value,  but not necessarily the same stochastic basis,  induce the same law  on  the path space $\mathcal C([0,\infty), \mathcal C(\mathbb R, [0, z^*]))$.
	
	\begin{lemma}
		The weak existence holds for the SPDE \eqref{eq:GSPDE}.
	\end{lemma}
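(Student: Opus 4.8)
The plan is to construct a solution by approximation, since the only genuine difficulty is that the noise coefficient $\sqrt{\Psi}$ fails to be Lipschitz: it behaves like a square root near its zeros $0$ and $z^*$, so a contraction argument is unavailable. First I would replace $\Phi$ and $\Psi$ by bounded, globally defined functions agreeing with them on $[0,z^*]$. Writing $\kappa(z):=0\vee z\wedge z^*$, set $\bar\Phi:=\Phi\circ\kappa$ and $\bar\Psi:=\Psi\circ\kappa$. Recalling from \eqref{asp:A1} that $R>1$, we have $z^*\le 2<1+R$, so $\Phi$ and $\Psi$ are real-analytic, hence Lipschitz, on a neighbourhood of $[0,z^*]$; combined with Lemma \ref{lem:CC} this shows $\bar\Phi$ is bounded and Lipschitz with $\bar\Phi(0)=0$ and $\bar\Phi(z^*)=\Phi(z^*)\ge 0$, while $\bar\Psi$ is bounded, continuous, nonnegative, equal to $\Psi$ on $[0,z^*]$, and vanishes at $0$ and at $z^*$. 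Since \eqref{eq:GSPDE} and the equation with $(\Phi,\Psi)$ replaced by $(\bar\Phi,\bar\Psi)$ coincide wherever the solution lies in $[0,z^*]$, it suffices to produce a weak solution of the modified equation that is confined to $[0,z^*]$.

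Next I would introduce Lipschitz approximations of $\sqrt{\bar\Psi}$ that retain the vanishing at the endpoints, for example $\sigma_n(z):=\sqrt{\bar\Psi(z)\vee n^{-1}}-n^{-1/2}$; each $\sigma_n$ is bounded, Lipschitz, nonnegative, satisfies $\sigma_n(0)=\sigma_n(z^*)=0$, and $\|\sigma_n-\sqrt{\bar\Psi}\|_\infty\le n^{-1/2}$. The SPDE with drift $-\bar\Phi$ and noise coefficient $\sigma_n$, started from $f$, has globally Lipschitz and bounded coefficients, so classical theory (as in the constructions behind \cite{MR0948717} and \cite{MR1813840}) yields a pathwise-unique solution $u^n$. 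Moreover the constant $0$ solves this equation, and the constant $z^*$ solves the same equation with the nonnegative constant $\Phi(z^*)$ added to its drift; since $0\le f\le z^*$, the comparison theorem for one-dimensional SPDEs with Lipschitz coefficients forces $0\le u^n_t(x)\le z^*$ for all $t\ge 0$ and $x\in\mathbb R$, almost surely, so $u^n$ is $\mathcal C(\mathbb R,[0,z^*])$-valued.

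Then I would pass to the limit. Boundedness of $f$ and of $\bar\Phi,\sigma_n$ (uniformly in $n$) together with the standard moment and Kolmogorov-continuity estimates for the mild form \eqref{eq:mild} of the $n$-th equation give space-time H\"older bounds for $(t,x)\mapsto u^n_t(x)$ that are uniform in $n$; hence the laws of $(u^n)_n$ are tight on $\mathcal C([0,\infty),\mathcal C(\mathbb R,[0,z^*]))$. Along a subsequence, the Skorokhod representation theorem produces $(u^n,W^n)\to(u,W)$ almost surely, with $W$ a space-time white noise on a stochastic basis and $u$ a continuous adapted $\mathcal C(\mathbb R,[0,z^*])$-valued process with $u_0=f$. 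Using $\sigma_n\to\sqrt{\bar\Psi}$ uniformly, the continuity of $\bar\Phi$ and $\sqrt{\bar\Psi}$, and uniform boundedness, the deterministic convolution terms in the mild equation converge by dominated convergence and the stochastic convolution terms converge by the usual results on convergence of stochastic integrals against converging martingale measures, so $u$ solves the modified SPDE; since $u$ is $[0,z^*]$-valued and $\bar\Phi=\Phi$, $\bar\Psi=\Psi$ there, $u$ solves \eqref{eq:GSPDE}, which is the asserted weak existence.

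The step I expect to be the main obstacle is precisely the non-Lipschitz behaviour of $\sqrt{\Psi}$ at $0$ and $z^*$, which simultaneously blocks a fixed-point construction and makes the confinement to $[0,z^*]$ delicate; the approximation in the second step is exactly what circumvents it, since arranging the approximants $\sigma_n$ to vanish at $0$ and $z^*$ lets the classical comparison theorem keep each $u^n$ inside $[0,z^*]$, and this confinement is stable under the weak limit. An alternative would be to quote an existence theorem for SPDEs with merely continuous coefficients of at most linear growth and then confine the solution to $[0,z^*]$ via a comparison theorem valid for H\"older-$1/2$ noise coefficients, but the approximation route seems cleaner.
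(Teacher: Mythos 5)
Your construction is correct and is essentially the same as the paper's: the paper simply observes that, given the sign and endpoint properties of $\Phi$ and $\Psi$ on $[0,z^*]$ from Lemma \ref{lem:CC}, weak existence is standard and cites \cite[Theorem 2.6]{MR1271224} and \cite[Section 2.1]{MR4259374}, whose underlying argument is exactly your scheme (Lipschitz approximations of $\sqrt{\Psi}$ vanishing at $0$ and $z^*$, comparison to confine the solution in $[0,z^*]$, tightness and passage to the limit). Indeed, the same approximation-plus-comparison machinery is spelled out in the paper's supplementary proofs of Lemma \ref{eq:WC} and Proposition \ref{prop:GI}, so your blind reconstruction matches the intended route.
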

	
	\begin{proof}
		Thanks to Lemma \ref{lem:CC}, the existence of the SPDE \eqref{eq:GSPDE} is standard.
		See  \cite{MR1271224}*{Theorem 2.6} and \cite{MR4259374}*{Section 2.1} for example.
	\end{proof}

	The uniqueness in law  for  the SPDE \eqref{eq:GSPDE} also holds. To show this, let us first give the moment duality relation between the SPDE \eqref{eq:GSPDE} and the SBBM.

	\begin{proposition} \label{prop:D}
		Suppose that  $\mathcal C(\mathbb R,[0, z^*])$-valued continuous process $(u_t)_{t\geq 0}$ is a  solution to the SPDE \eqref{eq:GSPDE}  w.r.t.~ a
		stochastic basis $(\tilde{\Omega}, \tilde{\mathcal F}, (\tilde{\mathcal F}_t)_{t\geq 0}, \tilde{\mathbb P}_f, W)$.
		Then it holds for every $t \geq 0$ that
		\begin{equation} \label{eq:Duality}
			\tilde{\mathbb E}_f\brk{\prod_{i=1}^n \pr{1-u_t(x_i)}}
			= \mathbb E \brk{ \prod_{\alpha \in {I_t}} \pr{1-f({X_t^{\alpha}})} }.
		\end{equation}
	\end{proposition}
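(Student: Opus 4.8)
The plan is to obtain \eqref{eq:Duality} as a specialization of the moment duality of \cite[Theorem 1]{MR1813840}; indeed the exponential moment assumption \eqref{asp:A1} has been imposed precisely so that our SPDE \eqref{eq:GSPDE} and branching mechanisms $\Phi,\Psi$ lie within the scope of that result. Before applying it I would record that both sides of \eqref{eq:Duality} are finite: by Lemma \ref{lem:CC} together with \eqref{asp:A2} we have $z^{*}\in[1,2)$, so each factor $1-u_t(x_i)$, resp.\ $1-f(X^{(n),\alpha}_t)$, lies in $[1-z^{*},1]\subset(-1,1]$, while $I^{(n)}_t$ is almost surely finite by Proposition \ref{prop:WD}; hence each product is bounded by $1$ in absolute value and both expectations make sense.

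For the reader I would also recall the mechanism behind the cited duality. Running the SBBM $(Z^{(n)}_r)_{r\ge0}$ from $(x_i)_{i=1}^n$ and the solution $(u_r)_{r\ge0}$ of \eqref{eq:GSPDE} from $f$ jointly and independently, set
\[
\Gamma(s):=\mathbb{E}\Bigl[\,\prod_{\alpha\in I^{(n)}_{t-s}}\bigl(1-u_{s}(X^{(n),\alpha}_{t-s})\bigr)\Bigr],\qquad 0\le s\le t,
\]
so that $\Gamma(0)=\mathbb{E}[\prod_{\alpha\in I^{(n)}_t}(1-f(X^{(n),\alpha}_t))]$ and $\Gamma(t)=\tilde{\mathbb{E}}_f[\prod_{i=1}^n(1-u_t(x_i))]$; it then suffices to prove $\Gamma$ is constant. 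Differentiating in $s$ and using that the two evolutions are independent, this reduces to checking that the SBBM generator (acting in $Z$) and the SPDE generator (acting in $u$) agree on $(Z,u)\mapsto\prod_{\alpha\in I}(1-u(X^{\alpha}))$. The three groups of terms match up pairwise: the Brownian motion of the particles matches the $\tfrac12\Delta u$ term of \eqref{eq:GSPDE} (each gives $-\tfrac12\sum_\alpha\Delta u(X^{\alpha})\prod_{\gamma\ne\alpha}(1-u(X^{\gamma}))$); the ordinary branching, which replaces the factor $1-u(X^{\alpha})$ by $\sum_{k} p_k(1-u(X^{\alpha}))^k$, matches the $-\Phi(u)$ drift via the definition \eqref{Def-Phi} of $\Phi$; and the catalytic branching, which at a collision ($X^{\alpha}=X^{\beta}$) replaces $(1-u(X^{\alpha}))^2$ by $\sum_{k} q_k(1-u(X^{\alpha}))^k$ along the collision local time $L^{\{\alpha,\beta\}}$, matches the It\^o correction produced by the noise $\sqrt{\Psi(u)}\dot W$ via the definition \eqref{Def-Psi} of $\Psi$, together with the normalization of $L^{\{\alpha,\beta\}}$ and the catalytic rate $\tfrac12\beta_{\mathrm c}$ in \eqref{eq:R4}. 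The compensators required for the SBBM jump terms are exactly the ones exhibited in the proof of Proposition \ref{prop:TM}, and Proposition \ref{prop:WD} ensures that only finitely many branchings occur on $[0,t]$, so that jump part is a genuine finite sum.

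I expect the main obstacle to be the third matching: identifying the space-time white-noise It\^o correction (formally the $\delta$-function in the covariance of $u_{s}(X^{\alpha})$ and $u_{s}(X^{\beta})$) with the collision-local-time contribution of the moving particle pair. Making this rigorous requires mollifying the evaluation points and the pointwise Laplacian, estimating the resulting errors uniformly, and passing to the limit; this is exactly where the exponential moments \eqref{asp:A1} are used, both to control those errors and to secure the integrability of the (possibly sign-changing) products. Since this is the technical heart of \cite[Theorem 1]{MR1813840} and, granted \eqref{asp:A1}, our setting is covered by it, I would not reproduce the argument: I would instead set up the dictionary between that paper's branching data and our $\Phi,\Psi$, verify its hypotheses (using Lemma \ref{lem:CC}, Proposition \ref{prop:WD}, and \eqref{asp:A1}), and read off \eqref{eq:Duality}.
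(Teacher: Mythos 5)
Your overall route is the same as the paper's: reduce \eqref{eq:Duality} to \cite[Theorem 1]{MR1813840} via the dictionary $b_k=\beta_{\mathrm o}p_k$ ($k\neq 1$), $b_1=-\beta_{\mathrm o}$, $\sigma_k=\beta_{\mathrm c}q_k$ ($k\neq 2$), $\sigma_2=-\beta_{\mathrm c}$, under which $1-u_t$ solves the Athreya--Tribe SPDE, and your sketch of the interpolation/generator-matching mechanism is a fair description of what that theorem encapsulates. The problem is your claim that ``granted \eqref{asp:A1}, our setting is covered by it.'' That is not true in general. Assumption \eqref{asp:A1} only gives the convergence radii $R_b,R_\sigma>1$; the hypotheses (H1)--(H2) of \cite[Theorem 1]{MR1813840} additionally require, in the present dictionary, $\tilde b'(1)=\beta_{\mathrm o}\sum_k p_k(k-1)<0$ and $\tilde b(e^\gamma)<-b_1=\beta_{\mathrm o}$ for some $\gamma>0$, i.e.\ a subcriticality condition on the \emph{ordinary} offspring law $\sum_k kp_k<1$. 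The proposition makes no such assumption (only the catalytic branching is subcritical, \eqref{asp:A3}), so when $\sum_k kp_k\geq 1$ your plan to ``verify the hypotheses'' fails and the cited theorem cannot be invoked as a black box.

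The paper closes exactly this gap: it first proves the result under the extra assumption $\sum_k kp_k<1$ by checking (H1)--(H2), and then observes that these hypotheses enter \cite{MR1813840} only (i) to rule out explosion of the dual particle system and (ii) to guarantee integrability of $\sup_{0\le t\le T}\exp\{(\mu+b_1)\int_0^t|I^{(n)}_s|\,\mathrm ds\}$ with $\mu=\sum_{k\neq1}|b_k|$. In the general case, (i) is supplied by Proposition \ref{prop:WD}, and (ii) becomes vacuous because with $p_1=0$ one has $\mu=\beta_{\mathrm o}=-b_1$, so $\mu+b_1=0$ and the weight is identically $1$; with these substitutions the Athreya--Tribe proof can be rerun line by line. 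You do list Proposition \ref{prop:WD} among your tools, but you use it as part of ``verifying the hypotheses'' rather than as a replacement for a hypothesis that genuinely fails; without the observation that $\mu+b_1=0$ (and the accompanying remark that one must re-trace the steps of \cite{MR1813840} rather than quote its statement), your argument does not cover critical or supercritical ordinary branching, which the proposition is required to handle.
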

	
	\begin{proof}
		In \cite{MR1813840}, Athreya and Tribe considered the 
		the SPDEs
		\begin{equation} \label{eq:ATSPDE}
			\partial_t w_{t}(x) = \frac{1}{2}\Delta w_t(x) + b(w_t(x)) + \sqrt{\sigma(w_t(x))} \dot \xi_{t,x}, \quad t\geq 0, x\in \mathbb R
		\end{equation}
		where $\dot \xi$ is a space-time white noise on $[0,\infty)\times\mathbb R$,
		\begin{equation} \label{eq:b}
			b(z) := \sum_{k=0}^\infty b_k z^k,\quad z\in (-R_b,R_b),
		\end{equation}
		and
		\begin{equation}  \label{eq:sigma}
			\sigma(z) := \sum_{k=0}^\infty \sigma_k z^k, \quad z\in (-R_\sigma, R_\sigma).
		\end{equation}
		Here, $R_b>0$ and $R_\sigma>0$ are the convergence radius for the infinite series
		on the right hand sides of  \eqref{eq:b} and \eqref{eq:sigma}
		respectively.
		To  utilize the result in \cite{MR1813840}, let us take
		\begin{equation} \label{eq:bk}
			b_k :=
			\begin{cases}
				\beta_{\mathrm o}
				p_k, & \quad k\in \mathbb Z_+\setminus \{1\},
				\\ -
				\beta_{\mathrm o},
				& \quad k = 1,
			\end{cases}
			\quad \mbox{and}\quad
			\sigma_k:= \begin{cases}
				\beta_{\mathrm c} q_k, & \quad k \in \mathbb Z_+ \setminus \{2\},
				\\ -\beta_{\mathrm c}, & \quad k = 2.
			\end{cases}
		\end{equation}
		In this way, it is clear from \eqref{asp:A1} that $R_b > 1$ and $R_\sigma > 1$.
		One can also verify that the $[1-z^*,1]$-valued continuous random field $(1-u_t(x))_{t\geq 0,x\in \mathbb R}$  solves
		the SPDE \eqref{eq:ATSPDE}.
		In the following, we analyze this particular solution $w := 1 - u$ whose initial value is $w_0 = 1-f$.
		
		Let us assume for the moment that the ordinary offspring law is subcritical:
		\begin{equation} \label{eq:NA}
			\sum_{k=0}^\infty kp_k < 1.
		\end{equation}
		Define
		\[
		\tilde b(z) := \sum_{k\in \mathbb Z_+: z \neq 1} |b_k|z^{k-1}= \beta_{\mathrm o}\sum_{k=0}^\infty p_k z^{k-1}, \quad z\in (-R_\sigma, R_\sigma)\setminus\{0\},
		\]
		and
		\[
		\tilde \sigma(z)
		:=
		\sum_{k\in \mathbb Z_+: z \neq 2} |\sigma_k|z^{k-2} = \beta_{\mathrm c}\sum_{k=0}^\infty q_k z^{k-2}
		, \quad z\in (-R_\sigma, R_\sigma)\setminus\{0\}.
		\]
		Note that in this case,
		\begin{equation} \label{eq:Tbpo}
			\tilde b'(1) = \beta_{\mathrm o}\sum_{k=0}^\infty p_k(k-1) < 0,
			\quad \mbox{and}\quad
			\tilde \sigma'(1) = \beta_{\mathrm c}\sum_{k=0}^\infty q_k(k-2) < 0.
		\end{equation}
		Therefore, the condition (H1) in \cite{MR1813840}*{Theorem 1} holds.
		Moreover, from
		\begin{equation}
			\tilde b(1) = \sum_{k\in \mathbb Z_+: z\neq 1} |b_k| =
			\beta_{\mathrm o}
			\sum_{k=0}^\infty p_k =
			\beta_{\mathrm o},
		\end{equation}
		and \eqref{eq:Tbpo}, there exists  a  $\gamma > 0$ such that
		\begin{equation} \label{eq:H2}
			\tilde b(e^\gamma) <
			\beta_{\mathrm o}
			= -b_1.
		\end{equation}
		Similarly, there exists  a $\gamma '>0$ such that $\tilde \sigma(e^{\gamma'}) < -\sigma_2$.
		These give the condition (H2) in \cite{MR1813840}*{Theorem 1}.
		So, under this additional assumption \eqref{eq:NA}, one obtains from \cite{MR1813840}*{Theorem 1} for every $t\geq 0$ the following identity
		\begin{align}\label{eq:FCDuality}
			& \qquad \tilde{\mathbb E}_f\brk{\prod_{i=1}^n \pr{1-u_t(x_i)}} =\tilde{\mathbb E}\brk{\prod_{i=1}^n w_t(x_i) \middle| w_0 = 1-f }
			\\&= \mathbb E \brk{ (-1)^{K_t} \exp\pr{(\nu+\frac{\sigma_2}{2}){L_t}+ (\mu+b_1)\int_0^t {|I_s|}\mathrm ds}\prod_{\alpha \in {I_t}} \pr{1-f({X_t^{\alpha}})} }
		\end{align}
		where $\mu := \sum_{k\neq 1} |b_k|$, $\nu := \frac{1}{2} \sum_{k\neq 2} |\sigma_k|$, $K_t := K_t^1+K_t^2$, $K^1_t:=$ the number of ordinary branching events whose offspring size belongs to $S_1$ up to time $t$, $K^2_t:=$ the number of catalytic branching events whose offspring size belongs to $S_2$ up to time $t$, $S_1:= \{k \in \mathbb N: b_k < 0, k\neq 1\}$, $S_2:=\{k\in \mathbb N: \sigma_k < 0, k\neq 2\}$ and ${L_t}=$ the total intersection local time accrued by all pairs of particles up to time $t$.
		From how we defined $(b_k)_{k\in \mathbb N}$ and $(\sigma_k)_{k\in \mathbb N}$ in \eqref{eq:bk}, we see that $S_1$ and $S_2$ are empty sets, and therefore $K_t = 0$.
		We also have that $\mu + b_1 =0$ since $\mu = \sum_{k\neq 1}|b_k| = \sum_{k\neq 1}\beta_{\mathrm o} p_k = \beta_{\mathrm o} = -b_1$ where we used the fact that $(p_k)_{k\in \mathbb N}$ is a probability measure on $\mathbb N$ with $p_1 = 0$.
		Similarly, $\nu+\frac{\sigma_2}{2} = 0$.
		Therefore, the alternating term $(-1)^{K_t}$ and the exponential term in the expectation on the right hand side of \eqref{eq:FCDuality} are $1$.
		So, the Feynman-Kac type formula \eqref{eq:FCDuality} degenerates to the desired identity \eqref{eq:Duality} under our particular choice of the coefficients \eqref{eq:bk}.

		Note that  this additional condition
		\eqref{eq:NA} is used
		to deduce \eqref{eq:Tbpo} and \eqref{eq:H2}, which are mainly used in \cite{MR1813840} to prevent the explosion of the dual particle system, and to ensure the finiteness of the expectation of the following  term
		\begin{equation} \label{eq:term}
			\sup_{0\leq t\leq T}\exp\pr{(\nu+\frac{\sigma_2}{2}){L_t} + (\mu+b_1)\int_0^t {|I_s|}\mathrm{d} s},
		\end{equation}
		respectively for every $T>0$.
		See Section 2.2, especially Lemma 3, of \cite{MR1813840}.
		
		For our case when $\sum_{k=0}^\infty k p_k \geq 1$, since the explosion for the dual particle system won't happen by Proposition \ref{prop:WD}, and the term \eqref{eq:term} equals $1$ under our particular choice of the coefficients \eqref{eq:bk},
		one can still verify the desired  identity \eqref{eq:Duality} following the  steps of  \cite{MR1813840}. We omit the details.
	\end{proof}
	
	\begin{lemma}
		The  uniqueness in law holds for the SPDE \eqref{eq:GSPDE}.
	\end{lemma}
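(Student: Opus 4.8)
The plan is to derive uniqueness in law from the moment duality of Proposition \ref{prop:D}, following the classical strategy for such SPDEs (cf. \cite{MR0948717}, \cite{MR1813840}). Let $(u_t)_{t\geq 0}$ and $(u'_t)_{t\geq 0}$ be two solutions to \eqref{eq:GSPDE} with the common initial value $f\in\mathcal C(\mathbb R,[0,z^*])$, defined on possibly different stochastic bases. By Proposition \ref{prop:D}, for every $t\geq 0$ and every finite list $(x_i)_{i=1}^n$ in $\mathbb R$,
\begin{equation}
\tilde{\mathbb E}_f\brk{\prod_{i=1}^n \pr{1-u_t(x_i)}}
= \mathbb E \brk{ \prod_{\alpha \in I_t^{(n)}} \pr{1-f(X_t^{(n),\alpha})} }
= \tilde{\mathbb E}'_f\brk{\prod_{i=1}^n \pr{1-u'_t(x_i)}},
\end{equation}
since the right-hand side depends only on $f$ and the law of the SBBM, not on which solution we started from. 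Thus $u_t$ and $u'_t$ have the same finite-dimensional moments of the form $\prod_i(1-u_t(x_i))$, for all $n$ and all choices of evaluation points.

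Next I would upgrade this to equality of the one-time marginal laws of $u_t$ as a $\mathcal C(\mathbb R,[0,z^*])$-valued random element. The key point is that the functionals $v\mapsto \prod_{i=1}^n(1-v(x_i))$, as $n$ ranges over $\mathbb N$ and $(x_i)$ over finite tuples in $\mathbb R$, generate (together with constants) an algebra of bounded continuous functions on $\mathcal C(\mathbb R,[0,z^*])$ that separates points: two continuous functions $v,v'$ on $\mathbb R$ taking values in $[0,z^*]\subset[0,2)$ coincide iff $1-v(x)=1-v'(x)$ for all $x$, and $1-v(x)\in(1-z^*,1]$ is bounded away from $0$, so finite products of such values at rational evaluation points already determine $v$. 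Since $\mathcal C(\mathbb R,[0,z^*])$ with the topology of uniform convergence on compacts is Polish, a measure-determining argument (Stone–Weierstrass on compacts combined with a monotone-class/$\pi$–$\lambda$ argument over the cylinder sets) shows these functionals are measure-determining. Hence the law of $u_t$ on $\mathcal C(\mathbb R,[0,z^*])$ equals that of $u'_t$ for every fixed $t\geq 0$.

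Finally I would promote equality of one-time marginals to equality of the full laws on path space $\mathcal C([0,\infty),\mathcal C(\mathbb R,[0,z^*]))$ via the Markov property. A solution to the mild equation \eqref{eq:mild} is a time-homogeneous Markov process: given $\tilde{\mathcal F}_s$, the evolution of $(u_{s+t})_{t\geq 0}$ is again a solution started from $u_s$ driven by the shifted white noise, so its conditional law is a measurable function of $u_s$ alone, and by the first part this transition kernel is the same for both solutions. Therefore the finite-dimensional distributions $(u_{t_1},\dots,u_{t_k})$ agree for all $0\leq t_1<\cdots<t_k$, and since both processes have continuous paths in a Polish space, their laws on path space coincide. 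The main obstacle is the measure-determining step: one must carefully check that the (possibly signed, but here in fact $[0,1]$-valued since $1-u_t\in[1-z^*,1]\subset[0,1]$) functionals $\prod_i(1-u_t(x_i))$ genuinely characterize the marginal law on the infinite-dimensional space $\mathcal C(\mathbb R,[0,z^*])$; this is where the restriction $z^*<2$ from Lemma \ref{lem:CC}, giving $1-u_t\geq 1-z^*>-1$ and in particular the multiplicative structure being well-behaved, is used, and where one invokes that $\mathcal N$ and the relevant function space are Polish.
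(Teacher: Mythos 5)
Your proposal is correct and follows essentially the same route as the paper, which simply invokes Proposition \ref{prop:D} together with \cite[Lemma 1]{MR1813840}; that cited lemma encapsulates exactly the argument you spell out (duality determines all mixed moments of the bounded variables $1-u_t(x_i)$, hence the one-time marginals, and a conditional application of the duality along the shifted solution yields equality of finite-dimensional distributions and thus of the laws on path space). One small slip: since $z^*$ may exceed $1$, the inclusion $1-u_t\in[1-z^*,1]\subset[0,1]$ is false in general (the paper stresses that $H$ can be negative), but this is harmless because your moment-determinacy and Stone--Weierstrass steps only require boundedness in $(-1,1]$.
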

	\begin{proof}
		This follows from Proposition \ref{prop:D} and \cite{MR1813840}*{Lemma 1}.
	\end{proof}

	Recall that the initial value $f\in \mathcal C(\mathbb R, [0,z^*])$ of the SPDE \eqref{eq:GSPDE} is chosen arbitrarily.
	Let $\mathscr L_{f}$ represent the law of the unique in law solution $u=(u_t(x))_{t\geq 0, x\in \mathbb R}$ to the SPDE \eqref{eq:GSPDE} on  $\mathcal C([0,\infty), \mathcal C(\mathbb R, [0,z^*]))$.
	The following lemma is  standard, and is  known as the weak comparison principle.
	(We include its proof in Appendix \ref{append-A}.)
	
	\begin{lemma} \label{eq:WC}
		Suppose that $u^{(1)}=(u^{(1)}_t(x))_{t\geq 0, x\in \mathbb R}$ and $u^{(2)}=(u^{(2)}_t(x))_{t\geq 0, x\in \mathbb R}$ are two solutions to the SPDE \eqref{eq:GSPDE} with initial values $f^{(1)}$ and $f^{(2)}$ in $\mathcal C(\mathbb R, [0,z^*])$ respectively.
		Suppose that $f^{(1)}\leq f^{(2)}$ on $\mathbb R$.
		Then $u^{(1)}$ is stochastically dominated by $u^{(2)}$, in the sense that, there exists a probability kernel $\mathscr K_{f^{(1)},f^{(2)}}$ on $\mathcal C([0,\infty), \mathcal C(\mathbb R, [0,z^*]))$ such that, for $\mathscr L_{f^{(1)}}$-a.s.~
		$w^{(1)} \in \mathcal C([0,\infty),\mathcal C(\mathbb R, [0,z^*]))$
		and $\mathscr K_{f^{(1)},f^{(2)}}(w^{(1)}, \cdot)$-a.s.~ $w^{(2)}$, we have \[w^{(1)}_t(x) \leq w^{(2)}_t(x), \quad t\geq 0, x\in \mathbb R;\]
		and that, for any Borel subset $A$ of $\mathcal C([0,\infty), \mathcal C(\mathbb R, [0,z^*]))$,
		\[
		\mathscr L_{f^{(2)}}(A)
		= \int \mathscr K_{f^{(1)},f^{(2)}}(w^{(1)}, A)\mathscr L_{f^{(1)}}(\mathrm{d} w^{(1)}) .
		\]
	\end{lemma}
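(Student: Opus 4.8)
The plan is to realize both solutions as functionals of a single common noise and then invoke the uniqueness in law established above. I would start from the observation that the coefficients $-\Phi(\cdot)$ and $\sqrt{\Psi(\cdot)}$ are not globally Lipschitz, but on the compact interval $[0,z^*]$ they enjoy exactly the structural features that drive the classical comparison arguments of Mueller, Shiga, and Donati-Martin–Pardoux: the drift $-\Phi$ is (locally) Lipschitz and the diffusion coefficient $\sqrt{\Psi}$ is Hölder-$\tfrac12$ with $\Psi$ nonnegative and vanishing at the endpoints by Lemma \ref{lem:CC}. The standard route is: (a) build a single stochastic basis carrying one space-time white noise $W$ and, on it, two adapted $\mathcal C(\mathbb R,[0,z^*])$-valued solutions $\hat u^{(1)}$, $\hat u^{(2)}$ to \eqref{eq:GSPDE} with initial data $f^{(1)}\le f^{(2)}$ and with the ordering $\hat u^{(1)}_t(x)\le \hat u^{(2)}_t(x)$ holding for all $t\ge0$, $x\in\mathbb R$, almost surely; (b) read off the kernel $\mathscr K_{f^{(1)},f^{(2)}}$ as the regular conditional law of $\hat u^{(2)}$ given $\hat u^{(1)}$ on the Polish path space $\mathcal C([0,\infty),\mathcal C(\mathbb R,[0,z^*]))$; (c) conclude via uniqueness in law that the $\hat u^{(i)}$-marginals are $\mathscr L_{f^{(i)}}$, so that the disintegration identity $\mathscr L_{f^{(2)}}(A)=\int \mathscr K_{f^{(1)},f^{(2)}}(w^{(1)},A)\,\mathscr L_{f^{(1)}}(\diff w^{(1)})$ holds automatically.

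For step (a) I would argue by a smoothing-and-limiting scheme. First mollify the coefficients: pick Lipschitz $\Phi_\epsilon\to\Phi$ and strictly-positive-away-from-the-boundary, Lipschitz $\sigma_\epsilon\to\sqrt\Psi$ on $[0,z^*]$ with $\sigma_\epsilon$ still vanishing at $0$ and $z^*$ to keep the solutions confined. For each $\epsilon$ pathwise uniqueness holds by standard $L^2$-energy estimates, so one may couple the two regularized equations on the same noise $W$. The comparison $\hat u^{(1),\epsilon}_t(x)\le \hat u^{(2),\epsilon}_t(x)$ then follows from the classical argument: test the difference $D:=\hat u^{(1),\epsilon}-\hat u^{(2),\epsilon}$ against the mild formulation, apply a smooth convex approximation of $x\mapsto x_+^2$, and use that the martingale part has quadratic variation controlled by $|\sigma_\epsilon(a)-\sigma_\epsilon(b)|^2\le C|a-b|^2$ while the drift contributes a nonpositive term on $\{D>0\}$ by the (one-sided) Lipschitz bound; Gronwall then gives $\mathbb E\!\int (D_t(x))_+^2\,\phi(x)\,\diff x=0$ for every $t$ and every test weight $\phi$. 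Passing $\epsilon\downarrow0$ along a subsequence, using standard moment bounds and tightness on $\mathcal C([0,\infty),\mathcal C(\mathbb R,[0,z^*]))$ (Kolmogorov-type estimates for the stochastic heat equation, uniform in $\epsilon$), one obtains in the limit two solutions of the genuine SPDE \eqref{eq:GSPDE} on a common basis, still satisfying the pointwise ordering since "$\le$" is closed under the convergence.

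The main obstacle I anticipate is precisely the passage to the limit in the diffusion coefficient: because $\sqrt{\Psi}$ is only Hölder-$\tfrac12$, one does not have pathwise uniqueness for \eqref{eq:GSPDE} itself, so the limiting pair $(\hat u^{(1)},\hat u^{(2)})$ must be extracted by a tightness/Skorokhod argument rather than by direct convergence, and one must check that each limiting marginal is genuinely a weak solution of \eqref{eq:GSPDE} with the correct initial datum — this is where the identification of the stochastic-integral term (via the martingale problem for the white-noise-driven equation, à la Dawson–Kurtz) needs care. A lighter alternative, which I would mention as a remark, is to bypass (a) entirely and instead build the coupling from the duality itself: for monotone test functions the moment-duality \eqref{eq:Duality} together with an approximation of indicator-type functionals by polynomials in the $1-u_t(x_i)$ already yields stochastic ordering of the one-dimensional, then finite-dimensional, distributions, and a standard Strassen-type theorem on the Polish path space upgrades this to the existence of the kernel $\mathscr K_{f^{(1)},f^{(2)}}$; this keeps everything within the toolkit the paper has already set up and avoids re-deriving SPDE comparison from scratch.
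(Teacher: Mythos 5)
Your main argument is essentially the paper's proof: approximate $\sqrt{\Psi}$ by Lipschitz coefficients vanishing at $0$ and $z^*$, couple the approximating equations on a common noise and compare them (the paper simply invokes Shiga's strong comparison principle \cite[Corollary 2.4]{MR1271224} instead of re-running the Gronwall argument), extract a joint subsequential limit by tightness, identify each marginal law as $\mathscr L_{f^{(i)}}$ via weak-solution identification plus the uniqueness in law already established, and obtain $\mathscr K_{f^{(1)},f^{(2)}}$ as the regular conditional distribution by disintegration. The one substantive deviation is that you insist the limiting pair solve \eqref{eq:GSPDE} on a common stochastic basis, which is more than the lemma requires and is exactly where you anticipate trouble --- the paper sidesteps this by working only with the joint law of the ordered pair; also, your closing ``duality plus Strassen'' alternative is shaky, since with $z^*>1$ the dual functional $\prod_i(1-u_t(x_i))$ is not monotone in $u$, so those moments do not obviously yield stochastic ordering.
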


	For our purpose, we sometimes need the initial value of the SPDE \eqref{eq:GSPDE} to be the non-decreasing limit of a sequence of continuous functions on $\mathbb R$.

	\begin{proposition} \label{prop:GI}
		Let $g$ be a measurable function on $\mathbb R$ which can be approximated by the elements of $\mathcal C(\mathbb R, [0,z^*])$ monotonically from below, i.e.~ there exists a pointwisely non-decreasing sequence $(f^{(m)})_{m=1}^\infty$ in $\mathcal C(\mathbb R, [0,z^*])$ such that $f^{(m)}(x) \uparrow g(x)$ for every $x\in \mathbb R$ as $m\uparrow \infty$.
		Then, there exists a $\mathcal C(\mathbb R, [0,z^*])$-valued continuous process $(u_t)_{t>0}$  with initial value $u_0:=g$, on a probability space whose probability measure will be denoted by $\tilde {\mathbb P}_g$, such that the following two statements hold.
		\begin{itemize}
			\item[\eq \label{eq:S1}] For each $\phi \in \mathcal C_\mathrm c^\infty(\mathbb R)$, almost surely
			\begin{align}
				\int u_t(x) \phi(x) \mathrm{d} x & = \int g(x) \phi(x) \mathrm{d} x +  \iint_0^t u_s(y) \frac{\phi''(y)}{2} \mathrm{d} s \mathrm{d} y  - {}
				\\& \qquad \iint_0^t \Phi(u_s(y)) \phi(y) \mathrm{d} s \mathrm{d} y + M_t^{(\phi)},  \quad t\geq 0,
			\end{align}
			where $(M^{(\phi)}_t)_{t\geq 0}$ is a $(\mathcal G_t)_{t\geq 0}$-adapted  continuous martingale with quadratic variation
			\begin{equation}
				\chv{M_\cdot^{(\phi)}}_t  = \iint_0^t \Psi(u_s(y)) \phi(y)^2 \mathrm{d} s\mathrm{d} y, \quad t\geq 0
			\end{equation}
			and $(\mathcal G_t)_{t\geq 0}$ is the natural filtration of the process $(u_t)_{t\geq 0}$.
			\item[\eq\label{eq:S2}] For every  $t\geq 0$, the duality formula \eqref{eq:Duality} holds with $f$ being replaced by $g$.
		\end{itemize}
	\end{proposition}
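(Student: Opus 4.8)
The plan is to realise $u$ as the increasing limit, along the approximating sequence $(f^{(m)})_m$, of solutions of the SPDE~\eqref{eq:GSPDE}, and then to pass the duality~\eqref{eq:Duality} and the weak form of~\eqref{eq:mild} to that limit. First, for each $m$ take a solution $u^{(m)}$ of~\eqref{eq:GSPDE} with $u^{(m)}_0=f^{(m)}$ and law $\mathscr L_{f^{(m)}}$ (weak existence and uniqueness in law have been established above). Since $f^{(m)}\le f^{(m+1)}$, Lemma~\ref{eq:WC} supplies the comparison kernels $\mathscr K_{f^{(m)},f^{(m+1)}}$; chaining them by the Ionescu--Tulcea theorem gives, on a single probability space with measure $\tilde{\mathbb P}_g$, processes $(u^{(m)})_{m\ge 1}$ with $u^{(m)}\sim\mathscr L_{f^{(m)}}$ for every $m$ and, almost surely, $u^{(m)}_t(x)\le u^{(m+1)}_t(x)$ for all $t\ge 0$, $x\in\mathbb R$. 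I then set $u_t(x):=\sup_m u^{(m)}_t(x)\in[0,z^*]$, so that $u_0=\sup_m f^{(m)}=g$. It is convenient, and the proof of Lemma~\ref{eq:WC} permits it, to take this coupling non-anticipative, for instance by putting all the $u^{(m)}$ on a common stochastic basis driven by a single white noise with $u^{(m)}\le u^{(m+1)}$ holding pathwise.

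Next I would control the regularity of the limit away from $t=0$. Starting from the mild form~\eqref{eq:mild}, using that $\Phi$, $\Psi$ and $\sqrt{\Psi}$ are bounded on $[0,z^*]$ with $\Psi\ge 0$ there (Lemma~\ref{lem:CC}), and standard BDG/factorisation estimates, one gets a Kolmogorov-type bound on the space--time increments of $u^{(m)}$ over any region $[a,b]\times K$ ($0<a<b$, $K\subset\mathbb R$ compact) with a constant depending on $a,b,K$ but \emph{not on $m$}: the uniformity rests on the initial data being bounded by $z^*$ and on the heat term $\int p_t(x-y)f^{(m)}(y)\,\mathrm dy$ being, for $t\ge a$, equicontinuous in $(t,x)$ with a modulus controlled only by $a$ and $z^*$. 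Letting $m\to\infty$ and using Fatou, $u$ obeys the same increment bound, hence has a continuous (indeed locally Hölder) modification on $(0,\infty)\times\mathbb R$, which I take as the process $(u_t)_{t>0}$; it stays $[0,z^*]$-valued (check at rational points, then use continuity), and $u^{(m)}\to u$ locally uniformly on $(0,\infty)\times\mathbb R$ almost surely.

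With $u$ so constructed, \eqref{eq:S2} follows by letting $m\to\infty$ in Proposition~\ref{prop:D} with $f=f^{(m)}$: on the left $u^{(m)}_t(x_i)\uparrow u_t(x_i)$ and each factor lies in $[1-z^*,1]\subseteq[-1,1]$, so the $n$-fold product is bounded by $1$ and converges; on the right the product over $\alpha\in I_t^{(n)}$ is almost surely finite (Proposition~\ref{prop:WD}), each factor converges as $f^{(m)}\to g$ pointwise, and is again bounded by $1$; dominated convergence on both sides yields~\eqref{eq:S2}. For~\eqref{eq:S1}, each mild solution $u^{(m)}$ also solves the weak (distributional) form of~\eqref{eq:mild} with a martingale part $M^{(m),\phi}$ of quadratic variation $\iint_0^t\Psi(u^{(m)}_s(y))\phi(y)^2\,\mathrm ds\,\mathrm dy$; as $m\to\infty$ the three non-martingale terms converge by monotone and dominated convergence (continuity of $\Phi$), so $M^{(m),\phi}_t$ converges to the $M^{(\phi)}_t$ defined by~\eqref{eq:S1}. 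As $\phi$ is compactly supported and all integrands bounded, $M^{(m),\phi}$ is uniformly bounded on compacts in time; with the non-anticipative coupling this makes $M^{(\phi)}$ a bounded martingale in the joint filtration of $(u^{(m)})_m$, hence, being adapted to the smaller natural filtration $(\mathcal G_t)$ of $u$, a $(\mathcal G_t)$-martingale, with quadratic variation $\iint_0^t\Psi(u_s(y))\phi(y)^2\,\mathrm ds\,\mathrm dy$ by continuity of $\Psi$. Continuity of $M^{(\phi)}$ on $(0,\infty)$ follows from that of $u$; continuity at $t=0$ (equivalently $\int u_t\phi\to\int g\phi$) follows from the backward martingale convergence theorem applied to the bounded martingale $M^{(\phi)}$, combined with a short uniform-in-$m$ estimate from~\eqref{eq:mild} giving $\mathbb E[\int u_t\phi]\to\int g\phi$ and with $\liminf_{t\downarrow 0}(\int u_t\phi-\int g\phi)\ge 0$ for $\phi\ge 0$, so that the (nonnegative) limiting increment has zero mean and hence vanishes almost surely; general $\phi$ reduces to this by taking smooth positive parts.

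The main obstacle I expect is the regularity step: extracting a priori increment estimates for~\eqref{eq:GSPDE} that are uniform over the bounded approximating initial data, so that the increasing limit is genuinely jointly continuous on $(0,\infty)\times\mathbb R$. A second delicate point lies in~\eqref{eq:S1}: the filtration bookkeeping needed for the martingale property (and hence the quadratic variation) to survive the limit---this is where the non-anticipative coupling is used---together with the behaviour as $t\downarrow 0$, where $u_t$ converges to $g$ only in the weak sense against $\mathcal C_{\mathrm c}^\infty(\mathbb R)$, not pointwise.
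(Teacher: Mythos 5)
Your overall strategy coincides with the paper's: couple the approximations $u^{(m)}$ monotonically via the kernels of Lemma \ref{eq:WC}, prove increment moment bounds uniform in $m$, take the pointwise increasing limit together with a continuous modification, and pass both the duality of Proposition \ref{prop:D} and the weak formulation with its martingale part to the limit by bounded convergence. Your treatment of regularity differs in detail: you work on $[a,b]\times K$ with $a>0$ and handle $t\downarrow 0$ separately (first moment from the mild form, monotonicity, downward martingale convergence), whereas the paper subtracts the heat-semigroup and drift terms and proves the uniform Kolmogorov bound for the remaining stochastic-convolution part $U^{(m)}$, so that the limiting $U$ is jointly continuous on $[0,\infty)\times\mathbb R$ with $U_0=0$ and $\int u_t\phi\to\int g\phi$ comes for free. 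Your variant can be made to work, though the claimed locally uniform convergence $u^{(m)}\to u$ (Dini) is neither needed nor immediate, since the increasing pointwise limit $\bar u$ agrees with its continuous modification only at fixed points almost surely, and the reduction of general $\phi$ to nonnegative $\phi$ should be done by writing $\phi$ as a difference of two nonnegative smooth compactly supported functions rather than by ``smooth positive parts''.

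The step I would flag as a genuine gap is the ``non-anticipative coupling driven by a single white noise'', which you invoke to make $M^{(\phi)}$ a martingale in a joint filtration and hence in $(\mathcal G_t)_{t\geq 0}$. Lemma \ref{eq:WC} only furnishes a coupling through a probability kernel, obtained from distributional limits of Lipschitz approximations; it does not produce ordered solutions driven by one common noise, and neither pathwise uniqueness nor a strong comparison principle for the non-Lipschitz equation \eqref{eq:GSPDE} is available (uniqueness holds only in law, via duality), so the assertion that ``the proof of Lemma \ref{eq:WC} permits it'' is not justified as stated; building such a common-noise coupling would require an additional joint-limit argument you have not supplied. The gap is repairable without it: since each $M^{(m,\phi)}$ is a martingale for the natural filtration of $u^{(m)}$, test the martingale identities against bounded continuous functionals $G\bigl(u^{(m)}_{s_1}(y_1),\dots,u^{(m)}_{s_N}(y_N)\bigr)$ with $s_i\le s$, pass to the limit $m\to\infty$ using the almost sure monotone convergence and the deterministic uniform bounds on $M^{(m,\phi)}_t$ and on $\bigl(M^{(m,\phi)}_t\bigr)^2-\langle M^{(m,\phi)}_\cdot\rangle_t$, and conclude by a monotone class argument that $M^{(\phi)}$ and $\bigl(M^{(\phi)}_t\bigr)^2-\iint_0^t\Psi(u_s(y))\phi(y)^2\,\diff s\,\diff y$ are $(\mathcal G_t)_{t\geq 0}$-martingales; this is exactly how the paper proceeds, and it needs only the kernel coupling you already have.
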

	The proof of the above proposition is standard using the weak comparison principle.
	(We include a proof in Appendix \ref{append-A}.)

	\section{Probabilistic estimates for the dual SPDE} \label{sec:DSPDE}
	
	In this section, we give some probabilistic estimates for the dual SPDE $(u_t)_{t\geq 0}$.
	Especially, we give upper/lower bounds for some finite/infinite moments of the random field $1-u_t$ when the initial value $u_0$ is close to $0$.
	Those bounds will be crucial for the proof of Theorems \ref{thm:existence-of-Z-t} and \ref{Comming-down-finite-first-moment} in the later sections.

	Let $(x_i)_{i=1}^\infty$ be a sequence in $\mathbb R$, and let $(\Lambda, \mu) \in \mathcal T_\mathrm a$ denote the m-weak limit of the monotonically increasing sequence of counting measures $(\sum_{i=1}^n \delta_{x_i})_{n\in\mathbb N}$.
	For every $n \in \mathbb N$, let $(I_t^{(n)})_{t\geq 0}$, $(X^{(n),\alpha}_{t})_{\alpha \in I_t^{(n)},t\geq 0}$ and $(Z^{(n)}_t)_{t\geq 0}$ be notations  $(I_t)_{t\geq 0}$, $(X^{\alpha}_{t})_{\alpha \in I_t,t\geq 0}$ and $(Z_t)_{t\geq 0}$ given as in Subsection \ref{sec:MR} (right after Proposition \ref{prop:WD}) for an SBBM with initial configuration $(x_i)_{i=1}^n$, and
	parameters $\beta_{\mathrm o}$, $(p_k)_{k=0}^\infty$, $\beta_{\mathrm c}$ and $(q_k)_{k=0}^\infty$ given as in \eqref{eq:OBR}--\eqref{eq:CB}.
	Assume that \eqref{asp:A3}, \eqref{asp:A1} and \eqref{asp:A2} hold.
	For every $(\tilde \Lambda, \tilde \mu)\in \mathcal T$, let $(v_t^{(\tilde \Lambda, \tilde \mu)}(x))_{t>0,x\in \mathbb R} \in \mathcal C^{1,2}((0,\infty)\times \mathbb R)$ be the unique non-negative solution to the CDI Profile Equation \eqref{PDE}.
	Let $\Phi$ and $\Psi$ be given as in \eqref{Def-Phi} and \eqref{Def-Psi} respectively.
	Let $f$ be a measurable function on $\mathbb R$ which can be approximated by the elements of $\mathcal C(\mathbb R, [0,z^*])$ monotonically from below.
	Let $(u_t)_{t> 0}$ be the continuous $\mathcal C(\mathbb R, [0,z^*])$-valued process given as in Proposition \ref{prop:GI}, with  initial value $u_0 := f$,  on a probability space whose probability measure will be denoted by $\tilde {\mathbb P}_f$.

	When the initial value $u_0$ is close to $0$, the behavior of the random field $(u_t)_{t\geq 0}$ is largely related to the linearization of the functions $\Phi$ and $\Psi$ at $0$.
	The following analytical lemma helps us with the linearization technique.

	\begin{lemma}\label{lem: general-Bernoulli-ineq}
		Suppose that $N\in \mathbb N$ and $(z_i)_{i=1}^N$ is a finite list in $[0,2]$, then
		\begin{align}
			0\leq 1- \prod_{i=1}^N (1-z_i)\leq  \sum_{i=1}^N z_i.
		\end{align}
	\end{lemma}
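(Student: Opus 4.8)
The plan is to prove the two inequalities separately: the left-hand one directly, and the right-hand one by a short induction on $N$.

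For the lower bound, observe that since each $z_i\in[0,2]$ we have $1-z_i\in[-1,1]$, so $\abs{1-z_i}\le 1$ and hence $\abs{\prod_{i=1}^N(1-z_i)}\le 1$; in particular $\prod_{i=1}^N(1-z_i)\le 1$, which is exactly $1-\prod_{i=1}^N(1-z_i)\ge 0$.

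For the upper bound, I would induct on $N$. When $N=1$ the claim is the identity $1-(1-z_1)=z_1$. Assuming the bound for $N$, set $P:=\prod_{i=1}^N(1-z_i)$; by the previous paragraph $P\le 1$, so, since $z_{N+1}\ge 0$, we have $z_{N+1}P\le z_{N+1}$. Combining this with the inductive hypothesis $1-P\le\sum_{i=1}^N z_i$ yields
\[
1-\prod_{i=1}^{N+1}(1-z_i)=1-(1-z_{N+1})P=(1-P)+z_{N+1}P\le \sum_{i=1}^N z_i+z_{N+1}=\sum_{i=1}^{N+1}z_i,
\]
completing the induction. Alternatively one can bypass the induction by using the telescoping identity $1-\prod_{i=1}^N(1-z_i)=\sum_{j=1}^N z_j\prod_{i<j}(1-z_i)$ and estimating each partial product by $1$ in absolute value, which gives $z_j\prod_{i<j}(1-z_i)\le z_j$ term by term.

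There is essentially no obstacle here; the only subtlety worth flagging is that the factors $1-z_i$ can be negative, so the inequality $\prod_{i=1}^N(1-z_i)\le 1$ must be deduced from $\abs{1-z_i}\le 1$ rather than by pretending the product is a product of numbers in $[0,1]$.
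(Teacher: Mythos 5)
Your proof is correct, and it takes a different route from the one in the paper's supplement. The paper argues by cases according to how many of the $z_i$ lie in $(1,2]$: if all $z_i\in[0,1]$ it invokes the Bernoulli--Weierstrass inequality $\prod_{i}(1-z_i)\ge 1-\sum_i z_i$ directly; if at least two exceed $1$ it observes that $1-\prod_i(1-z_i)\le 2<\sum_i z_i$; and in the delicate case of exactly one large factor, say $z_1\in(1,2]$, it uses the splitting $1-z_1=-\tfrac{z_1}{2}+\bigl(1-\tfrac{z_1}{2}\bigr)$ together with Bernoulli applied to the remaining factors. Your argument — induction on $N$ using only $\prod_{i=1}^N(1-z_i)\le 1$ (which follows from $\abs{1-z_i}\le 1$) and $z_{N+1}\ge 0$, or equivalently the telescoping identity $1-\prod_{i=1}^N(1-z_i)=\sum_{j=1}^N z_j\prod_{i<j}(1-z_i)$ with each partial product bounded by $1$ — is uniform in the $z_i$ and dispenses with the case analysis entirely; it also makes transparent that the same observation $\abs{1-z_i}\le 1$ yields both the lower and the upper bound. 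The paper's version, by contrast, leans on the classical Bernoulli inequality as a black box and exploits the specific constant $2$ to trivialize the case of two large factors, at the cost of the somewhat ad hoc splitting in the remaining case. You also correctly flagged the only real pitfall, namely that the factors may be negative, so $\prod_i(1-z_i)\le 1$ must come from the absolute-value bound rather than from monotonicity of products of numbers in $[0,1]$.
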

	\begin{proof}
		If  $z_i\in [0,1]$ for all $i=1, \dots, N$,  then the desired  inequality follows from Bernoulli's inequality.
		If there is $i\neq j$ such that $z_i, z_j >1$, then $0\leq 1- \prod_{i=1}^N (1-z_i)\leq 2< \sum_{i=1}^N z_i.$
		Otherwise, there exists only one $i_0$ such that $z_{j}\in [0,1]$ for all $j\neq i_0$ and that $z_{i_0}\in(1,2].$
		Without loss of generality, we assume in this case $i_0=1$.
		Then by Bernoulli's inequality,
		\[
		\prod_{i=1}^N(1-z_i) = -\frac{z_1}{2}\prod_{i=2}^N(1-z_i) + (1-\frac{z_1}{2})\prod_{i=2}^N(1-z_i) \geq -\frac{z_1}{2}+1-\frac{z_1}{2}-\sum_{i=2}^N z_i,
		\]
		which again implies the desired result.
	\end{proof}
	
	Combining  Lemmas \ref{lem:CC} and \ref{lem: general-Bernoulli-ineq}, as well as the
	fact
	that $ (1-z)^k \leq 1$  for every $z\in [0,2]$ and $k\in \mathbb N$,
	we see that  for all $z\in [0,z^*]\subset [0,2]$,
	\begin{align}\label{eq: bounds-for-branching-mechanism}
		& -\lambda_{\mathrm o} z\leq \Phi'(0+) z \leq \Phi(z) \leq \beta_{\mathrm o} z \ \mbox{ and }\  0\leq \Psi(z) \leq 2\beta_{\mathrm c} z,
	\end{align}
	where $\lambda_{\mathrm o}$ is defined in \eqref{Def-of-lambda}.
	Moreover, define
	\begin{align}\label{Def-Of-kappa}
		\kappa( \tilde \gamma ):=\inf_{w\in [0,  \tilde \gamma ]} \frac{\Psi'(0+)w}{\Psi(w)}, \quad \tilde \gamma \in [0,1).
	\end{align}
	It is clear that  $\kappa (\tilde \gamma) \in [0,1]$ and  $\lim_{\tilde \gamma\to 0} \kappa(\tilde \gamma) =1$.
	With those linear bounds of the functions $\Phi$ and $\Psi$, we  have  a preliminary upper bound for the first moment of the random field $u_t$.
	
	\begin{lemma}\label{Upper-bound-w-t-x-2}
		For every $t>0$ and $x\in \mathbb R$, it holds that
		\begin{align}
			& \tilde{\mathbb{E}}_{f} \brk{u_{t}(x) }
			\leq  e^{-\Phi'(0+)t} \mathbf{E}_x[f(B_t)]
			\leq e^{\lambda_{\mathrm o} t} \mathbf{E}_x[f(B_t)],
		\end{align}
		where $(B_t)_{t\geq 0}$ is a 1-d standard Brownian motion with initial value $x$ under the expectation operator $\mathbf E_x$.
	\end{lemma}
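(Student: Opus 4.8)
The second inequality is elementary: $f$ is $[0,z^*]$-valued, so $\mathbf E_x[f(B_t)]\ge 0$, and \eqref{eq: bounds-for-branching-mechanism} gives $\Phi'(0+)\ge -\lambda_{\mathrm o}$, hence $e^{-\Phi'(0+)t}\le e^{\lambda_{\mathrm o}t}$. So the substance is the first inequality $\tilde{\mathbb E}_f[u_t(x)]\le e^{-\Phi'(0+)t}\mathbf E_x[f(B_t)]$. The plan is to read this off from the weak formulation \eqref{eq:S1} of $(u_t)_{t>0}$ after a multiplicative tilt, using the linear lower bound $\Phi(z)\ge\Phi'(0+)z$ on $[0,z^*]$ from \eqref{eq: bounds-for-branching-mechanism}. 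Abbreviate $c:=\Phi'(0+)$ and $h_s(y):=\tilde{\mathbb E}_f[u_s(y)]$, which lies in $[0,z^*]$ because $u_s(y)$ does. Note that this argument will be insensitive to the sign of $c$, so it handles all branching regimes at once.

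Fix $T>0$ and a non-negative $\psi\in\mathcal C_\mathrm c^\infty(\mathbb R)$, and set
\[
\phi_s(y):=e^{cs}\int p_{T-s}(y-z)\psi(z)\,\diff z,\qquad 0\le s\le T,\ y\in\mathbb R,
\]
which is smooth in $y$ with all spatial derivatives bounded, $\mathcal C^1$ in $s$, non-negative, and satisfies $\partial_s\phi_s+\tfrac12\phi_s''=c\phi_s$ together with $\phi_T=e^{cT}\psi$ and $\phi_0=\int p_T(\,\cdot\,-z)\psi(z)\,\diff z$. First I would upgrade \eqref{eq:S1} to this time-dependent test function — standard, by approximating $\phi_s$ by finite sums $\sum_i a_i(s)\phi_i$ with $a_i\in\mathcal C^1$ and $\phi_i\in\mathcal C_\mathrm c^\infty$ and using the product rule, all limits being controlled since $u_s$, $|\Phi(u_s)|$ and $\Psi(u_s)$ are bounded on $[0,z^*]$ (using Lemma \ref{lem:CC} for $\Psi\ge 0$) — to obtain
\[
e^{cT}\!\int u_T(y)\psi(y)\,\diff y-\int f(y)\phi_0(y)\,\diff y=\int_0^T\!\!\int\bigl(c\,u_s(y)-\Phi(u_s(y))\bigr)\phi_s(y)\,\diff y\,\diff s+\tilde M_T,
\]
where $(\tilde M_t)_{0\le t\le T}$ is a continuous martingale, null at $0$, with $\langle\tilde M\rangle_T=\int_0^T\!\int\Psi(u_s(y))\phi_s(y)^2\,\diff y\,\diff s$; this has finite expectation by boundedness of $\Psi(u_s)$ and integrability of $\phi_s^2$, so $\mathbb E[\tilde M_T]=0$.

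Taking $\tilde{\mathbb E}_f$ of this identity (interchanging expectation with the $\diff s\,\diff y$-integral by Fubini, legitimate by the same uniform bounds): since $\Phi(z)\ge cz$ on $[0,z^*]$ yields $\tilde{\mathbb E}_f[\Phi(u_s(y))]\ge c\,h_s(y)$, and since $\phi_s\ge 0$, the right-hand integral is $\le 0$, and therefore $e^{cT}\int h_T(y)\psi(y)\,\diff y\le\int f(y)\phi_0(y)\,\diff y=\int\mathbf E_y[f(B_T)]\,\psi(y)\,\diff y$. Letting $\psi\ge0$ range over $\mathcal C_\mathrm c^\infty(\mathbb R)$ gives $e^{cT}h_T(y)\le\mathbf E_y[f(B_T)]$ for a.e.\ $y$, and both sides are continuous in $y$ (the left by dominated convergence using $u_T\in[0,z^*]$, the right by smoothness of the heat kernel at time $T>0$), so the inequality holds for every $y$; this is the first inequality at time $t=T$, and the lemma follows. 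The only genuinely delicate points are the extension of \eqref{eq:S1} to the time-dependent test function $\phi_s$ and the centredness of $\tilde M_T$, and both reduce to the uniform bounds on $u$, $\Phi(u)$, $\Psi(u)$ on $[0,z^*]$. (Equivalently, working with the mild form, the computation collapses to the observation that $e^{cs}u_s$ solves the mild equation with the extra drift $e^{cs}(c\,u_s-\Phi(u_s))\le 0$, so taking expectations and using positivity of the heat kernel gives $e^{cT}\tilde{\mathbb E}_f[u_T(x)]\le\mathbf E_x[f(B_T)]$ directly.)
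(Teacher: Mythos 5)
Your proof is correct, and it takes a route that differs in mechanics, though not in substance, from the paper's. The paper takes the expectation of the mild form \eqref{eq:mild}, so that $h_t(x):=\tilde{\mathbb E}_f[u_t(x)]$ solves a deterministic heat equation with the potential $V(r,z)=\tilde{\mathbb E}_f[\Phi(u_r(z))]/\tilde{\mathbb E}_f[u_r(z)]$, and then applies the Feynman--Kac formula together with the bound $V\geq \Phi'(0+)$ (which, like your argument, rests on the linearization $\Phi(z)\geq\Phi'(0+)z$ from \eqref{eq: bounds-for-branching-mechanism}). You instead test the martingale-problem formulation \eqref{eq:S1} against the adjoint, exponentially tilted heat flow $\phi_s=e^{\Phi'(0+)s}P_{T-s}\psi$, which kills the Laplacian term and reduces everything to the sign of $\Phi'(0+)u_s-\Phi(u_s)\leq 0$. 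What your route buys: it works directly from what Proposition \ref{prop:GI} actually supplies (the martingale problem), without first constructing a driving noise and passing to the mild form, and it avoids the division by $\tilde{\mathbb E}_f[u_{t-s}(z)]$ in the Feynman--Kac potential, which in the paper needs a tacit convention where that expectation vanishes. What the paper's route buys: once the mild form is granted, the Feynman--Kac representation delivers the bound in two lines, with no extension of \eqref{eq:S1} to time-dependent, non-compactly-supported test functions. That extension (and the centredness of $\tilde M_T$) is the only place where your argument requires work, and, as you note, it is the standard cutoff/approximation argument controlled by the uniform bounds on $u$, $\Phi(u)$, $\Psi(u)$ on $[0,z^*]$ and the Gaussian decay of $\phi_s$; your parenthetical mild-form variant is essentially the paper's proof with the Feynman--Kac step replaced by positivity of the heat semigroup. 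The elementary reduction of the second inequality to $\Phi'(0+)\geq-\lambda_{\mathrm o}$ and $f\geq 0$ is also exactly right.
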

	\begin{proof}
		Taking expectation in the mild formula \eqref{eq:mild}, we see that for every $t>0$ and $x\in \mathbb R$,
		\begin{equation} \label{eq:R22}
			\tilde{\mathbb{E}}_{f} \brk{u_{t}(x) }
			=\int p_t(x-y) f(y) \mathrm{d} y - \iint_0^t p_{t-s}(x-y)\tilde{ \mathbb{E}}_{f}\brk{ \Phi(u_{s}(y)) }\mathrm{d}  s \mathrm{d}  y.
		\end{equation}
		Fix the arbitrary $t>0$.
		Define $g(r,x):=\tilde{\mathbb{E}}_{f} \brk{u_{t-r}(x) }$ and $\rho(r,x):=\frac{\tilde{\mathbb{E}}_{f} \brk{\Phi(u_{t-r}(x))}}{\tilde{\mathbb{E}}_{f} \brk{u_{t-r}(x)}}$ for every $r\in [0,t]$ and $x\in \mathbb R$.
		It is clear that $g$ is bounded on $[0,T] \times \mathbb R$; and so is $\rho$, thanks to \eqref{eq: bounds-for-branching-mechanism}.
		For every $(r,x)\in [0,T]\times \mathbb R$, denote by $(B_t)_{t\geq r}$ a 1-d standard Brownian motion initiated at time $r$ and location $x$ under the expectation operator $\mathbf E_{(r,x)}$.
		Then, for every $(r,x)\in [0,t)\times \mathbb R$,
		\begin{align}
			& g(r,x)+\mathbf E_{(r,x)}\brk{\int_r^t (\rho g)(s,B_s)\mathrm ds}
			\\&=\tilde{\mathbb{E}}_{f} \brk{u_{t-r}(x)} + \int_r^t \mathrm ds\int p_{s-r}(x-y) \tilde{ \mathbb{E}}_{f}\brk{ \Phi(u_{t-s}(y)) } \mathrm dy
			\\&\label{eq:R23} \overset{\eqref{eq:R22}}{=}\int p_{t-r}(x-y) f(y) \mathrm{d} y - \iint_0^{t-r} p_{t-r-s}(x-y)\tilde{ \mathbb{E}}_{f}\brk{ \Phi(u_{s}(y)) }\mathrm{d}  s \mathrm{d}  y
			\\&\qquad + \int_r^t \mathrm ds\int p_{s-r}(x-y) \tilde{ \mathbb{E}}_{f}\brk{ \Phi(u_{t-s}(y)) } \mathrm dy.
		\end{align}
		Noticing that the last two terms on the right hand side of \eqref{eq:R23} cancel each other, so
		\begin{equation}
			g(r,x)+\mathbf E_{(r,x)}\brk{\int_r^t (\rho g)(s,B_s)\mathrm ds}= \mathbf E_{(r,x)}[f(B_t)], \quad (r,x)\in [0,t)\times \mathbb R.
		\end{equation}
		Now, by Feynman-Kac formula \cite{MR1235414}*{Lemma 1.5 of Appendix of Part I},
		\begin{align}
			& \tilde{\mathbb{E}}_{f} \brk{u_{t}(x) }
			= g(0,x)
			= \mathbf{E}_{(0,x)} \brk{\exp\brc{- \int_0^t \rho(s,B_s) \mathrm{d}  s } f(B_t)}
			\\
			& \overset{\eqref{eq: bounds-for-branching-mechanism}}{\leq}  e^{-\Phi'(0+)t} \mathbf{E}_x[f(B_t)] \leq
			e^{\lambda_{\mathrm o} t} \mathbf{E}_x[f(B_t)], \quad x\in \mathbb R.
		\end{align}
	\end{proof}

	Rather than the first moment, we are more interested in the infinite moments of the random field $1-u_t$, since they arise naturally when one takes $n$ to $\infty$ in the duality relation \eqref{eq:Duality}.
	To this end, we need to work with the infinite product.
	For a sequence of real numbers $(z_i)_{i\in \mathbb N}$, define \[\prod_{i=1}^\infty z_i = \lim_{m\to \infty} \prod_{i=1}^m z_i\] whenever the limit exists.
	(This definition is standard, see \cite{MR0924157}*{Definition 15.2}.)
	Recall that $z^* \in [0,2)$.
	For every $[0,z^*]$-valued measurable function $u$ on $\mathbb R$, closed subset $\tilde \Lambda$ of $\mathbb R$, and integer-valued locally finite measure $\tilde \mu$ on $\tilde \Lambda^{\mathrm c}$, define
	\begin{equation} \label{eq:IP}
		\prod_{x\in \tilde {\Lambda}^{\mathrm c}} \pr{1-u(x)}^{\tilde \mu(\{x\})}
		:= \prod_{i=1}^{\tilde \mu(\tilde \Lambda^\mathrm c)} (1-u(z_i))
	\end{equation}
	where $(z_i)_{i=1}^{\tilde \mu(\tilde \Lambda^\mathrm c)}$ is a (possibly finite or infinite) sequence in $\tilde \Lambda^\mathrm c$ such that $\tilde \mu = \sum_{i=1}^{\tilde \mu(\tilde \Lambda^\mathrm c)} \delta_{z_i}$.
	This definition does not depend on the particular choice of the sequence $(z_i)_{i=1}^{\tilde \mu(\tilde \Lambda^\mathrm c)}$ thanks to the following lemma.
	\begin{lemma}\label{eq:infinite-product}
		Let $u$ be a $[0,z^*]$-valued measurable function on $\mathbb R$, $\tilde \Lambda$ a closed subset of $\mathbb R$, and $\tilde \mu$ an integer-valued locally finite measure  on $\tilde \Lambda^{\mathrm c}$.
		Then for any (possibly finite or infinite) list $(\tilde x_i)_{i=1}^{\tilde \mu(\tilde \Lambda^\mathrm c)}$ in $\tilde \Lambda^\mathrm c$ such that $\tilde \mu = \sum_{i=1}^{\tilde \mu(\tilde \Lambda^\mathrm c)} \delta_{\tilde x_i}$, it holds that
		\begin{align}\label{eq:IPP}
			& \qquad \prod_{i=1}^{\tilde \mu(\tilde \Lambda^\mathrm c)} (1-u(\tilde x_i))
			\\&=
			\mathbf 1_{\brc{\tilde \mu\pr{ \brc{x\in \tilde \Lambda^{\mathrm c}: u(x) \in (1,z^*]}} < \infty}}
			(-1)^{\tilde \mu(\{x\in \tilde \Lambda^{\mathrm c}: u(x) \in (1,z^*]\})}
			\exp \brc{\int_{\tilde \Lambda^\mathrm c} \log \pr{\abs{1-u(x)}} \tilde\mu(\mathrm dx)}.
		\end{align}
	\end{lemma}
	
	\begin{proof}
		If $\tilde \mu$ is a finite measure on $\tilde \Lambda^{\mathrm c}$ then the desired result is trivial. So we assume that $\tilde \mu(\tilde \Lambda^\mathrm c) = \infty$.
		
		\firststep Suppose that \[\tilde \mu\pr{ \brc{x\in \tilde \Lambda^{\mathrm c}: u(x) \in (1,z^*]}} = \infty.\]
		In this case, the right hand side of \eqref{eq:IPP} is zero.
		Let $I$ be the collection of indices $i\in \mathbb N$ such that $1< u(\tilde x_i) \leq z^*$.
		Then,
		\begin{equation}\label{eq:IFI}
			\abs{I}=\sum_{i=1}^{\infty} \mathbf 1_{\brc{x\in \tilde \Lambda^{\mathrm c}: u(x) \in (1,z^*]}}(\tilde x_i) = \tilde \mu\pr{ \brc{x\in \tilde \Lambda^{\mathrm c}: u(x) \in (1,z^*]}} = \infty.
		\end{equation}
		Note by Lemma \ref{lem:CC} that $1\leq z^*<2$, and therefore
		\begin{itemize}
			\item[\eq\label{eq:zstar}] for any $a\in [0,z^*]$, $|1-a| \leq 1$.
		\end{itemize}
		For any $n\in \mathbb N$, it holds that
		\begin{align}
			& \abs{\prod_{i=1}^{n} (1-u(\tilde x_i))}
			= \pr{\prod_{i\in I \cap \{1,\dots, n\}} \abs{1-u(\tilde x_i)}} \pr{\prod_{i\in I^{\mathrm{c}} \cap \{1,\dots, n\}} \abs{1-u(\tilde x_i)} }
			\\&\overset{\eqref{eq:zstar}}{\leq} \prod_{i\in I \cap \{1,\dots, n\}} \abs{1-u(\tilde x_i)}
			\leq \pr{z^* - 1}^{|I \cap \{1,\dots, n\}|}.
		\end{align}
		Now
		\begin{equation}
			\abs{\prod_{i=1}^{\infty} (1-u(\tilde x_i))} \leq \limsup_{n\to \infty} ~ \pr{z^* - 1}^{|I \cap \{1,\dots, n\}|} \overset{\eqref{eq:IFI}}{=} 0
		\end{equation}
		which says that the desired equality \eqref{eq:IPP} holds in this case.

		\nextstep Suppose that \[\tilde \mu\pr{ \brc{x\in \tilde \Lambda^{\mathrm c}: u(x) \in (1,z^*]}} < \infty.\]
		Let $I$ be the collection of indices $i\in \mathbb N$ such that $1< u(\tilde x_i) \leq z^*$.
		Then,
		\begin{equation}
			\abs{I}=\sum_{i=1}^{\infty} \mathbf 1_{\brc{x\in \tilde \Lambda^{\mathrm c}: u(x) \in (1,z^*]}}(\tilde x_i) = \tilde \mu\pr{ \brc{x\in \tilde \Lambda^{\mathrm c}: u(x) \in (1,z^*]}} < \infty.
		\end{equation}
		Now, for any $n\in \mathbb N$,
		\begin{align}
			& \prod_{i=1}^{n} (1-u(\tilde x_i))
			= \pr{\prod_{i\in I \cap \{1,\dots, n\}} \pr{1-u(\tilde x_i)}} \pr{\prod_{i\in I^{\mathrm{c}} \cap \{1,\dots, n\}} \pr{1-u(\tilde x_i)} }
			\\&=(-1)^{|I\cap \{1,\dots, n\}|}\pr{\prod_{i\in I \cap \{1,\dots, n\}} \abs{1-u(\tilde x_i)}} \pr{\prod_{i\in I^{\mathrm{c}} \cap \{1,\dots, n\}} \abs{1-u(\tilde x_i)} }
			\\&=(-1)^{|I\cap \{1,\dots, n\}|} \exp \brc{\sum_{i=1}^n \log\pr{\abs{1-u(\tilde x_i)}}}.
		\end{align}
		Taking $n\to \infty$, we see that the desired equality \eqref{eq:IPP} also holds in this case.
	\end{proof}

	It is worth noting from  \cite{MR3642325}*{Theorem 4.2} and Lemma \ref{eq:infinite-product}
	that
	\begin{itemize}
		\item[\eq\label{eq:MM}]
		for every $[0,z^*]$-valued measurable function $u$ on $\mathbb R$,
		\[
		\tilde \mu \mapsto \prod_{x\in \mathbb R} \pr{1-u(x)}^{\tilde \mu(\{x\})}
		\]
		is a Borel measurable map from $\mathcal N$ to $(-1,1]$.
	\end{itemize}
	
	The next lemma shows how the infinite moments of the random field $1-u_t$ are related to the SBBM. It motivates the rest of the results of this section.
	\begin{lemma} \label{Lemma: Duality3}
		For any $t>0$,
		\begin{align}\label{eq:Duality3}
			& \lim_{n\to\infty}\mathbb E\brk{  \prod_{\alpha \in I_t^{(n)}} \pr{1-f\pr{X_t^{(n),\alpha}}}} = \tilde{\mathbb E}_{f} \brk{\prod_{i=1}^\infty \pr{1-u_t(x_i)}}
			\\&
			= \tilde{\mathbb E}_{f} \brk{\mathbf 1_{\brc{u_t(x)=0,\forall x\in  \Lambda}}\prod_{x\in \Lambda^{\mathrm c}} \pr{1-u_t(x)}^{\mu(\{x\})}}.
		\end{align}
	\end{lemma}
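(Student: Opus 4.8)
The plan is to pass to the limit $n\to\infty$ in the finite-particle duality. By~\eqref{eq:S2} of Proposition~\ref{prop:GI}, the identity~\eqref{eq:Duality} holds with the present $f$ in place of the initial value there, that is,
\[
\mathbb E\brk{\prod_{\alpha\in I_t^{(n)}}\pr{1-f\pr{X_t^{(n),\alpha}}}}=\tilde{\mathbb E}_f\brk{\prod_{i=1}^n\pr{1-u_t(x_i)}},\qquad n\in\mathbb N.
\]
By Lemma~\ref{lem:CC} every factor appearing here lies in $[1-z^*,1]\subset(-1,1]$, so every partial product is bounded in modulus by $1$. Consequently, once I show that the partial products $P_n:=\prod_{i=1}^n\pr{1-u_t(x_i)}$ converge $\tilde{\mathbb P}_f$-almost surely, the bounded convergence theorem will give at once that the left-hand side above has a limit and that this limit equals $\tilde{\mathbb E}_f\brk{\prod_{i=1}^\infty\pr{1-u_t(x_i)}}$, which is the first claimed equality; after that it remains to identify the infinite product, pathwise, with the $(\Lambda,\mu)$-expression on the last line.

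For the almost sure convergence I would work on a fixed realisation of the continuous field $u_t$. The sequence $(\abs{P_n})$ is non-increasing, hence converges to some $c\ge0$. If $c=0$ then $P_n\to0$. If $c>0$ then $\sum_i\pr{1-\abs{1-u_t(x_i)}}<\infty$, so $\abs{1-u_t(x_i)}\to1$; since $z^*<2$ by Lemma~\ref{lem:CC}---this is exactly where the non-parity-preserving assumption~\eqref{asp:A2} is used---each index $i$ with $u_t(x_i)>1$ satisfies $\abs{1-u_t(x_i)}\le z^*-1<1$, so there are only finitely many such $i$; beyond them all factors are non-negative and the sign of $P_n$ is eventually constant, whence $(P_n)$ converges. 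The same argument applies verbatim to the partial products $\prod_{j\le m}\pr{1-u_t(z_j)}$ along any enumeration $(z_j)$, with multiplicity, of the atoms of a locally finite integer-valued measure, and by~\eqref{eq:infinite-product} the limiting value does not depend on the enumeration.

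To identify the limit I would split into two cases for the random continuous function $u_t$. If $u_t(y)>0$ for some $y\in\Lambda$, then continuity together with $u_t(y)\le z^*<2$ provides a neighbourhood $V$ of $y$ and $\delta>0$ with $\abs{1-u_t}\le1-\delta$ on $V$; by~\eqref{Def-Lambda}, $V$ contains infinitely many of the $x_i$, so $\abs{P_n}\to0$, while the $(\Lambda,\mu)$-expression vanishes through its indicator factor. If instead $u_t\equiv0$ on $\Lambda$, then every factor of $P_n$ with $x_i\in\Lambda$ equals $1$, so $P_n$ coincides with the partial product over $\{i\le n:x_i\in\Lambda^{\mathrm c}\}$; letting $n\to\infty$, these indices exhaust all atoms of $\mu=\sum_i\mathbf 1_{\Lambda^{\mathrm c}}(x_i)\delta_{x_i}$ with multiplicity, so by~\eqref{eq:IP} and the order-independence just noted the limit is exactly $\prod_{x\in\Lambda^{\mathrm c}}\pr{1-u_t(x)}^{\mu(\{x\})}$. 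This yields the pathwise identity
\[
\prod_{i=1}^\infty\pr{1-u_t(x_i)}=\mathbf 1_{\brc{u_t(x)=0,\ \forall x\in\Lambda}}\prod_{x\in\Lambda^{\mathrm c}}\pr{1-u_t(x)}^{\mu(\{x\})},\qquad\tilde{\mathbb P}_f\text{-a.s.},
\]
and taking $\tilde{\mathbb E}_f$ gives the second equality; measurability of the right-hand side follows from the representation~\eqref{eq:infinite-product} together with the continuity of $u_t$ and the closedness of $\Lambda$.

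The step I expect to be the main obstacle is the middle one: making rigorous the convergence of this only conditionally convergent infinite product and its independence of the order of the factors, which is precisely where $z^*<2$---equivalently, assumption~\eqref{asp:A2}---is essential; without it a path of $u_t$ hitting the value $2$ at infinitely many of the $x_i$ would make $(P_n)$ oscillate between $\pm c$ for some $c>0$. The remaining ingredients, the finite-particle duality~\eqref{eq:Duality} and bounded convergence, are routine.
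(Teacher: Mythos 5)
Your proposal is correct and follows essentially the same route as the paper: invoke the finite-particle duality (the paper cites Proposition \ref{prop:D}, you cite \eqref{eq:S2} of Proposition \ref{prop:GI}, which is the appropriate form for this $f$), prove almost sure convergence of the partial products using $z^*<2$ from Lemma \ref{lem:CC} and assumption \eqref{asp:A2}, identify the limit pathwise with the $(\Lambda,\mu)$-expression via continuity of $u_t$ and \eqref{eq:infinite-product}, and conclude by bounded convergence. Your case split ($c=0$ versus $c>0$, and the explicit neighbourhood argument when $u_t(y)>0$ for some $y\in\Lambda$) merely spells out in more detail what the paper states tersely, so no gap remains.
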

	\begin{proof}
		Thanks to Proposition \ref{prop:D}, it suffices to consider the limit
		\begin{equation}\label{eq:EIP}
			\lim_{n\to\infty}\mathbb E\brk{  \prod_{\alpha \in I_t^{(n)}} \pr{1-f\pr{X_t^{(n),\alpha}}}}=
			\lim_{n\to\infty} \tilde{\mathbb E}_{f} \brk{\prod_{i=1}^n\pr{1-u_t(x_i)}}.
		\end{equation}
		Let us first explain that the infinite product
		\[
		\prod_{i=1}^\infty \pr{1-u_t(x_i)} := \lim_{n\to \infty} \prod_{i=1}^n \pr{1-u_t(x_i)}
		\]
		is a well-defined random variable.
		Note that $u_t(x_i)$ take values in $[0,z^*].$
		From Lemma \ref{lem:CC} and \eqref{asp:A2}, we have $z^* < 2$.
		On one hand, if there are infinitely many $i$ such that
		$u_t(x_i)\in [1, z^*]\subset [1,2)$, then $\prod_{i=1}^\infty(1-u_t(x_i))=0$.
		On the other hand,
		if there are only finitely many
		$i$ such that $u_t(x_i)\in [1,z^*]$, then
		the infinite product is also well-defined since the sequence
		\begin{equation}
			\prod_{i=1}^n (1-u_t(x_i)),   \quad n\in \mathbb N
		\end{equation}
		is eventually monotone.
		Since $u_t(x)$ is continuous in $x\in \mathbb R$, by  Lemma \ref{eq:infinite-product},
		almost surely,
		\[
		\prod_{i=1}^\infty \pr{1-u_t(x_i)} =\mathbf 1_{\brc{u_t(x)=0,\forall x\in  \Lambda}}\prod_{x\in \Lambda^{\mathrm c}} \pr{1-u_t(x)}^{\mu(\{x\})}.
		\]
		Now, by the bounded convergence theorem, we can exchange the limit and the expectation on the right hand side of \eqref{eq:EIP}, and get the desired result.
	\end{proof}

	The next lemma says that the infinite moments of the random field $1-u_t$ can be estimated by the Laplace transform of $u_t$.

	\begin{lemma}\label{lem:1}
		Let $\varepsilon\in (0,\frac{1}{2})$ and $U\subset \R$ be an open interval.
		Suppose that $F$ is a closed interval  containing $\{x_i: i\in \N\}$.
		Define
		\begin{equation} \label{eq:Theta}
			\theta (\gamma) := \begin{cases}
				-\log (1-\gamma)/\gamma, \quad & \gamma\in (0,1),
				\\ 1, \quad &\gamma = 0.
			\end{cases}
		\end{equation}
		Then
		for any $t>0$ and $\gamma \in (\varepsilon, 1)$,
		\begin{align}\label{Lower:w}
			\tilde{\mathbb E}_{\varepsilon  \mathbf 1_U} \brk{\prod_{i=1}^\infty (1-u_t(x_i))}
			\geq \tilde{\mathbb E}_{\varepsilon  \mathbf 1_U } \brk{\exp\brc{-\theta(\gamma)\sum_{i=1}^\infty u_{t}(x_i) } }  -2\tilde{\mathbb{P}}_{\varepsilon  \mathbf 1_U }\pr{\sup_{s\leq t, y\in F} u_s(y)> \gamma},
		\end{align}
		and
		\begin{align}\label{Upper:w}
			\tilde{\mathbb E}_{\varepsilon  \mathbf 1_U} \brk{\prod_{i=1}^\infty (1-u_t(x_i))}
			\leq \tilde{\mathbb E}_{\varepsilon \mathbf 1_U }\brk{\exp\brc{-\sum_{i=1}^\infty u_t(x_i)}} +  \tilde{\mathbb{P}}_{\varepsilon  \mathbf 1_U }\pr{\sup_{s\leq t, y\in F} u_s(y)> \frac{1}{2}}.
		\end{align}
	\end{lemma}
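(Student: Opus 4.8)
\emph{Strategy.} The plan is to deduce both bounds from two elementary pointwise inequalities for $z\mapsto\log(1-z)$, after isolating the event on which the dual field $u$ has already grown past the relevant threshold somewhere on the compact set $[0,t]\times F$. First I would record the needed facts. Since $\gamma\mapsto-\log(1-\gamma)/\gamma$ is non-decreasing on $(0,1)$ with limit $1$ at $0+$, the function $\theta$ from \eqref{eq:Theta} satisfies $\theta(\gamma)\ge-\log(1-z)/z$ for $0\le z\le\gamma<1$, which after exponentiation reads
\begin{equation}
    0\le e^{-\theta(\gamma)z}\le 1-z,\qquad 0\le z\le\gamma<1.
\end{equation}
Also $1-z\le e^{-z}$ for every $z\in[0,1]$. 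Finally, by Lemma \ref{lem:CC} and assumption \eqref{asp:A2} we have $z^*<2$, so $1-u_t(x)\in(-1,1]$ for all $x$; hence each partial product $\prod_{i=1}^n(1-u_t(x_i))$ lies in $[-1,1]$, and the limit $\prod_{i=1}^\infty(1-u_t(x_i))$, which exists a.s.\ as in the proof of Lemma \ref{Lemma: Duality3}, lies in $[-1,1]$ as well.

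\emph{Lower bound.} I would introduce $S:=\sup_{s\le t,\,y\in F}u_s(y)$, a well-defined random variable by joint continuity of $(s,y)\mapsto u_s(y)$ and compactness of $[0,t]\times F$. Since $\{x_i:i\in\mathbb N\}\subset F$, on the event $\{S\le\gamma\}$ we have $u_t(x_i)\le\gamma<1$ for all $i$, so all factors $1-u_t(x_i)$ are positive and, by the first displayed inequality, $\prod_{i=1}^n(1-u_t(x_i))\ge\exp\{-\theta(\gamma)\sum_{i=1}^n u_t(x_i)\}$ for every $n$. Letting $n\to\infty$ (the partial products decrease, the partial sums increase, with the convention $e^{-\infty}=0$) gives $\prod_{i=1}^\infty(1-u_t(x_i))\ge\exp\{-\theta(\gamma)\sum_{i=1}^\infty u_t(x_i)\}\ge0$ on $\{S\le\gamma\}$. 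I would then split $\tilde{\mathbb E}_{\varepsilon\mathbf 1_U}[\prod_i(1-u_t(x_i))]$ over $\{S\le\gamma\}$ and its complement, bound the product below by $-1$ on the complement, and use $0\le e^{-\theta(\gamma)\sum_i u_t(x_i)}\le1$ to replace the indicator $\mathbf 1_{\{S\le\gamma\}}$ by $1$ at the cost of $\tilde{\mathbb P}_{\varepsilon\mathbf 1_U}(S>\gamma)$; adding the two error terms produces \eqref{Lower:w}.

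\emph{Upper bound and the delicate point.} The upper bound \eqref{Upper:w} is the mirror argument: on $\{S\le 1/2\}$ every $u_t(x_i)\le 1/2\le1$, so $1-u_t(x_i)\le e^{-u_t(x_i)}$ with all factors in $[1/2,1]$, and passing to the limit yields $\prod_{i=1}^\infty(1-u_t(x_i))\le\exp\{-\sum_{i=1}^\infty u_t(x_i)\}$ there; on the complement $\{S>1/2\}$ the product is at most $1$. Taking expectations, extending the integral of $\exp\{-\sum_i u_t(x_i)\}$ from $\{S\le1/2\}$ to the whole space (the integrand is nonnegative), and estimating the complement by $\tilde{\mathbb P}_{\varepsilon\mathbf 1_U}(S>1/2)$ gives the claim. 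I expect the only real subtlety—and the reason one cannot compare the two infinite products outright—to be that $1-u_t(x_i)$ may be negative when $z^*>1$; the comparisons above are valid only on the event $\{S\le\gamma\}$ (resp.\ $\{S\le1/2\}$), where all factors are positive, and the contribution of the complementary event is absorbed into the probability term using $|1-u_t(x_i)|\le1$, i.e.\ $z^*<2$. Beyond this bookkeeping I do not anticipate any genuine obstacle.
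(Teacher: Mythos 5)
Your proof is correct and follows essentially the same route as the paper's: the same two pointwise inequalities ($1-z\ge e^{-\theta(\gamma)z}$ on $[0,\gamma]$ and $1-z\le e^{-z}$), combined with splitting on the event that $u$ exceeds the threshold on $[0,t]\times F$ and bounding the product by $\pm1$ on the bad event. The only (immaterial) difference is that the paper phrases the splitting via the first-passage time $\tau_\gamma=\inf\{s\ge0:\sup_{y\in F}u_s(y)>\gamma\}$ while you work directly with the supremum $S$, which if anything matches the probability appearing in the statement more cleanly; note only that $F$ need not be bounded, so the finiteness of $S$ comes from $u\le z^*$ rather than compactness.
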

	
	\begin{proof}
		Let $t>0$ and $\gamma \in (\varepsilon,1)$.
		Define $\tau_\gamma:= \inf\brc{s\geq 0: \sup_{y\in F} u_s(y)>\gamma}$.
		For the lower bound, noting that $1-w\geq e^{-\theta(\gamma)w}$  for $w\in [0, \gamma]$, we have
		\begin{align}
			& \tilde{\mathbb E}_{\varepsilon  \mathbf 1_U } \brk{\prod_{i=1}^\infty (1-u_t(x_i)) }
			= \tilde{\mathbb E}_{\varepsilon \mathbf 1_U } \brk{\prod_{i=1}^\infty (1-u_t(x_i))  \mathbf 1_{\{\tau_\gamma  \leq  t \}}} +  \tilde{\mathbb E}_{\varepsilon  \mathbf 1_U} \brk{\prod_{i=1}^\infty (1-u_t(x_i)) \mathbf 1_{\{\tau_\gamma > t \}} }
			\\
			& \geq - \tilde{\mathbb{P}}_{\varepsilon  \mathbf 1_U }\pr{\tau_\gamma \leq t} +  \tilde{\mathbb E}_{\varepsilon \mathbf 1_U}\brk{\exp\brc{-\theta(\gamma)\sum_{i=1}^\infty u_{t}(x_i) } \mathbf 1_{\{\tau_\gamma > t \}}} \\
			& \geq \tilde{\mathbb E}_{\varepsilon  \mathbf 1_U }\brk{\exp\brc{-\theta(\gamma)\sum_{i=1}^\infty u_{t}(x_i) } }-2\tilde{\mathbb{P}}_{\varepsilon  \mathbf 1_U }\pr{\tau_\gamma \leq t}.
		\end{align}
		For the upper bound, using the inequality $1-w \leq e^{-w}$ and the fact that $1-u_t(x_i)\geq 0$ for every $i\in \mathbb N$ almost surely on the event $\{\tau_{1/2}>t\}$, we have
		\begin{align}
			& \tilde{\mathbb E}_{\varepsilon  \mathbf 1_U } \brk{\prod_{i=1}^\infty (1-u_t(x_i))}
			= \tilde{\mathbb E}_{\varepsilon  \mathbf 1_U } \brk{\prod_{i=1}^\infty (1-u_t(x_i))  \mathbf 1_{\{\tau_{1/2}\leq t \}} } + \tilde{\mathbb E}_{\varepsilon  \mathbf 1_U } \brk{\prod_{i=1}^\infty (1-u_t(x_i))  \mathbf 1_{\{\tau_{1/2}> t\}}}
			\\
			& \leq \tilde{\mathbb{P}}_{\varepsilon  \mathbf 1_U }\pr{\tau_{1/2}\leq t}+  \tilde{\mathbb E}_{\varepsilon  \mathbf 1_U }\brk{\exp\brc{-\sum_{i=1}^\infty u_t(x_i)}}.
		\end{align}
		This completes the proof.
	\end{proof}

	Now,  we want to give upper/lower bounds for the Laplace  transform of the random field $u_t$ in terms of the solution $(v_t)_{t\geq 0}$ to the CDI Profile Equation \eqref{PDE}.
	The idea is to investigate Doob's decomposition of the semimartingale
	\begin{equation}
		s\mapsto \exp\brc{-\int u_s(y) v_{t-s}(y)\mathrm{d} y}
	\end{equation}
	on $[0,t]$.

	We will write $f \lesssim g$ to indicate that $f$ is bounded by $g$ up to a positive multiplicative constant.
	To be precise about this notation, let $f$ and $g$ be arbitrary real-valued functions defined on the Cartesian product $A\times B$ of two arbitrary sets $A$ and $B$.
	For a fixed $b\in B$, we say $f(a,b) \lesssim g(a,b)$ uniformly in $a\in A$ provided that there exists a constant $c>0$ (which may depend on $b$) such that $f(a,b) \leq c g(a,b)$ for all $a\in A$.

	For every $(\tilde \Lambda, \tilde \mu) \in \mathcal T$, we  use $(\hat{v}_t^{(\tilde{\Lambda}, \tilde{\mu})}(x))_{t>0,x\in \mathbb R} \in \mathcal C^{1,2}((0,\infty)\times \mathbb R)$  to denote the
	unique non-negative  solution of \eqref{PDE} with  $\Psi'(0+)$ being replaced by $1$.
	It is easy to  check that for all $(\tilde \Lambda, \tilde \mu)\in \mathcal T$ and $t>0, x\in \R$,
	\begin{align}\label{Identity}
		v_t^{(\tilde{\Lambda}, \tilde{\mu})} (x) = \Psi'(0+)^{-1}\hat{v}_t^{(\tilde{\Lambda}, \Psi'(0+)\tilde{\mu})} (x) .
	\end{align}
	From this and \cite{MR4698025}*{(2.4)--(2.10)}, we can verify that for every $(\tilde \Lambda, \tilde \mu)\in \mathcal T$  and $t>0, x\in \R$,
	\begin{align}
		&
		v_{t}^{(\tilde \Lambda, \tilde \mu)}(x)
		\leq v_{t}^{(\R, \mathbf 0)}(x)
		= \frac{2}{\Psi'(0+)t},\label{eq:Upper-bound-of-v}
		\\& \label{eq:TEV}
		v_{t}^{(\tilde \Lambda, \mathbf 0)}(x)= v_{t}^{(\tilde \Lambda+\{z\}, \mathbf 0)}(x+z), \quad \forall z\in \mathbb R,
		\\&\label{eq:SV}
		v_{t}^{(\tilde \Lambda,\mathbf 0)}(x)= v_{t}^{(-\tilde \Lambda,\mathbf 0)}(-x).
	\end{align}
	Here, we used Minkowski's notation  $a A + b B := \{ax+by:x\in A, y\in B\}$ for $a,b\in \mathbb R$ and $A,B \subset \mathbb R$; and $\mathbf 0$ represents the null measure.
	We also have that
	\begin{align}\label{Property-2}
		v_{t}^{((-\infty, 0], \mathbf 0)}(|x|) \lesssim \frac{1}{t}\pr{1+\frac{|x|}{\sqrt{t}}}e^{-\frac{x^2}{2t}},
	\end{align}
	and
	\begin{align}\label{Property-3}
		v_{t}^{([-k,k], \mathbf 0)}(x)
		\lesssim \frac{1}{t}\pr{1+\frac{ \operatorname{dist}(\{x\}, [-k,k])}{\sqrt{t}}} e^{-\frac{\operatorname{dist}(\{x\}, [-k,k])^2}{2t}}
	\end{align}
	uniformly for  $t>0, x\in \mathbb R$ and $k\geq 0$.
	Here, $\operatorname{dist}(A,B):= \inf\{|a-b|:a\in A,b\in B\}$ represents the distance between two given sets $A,B\subset \mathbb R$.
	For every $t>0$,  $(\tilde \Lambda, \tilde \mu) \in \mathcal T$ and closed interval $F \subset \mathbb R,$ define
	\begin{equation} \label{eq:CV}
		\mathcal{V}_t^{(\tilde \Lambda,\tilde \mu , F)}:= \int_0^t \int_{F^c} v_{r}^{(\tilde \Lambda, \tilde \mu)}(z)^2 \mathrm{d}  z \mathrm{d}  r.
	\end{equation}
	
	\begin{lemma}\label{lemma:integral-of-v}
		The following statements hold.
		\begin{itemize}
			\item[(i)]
			If $U$ is an open interval such that $U\cap \{x_i: i\in \N \}$ is bounded,
			then for every $b\geq a>0$,
			\begin{equation} \label{eq:DU}
				\sup_{t\in [a,b]}\int_U v_{t}^{(\Lambda,\mu)}(y)\mathrm{d}  y<\infty.
			\end{equation}
			\item[(ii)]
			If $F$ is a closed interval containing $\bigcup_{i\in \N} (x_i -1, x_i+1)$, then $\mathcal{V}_t^{(\Lambda,\mu, F)}<\infty$ for every $t>0$. In particular, it holds in this case that
			$
			\lim_{t\downarrow 0}\mathcal{V}_t^{(\Lambda,\mu, F)} = 0.
			$
			\item[(iii)]
			For each $K\in \mathbb N$, let $\Lambda_K:= [a_K,\infty)$ and $ F_K:= [b_K,\infty)$
			be unbounded intervals where $(a_K)_{K\in \mathbb N}, (b_K)_{K\in \mathbb N}$ are sequences in $\mathbb R$  such that
			\[
			\lim_{K\to\infty}  \operatorname{dist}(\Lambda_K, F_K^c)
			= \lim_{K\to\infty} (a_K- b_K)^+=\infty.
			\]
			Then for every $t>0$,
			$
			\lim_{K \to \infty} \mathcal{V}_t^{(\Lambda_K, \mathbf 0  , F_K)} =0.
			$
		\end{itemize}
	\end{lemma}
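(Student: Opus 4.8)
The plan is to reduce all three assertions, via the comparison principle for \eqref{PDE}, to upper bounds on the solutions $v^{(G,\mathbf 0)}$ attached to closed \emph{intervals} $G$, for which the explicit estimates \eqref{eq:Upper-bound-of-v}--\eqref{Property-3} together with the translation and reflection identities \eqref{eq:TEV}--\eqref{eq:SV} are already available. The structural fact I would use repeatedly is that $\Lambda$ and every atom of $\mu$ are contained in $\overline{\{x_i:i\in\N\}}$, which is immediate from \eqref{Def-Lambda}--\eqref{Def-mu}; by the comparison principle this gives $v_r^{(\Lambda,\mu)}\le v_r^{(G,\mathbf 0)}$ for every closed interval $G\supseteq\{x_i:i\in\N\}$.

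For part~(i), if $U$ is bounded I would conclude at once from $v_t^{(\Lambda,\mu)}\le v_t^{(\R,\mathbf 0)}=2/(\Psi'(0+)t)$. If $U=(\alpha,\infty)$ with $\alpha\in\R$, then every $x_i$ lies either in $U\cap\{x_i\}$ (bounded by hypothesis) or in $(-\infty,\alpha]$, so $\{x_i\}\subseteq(-\infty,M]$ for some finite $M$, hence $v_t^{(\Lambda,\mu)}\le v_t^{((-\infty,M],\mathbf 0)}$. I would then split $\int_U=\int_{(\alpha,M+1]}+\int_{(M+1,\infty)}$: on the bounded part bound the integrand by $2/(\Psi'(0+)t)\le2/(\Psi'(0+)a)$, and on the unbounded part use \eqref{eq:TEV} to write $v_t^{((-\infty,M],\mathbf 0)}(y)=v_t^{((-\infty,0],\mathbf 0)}(y-M)$ and then \eqref{Property-2}, whose Gaussian-type bound integrates to $\lesssim t^{-1/2}\le a^{-1/2}$ over $(M+1,\infty)$. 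The cases $U=(-\infty,\beta)$ and $U=\R$ are identical after applying \eqref{eq:SV} and \eqref{Property-3}.

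For parts~(ii) and~(iii) the point is a small-time estimate. For~(ii), I would set $G:=\{w\in\R:\operatorname{dist}(w,F^{\mathrm c})\ge1\}$; because $F\supseteq\bigcup_i(x_i-1,x_i+1)$, this $G$ is a closed interval containing $\{x_i\}$ (namely $F$ shrunk by $1$ at each end), so $v_r^{(\Lambda,\mu)}\le v_r^{(G,\mathbf 0)}$, and one checks $\operatorname{dist}(z,G)=\operatorname{dist}(z,F)+1\ge1$ for $z\in F^{\mathrm c}$. Feeding this into \eqref{Property-2}/\eqref{Property-3} (after recentring by \eqref{eq:TEV}, \eqref{eq:SV}), parametrising each of the at most two components of $F^{\mathrm c}$ by $\rho=\operatorname{dist}(z,F)\in(0,\infty)$, and substituting $\rho+1=\sqrt r\,s$, I expect to get
\[
\int_{F^{\mathrm c}}v_r^{(\Lambda,\mu)}(z)^2\,\diff z\ \lesssim\ r^{-3/2}\int_{1/\sqrt r}^{\infty}(1+s)^2e^{-s^2}\,\diff s\ \lesssim\ r^{-2}e^{-1/r}\qquad(0<r\le 1),
\]
and $\lesssim r^{-3/2}$ for $r\ge1$; both are integrable at $0$ and locally integrable on $(0,\infty)$, so $\mathcal V_t^{(\Lambda,\mu,F)}<\infty$ for every $t>0$ and, being the integral over $(0,t]$ of a nonnegative function integrable near $0$, tends to $0$ as $t\downarrow0$. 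For~(iii), for large $K$ one has $d_K:=a_K-b_K>0$ with $d_K\to\infty$ and $\operatorname{dist}(z,\Lambda_K)=a_K-z>d_K$ for $z\in F_K^{\mathrm c}=(-\infty,b_K)$; writing $v_r^{(\Lambda_K,\mathbf 0)}(z)=v_r^{((-\infty,0],\mathbf 0)}(a_K-z)$ via \eqref{eq:SV}, \eqref{eq:TEV} and applying \eqref{Property-2}, the same substitution $a_K-z=\sqrt r\,s$ should give, for $K$ large and $0<r\le t$,
\[
\int_{F_K^{\mathrm c}}v_r^{(\Lambda_K,\mathbf 0)}(z)^2\,\diff z\ \lesssim\ r^{-3/2}\int_{d_K/\sqrt r}^{\infty}(1+s)^2e^{-2s^2}\,\diff s\ \lesssim\ r^{-2}e^{-d_K^2/r},
\]
and then $\mathcal V_t^{(\Lambda_K,\mathbf 0,F_K)}\lesssim\int_0^t r^{-2}e^{-d_K^2/r}\,\diff r\lesssim d_K^{-2}e^{-d_K^2/t}\to0$ as $K\to\infty$ (the last step by the substitution $r=d_K^2/u$).

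The main obstacle, and essentially the only nontrivial point, is the behaviour near $r=0$ in (ii) and (iii): the crude Gaussian bound on $v_r^{(G,\mathbf 0)}$ alone gives only $\int_{F^{\mathrm c}}v_r^2\lesssim r^{-3/2}$, whose $r$-integral diverges at $0$. What I expect to rescue the argument is the uniform separation $\operatorname{dist}(F^{\mathrm c},G)\ge1$ (respectively $\operatorname{dist}(F_K^{\mathrm c},\Lambda_K)\to\infty$), which replaces $r^{-3/2}$ by $r^{-2}e^{-c/r}$; the rest is the routine change of variables indicated above, plus bookkeeping with the comparison principle for \eqref{PDE} (cf.~\cite[Theorem~4]{MR1429263}, \cite[(2.4)--(2.10)]{MR4698025}) and the established bounds \eqref{eq:Upper-bound-of-v}--\eqref{Property-3}.
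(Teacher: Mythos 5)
Your argument is correct, and for parts (i) and (iii) it is essentially the paper's own computation: (i) is handled there by the same case analysis on the smallest closed interval containing $\{x_i:i\in\N\}$ together with \eqref{eq:Upper-bound-of-v}, \eqref{Property-2}, \eqref{Property-3} (the intermediate domination inequalities being quoted from the proof of \cite[Lemma 2.3]{MR4698025} rather than re-derived), and (iii) is exactly the reduction via \eqref{eq:TEV}, \eqref{eq:SV}, \eqref{Property-2} followed by a Gaussian-tail estimate -- the paper bounds $\mathcal V_t^{(\Lambda_K,\mathbf 0,F_K)}\lesssim t\int_{c_K}^\infty e^{-z^2/(2t)}\diff z$ while you bound it (with constants depending on $t$) by $d_K^{-2}e^{-d_K^2/t}$, and both tend to $0$. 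Where you genuinely differ is part (ii): the paper simply cites \cite[Lemma 2.3 and Lemma 3.1 (2)]{MR4698025} through the scaling identity \eqref{Identity}, whereas you give a self-contained proof by comparing $v^{(\Lambda,\mu)}$ with $v^{(G,\mathbf 0)}$ for $G$ the unit shrinking of $F$ and exploiting the gap $\operatorname{dist}(F^{\mathrm c},G)\ge 1$ to upgrade the non-integrable small-$r$ bound $r^{-3/2}$ to $r^{-2}e^{-1/r}$; this is the right mechanism, and your bookkeeping checks out (e.g.\ $\int_0^t r^{-2}e^{-1/r}\,\diff r = e^{-1/t}$, which also yields the limit statement as $t\downarrow 0$). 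What each approach buys: yours makes the lemma independent of the quantitative statements in \cite{MR4698025}, at the price of redoing an estimate that reference already contains; the paper's is shorter but not self-contained. The one step you should pin to a precise citation is the monotonicity of $v^{(\tilde\Lambda,\tilde\mu)}$ in the initial trace that you invoke as ``the comparison principle'' (in particular that a point of the singular set $\Lambda$ dominates a finite atom placed there): it is not among the displayed facts \eqref{eq:Upper-bound-of-v}--\eqref{Property-3}, but it is exactly what \cite[(2.4)--(2.10)]{MR4698025} (or the representation in \cite{MR1429263}) provide, and the paper relies on the same monotonicity through its citations, so this is a matter of attribution rather than a gap.
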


	\begin{proof}
		Let us prove (i).
		Let us fix an arbitrary open interval $U$ such that $U\cap \{x_i:i\in \mathbb N\}$ is bounded.
		Let $\tilde F$ be the smallest closed interval which contains $\{x_i:i\in \mathbb N\}$.
		There are four different cases to consider.
		\begin{itemize}
			\item
			If $\tilde F=\mathbb R$, then  $U$ is bounded.
			In this case, \eqref{eq:DU} follows from \eqref{eq:Upper-bound-of-v}.
			\item
			If $\tilde F=(-\infty, \beta]$ for some $\beta \in \mathbb R$, then $U$ is the subset of $(\alpha,\infty)$ for some $\alpha \in \mathbb R$.
			In this case, it was argued in the proof of \cite{MR4698025}*{Lemma 2.3} that
			\begin{equation}
				\int_U v_t^{(\Lambda, \mu)}(y) \mathrm{d} y
				\leq \int_\alpha^\beta v^{(\mathbb R, \mathbf 0)}_t(y)\mathrm{d} y + \int_0^\infty v_{t}^{((-\infty,0],\mathbf 0)}(y)\mathrm{d} y.
			\end{equation}
			The desired \eqref{eq:DU} now follows from \eqref{eq:Upper-bound-of-v} and \eqref{Property-2}.
			\item
			If $\tilde F = [\alpha, \infty)$ for some $\alpha \in \mathbb R$ then, similarly to the previous case, \eqref{eq:DU} holds.
			\item
			If $\tilde F = [\alpha, \beta]$ for some $-\infty < \alpha \leq \beta<\infty$, then it was argued in the proof of \cite{MR4698025}*{Lemma 2.3} that
			\begin{equation}
				\int_U v_t^{(\Lambda, \mu)}(y) \mathrm dy
				\leq \int v^{([\frac{\alpha - \beta}{2}, \frac{\beta - \alpha}{2}],\mathbf 0)}_{t}(y)\mathrm{d} y.
			\end{equation}
			The desired \eqref{eq:DU} now follows from \eqref{Property-3}.
		\end{itemize}

		Noticing from \eqref{Identity},  (ii) of this lemma is essentially given by \cite{MR4698025}*{Lemma 2.3 and Lemma 3.1 (2)}.
		
		We now prove (iii). Without loss of generality, we assume that
		$c_K:= a_K-b_K> 1$ for every $K\in \mathbb N$.
		By \eqref{eq:TEV}, \eqref{eq:SV} and \eqref{Property-2}, uniformly for every $K\in \mathbb N$ and $t>0$,
		\begin{align}
			& \mathcal{V}_t^{(\Lambda_K, \mathbf 0  , F_K)}
			=  \int_0^t \int_{-\infty}^{b_K} v_{r}^{([a_K, \infty) , \mathbf 0)}(z)^2 \mathrm{d}  z \mathrm{d}  r
			= \int_0^t \int_{-\infty}^{-c_K} v_{r}^{([0, \infty) , \mathbf 0)}(z)^2 \mathrm{d}  z \mathrm{d}  r
			\\& = \int_0^t \int_{c_K}^{\infty } v_{r}^{((-\infty, 0], \mathbf 0)}(z)^2 \mathrm{d}  z \mathrm{d}  r
			\lesssim\int_0^t \int_{c_K}^{\infty } \frac{ 1}{r^2}\pr{1+\frac{z}{\sqrt{r}}}^2e^{-\frac{z^2}{r}}  \mathrm{d}  z \mathrm{d}  r.
		\end{align}
		Therefore,
		noticing that the function $a \mapsto a^4 (1+a)^2e^{-\frac{a^2}{2}}$ is bounded on $(0, \infty)$ and $z\geq c_K > 1$,
		we have uniformly for every $K\in \mathbb N$ and $t>0,$
		\begin{align}
			& \mathcal{V}_t^{(\Lambda_K, \mathbf 0  , F_K)}
			\lesssim \int_0^t \int_{c_K}^{\infty } e^{-\frac{z^2}{2r}}  \mathrm{d}  z \mathrm{d}  r
			\leq t  \int_{c_K}^{\infty } e^{-\frac{z^2}{2t}}  \mathrm{d}  z.
		\end{align}
		The desired result of (iii) now follows.
	\end{proof}

	We now present our upper/lower bounds for certain Laplace transform of $u_t$.

	\begin{lemma}\label{lem:2}
		Let $F$ be a closed interval containing $\bigcup_{i\in \mathbb N} (x_i -1, x_i+1)$ and $U$ be an open interval.
		Let $0\leq \varepsilon \leq \gamma <1$. Suppose that the initial value of $(u_t)_{t\geq 0}$ is $f = \varepsilon \mathbf 1_{U}.$
		Let $\theta(\gamma)$ be given as in \eqref{eq:Theta}.
		Then for any $t>0$,
		\begin{align}\label{eq:LIL}
			& \tilde{\mathbb{E}}_{\varepsilon \mathbf 1_U }\brk{\exp\brc{-\theta(\gamma)\sum_{i=1}^\infty u_{t}(x_i) } }
			\geq  \exp\brc{-\frac{\varepsilon e^{\lambda_{\mathrm o} t}}{1-\gamma}\int_U v_{t}^{(\Lambda,\mu)}(y)\mathrm{d}  y }
			\\ &\qquad - \tilde{\mathbb{P}}_{\varepsilon \mathbf 1_U }\pr{\sup_{s\leq t, y\in F} u_s(y)
				> \frac{\gamma}{2 \beta_{\mathrm c}}\Psi'(0+) } -  {}
			\frac{\varepsilon \Psi'(0+)e^{2\lambda_{\mathrm o} t}}{2(1-\gamma)} \mathcal{V}_t^{(\Lambda, \mu, F)},
		\end{align}
		and
		\begin{align}\label{eq:LIU}
			& \qquad  \tilde{\mathbb{E}}_{\varepsilon \mathbf 1_U }\brk{\exp\brc{-\sum_{i=1}^\infty u_t(x_i) } } \\&\leq \exp\brc{-\varepsilon \kappa(\gamma) e^{-\beta_{\mathrm o}  t}  \int_U v_{t}^{(\Lambda,\mu)}(y) \mathrm{d}  y  } + \tilde{\mathbb{P}}_{\varepsilon  \mathbf 1_U}\pr{\sup_{s\leq t, y\in F} u_s(y)> \gamma } + \varepsilon \beta_{\mathrm c}
			e^{\lambda_{\mathrm o} t}
			\mathcal{V}_t^{(\Lambda, \mu, F)}.
		\end{align}
		Here,  $\Psi'(0+)$, $\lambda_{\mathrm o}$, $\Phi'(0+)$ and $\kappa$ are given as in \eqref{eq:Psi'}, \eqref{Def-of-lambda}, \eqref{eq:Phi'} and \eqref{Def-Of-kappa} respectively.
	\end{lemma}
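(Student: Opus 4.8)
The plan is to realize the ``compare the moment dual with the Laplacian dual'' idea from the introduction by examining the Doob decomposition of the bounded nonnegative semimartingale $s\mapsto\exp\{-\int u_s(y)\psi_s(y)\diff y\}$ on $[0,t)$, where $\psi_s$ is a rescaled solution of the mean-field equation \eqref{PDE}. It will be convenient to first prove the two inequalities with the configuration $(x_i)_{i\in\N}$ replaced by a finite truncation $(x_i)_{i=1}^N$, whose initial trace is the \emph{finite} measure $\mu_N:=\sum_{i=1}^N\delta_{x_i}$, and then to let $N\to\infty$: on the analytic side $v^{(\emptyset,\mu_N)}_r\uparrow v^{(\Lambda,\mu)}_r$, whence $\int_U v_t^{(\emptyset,\mu_N)}\uparrow\int_U v_t^{(\Lambda,\mu)}$ and $\mathcal V_t^{(\emptyset,\mu_N,F)}\uparrow\mathcal V_t^{(\Lambda,\mu,F)}$, while on the probabilistic side $\tilde{\mathbb E}_{\varepsilon\mathbf 1_U}[e^{-c\sum_{i=1}^\infty u_t(x_i)}]$ is the monotone limit of $\tilde{\mathbb E}_{\varepsilon\mathbf 1_U}[e^{-c\sum_{i=1}^N u_t(x_i)}]$. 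This truncation lets one pair $u_t$ against $\mu_N$ cleanly and avoids confronting the singular part $\Lambda$ of the trace directly. So fix $N$.

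For \eqref{eq:LIL} take $\psi_s:=\frac{1}{1-\gamma}e^{\lambda_{\mathrm o}(t-s)}v_{t-s}^{(\emptyset,\mu_N)}$ and for \eqref{eq:LIU} take $\psi_s:=\kappa(\gamma)e^{-\beta_{\mathrm o}(t-s)}v_{t-s}^{(\emptyset,\mu_N)}$, $s\in[0,t)$, and set $Y_s:=\int u_s(y)\psi_s(y)\diff y$; since $u_0=\varepsilon\mathbf 1_U$, the number $e^{-Y_0}$ equals exactly the leading exponential of the respective bound, with $v_t^{(\Lambda,\mu)}$ replaced by $v_t^{(\emptyset,\mu_N)}$. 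By the scaling \eqref{Identity}, $\psi_s$ solves the backward equation $\partial_s\psi_s=-\frac{1}{2}\psi_s''+g(s)\psi_s+h(s)\psi_s^2$, with $g\equiv-\lambda_{\mathrm o}$ and $h(s)=\frac{(1-\gamma)\Psi'(0+)}{2}e^{-\lambda_{\mathrm o}(t-s)}$ in the first case, $g\equiv\beta_{\mathrm o}$ and $h(s)=\frac{\Psi'(0+)}{2\kappa(\gamma)}e^{\beta_{\mathrm o}(t-s)}$ in the second. Feeding this time-dependent test function into \eqref{eq:S1} and applying It\^{o}'s formula to $e^{-Y_s}$, the two Laplacian contributions cancel, leaving
\begin{align}
e^{-Y_s}&=e^{-Y_0}+(\text{local martingale})
\\&\quad+\int_0^s e^{-Y_r}\brk{\pr{-g(r)Y_r+\int\Phi(u_r)\psi_r\diff y}+\int\pr{\tfrac{1}{2}\Psi(u_r)-h(r)u_r}\psi_r^2\diff y}\diff r.
\end{align}

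Next I would control the sign of this drift on the stochastic interval $[0,\sigma)$, where $\sigma$ is the first time $\sup_{y\in F}u_s(y)$ reaches the threshold in the relevant probability term, i.e.\ $\frac{\gamma}{2\beta_{\mathrm c}}\Psi'(0+)$ for \eqref{eq:LIL} and $\gamma$ for \eqref{eq:LIU}. From the linear bounds $\Phi'(0+)z\le\Phi(z)\le\beta_{\mathrm o}z$ and $0\le\Psi(z)\le2\beta_{\mathrm c}z$ recorded in \eqref{eq: bounds-for-branching-mechanism}, the choice of $g$ makes the $\Phi$-bracket nonnegative (resp.\ nonpositive); for the $\Psi$-bracket one splits $\R=F\cup F^{\mathrm c}$, and on $F$ the elementary estimate $\Psi(w)/w\ge\Psi'(0+)-\beta_{\mathrm c}w$ --- which on $\{s<\sigma\}$ forces $\frac{1}{2}\Psi(u_s)\ge h(s)u_s$ --- resp.\ the definition of $\kappa(\gamma)$ --- which forces $\frac{1}{2}\Psi(u_s)\le h(s)u_s$ --- allows one to discard the $F$-part, while on $F^{\mathrm c}$ one combines $\Psi(w)\le2\beta_{\mathrm c}w$ with $\psi_r\lesssim v_{t-r}^{(\emptyset,\mu_N)}$ and $\tilde{\mathbb E}_{\varepsilon\mathbf 1_U}[u_r(y)]\le\varepsilon e^{\lambda_{\mathrm o}t}$ (Lemma \ref{Upper-bound-w-t-x-2}) to bound the expected $F^{\mathrm c}$-contribution by exactly the $\mathcal V_t^{(\emptyset,\mu_N,F)}$-term of the statement. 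Localizing the local martingale, taking expectations on $[0,(t-\delta)\wedge\sigma]$, and then letting $\delta\downarrow0$ --- where the a priori bound $0\le e^{-Y}\le1$ simultaneously forces convergence of the left-hand side and absorbs the possibly non-integrable sign-definite part of the drift integral near $s=t$ (which stems from $v_{t-s}^{(\emptyset,\mu_N)}\le\frac{2}{\Psi'(0+)(t-s)}$, cf.\ \eqref{eq:Upper-bound-of-v}), Fatou handling the remaining part --- yields $\tilde{\mathbb E}_{\varepsilon\mathbf 1_U}[e^{-Y_{t\wedge\sigma}}]\ge e^{-Y_0}$ minus the $\mathcal V_t$-error for \eqref{eq:LIL}, and $\le e^{-Y_0}$ plus it for \eqref{eq:LIU}.

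It remains to convert $Y_{t\wedge\sigma}$ back into $\sum_{i=1}^N u_t(x_i)$. On $\{\sigma<t\}$ one bounds $e^{-Y_{t\wedge\sigma}}$ crudely by $1$, paying the probability $\tilde{\mathbb P}_{\varepsilon\mathbf 1_U}(\sup_{s\le t,y\in F}u_s>\text{threshold})$; on $\{\sigma\ge t\}$, since $v_{t-s}^{(\emptyset,\mu_N)}$ has total mass at most $N$ and converges weakly to $\mu_N$ as $s\uparrow t$ while $u$ is continuous, one gets $Y_{t-}=c\sum_{i=1}^N u_t(x_i)$ with $c=\frac{1}{1-\gamma}$ (resp.\ $c=\kappa(\gamma)$). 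Because $\frac{1}{1-\gamma}\ge\theta(\gamma)$ and $\kappa(\gamma)\le1$, this gives $e^{-Y_{t\wedge\sigma}}\le e^{-\theta(\gamma)\sum_{i=1}^N u_t(x_i)}$ in the first case and $e^{-Y_{t\wedge\sigma}}\ge e^{-\sum_{i=1}^N u_t(x_i)}$ in the second; rearranging the resulting inequalities and then sending $N\to\infty$ produces \eqref{eq:LIL} and \eqref{eq:LIU}. The step I expect to be the main obstacle is this $N\to\infty$ passage --- specifically the monotone convergence $v_r^{(\emptyset,\sum_{i=1}^N\delta_{x_i})}\uparrow v_r^{(\Lambda,\mu)}$ of the solutions with finite initial data to the solution with the singular initial trace $(\Lambda,\mu)$ --- which one should read off from the comparison principle and the stability and uniqueness theory for \eqref{PDE} in \cite{MR1429263} (also used in \cite{MR4698025}). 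A secondary technicality is the blow-up of $v_{t-s}^{(\emptyset,\mu_N)}$ as $s\uparrow t$, which forces the truncation at $t-\delta$ and the use of $0\le e^{-Y}\le1$ rather than a direct computation up to time $t$.
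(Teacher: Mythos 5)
Your proposal is correct and follows essentially the same route as the paper's proof: pair $u_s$ against the exponentially tilted solution $c(s)\,v_{t-s}^{(\emptyset,\mu_N)}$ with finite atomic data, apply It\^{o} to the exponential, control the drift sign on $F$ up to the stopping time via \eqref{Lower-bound-of-Psi}-type bounds and the definition of $\kappa(\gamma)$, bound the $F^{\mathrm c}$-part in expectation by the $\mathcal V_t$-term through Lemma \ref{Upper-bound-w-t-x-2}, pay the probability term on $\{\sigma<t\}$, and pass $n\to\infty$ using the monotone convergence $v^{(\emptyset,\cdot)}\uparrow v^{(\Lambda,\cdot)}$ from \cite{MR4698025}. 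The only cosmetic difference is that the paper pre-scales the atomic measure to $\mu^{(\gamma,n)}=(1-\gamma)\theta(\gamma)\sum_{i\le n}\delta_{x_i}$ so the terminal value is exactly $\theta(\gamma)\sum u_t(x_i)$, whereas you use the unscaled measure and conclude with $\tfrac{1}{1-\gamma}\ge\theta(\gamma)$ (resp.\ $\kappa(\gamma)\le 1$), which is equivalent.
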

	
	The  proof of Lemma \ref{lem:2} is similar to that of \cite{MR4698025}*{Lemma 2.4} so we postpone it to  Appendix \ref{append-B}.
	
	In the upper/lower bounds for the Laplace transform of the random field $u_t$ in Lemma \ref{lem:2} above, except the terms involving the solution $(v_t)_{t\geq 0}$ to the CDI Profile Equation \eqref{PDE},
	there are still terms related to the probability of the random field $u_t$ itself.
	In the next lemma, we explain how these terms can be controlled by the initial data $\varepsilon \mathbf 1_U$.
	
	\begin{lemma}\label{lem: Prob-w}
		Let $U$ be an open interval and $F$ be a closed interval. Let $t>0$ and $\gamma\in (0,1)$.  Suppose that $U\cap F$ is bounded. Then
		\C{c:UFtg}
		\begin{align}\label{eq:UFtg}
			\Cr{c:UFtg}(U,F,t,\gamma)
			:= \sup_{\varepsilon\in (0, \gamma/2)} \frac{1}{\varepsilon}\tilde{\mathbb{P}}_{\varepsilon \mathbf 1_U }\pr{\sup_{s\leq t, y\in F} u_{s}(y)> \gamma}<\infty.
		\end{align}
		In particular,
		$
		\limsup_{t \downarrow 0} \Cr{c:UFtg}(U,F, t,\gamma) <\infty.
		$
		Moreover, if $\tilde U$ is an open interval such that its intersection with $F_K:=[K,\infty)$ is bounded for every $K\in \mathbb R$,
		then we also have that
		$
		\lim_{K\to\infty}  \Cr{c:UFtg}(\tilde U,F_K,t,\gamma) =0.
		$
	\end{lemma}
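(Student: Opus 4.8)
The plan is to prove Lemma~\ref{lem: Prob-w} by reading off the size of the random field $u$ from its mild form, together with the linear bounds \eqref{eq: bounds-for-branching-mechanism} and the first moment estimate Lemma~\ref{Upper-bound-w-t-x-2}. First I would expand $u_{s}(y)$, for initial value $u_{0}=\varepsilon\mathbf 1_{U}$, as the sum of a \emph{heat term} $\varepsilon\,(p_{s}*\mathbf 1_{U})(y)$ (deterministic, bounded by $\varepsilon$ everywhere), a \emph{drift term} $-\iint_{0}^{s}p_{s-\tau}(y-z)\,\Phi(u_{\tau}(z))\,\diff\tau\diff z$, and a \emph{noise term} $\iint_{0}^{s}p_{s-\tau}(y-z)\,\sqrt{\Psi(u_{\tau}(z))}\,W(\diff\tau\diff z)$, where $p$ is the heat kernel \eqref{eq:HK}. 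This decomposition follows from Proposition~\ref{prop:GI}\eqref{eq:S1} by testing against the heat kernel (alternatively, one may first reduce to the setting of continuous initial data, where the mild form \eqref{eq:mild} is directly available, by the comparison principle Lemma~\ref{eq:WC} and the monotone approximation of $\varepsilon\mathbf 1_{U}$ used in Proposition~\ref{prop:GI}).

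The core of the argument is to show that \emph{every} space-time moment of the drift and noise terms carries an explicit factor $\varepsilon$ and a Gaussian decay in $\operatorname{dist}(y,U)$. For the drift term, using $\abs{\Phi(w)}\le(\beta_{\mathrm o}+\lambda_{\mathrm o})w$ on $[0,z^{*}]$ and the a priori bound $0\le u_{\tau}\le z^{*}<\infty$, I would write $\abs{\mathrm{drift}_{s}(y)}^{q}\le C_{q}\big(\iint_{0}^{s}p_{s-\tau}(y-z)\diff\tau\diff z\big)^{q-1}\iint_{0}^{s}p_{s-\tau}(y-z)\,u_{\tau}(z)\,\diff\tau\diff z$, take expectations, and insert Lemma~\ref{Upper-bound-w-t-x-2} (which gives $\tilde{\mathbb E}_{\varepsilon\mathbf 1_{U}}[u_{\tau}(z)]\le\varepsilon\,e^{\lambda_{\mathrm o}t}(p_{\tau}*\mathbf 1_{U})(z)$) to obtain a bound of the form $C_{q}(t)\,\varepsilon\,(p_{s}*\mathbf 1_{U})(y)$. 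For the noise term, the Burkholder--Davis--Gundy inequality together with the same device applied to the quadratic variation $\iint_{0}^{s}p_{s-\tau}(y-z)^{2}\Psi(u_{\tau}(z))\diff\tau\diff z$ --- using $0\le\Psi(w)\le2\beta_{\mathrm c}w$ and $\iint_{0}^{s}p_{s-\tau}(y-z)^{2}\diff\tau\diff z\lesssim\sqrt{s}$ --- yields the analogous bound for $\tilde{\mathbb E}_{\varepsilon\mathbf 1_{U}}[\abs{\mathrm{noise}_{s}(y)}^{2q}]$. Running these computations with $p_{s-\tau}(y-z)$ and $p_{s-\tau}(y-z)^{2}$ replaced by the heat-kernel increments $\abs{p_{s-\tau}(y-z)-p_{s'-\tau}(y'-z)}$ and its square, and invoking standard increment estimates for the heat kernel, gives, for all large $q$ and all $(s,y),(s',y')\in[0,t]\times\R$, a bound of the form $C_{q}(t)\,\varepsilon\,e^{-\operatorname{dist}(\{y,y'\},U)^{2}/(2t)}\big(\abs{s-s'}^{\alpha_{q}}+\abs{y-y'}^{\beta_{q}}\big)$ with exponents $\alpha_{q},\beta_{q}\to\infty$; the Gaussian factor originates from $(p_{\tau}*\mathbf 1_{U})(y)\le e^{-\operatorname{dist}(y,U)^{2}/(2\tau)}$.

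Since $U\cap F$ is bounded, $F$ meets every fixed bounded window in a bounded set, and $\operatorname{dist}(y,U)\to\infty$ as $y\in F$ leaves that window; hence $[0,t]\times F$ can be covered by one bounded parabolic box and a sequence of unit boxes along which the Gaussian factor is summable. Kolmogorov--Chentsov applied on this covering, together with the fact that the drift and noise terms vanish at $s=0$, then yields $\tilde{\mathbb E}_{\varepsilon\mathbf 1_{U}}\big[\sup_{s\le t,\,y\in F}\big(\abs{\mathrm{drift}_{s}(y)}^{q}+\abs{\mathrm{noise}_{s}(y)}^{q}\big)\big]\le C_{q}(U,F,t)\,\varepsilon$. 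For $\varepsilon\in(0,\gamma/2)$ the heat term is $<\gamma/2$, so $\{\sup_{s\le t,\,y\in F}u_{s}(y)>\gamma\}$ forces $\sup_{s\le t,\,y\in F}\abs{\mathrm{drift}_{s}(y)}>\gamma/4$ or $\sup_{s\le t,\,y\in F}\abs{\mathrm{noise}_{s}(y)}>\gamma/4$, and Markov's inequality gives $\tfrac1{\varepsilon}\tilde{\mathbb P}_{\varepsilon\mathbf 1_{U}}\big(\sup_{s\le t,\,y\in F}u_{s}(y)>\gamma\big)\le(4/\gamma)^{q}C_{q}(U,F,t)<\infty$, which is \eqref{eq:UFtg}. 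Tracking the $t$-dependence of $C_{q}(U,F,t)$ (a product of $e^{\lambda_{\mathrm o}t}$, powers of $\sqrt{t}$, and factors $\sum_{j}e^{-cj^{2}/t}$) shows it stays bounded as $t\downarrow0$, giving $\limsup_{t\downarrow0}\Cr{c:UFtg}(U,F,t,\gamma)<\infty$; and for $F_{K}=[K,\infty)$ with $\tilde U\subseteq(-\infty,b)$ fixed, once $K>b$ the \emph{entire} set $F_{K}$ lies in the Gaussian-decay region, so the same covering gives $C_{q}(\tilde U,F_{K},t)\lesssim e^{-(K-b)^{2}/(2t)}$ and hence $\Cr{c:UFtg}(\tilde U,F_{K},t,\gamma)\to0$ as $K\to\infty$.

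The step I expect to be delicate is the moment estimate: one must replace $\Phi(u)$ and $\Psi(u)$ by constant multiples of $u$ (to expose the first moment, and with it the factor $\varepsilon$ via Lemma~\ref{Upper-bound-w-t-x-2}), while simultaneously keeping enough powers of the heat-kernel increments to feed Kolmogorov--Chentsov and preserving the Gaussian spatial decay throughout. This balance is possible precisely because $u$ is a priori bounded by $z^{*}$, which licenses $w^{q}\le(z^{*})^{q-1}w$; without such an a priori bound --- as would happen in a supercritical catalytic regime --- this route would collapse.
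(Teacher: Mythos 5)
Your proposal is correct, and its analytic core coincides with the paper's: the same mild-form decomposition $u=\text{heat}-M+N$, the same linearization trick (using the a priori bound $u\le z^{*}$ to write $|\cdot|^{q}\lesssim(\cdot)^{q-1}\times(\text{first moment}))$, the same insertion of Lemma \ref{Upper-bound-w-t-x-2} to extract the factor $\varepsilon$ together with the Brownian hitting probability $\mathbf P_y(B_s\in U)$, and the same heat-kernel increment estimates $\iint|p_{s-r}(y-z)-p_{s'-r}(y'-z)|\,\mathrm{d}z\,\mathrm{d}r\lesssim|y-y'|+\sqrt{|s-s'|}$ (and its squared analogue). Where you diverge is only in the concluding chaining step: the paper (following Tribe) builds explicit dyadic time-space lattices, defines the good event $A$ on which successive dyadic increments of $M$ and $N$ are at most $\gamma_0 2^{-m/10}$, and bounds $\tilde{\mathbb P}_{\varepsilon\mathbf 1_U}(A^{\mathrm c})$ by a union bound whose lattice sums are converted to $\int_F\int_0^t\mathbf P_y(B_s\in U)\,\mathrm{d}s\,\mathrm{d}y<\infty$; you instead invoke a Kolmogorov--Chentsov/Garsia--Rodemich--Rumsey sup-moment bound on a unit-box covering of $[0,t]\times F$ and finish with a single Markov inequality. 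Both are valid; your route is shorter but requires two points of care that you do address implicitly: you must use the \emph{moment} form of Kolmogorov--Chentsov, in which the bound on $\tilde{\mathbb E}[\sup_{Q}|N|^{q}]$ is linear in the increment-moment constant (this is exactly what preserves the factor $\varepsilon$ and, applied box-by-box rather than globally, the Gaussian factor $e^{-\operatorname{dist}(\cdot,U)^2/(2t)}$ needed both for summability over an unbounded $F$ and for the $F_K\to\infty$ claim), and you need $q$ large enough that the increment exponent exceeds the parabolic dimension of the boxes, which your ``large $q$'' moment bounds supply. Also note that for $\limsup_{t\downarrow0}\Cr{c:UFtg}<\infty$ the paper simply uses monotonicity of $\Cr{c:UFtg}(U,F,t,\gamma)$ in $t$, which is lighter than tracking the $t$-dependence of the constants, though your version works too.
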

	
	The proof of the above Lemma  is standard and similar to that of \cite{MR1339735}*{Lemma 3.1}, so we postpone it to Appendix \ref{append-C}.
	
	As an application of the results in this section, we establish a compact support property for the random field $u_t$.
	
	\begin{lemma}\label{Local-Compactness}
		Suppose that $0< \varepsilon < (2-\sum_{k=1}^\infty kq_k)/4 = \Psi'(0+)/(4\beta_{\mathrm c})$.
		Let $U$ be a bounded open subset of $\mathbb R$.
		Let $f$, the initial value of the SPDE \eqref{eq:GSPDE}, be bounded by $\varepsilon\mathbf 1_U$.
		Let $t > 0$ be arbitrary.
		Then,
		it holds that
		\begin{align}
			\lim_{K\to +\infty} \tilde{\mathbb{P}}_{f}\pr{u_t(x)=0,\forall x>K}= 1
			\quad \mbox{and}\quad
			\lim_{K\to +\infty} \tilde{\mathbb{P}}_{f }\pr{u_t(x)=0,\forall x<-K}= 1.
		\end{align}
	\end{lemma}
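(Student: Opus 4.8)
The plan is to derive this compact support property from the Laplace-transform lower bound of Lemma~\ref{lem:2}, run with a dense initial configuration and a carefully separated auxiliary interval. First I would make two reductions. Since reflecting the spatial variable maps a solution of \eqref{eq:GSPDE} to a solution with initial value $f(-\cdot)\le\varepsilon\mathbf 1_{-U}$, it suffices to prove the first assertion. Then, enlarging $U$ to a bounded open interval and using the weak comparison principle (Lemma~\ref{eq:WC}, extended to initial data of the present type via Proposition~\ref{prop:GI}), I may assume $f=\varepsilon\mathbf 1_U$ with $U$ a bounded open interval; fix $t>0$.

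Next I would fix a large $K$ and apply the results of this section with the initial configuration $(x_i)_{i\in\N}$ taken to be an enumeration of $\mathbb Q\cap[K,\infty)$, so that the associated initial trace is $(\Lambda,\mu)=([K,\infty),\mathbf 0)$. Because $x\mapsto u_t(x)$ is continuous and $\{x_i:i\in\N\}$ is dense in $[K,\infty)$, whenever $u_t$ is positive somewhere on $[K,\infty)$ it is bounded below by a positive constant on an open subinterval containing infinitely many of the $x_i$; hence $\sum_{i\ge1}u_t(x_i)\in\{0,\infty\}$, with $\sum_{i\ge1}u_t(x_i)=0$ if and only if $u_t\equiv0$ on $[K,\infty)$, which by continuity amounts to $u_t(x)=0$ for all $x>K$. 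Consequently, for every $\gamma\in(0,1)$,
\[
\tilde{\mathbb{P}}_{\varepsilon\mathbf 1_U}\pr{u_t(x)=0,\ \forall x>K}=\tilde{\mathbb{E}}_{\varepsilon\mathbf 1_U}\brk{\exp\brc{-\theta(\gamma)\sum_{i\ge1}u_t(x_i)}},
\]
which is precisely the left-hand side of the lower bound \eqref{eq:LIL}.

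Using the hypothesis $4\varepsilon<\Psi'(0+)/\beta_{\mathrm c}$, I would fix $\gamma\in\pr{4\varepsilon\beta_{\mathrm c}/\Psi'(0+),\,1}$ and put $\gamma':=\gamma\,\Psi'(0+)/(2\beta_{\mathrm c})$, so that $0<\varepsilon<\gamma'/2<\gamma'<1$ and $\varepsilon\le\gamma$; and I would take $F:=[K/2,\infty)$, which for $K>2$ is a closed interval containing $\bigcup_{i\in\N}(x_i-1,x_i+1)=(K-1,\infty)$. Then \eqref{eq:LIL} of Lemma~\ref{lem:2} bounds $\tilde{\mathbb{P}}_{\varepsilon\mathbf 1_U}(u_t(x)=0,\ \forall x>K)$ from below by
\[
\begin{aligned}
&\exp\brc{-\tfrac{\varepsilon e^{\lambda_{\mathrm o}t}}{1-\gamma}\int_U v_t^{([K,\infty),\mathbf 0)}(y)\diff y}-\tilde{\mathbb{P}}_{\varepsilon\mathbf 1_U}\pr{\sup_{s\le t,\,y\in F}u_s(y)>\gamma'}\\
&\qquad-\tfrac{\varepsilon\Psi'(0+)e^{2\lambda_{\mathrm o}t}}{2(1-\gamma)}\,\mathcal{V}_t^{([K,\infty),\mathbf 0,F)}.
\end{aligned}
\]
I would then send $K\to\infty$ and check that each term behaves as needed: by \eqref{eq:TEV}, \eqref{eq:SV} and \eqref{Property-2}, for $y\in U$ one has $v_t^{([K,\infty),\mathbf 0)}(y)=v_t^{((-\infty,0],\mathbf 0)}(K-y)$, which decays Gaussian-fast in $K-y$, and since $U$ is bounded, $\int_U v_t^{([K,\infty),\mathbf 0)}(y)\diff y\to0$, so the first term tends to $1$; since $U$ is a bounded interval, $U\cap F$ is bounded and $\varepsilon<\gamma'/2$, the last assertion of Lemma~\ref{lem: Prob-w} applied with the receding half-lines $[K/2,\infty)$ gives that the probability term is $\le\varepsilon\,\Cr{c:UFtg}(U,[K/2,\infty),t,\gamma')\to0$; and by \eqref{eq:TEV}, \eqref{eq:SV} and the change of variable $a=K-z$, $\mathcal{V}_t^{([K,\infty),\mathbf 0,F)}=\int_0^t\int_{K/2}^\infty v_r^{((-\infty,0],\mathbf 0)}(a)^2\,\diff a\,\diff r$, which by \eqref{Property-2} and elementary Gaussian estimates is $\lesssim t K^{-3}e^{-K^2/(4t)}\to0$. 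This gives $\lim_{K\to\infty}\tilde{\mathbb{P}}_{\varepsilon\mathbf 1_U}(u_t(x)=0,\ \forall x>K)=1$, and the second statement follows from the spatial reflection noted at the outset.

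The step I expect to be the main obstacle is the choice of the auxiliary interval $F$. The minimal admissible choice $F=[K-1,\infty)$ makes $\mathcal{V}_t^{([K,\infty),\mathbf 0,F)}$ equal to a strictly positive constant independent of $K$ (by the translation invariance \eqref{eq:TEV} of $v^{(\cdot,\mathbf 0)}$), so the bound would only give ``$\ge1-c(\varepsilon,t)$'' rather than $\to1$; conversely a fixed half-line $F=[c,\infty)$ would leave the probability term in \eqref{eq:LIL} bounded away from $0$, because the last assertion of Lemma~\ref{lem: Prob-w} requires $F$ to run off to $+\infty$. The choice $F=[K/2,\infty)$ is designed to meet both constraints at once: its complement recedes from the singular region $[K,\infty)$ of $v^{([K,\infty),\mathbf 0)}$ at rate $K/2\to\infty$, so \eqref{Property-2} kills the $\mathcal{V}_t$-term, while $[K/2,\infty)$ itself still tends to $+\infty$, so Lemma~\ref{lem: Prob-w} applies. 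Finally, the smallness assumption $4\varepsilon<\Psi'(0+)/\beta_{\mathrm c}$ is used only to ensure that the interval $\pr{4\varepsilon\beta_{\mathrm c}/\Psi'(0+),\,1}$ of admissible $\gamma$ is nonempty, which is what reconciles the thresholds $\varepsilon$, $\gamma$ and $\gamma'$ in Lemma~\ref{lem:2}.
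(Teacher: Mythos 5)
Your proposal is correct and follows essentially the same route as the paper: after reducing by symmetry and weak comparison to $f=\varepsilon\mathbf 1_U$, you take a dense rational configuration in $[K,\infty)$ so that the initial trace is $([K,\infty),\mathbf 0)$, identify the event $\{u_t=0 \text{ on } (K,\infty)\}$ with the Laplace functional, and apply the lower bound \eqref{eq:LIL} of Lemma \ref{lem:2} with $F=[K/2,\infty)$, controlling the three terms exactly as the paper does via \eqref{eq:TEV}--\eqref{Property-2}, Lemma \ref{lem: Prob-w}, and (your direct recomputation of) Lemma \ref{lemma:integral-of-v} (iii). The only differences are cosmetic (explicitly enlarging $U$ to an interval, and rederiving the $\mathcal V_t$-decay rather than citing it), so no further comment is needed.
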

	\begin{proof}
		We only prove the first limit,  since  the second one follows  by symmetry. By the weak comparison principle, i.e.~ Lemma \ref{eq:WC},
		we can assume  $f= \varepsilon\mathbf 1_{U}$ without loss of generality.
		Let $K \geq 2$ be arbitrary.
		We can construct a sequence  $(\hat{x}_i)_{i\in \mathbb N}$ in $\mathbb R$ such that $\{\hat{x}_i: i\in \N\} = (K,+\infty)\cap \Q$.
		\begin{old} Note that  \eqref{Def-Lambda} holds with  $(x_i)_{i=1}^\infty$  and $(\Lambda, \mu)$ being replaced by $(\hat x_i)_{i=1}^\infty$ and
			$(\Lambda_K, \mathbf 0)$ where $\Lambda_K:=[K, +\infty)$. \end{old}
		Let $\gamma = \gamma(\varepsilon) <1$ be close enough to $1$ such that $4\varepsilon < \frac{\Psi'(0+)}{\beta_{\mathrm c}}\gamma$.
		In particular, it holds that $0\leq \varepsilon \leq \gamma < 1$.
		Fix $\theta =\theta(\gamma)$ according to \eqref{eq:Theta}.
		By Lemma \ref{lem:2},
		with $F_K:= [K/2, +\infty)$ and $(\Lambda, \mu)= (\Lambda_K, \mathbf 0)$ for $\Lambda_K:=[K, +\infty)$,
		we have
		\begin{align}
			& \tilde{\mathbb{P}}_{\varepsilon  \mathbf 1_U}\pr{u_t(x)=0,\forall x>K}= \tilde{\mathbb{E}}_{\varepsilon \mathbf 1_U}\brk{\exp\brc{-\theta(\gamma)\sum_{i=1}^\infty u_{t}(\hat{x}_i) }}
			\\&
			\label{Lower:w-2}
			\geq \exp\brc{-\frac{\varepsilon e^{\lambda_{\mathrm o} t}}{1-\gamma}\int_U v_{t}^{(\Lambda_K,\mathbf{0})}(y)\mathrm{d}  y } - \tilde{\mathbb{P}}_{\varepsilon \mathbf 1_U }\pr{\sup_{s\leq t, y\in F_K} u_s(y)> \frac{\gamma}{2\beta_{\mathrm c}
				}\Psi'(0+)} - {}
			\\& \qquad  \frac{\varepsilon \Psi'(0+)e^{2\lambda_{\mathrm o} t}}{2(1-\gamma)} \mathcal{V}_t^{(\Lambda_K, \mathbf{0}, F_K)}.
		\end{align}
		By  \eqref{eq:TEV} and \eqref{Property-2},
		since $U$ is bounded,
		the first term on the right hand side of \eqref{Lower:w-2}  converges
		to $1$ as $K\to+\infty$.
		Also, by Lemma \ref{lemma:integral-of-v} (iii), we see that
		the third term on the right hand side of \eqref{Lower:w-2}  converges to $0$
		as $K\to+\infty$.
		
		Therefore, it remains to prove that the second term on the right-hand side of \eqref{Lower:w-2}  converges to  $0$ as $K\to \infty$.
		From $\varepsilon \in (0, \frac{\gamma}{4\beta_{\mathrm c}}\Psi'(0+))$ and Lemma \ref{lem: Prob-w},
		the absolute value of  this term is bounded by $\varepsilon  \Cr{c:UFtg} (U, F_K, t, \frac{\gamma}{2 \beta_{\mathrm c}}\Psi'(0+))\stackrel{K\to\infty}{\longrightarrow}0$.
		We arrive at the desired result.
	\end{proof}
	\section{Existence: Proof of Theorem \ref{thm:existence-of-Z-t}} \label{sec:SS5}
	
	In this section, we will prove Theorem \ref{thm:existence-of-Z-t}.
	
	Assume that \eqref{asp:A3}, \eqref{asp:A1}, and \eqref{asp:A2} hold.
	Let the branching mechanisms $\Phi$ and $\Psi$ be given as in \eqref{Def-Phi} and \eqref{Def-Psi} respectively.
	Let $(\Lambda, \mu) \in \mathcal T_\mathrm{a}$ be an arbitrary initial trace.
	We first observe that it suffices to prove the \textit{existence} of an $\mathcal N$-valued c\`{a}dl\`{a}g Markov process $(Z_t)_{t>0}$ satisfying the convergence statement of Theorem \ref{thm:existence-of-Z-t}. Indeed, if $(Z_t)_{t>0}$ and $(Z'_t)_{t>0}$ are two such processes, then for any monotonically increasing sequence $(\mu_n)_{n\in\mathbb{N}}$ converging m-weakly to $(\Lambda,\mu)$, both $(Z_t)_{t>0}$ and $(Z'_t)_{t>0}$ arise as the distributional limit of the same sequence of SBBMs $(Z_t^{(n)})_{t>0}$; hence they must have the same law on $\mathbb{D}([t_0,\infty),\mathcal{N})$ for every $t_0>0$, which confirms they also share the same law in $\mathbb{D}((0,\infty),\mathcal{N})$.
	
	Thanks to this, fixing arbitrary $(\mu_n)_{n\in \mathbb{N}}$ and $(Z^{(n)}_{\cdot})_{n\in \mathbb N}$ satisfying \eqref{eq:Zt-monotone} and \eqref{eq:Zt-SBBM}, to prove Theorem \ref{thm:existence-of-Z-t}, we only have to show that
	\begin{itemize}
		\item[\eq\label{eq:MM1}] there exists an $\mathcal N$-valued c\`{a}dl\`{a}g Markov process $(Z_t)_{t>0}$ such that the $\mathbb D([t_0,\infty),\mathcal N)$-valued random elements $(Z^{(n)}_t)_{t\geq t_0}$ converge in distribution to $(Z_t)_{t\geq t_0}$ as $n \to \infty$ for every $t_0 > 0$;
		\item[\eq\label{eq:MM2}] the law of the limit process $(Z_t)_{t>0}$ depends only on $(\Lambda, \mu)$ and $(\Phi, \Psi)$, but not on the particular choice of the sequences $(\mu_n)_{n\in \mathbb N}$ and $(Z^{(n)}_\cdot)_{n\in \mathbb N}$;
		\item[\eq\label{eq:MM3}] if we assume in addition that $\Lambda =\emptyset$, then $(Z_t^{(n)})_{t\geq 0}$ converge in distribution to $(Z_t)_{t\geq 0}$ in $\mathbb D([0,\infty), \mathcal N)$ with $Z_0 := \mu$.
	\end{itemize}
	
	Furthermore, we can assume without loss of generality that there exists a sequence of points $(x_i)_{i=1}^\infty$ in $\mathbb R$ such that $\mu_n = \sum_{i=1}^n \delta_{x_i}$ for each $n \geq 1$.
	In fact, if the statements \eqref{eq:MM1}--\eqref{eq:MM3} are established under this additional condition, then any general monotonically increasing sequence can simply be viewed as a subsequence of the one-by-one accumulation $\big(\sum_{i=1}^k \delta_{x_i}\big)_{k=1}^\infty$, which immediately guarantees the validity of those statements in their full generality.
	
	For each $n \geq 1$, let $(I_t^{(n)})_{t\geq 0}$, $(X^{(n),\alpha}_{t})_{\alpha \in I_t^{(n)},t\geq 0}$, and $(Z^{(n)}_t)_{t\geq 0}$ play the respective roles of $(I_t)_{t\geq 0}$, $(X^{\alpha}_{t})_{\alpha \in I_t,t\geq 0}$, and $(Z_t)_{t\geq 0}$ defined in Subsection \ref{sec:MR} (right after Proposition \ref{prop:WD}) associated with an SBBM having initial configuration $\mu_n = \sum_{i=1}^n \delta_{x_i}$ and the branching mechanisms \eqref{eq:OBR}--\eqref{eq:CB}.
	
	For every $(\tilde \Lambda, \tilde \mu)\in \mathcal T$, let $(v_t^{(\tilde \Lambda, \tilde \mu)}(x))_{t>0,x\in \mathbb R} \in \mathcal C^{1,2}((0,\infty)\times \mathbb R)$ be the unique non-negative solution to the equation \eqref{PDE}.
	Let $f$ be a measurable function on $\mathbb R$  which can be approximated by the elements of $\mathcal C(\mathbb R, [0,z^*])$ monotonically from below.
	Let $(u_t)_{t> 0}$
	be the continuous $\mathcal C(\mathbb R, [0,z^*])$-valued process given as in Proposition \ref{prop:GI}, with  initial value $u_0=f$, on a probability space whose probability measure will be denoted by $\tilde {\mathbb P}_f$.

	The proof of Theorem \ref{thm:existence-of-Z-t} will be  divided into four parts.
	In Subsection \ref{sec:CFDD}, we will show the convergence in  finite-dimensional  distributions of $(Z_t^{(n)})_{t>0}$ as $n\to\infty$ to a Markov process $(\tilde{Z}_t)_{t>0}$.
	In Subsection \ref{sec:PZ}, we will give several preliminary results on the process $(\tilde{Z}_t)_{t>0}$, in particular, showing that it is stochastically right-continuous.
	In Subsection \ref{sec:CR}, we will show that $(\tilde{Z}_t)_{t>0}$ has a c\`{a}dl\`{a}g modification $(Z_t)_{t>0}$.
	Finally, Theorem \ref{thm:existence-of-Z-t} is proved in Subsection \ref{SS5.4}.
	
	\subsection{Convergence in finite dimensional distributions} \label{sec:CFDD}
	In this subsection, we will show the convergence in finite-dimensional distributions of $(Z_t^{(n)})$ as $n\to \infty$.
	Let us start with an analytic lemma which gives the continuity of certain functions on $\mathcal N$.
	(We include its proof  in Appendix \ref{append-D}.)

	\begin{lemma}\label{lemma:general-vague-convergence1}
		Suppose that $\nu_m$ converges  to $\nu$ in $\mathcal N$ when $m\to \infty$. Let $g\in \mathcal{C}(\R, [-1, 1])$ satisfy that $1-g$  has compact support.
		Then
		\begin{align}\label{step_6}
			\lim_{m\to\infty} \prod_{z\in \mathbb R}g(z)^{\nu_m(\{z\})}
			=  \prod_{z\in \mathbb R} g(z)^{\nu(\{z\})}.
		\end{align}
	\end{lemma}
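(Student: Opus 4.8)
The plan is to recognise the claimed convergence as the vague continuity of the functional $\nu \mapsto \prod_{z\in\R} g(z)^{\nu(\{z\})}$, and the first thing I would note is that all the products in \eqref{step_6} are secretly finite. Setting $K := \operatorname{supp}(1-g)$, which is compact by hypothesis, we have $g\equiv 1$ on $\R\setminus K$; since $\nu$ and every $\nu_m$ are locally finite, only the finitely many atoms lying in $K$ contribute, so each product is a well-defined finite product. To localise, I would fix a bounded open interval $U\supset K$ with $\nu(\partial U)=0$ --- possible because $\nu$ carries only countably many atoms, so a generic enlargement of a compact interval enclosing $K$ works.

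Next I would carry out the bookkeeping of atoms under vague convergence. The portmanteau inequalities give $\liminf_m \nu_m(U)\ge\nu(U)$ and $\limsup_m \nu_m(\bar U)\le\nu(\bar U)$, and since $\nu(\partial U)=0$ we have $\nu(\bar U)=\nu(U)=:N$; as $\nu_m(U)\le\nu_m(\bar U)$ and all of these are non-negative integers, there is $m_0$ with $\nu_m(U)=\nu_m(\bar U)=N$ for all $m\ge m_0$. Hence for $m\ge m_0$ no atom of $\nu_m$ lies on $\partial U$, and we may enumerate $\nu|_U=\sum_{i=1}^N\delta_{z_i}$ and $\nu_m|_U=\sum_{j=1}^N\delta_{y_j^{(m)}}$ with $y_j^{(m)}\in U$. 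Since $g\equiv 1$ off $U$, for $m\ge m_0$ this reduces \eqref{step_6} to showing $\prod_{j=1}^N g(y_j^{(m)})\to\prod_{i=1}^N g(z_i)$.

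I would close the argument along subsequences. Given any subsequence, compactness of $\bar U^N$ yields a further subsequence with $y_j^{(m)}\to y_j\in\bar U$ for each $j$; testing vague convergence against an arbitrary $\phi\in\mathcal C_{\mathrm c}(U)$ gives $\sum_{j=1}^N\phi(y_j)=\lim_m\sum_{j=1}^N\phi(y_j^{(m)})=\lim_m\nu_m(\phi)=\nu(\phi)=\sum_{i=1}^N\phi(z_i)$, and comparing total masses (the $z_i$ lie in the open set $U$) forces each $y_j$ into $U$ and makes $(y_j)_j$ a permutation of $(z_i)_i$. Continuity of $g$ then gives $\prod_{j=1}^N g(y_j^{(m)})\to\prod_{j=1}^N g(y_j)=\prod_{i=1}^N g(z_i)$ along that further subsequence; as the limit never depends on the subsequence, the full sequence converges, proving \eqref{step_6}. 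The only genuinely delicate point here is the atom bookkeeping --- excluding atoms that escape or appear across $\partial U$ and identifying the limiting configuration inside $\bar U$ with $\nu|_U$ --- and this is precisely what the choice $\nu(\partial U)=0$, together with the integrality of $\nu_m(U)$ and $\nu_m(\bar U)$, is set up to handle.
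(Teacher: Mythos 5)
Your argument is correct, but it runs on a different engine than the paper's. Both proofs begin the same way: localize to a window containing $\operatorname{supp}(1-g)$ whose boundary carries no $\nu$-mass, and use vague convergence together with the integer-valuedness of the measures to freeze the relevant atom counts for all large $m$. After that you diverge. The paper works quantitatively: it surrounds each distinct atom $z_i$ of $\nu$ by an $\epsilon$-interval, shows (again by convergence of masses plus integrality) that for large $m$ each such interval contains exactly the right number of atoms of $\nu_m$ and the rest of the window contains none, bounds the difference of the two finite products by a telescoping sum controlled by the oscillation of $g$ over the $\epsilon$-intervals, and finally lets $\epsilon\downarrow 0$ using continuity of $g$ — no subsequences anywhere. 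You instead pin down only the total count $N$ in the window $U$, extract a sub-subsequence along which the $N$ atoms of $\nu_m$ converge in $\bar U^N$, identify the limiting configuration with the atoms of $\nu|_U$ by testing against $\phi\in\mathcal C_{\mathrm c}(U)$ and comparing total masses (this is exactly where $\nu(\partial U)=0$ does its work, preventing limit atoms from escaping to the boundary), and close with continuity of $g$ and the subsequence principle. Your route is softer — no modulus-of-continuity bookkeeping — at the price of the compactness detour, while the paper's route is more explicit and delivers essentially the same atom-convergence fact directly along the full sequence; the one genuinely delicate step in your version, the identification of $(y_j)_{j\le N}$ with a permutation (as a multiset) of $(z_i)_{i\le N}$, is handled correctly by your Riesz/mass-count argument, since the restriction of $\sum_j\delta_{y_j}$ to the open set $U$ must have total mass $N$, forcing every limit point into $U$.
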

	
	\begin{corollary}\label{lemma:general-vague-convergence2}
		If $f$
		has compact support and is bounded by
		$\Psi'(0+)/(4\beta_{\mathrm c})$,
		then for each $t>0$,
		\begin{align}\label{step_7}
			\lim_{m\to\infty} \tilde{ \mathbb{E}}_f\brk{\prod_{z\in \mathbb R}\pr{1-u_t(z)} ^{\nu_m(\brc{z})} }
			= \tilde{ \mathbb{E}}_f\brk{\prod_{z\in \mathbb R}\pr{1-u_t(z)}^{\nu(\brc{z})} }.
		\end{align}
	\end{corollary}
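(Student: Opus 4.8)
The plan is to reduce the convergence of expectations in~\eqref{step_7} to an almost-sure pathwise convergence of the integrands, and then to apply Lemma~\ref{lemma:general-vague-convergence1} with $g$ taken to be the random continuous function $1-u_t$. What makes this legitimate is the compact support property of the dual SPDE, which is exactly where the hypothesis that $f$ is bounded by $\Psi'(0+)/(4\beta_{\mathrm c})$ is used.

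First I would note that, by Lemma~\ref{lem:CC} together with assumption~\eqref{asp:A2}, we have $z^*<2$, so $1-u_t(z)\in[1-z^*,1]\subset[-1,1]$ for every $z\in\mathbb R$; hence each infinite product appearing in~\eqref{step_7} is a well-defined random variable (see~\eqref{eq:infinite-product} and~\eqref{eq:MM}) whose absolute value is at most $1$. By the bounded convergence theorem it therefore suffices to prove that, $\tilde{\mathbb P}_f$-almost surely,
\[
\prod_{z\in\mathbb R}\bigl(1-u_t(z)\bigr)^{\nu_m(\{z\})}\xrightarrow[m\to\infty]{}\prod_{z\in\mathbb R}\bigl(1-u_t(z)\bigr)^{\nu(\{z\})},
\]
where $\nu_m\to\nu$ in $\mathcal N$ is the hypothesis inherited from Lemma~\ref{lemma:general-vague-convergence1}.

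To obtain this pathwise convergence I would use the compact support property. Since $\operatorname{supp}(f)$ is compact and $f$ is bounded by $\Psi'(0+)/(4\beta_{\mathrm c})$, Lemma~\ref{Local-Compactness} applies and gives, for the fixed $t>0$,
\[
\lim_{K\to\infty}\tilde{\mathbb P}_f\bigl(u_t(x)=0,\ \forall x>K\bigr)=\lim_{K\to\infty}\tilde{\mathbb P}_f\bigl(u_t(x)=0,\ \forall x<-K\bigr)=1 .
\]
Since the events in question increase with $K$, it follows that $\tilde{\mathbb P}_f$-a.s.\ the continuous function $u_t(\cdot)$ vanishes off a bounded interval, i.e.~$u_t$ has compact support. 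On this full-probability event put $g:=1-u_t(\cdot)$: then $g\in\mathcal C(\mathbb R,[-1,1])$ by the previous paragraph, and $1-g=u_t$ has compact support, so Lemma~\ref{lemma:general-vague-convergence1} applies to this $g$ and the given sequence $\nu_m\to\nu$ and yields exactly the displayed pathwise convergence. Combining this with the bounded-convergence reduction proves~\eqref{step_7}.

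The domination and the passage to the limit via bounded convergence are routine; the only genuine input is the compact support property of $u_t$, Lemma~\ref{Local-Compactness}, which in turn rests on the Laplace-transform estimates of Section~\ref{sec:DSPDE}. The main point to be careful about is that the hypotheses of Lemma~\ref{lemma:general-vague-convergence1} — continuity and $[-1,1]$-valuedness of $g$, and compactness of $\operatorname{supp}(1-g)$ — must be checked for $\tilde{\mathbb P}_f$-almost every realization of $u_t$ separately, rather than for a single deterministic $g$.
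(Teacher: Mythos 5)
Your proof is correct and uses the same ingredients as the paper's own argument (Lemma \ref{Local-Compactness}, Lemma \ref{lemma:general-vague-convergence1}, and bounded convergence); the only difference is organizational: the paper keeps the truncation inside the expectation via the indicator of $\{u_t(x)=0,\ \forall |x|>K\}$ and sends $m\to\infty$ first and then $K\to\infty$, whereas you first upgrade Lemma \ref{Local-Compactness} to an almost-sure compact-support statement for $u_t$ (legitimate, since the events increase in $K$) and then apply Lemma \ref{lemma:general-vague-convergence1} pathwise followed by a single application of bounded convergence. Both routes are essentially the same argument, so no further comparison is needed.
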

	\begin{proof}
		For each $K>0$ and $t>0$, it holds that
		\begin{align}\label{step_8}
			& \abs{\tilde{ \mathbb{E}}_f\brk{\prod_{z\in \mathbb R}\pr{1-u_t(z)}^{\nu_m(\{z\})} } - \tilde{ \mathbb{E}}_f\brk{\prod_{z\in \mathbb R}\pr{1-u_t(z)}^{\nu(\{z\})} }}
			\\
			& \leq
			\tilde{ \mathbb{E}}_f \brk{  \abs{ \prod_{z\in \mathbb R}\pr{1-u_t(z)}^{\nu_m(\{z\})} - \prod_{z\in \mathbb R}\pr{1-u_t(z)}^{\nu(\{z\})}   } \mathbf 1_{\brc{u_t(x) =0,\forall |x|>K}} }
			\\
			& \quad
			+ 2\tilde{\mathbb{P}}_{f} \pr{ \exists x\text{~s.t.~}|x|>K, u_{t}(x)>0 }.
		\end{align}
		From Lemma \ref{lemma:general-vague-convergence1} and the bounded convergence theorem,
		we know that the first term on the right hand side converges to $0$ when $m\to \infty$.
		Now from Lemma \ref{Local-Compactness}, taking $m\to\infty$ first and then $K\to\infty$, we get \eqref{step_7} as desired.
	\end{proof}

	In the next proposition, we show that $(Z_t^{(n)})_{t>0}$ converges to some Markov process in finite dimensional distributions.

	\begin{proposition}\label{Part-1-proof-of-thrm-1.2}
		There exists an $\mathcal N$-valued time-homogeneous Markov process $(\tilde{Z}_t)_{t>0}$ such that $(Z_t^{(n)} )_{t>0}$ converges to $(\tilde{Z}_t)_{t>0}$ as $n\to \infty$ in finite dimensional distributions.
		The entrance law $(\mathscr P^{(\Lambda, \mu)}_t)_{t> 0}$ of $(\tilde Z_t)_{t>0}$ is characterized so that, for any non-negative continuous function $g$ on $\mathbb R$ with compact support and $t>0$,
		\begin{align}
			& \int e^{-\tilde \nu (g)} \mathscr P^{(\Lambda, \mu)}_t(\mathrm{d} \tilde \nu)
			=
			\tilde {\mathbb E}_{1-e^{- g}}\brk{\prod_{i=1}^\infty \pr{1-u_t(x_i)}}
			\\&\label{eq:EC}=
			\tilde {\mathbb E}_{1-e^{- g}}\brk{\mathbf 1_{\brc{u_t(x) = 0, \forall x\in \Lambda }} \prod_{ x\in \Lambda^{\mathrm c}} \pr{1-u_t(x)}^{\mu(\{x\})}}.
		\end{align}
		The transition kernels $(\mathscr Q_t)_{t\geq 0}$ of $(\tilde Z_t)_{t>0}$ are characterized so that, for any non-negative continuous function $g$ on $\mathbb R$ with compact support, $t\geq 0$, and $\nu\in \mathcal N$,
		\begin{equation} \label{eq:KC}
			\int e^{-\tilde \nu(g)} \mathscr Q_t(\nu,\mathrm{d} \tilde \nu ) =
			\tilde {\mathbb E}_{1-e^{-g}} \brk{\prod_{z\in \mathbb R} \pr{1-u_t(z)}^{\nu(\{z\})}}.
		\end{equation}
		In particular, the finite dimensional distributions of $(\tilde Z_t)_{t>0}$ are determined by $(\Lambda, \mu)$, $\Phi$, and $\Psi$.
	\end{proposition}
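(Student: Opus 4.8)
The plan is to build the candidate marginals $\mathscr P_t^{(\Lambda,\mu)}$ and transition kernels $\mathscr Q_t$ out of the dual SPDE via the moment duality, prove the single-time convergence by a tightness argument, and then recover all finite-dimensional distributions by iterating the $n$-particle Markov property against a ``nested'' moment duality. The delicate point throughout is that the dual observable $Z\mapsto\prod_z(1-u(z))^{Z(\{z\})}$ may take negative values, so one cannot reduce to the additive functional $Z\mapsto Z(-\log(1-u))$ as in \cite{MR4698025}.

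\emph{Step 1 (one time point).} Fix $t>0$ and a non-negative $g\in\mathcal C_{\mathrm c}(\mathbb R)$. Since $z^*\ge1$ we have $1-e^{-g}\in\mathcal C(\mathbb R,[0,z^*])$, so Lemma~\ref{Lemma: Duality3} with $f=1-e^{-g}$ gives $\mathbb E\brk{e^{-Z_t^{(n)}(g)}}=\tilde{\mathbb E}_{1-e^{-g}}\brk{\prod_{i=1}^n(1-u_t(x_i))}\to\tilde{\mathbb E}_{1-e^{-g}}\brk{\prod_{i=1}^\infty(1-u_t(x_i))}=:L_t(g)$ as $n\to\infty$, and by \eqref{eq:infinite-product} the limit coincides with the right-hand side of \eqref{eq:EC}. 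To see that $L_t$ is the Laplace functional of a probability measure on $\mathcal N$, I would check $\lim_{\lambda\downarrow0}L_t(\lambda g)=1$: the weak comparison principle (Lemma~\ref{eq:WC}) couples the dual solutions with initial data $1-e^{-\lambda g}$ so that they decrease to $0$ as $\lambda\downarrow0$, and then, for $\lambda$ small enough that $\lambda\|g\|_\infty<\Psi'(0+)/(4\beta_{\mathrm c})$, the compact support property (Lemma~\ref{Local-Compactness}) together with \eqref{eq:infinite-product} forces $\prod_{i}(1-u_t(x_i))\to1$, whence $L_t(\lambda g)\to1$ by bounded convergence. By Kallenberg's criterion $(Z_t^{(n)})_n$ is tight in $\mathcal N$; every subsequential limit has Laplace functional $L_t$, hence $Z_t^{(n)}\Rightarrow\tilde Z_t$ with $\mathscr P_t^{(\Lambda,\mu)}:=\mathrm{Law}(\tilde Z_t)$ satisfying \eqref{eq:EC}. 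Running the same argument with $(x_i)_{i\ge1}$ replaced by an enumeration of the atoms of an arbitrary $\nu\in\mathcal N$ (whose initial trace in the sense of \eqref{Def-Lambda}--\eqref{Def-mu} is $(\emptyset,\nu)$, as $\nu$ is locally finite) produces $\mathscr Q_t(\nu,\cdot)$ obeying \eqref{eq:KC}; measurability of $\nu\mapsto\mathscr Q_t(\nu,\cdot)$ follows from \eqref{eq:MM}, and for finite $\nu$ it agrees with the $n$-particle SBBM transition kernel by Proposition~\ref{prop:D}.

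\emph{Step 2 (extended duality and the Markov structure).} I would next upgrade \eqref{eq:KC} to $\int_{\mathcal N}\prod_z(1-h(z))^{\tilde\nu(\{z\})}\,\mathscr Q_t(\nu,\diff\tilde\nu)=\tilde{\mathbb E}_h\brk{\prod_z(1-u_t(z))^{\nu(\{z\})}}$ for every $h\in\mathcal C(\mathbb R,[0,z^*])$. For compactly supported $h$ both sides are continuous in $\nu$ (the left side by Lemma~\ref{lemma:general-vague-convergence1} and Corollary~\ref{lemma:general-vague-convergence2}, since $1-h$ is $[-1,1]$-valued with $h$ compactly supported -- here $z^*<2$ is used), so the identity follows by approximating $\nu$ by its finite truncations and invoking Proposition~\ref{prop:D}; the general $h$ is reached by truncating $h$ and again using Lemma~\ref{eq:WC} and the compact support property. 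Combined with the Markov property of the SPDE \eqref{eq:GSPDE} and Fubini's theorem, this extended duality yields the Chapman--Kolmogorov relation $\int\mathscr Q_s(\nu,\diff\tilde\nu)\mathscr Q_t(\tilde\nu,\cdot)=\mathscr Q_{s+t}(\nu,\cdot)$ and, by the same mechanism applied to \eqref{eq:EC}, the entrance relation $\int\mathscr P_s^{(\Lambda,\mu)}(\diff\nu)\mathscr Q_t(\nu,\cdot)=\mathscr P_{s+t}^{(\Lambda,\mu)}$. Hence $(\mathscr P_t^{(\Lambda,\mu)})_{t>0}$ and $(\mathscr Q_t)_{t\ge0}$ genuinely define a time-homogeneous Markov process $(\tilde Z_t)_{t>0}$ whose finite-dimensional laws are determined by $(\Lambda,\mu)$, $\Phi$ and $\Psi$ -- the initial data $(x_i)$ entering only through $(\Lambda,\mu)$ by \eqref{eq:infinite-product}, and $\Phi,\Psi$ only through the SPDE.

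\emph{Step 3 (finite-dimensional convergence, and the main obstacle).} It remains to prove $(Z_t^{(n)})_{t>0}\to(\tilde Z_t)_{t>0}$ in finite-dimensional distributions. Given $0<t_1<\dots<t_k$ and non-negative $g_1,\dots,g_k\in\mathcal C_{\mathrm c}(\mathbb R)$, set $r_j:=t_j-t_{j-1}$ with $t_0:=0$, and build on one probability space a chain of solutions of \eqref{eq:GSPDE}: $u^{(k)}$ on $[0,r_k]$ from $1-e^{-g_k}$, and, for $j=k-1,\dots,1$, $u^{(j)}$ on $[0,r_j]$ from the random datum $1-e^{-g_j}\pr{1-u^{(j+1)}_{r_{j+1}}}$, the layers conditionally independent given the later ones and independent of the SBBM. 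A one-line check shows each such datum lies in $[0,z^*]$, since $1-u^{(j+1)}_{r_{j+1}}\in[1-z^*,1]\subset[-1,1]$ and $e^{-g_j}\in(0,1]$; it is exactly the range bound $[0,z^*]\subset[0,2)$, hence assumption \eqref{asp:A2}, that keeps every SPDE in the chain well posed. Iterating the $n$-particle Markov property and Proposition~\ref{prop:D} -- at each step conditioning on the later layers and using that $e^{-g_j(z)}\pr{1-u^{(j+1)}_{r_{j+1}}(z)}=1-h_j(z)$ with $h_j\in\mathcal C(\mathbb R,[0,z^*])$ -- gives $\mathbb E\brk{\prod_{j=1}^k e^{-Z_{t_j}^{(n)}(g_j)}}=\tilde{\mathbb E}\brk{\prod_{i=1}^n(1-u^{(1)}_{r_1}(x_i))}\to\tilde{\mathbb E}\brk{\prod_{i=1}^\infty(1-u^{(1)}_{r_1}(x_i))}$ as $n\to\infty$, the last step by bounded convergence and the infinite product being well defined because $z^*<2$; by \eqref{eq:infinite-product} the limit depends only on $(\Lambda,\mu),\Phi,\Psi$. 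Matching this with the iterated kernel $\int\mathscr Q_{t_1}(\nu^{(n)},\diff\nu_1)e^{-\nu_1(g_1)}\cdots\int\mathscr Q_{r_k}(\nu_{k-1},\diff\nu_k)e^{-\nu_k(g_k)}$ (valid for each $n$ since the $n$-particle SBBM is Markov and $\mathscr Q_t$ restricts to its transition kernel on finite measures) and passing to the limit using $Z_{t_1}^{(n)}\Rightarrow\tilde Z_{t_1}$ and the weak continuity of $\mathscr Q_t(\cdot,\cdot)$ (Corollary~\ref{lemma:general-vague-convergence2}) identifies the limiting finite-dimensional laws with those of the process from Step 2. I expect the hardest part to be precisely this nested-duality bookkeeping and the exchange of limit with expectation: the dual observable is signed and is \emph{not} a weakly continuous functional of the population unless the relevant SPDE layer is first shown to have compact support, and it is the $[0,z^*]\subset[0,2)$ bound (via \eqref{asp:A2}) that makes each layer, and each infinite product, meaningful.
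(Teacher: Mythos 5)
Your overall architecture (one\-/time convergence via the moment duality and Laplace functionals, kernels $\mathscr Q_t(\nu,\cdot)$ from the case $(\emptyset,\nu)$, multi\-/time laws via duality plus the Markov property) matches the paper's, but Step~1 has a genuine gap at its crucial point, namely the verification $\lim_{\lambda\downarrow 0}L_t(\lambda g)=1$. This limit is exactly the statement that the limiting particle count on $\operatorname{supp}(g)$ at the fixed time $t>0$ is almost surely finite, i.e.\ coming down from infinity at time $t$; it is a quantitative fact that hinges on subcriticality ($\Psi'(0+)>0$, assumption \eqref{asp:A3}) through the finiteness of $\int_U v_t^{(\Lambda,\mu)}(y)\diff y$ (Lemma~\ref{lemma:integral-of-v}~(i)), and it would simply be false without such input. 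The soft argument you propose (monotone coupling via Lemma~\ref{eq:WC} so that $u^{(\lambda)}_t(x)\downarrow 0$ for each fixed $x$, plus the compact support property of Lemma~\ref{Local-Compactness} and bounded convergence) cannot deliver it: the compact support property only suppresses the factors with $x_i$ outside a large compact set, whereas every neighbourhood of a point of $\Lambda$ contains infinitely many $x_i$, and by \eqref{eq:infinite-product} the infinite product vanishes identically on the event that $u^{(\lambda)}_t$ is strictly positive somewhere on $\Lambda$ — an event of positive probability for each fixed $\lambda$. Pointwise (even monotone) convergence of the individual factors to $1$ does not force the infinite product to converge to $1$: for instance $z^{(m)}_i=\min(1/i,1/m)$ satisfies $z^{(m)}_i\downarrow 0$ for every $i$, yet $\prod_{i\ge 1}(1-z^{(m)}_i)=0$ for every $m$. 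What has to be shown is that $\tilde{\mathbb P}_{\lambda\text{-data}}\bigl(u_t>0 \text{ somewhere on } \Lambda\bigr)$ and the defect of the product over the $\mu$-atoms are $o(1)$ as $\lambda\downarrow 0$, and this is precisely what the paper extracts from the quantitative bounds of Lemmas~\ref{lem:1} and~\ref{lem:2} (the lower bound $\exp\{-\tfrac{\varepsilon e^{\lambda_{\mathrm o}t}}{1-\gamma}\int_U v_t^{(\Lambda,\mu)}\}$ minus error terms proportional to $\varepsilon$) together with Lemma~\ref{lem: Prob-w} and Lemma~\ref{lemma:integral-of-v}. The same quantitative step is needed to know that your candidate $\mathscr P_t^{(\Lambda,\mu)}$ and $\mathscr Q_t(\nu,\cdot)$ are genuine probability measures, so the gap propagates into Steps~2 and~3 as well.

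Concerning Step~3: your nested backward chain of SPDE layers with random initial data $1-e^{-g_j}(1-u^{(j+1)}_{r_{j+1}})$ is a workable alternative to the paper's route (which instead proceeds by induction on the number of time points, Skorokhod representation of the $(m-1)$-point limit, and the bounded continuous functional $H^{1-e^{-g}}_s$ of Corollary~\ref{lemma:general-vague-convergence2}), and your range check $[0,z^*]$ is correct; but you must apply Proposition~\ref{prop:D}/\ref{prop:GI} conditionally on the later layers (legitimate since they are independent of the SBBM), and your ``matching'' step uses weak continuity of $\nu\mapsto\int e^{-\tilde\nu(g)}\mathscr Q_t(\nu,\diff\tilde\nu)$, which Corollary~\ref{lemma:general-vague-convergence2} supplies only under the smallness restriction $\|1-e^{-g}\|_\infty<\Psi'(0+)/(4\beta_{\mathrm c})$ — the same restriction the paper imposes on its test functions, so you should restrict your $g_j$ accordingly (still a determining class). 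None of this, however, repairs the Step~1 gap described above, which is where the real content of the proposition lies.
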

	\begin{remark} \label{rem:LQ}
		Comparing \eqref{eq:EC} and \eqref{eq:KC},
		we have $\mathscr P^{(\emptyset, \nu)}_t = \mathscr Q_t(\nu, \cdot)$ for any $\nu\in \mathcal N$ and $t>0$.
	\end{remark}
	\begin{proof}
		\firststep \label{step:K}
		Let us fix an arbitrary $t>0$ and show that the $\mathcal N$-valued random element $Z_t^{(n)}$ converges in distribution to some $\mathcal N$-valued random element  $\hat Z_t$ as $n\to \infty$.
		Fix an arbitrary non-negative continuous function $g$ on $\mathbb R$ with compact support.
		From \cite{MR3642325}*{Corollary 4.14},
		it suffices to show the convergence in distribution of the random variable  $Z_t^{(n)}(g)$ as $n\to \infty$.
		\begin{draft}
			If a sequence of integer-valued measures converges vaguely, then the limit is also a an integer-valued measure. This fact can be verified, and is actually used here. However, we omit the details.
		\end{draft}
		By Lemma \ref{Lemma: Duality3} with  $f:=1-e^{-\theta g}$, we see that the following limit exists  for each $\theta \geq 0$:
		\begin{align}\label{eq:ECR}
			& \lim_{n\to\infty} \mathbb{E}\brk{ e^{-\theta Z_t^{(n)}(g)} }
			=\lim_{n\to\infty} \mathbb{E}\brk{ \prod_{\alpha \in I_t^{(n)}} e^{-\theta g(X_t^{(n), \alpha})} }
			\\&= \tilde {\mathbb E}_{1-e^{-\theta g}}\brk{\prod_{i=1}^\infty \pr{1-u_t(x_i)}}
			=\tilde {\mathbb E}_{1-e^{-\theta g}}\brk{\mathbf 1_{\brc{u_t(x) = 0, \forall x\in \Lambda }} \prod_{x\in \Lambda^{\mathrm c}} \pr{1-u_t(x)}^{\mu(\{x\})}}.
		\end{align}
		To show the weak convergence of $Z_t^{(n)}(g)$,  by L\'evy's  continuity theorem, it suffices to prove that $\lim_{\theta \to0}\lim_{n\to\infty}  \mathbb{E} [e^{-\theta Z_t^{(n)}(g)} ]=1$.
		Recall from \eqref{asp:A3} and \eqref{eq:Psi'} that $\Psi'(0+) /\beta_{\mathrm c} = 2 - \sum_{k=0}^\infty k q_k \in (0,2)$.
		Let $\theta>0$ be small enough so that
		\[
		0<\theta' := 1-e^{-\theta \Vert g\Vert_\infty} <  \frac{\Psi'(0+)}{8\beta_{\mathrm c}} \leq  \frac{1}{4}.
		\]
		Let $U$ be a bounded open interval  containing the support of $g$, and $F$ be a closed interval containing $\cup_{i\in \mathbb N}(x_i-1,x_i+1)$.
		By Lemma \ref{Lemma: Duality3}, we have that
		\begin{align}
			& \lim_{n\to\infty} \mathbb{E}\brk{ e^{-\theta Z_t^{(n)}(g)}}
			\geq \lim_{n\to\infty} \mathbb{E}\brk{ (1-\theta')^{Z_t^{(n)}(U)}}
			= \tilde{\mathbb E}_{\theta' \mathbf 1_{U}} \brk{\prod_{i=1}^\infty (1-u_t(x_i))}.
		\end{align}
		From Lemmas \ref{lem:1} and  \ref{lem:2}, for $\gamma =1/2$,
		\begin{align}\label{eq:CR}
			& \tilde{\mathbb E}_{\theta' \mathbf 1_{U}} \brk{\prod_{i=1}^\infty (1-u_t(x_i))}
			\\&\geq \exp\brc{-\frac{\theta' e^{\lambda_{\mathrm o} t}}{1-\gamma}\int_U v_{t}^{(\Lambda,\mu)}(y)\mathrm{d}  y } - \tilde{\mathbb{P}}_{\theta' \mathbf 1_U}\pr{\sup_{s\leq t, y\in F} u_s(y)> \frac{\gamma}{2\beta_{\mathrm c}}\Psi'(0+)} - {}
			\\& \qquad \frac{\theta' \Psi'(0+)e^{2\lambda_{\mathrm o}  t}}{2(1-\gamma)} \mathcal{V}_t^{(\Lambda, \mu, F)} -2\tilde{\mathbb{P}}_{\theta' \mathbf 1_{U}}\pr{\sup_{s\leq t, y\in F} u_s(y)> \gamma}.
		\end{align}
		Note that $\theta'\to 0$ as $\theta \to 0$.
		By Lemma \ref{lem: Prob-w}, we have
		\[
		\tilde {\mathbb P}_{\theta' \mathbf 1_U} \pr{\sup_{s\leq t, y \in F} u_s(y) > \frac{\gamma}{2\beta_{\mathrm c}}\Psi'(0+)} \leq \theta' \Cr{c:UFtg}\pr{U,F,t,\frac{\gamma}{2\beta_{\mathrm c}}\Psi'(0+)}
		\xrightarrow[]{\theta \to 0} 0,
		\]
		and
		\[
		\tilde {\mathbb P}_{\theta' \mathbf 1_U} \pr{\sup_{s\leq t, y \in F} u_s(y) > \gamma}
		\leq \theta'  \Cr{c:UFtg}\pr{U,F,t,\gamma} \xrightarrow[]{\theta \to 0} 0.
		\]
		Here, $\Cr{c:UFtg}(\cdot, \cdot, \cdot, \cdot)$ is the constant given as in Lemma \ref{lem: Prob-w}.
		From Lemma \ref{lemma:integral-of-v}, we have $\mathcal V_t^{(\Lambda, \mu,F)} < \infty$. Therefore, the third term on the right hand side of \eqref{eq:CR} converges to $0$ as $\theta \to 0$.
		Now, we have
		\begin{align}
			& \liminf_{\theta\to 0} \lim_{n\to\infty} \mathbb{E}\brk{ e^{-\theta Z_t^{(n)}(g)}   }
			\geq
			\liminf_{\theta\to 0} \tilde{\mathbb E}_{\theta' \mathbf 1_{U}} \brk{\prod_{i=1}^\infty (1-u_t(x_i))}
			\\&\geq
			\lim_{\theta\to 0} \exp\brc{-\frac{\theta' e^{\lambda_{\mathrm o}  t}}{1-\gamma}\int_U v_{t}^{(\Lambda,\mu)}(y)\mathrm{d}  y } =1,
		\end{align}
		as desired for this step.
		Moreover, we can verify from \eqref{eq:ECR} that the distribution of $\tilde Z_t$, denoted by $\mathscr P^{(\Lambda, \mu)}_t$, satisfies \eqref{eq:EC}.
		In fact, we know that $\mathscr P^{(\Lambda, \mu)}_t$ is the unique probability measure on $\mathcal N$ satisfying \eqref{eq:EC}, thanks to \cite{MR3642325}*{Theorem 4.11 (iii)}.

		\nextstep \label{step:FC}
		Fixing integer $m>1$ and real numbers $0< t_1 < \dots < t_m$, we will show in this step the convergence in distribution of the $\mathcal N^m$-valued random element $(Z^{(n)}_{t_1}, \dots, Z^{(n)}_{t_m})$ as $n\to \infty$.
		From Step \ref{step:K}, for each $k\in \{1,\dots, m\}$, the family of $\mathcal N$-valued random elements $\{Z_{t_k}^{(n)}: n\in \mathbb N\}$ is tight.
		From this, it is easy to see that the family of $\mathcal N^m$-valued random elements $\{(Z^{(n)}_{t_1}, \dots, Z^{(n)}_{t_m}):n \in \mathbb N\}$ is also tight, and therefore, by \cite{MR4226142}*{Theorem 23.2}, is relatively compact in distribution.
		This implies the existence of a strictly increasing sequence $(n_k)_{k=1}^\infty$ in $\mathbb N$ satisfying that the $\mathcal N^m$-valued random element $(Z^{(n_k)}_{t_1}, \dots, Z^{(n_k)}_{t_m})$ converges in distribution as $k\to \infty$.
		Let the $\mathcal N^m$-valued random element $(\hat Z_{t_1}, \dots, \hat Z_{t_m})$ be the corresponding subsequential convergence in distribution limit.
		We will show that $(\hat Z_{t_1}, \dots, \hat Z_{t_m})$ is also the convergence in distribution limit of $(Z^{(n)}_{t_1}, \dots, Z^{(n)}_{t_m})$ as $n\to \infty$.
		To do this, by \cite{MR3642325}*{Theorem 4.11 (iii)}, it suffices to show that
		\begin{align}\label{eq:fdd-limit}
			\lim_{n\to\infty} \mathbb{E}  \brk{ e^{- \sum_{i=1}^m Z_{t_i}^{(n)}(g_i)} } \quad \text{exists in}~\mathbb R,
		\end{align}
		where, for each $i\in \{1,\dots, m\}$, $g_i$ is an arbitrarily chosen non-negative continuous function on $\mathbb R$ with compact support satisfying that  $\norm{1-e^{-g_i}}_{\infty} < \Psi'(0+)/(4\beta_\mathrm c)$.
		In fact, if \eqref{eq:fdd-limit} holds, it must be the case that
		\begin{align}\label{eq:fdd-limit2}
			\mathbb{E}  \brk{e^{- \sum_{i=1}^m \hat Z_{t_i}(g_i)}  } =\lim_{n\to\infty} \mathbb{E}  \brk{ e^{- \sum_{i=1}^m Z_{t_i}^{(n)}(g_i)} }.
		\end{align}
		
		By the principle of induction, we can assume without loss of generality that \eqref{eq:fdd-limit} holds with $m$ being replaced by $m-1$.
		In particular, we can assume that $(\hat Z_{t_1}, \dots, \hat Z_{t_{m-1}})$ is the convergence in distribution limit of $(Z_{t_1}^{(n)}, \dots, Z^{(n)}_{t_{m-1}})$ as $n\to \infty$.
		By Skorokhod's representation theorem, we can further assume (in this step) without loss of generality that $\{(Z_{t_1}^{(n)}, \dots, Z^{(n)}_{t_{m-1}}): n\in \mathbb N\}$ and $(\hat Z_{t_1}, \dots, \hat Z_{t_{m-1}})$ are coupled in one probability space so that $(Z_{t_1}^{(n)}, \dots, Z^{(n)}_{t_{m-1}})$ converges almost surely to $(\hat Z_{t_1}, \dots, \hat Z_{t_{m-1}})$ when $n\to \infty$.
		
		For every $t>0$ and $h\in \mathcal C(\mathbb R, [0,\Psi'(0+)/(4\beta_{\mathrm c})])$ with compact support, define
		\begin{align}\label{Def-H-f}
			& H_t^{h}(\nu):=\tilde{\E}_{h}\brk{\prod_{z\in \mathbb R } \pr{1-u_t(z)}^{\nu(\{z\})} }, \quad \nu \in \mathcal N,
		\end{align}
		which is a bounded continuous function on $\mathcal N$, according to  Corollary \ref{lemma:general-vague-convergence2}.
		From Proposition \ref{prop:D} and the Markov property of the process $(Z^{(n)}_t)_{t\geq 0}$,
		almost surely
		\begin{align}
			& \E \brk{\cond{e^{- \sum_{i=1}^m Z_{t_i}^{(n)}(g_i) }  }(Z_{s}^{(n)})_{s\leq t_{m-1}}}
			=e^{ -  \sum_{i=1}^{m-1} Z_{t_i}^{(n)}(g_i)}H_{t_m-t_{m-1}}^{1-e^{-g_m}}( Z_{t_{m-1}}^{(n)}).
		\end{align}
		Therefore, by the dominated convergence theorem, we have
		\begin{align}\label{eq:fdd-cal}
			& \lim_{n\to\infty}  \mathbb{E}\brk{e^{- \sum_{i=1}^m Z_{t_i}^{(n)}(g_i) }}=\lim_{n\to\infty}  \mathbb{E}\brk{ e^{ - \sum_{i=1}^{m-1}  Z_{t_i}^{(n)}(g_i)} H_{t_m-t_{m-1}}^{1-e^{-g_m}}( Z_{t_{m-1}}^{(n)})}
			\\&=  \mathbb{E} \brk{ e^{ - \sum_{i=1}^{m-1} \hat Z_{t_i}(g_i)} H_{t_m-t_{m-1}}^{1-e^{-g_m}}( \hat Z_{t_{m-1}})}.
		\end{align}
		This implies \eqref{eq:fdd-limit}, and therefore, the desired result of this step.
		
		\nextstep
		For every integer $m\in \mathbb N$ and real numbers $0<t_1<\dots < t_m$, denote by $\mathscr P^{(\Lambda, \mu)}_{t_1, \dots, t_m}$ the distribution of the $\mathcal N^m$-valued random elements $(\hat Z_{t_1}, \dots, \hat Z_{t_m})$ given as in the previous steps.
		It is straightforward to verify that the family of distributions $(\mathscr P^{(\Lambda, \mu)}_{t_1,\dots, t_m})_{m\in \mathbb N, 0<t_1 < \dots < t_m}$ is projective in the sense of \cite{MR4226142}*{p.~179}.
		Recall that $\mathcal N$ is Polish.
		Therefore, by Kolmogorov's extension theorem (see \cite{MR4226142}*{Theorem 8.23} for example), there exists an $\mathcal N$-valued process $(\tilde Z_t)_{t>0}$ such that, for every integer $m\in \mathbb N$ and real numbers $0<t_1<\dots < t_m$, the distribution of $(\tilde Z_{t_1}, \dots, \tilde Z_{t_m})$ is given by $\mathscr P^{(\Lambda, \mu)}_{t_1, \dots, t_m}$.
		Moreover, from \eqref{eq:fdd-limit2} and \eqref{eq:fdd-cal}, we have
		\begin{equation}\label{Transition-probability}
			\mathbb{E}\brk{ e^{- \sum_{i=1}^m \tilde Z_{t_i}(g_i)}  } =  \mathbb{E}\brk{e^{ - \sum_{i=1}^{m-1} \tilde Z_{t_i}(g_i)} H_{t_m-t_{m-1}}^{1-e^{-g_m}}(\tilde  Z_{t_{m-1}})}
		\end{equation}
		for every $m\in \{2,3,\dots\}$, $0< t_1 < \dots < t_m$ and testing functions $(g_i)_{i=1}^m$ given as in Step \ref{step:FC}.

		\nextstep \label{step:Four}
		Note that, the result in Step \ref{step:K} essentially implies that, for any $t>0$, closed subset $\tilde \Lambda$ of $\mathbb R$, and locally finite integer-valued measure $\tilde \mu$ on $\tilde \Lambda^{\mathrm c}$, there exists a unique probability measure $\tilde {\mathscr P}$ on $\mathcal N$, such that, for any non-negative continuous function $g$ on $\mathbb R$ with compact support,
		\[
		\int e^{-\tilde \nu(g)} \tilde {\mathscr P}(\mathrm{d} \tilde \nu) =
		\tilde {\mathbb E}_{1-e^{-g}}\brk{\mathbf 1_{\brc{u_t(x) = 0, \forall x\in \tilde \Lambda}} \prod_{  x\in \tilde{\Lambda}^c  } \pr{1-u_t(x)}^{\tilde \mu(\{x\})} }.
		\]
		For every $t>0$ and $\nu \in \mathcal N$, by taking $\tilde \Lambda = \emptyset$ and $\tilde \mu = \nu$ in the above statement, we know that there exists a unique probability measure $\mathscr Q_t(\nu, \cdot)$ on $\mathcal N$ such that \eqref{eq:KC} holds.
		(When $t=0$, we set $\mathscr Q_t(\nu, \cdot) = \delta_{\nu}$.)
		
		It can be verified that $(\mathscr Q_t)_{t\geq 0}$ is a family of kernels on $\mathcal N$.
		In fact, fixing $t> 0$, denote by $\mathcal H$ the collection of bounded measurable function $F$ on $\mathcal N$ such that $\nu \mapsto \int F(\tilde\nu) \mathscr Q_t(\nu,\mathrm d\tilde \nu)$ is a measurable map from $\mathcal N$ to $\mathbb R$.
		It is clear that $\mathcal H$ is a monotone vector space (MVS) in the sense of \cite{MR0958914}*{Appendix A0}.
		Denote by $\mathcal K$ the collection of bounded measurable map $\tilde \nu\mapsto e^{-\tilde \nu(g)}$ from $\mathcal N$ to $\mathbb R$ where $g$ is a non-negative continuous function on $\mathbb R$ with compact support.
		Now, from \eqref{eq:MM}, \eqref{eq:KC} and \cite{MR4226142}*{Lemma 1.28},
		it can be argued
		that, for every $F \in \mathcal K$, $\nu \mapsto \int F(\tilde \nu) \mathscr Q_t(\nu, d\tilde \nu)$ is a measurable map from $\mathcal N$ to $\mathbb R$.
		In other words, $\mathcal K \subset \mathcal H$.
		Also, note that $\mathcal K$ is a multiplicative class of bounded real functions on $\mathcal N$ in the sense of  \cite{MR0958914}*{Appendix A0}.
		It is also clear that $\sigma(\mathcal K)$ is the Borel $\sigma$-field $\mathcal B_{\mathcal N}$ of $\mathcal N$ generated by the vague topology.
		So from \cite{MR0958914}*{Theorem A0.6}, we have  $\mathcal B_{\mathrm b}(\mathcal N) \subset \mathcal H$.
		Here, $\mathcal B_{\mathrm b}(\mathcal N)$ represents the collection of bounded Borel measurable functions on $\mathcal N$.
		This proves that $\mathscr Q_t$ is a kernel from $\mathcal N$ to itself.
		
		\nextstep
		Let $(\mathscr Q_t)_{t\geq 0}$ be the family of probability kernels given as in Step \ref{step:Four}.
		For every $m\in \{2,3,\dots\}$, $0< t_1 < \dots < t_m$ and testing functions $(g_i)_{i=1}^m$ chosen as in Step \ref{step:FC}, from \eqref{Transition-probability} we have
		\begin{align}
			& \mathbb{E}\brk{ e^{- \sum_{i=1}^m \tilde Z_{t_i}(g_i)}  }
			=  \mathbb{E}\brk{e^{ - \sum_{i=1}^{m-1} \tilde Z_{t_i}(g_i)} \int e^{-\tilde \nu(g_m)}\mathscr Q_{t_m - t_{m-1}}(\tilde Z_{t_{m-1}},\mathrm{d} \tilde \nu)}.
		\end{align}
		From this, it is clear that $(\tilde Z_t)_{t > 0}$ is a time-homogeneous Markov process with transition kernels $(\mathscr Q_t)_{t\geq 0}$.
		The entrance laws of this Markov process, i.e.~its one-dimensional distributions $(\mathscr P^{(\Lambda, \mu)}_t)_{t>0}$,  were  characterized already in Step \ref{step:K} through \eqref{eq:EC}. We are done.
	\end{proof}

	\subsection{Stochastic right continuity}\label{sec:PZ}
	In this subsection, we are going to give several preliminary results on the Markov process $(\tilde Z_t)_{t>0}$ given as in Proposition \ref{Part-1-proof-of-thrm-1.2}.
	Without loss of generality, we assume that $(\tilde Z_t)_{t\geq 0}$ is the canonical process defined on the path space $\Omega :=\mathcal N^{(0,\infty)}$, which is the collection of maps from $(0,\infty)$ to $\mathcal N$.
	More precisely, $\tilde Z_t(\omega) = \omega(t)$ for every $\omega \in \Omega$ and $t>0$.
	Let $\mathcal F^{\tilde Z}$ and $(\mathcal F^{\tilde Z}_t)_{t>0}$ be the natural $\sigma$-field, and the natural filtration, generated by the process $(\tilde Z_t)_{t>0}$.
	The corresponding probability measure, and the expectation operator, will be denoted by
	$\mathbb P_{(\Lambda, \mu)}$, and $\mathbb E_{(\Lambda, \mu)}$, respectively.

	To show the existence of a c\`adl\`ag modification of a process, one typically needs information about its finite dimensional distributions.
	Equations \eqref{eq:EC} and \eqref{eq:KC} can be regarded as the duality formulas between the process $(\tilde Z_t)_{t> 0}$ and the SPDE \eqref{eq:GSPDE}, which essentially characterizes the finite dimensional distributions of $(\tilde Z_t)_{t> 0}$.
	Note that they hold
	under the condition that the initial value of the dual SPDE $(u_t)_{t\geq 0}$ is a non-negative, compactly supported, continuous function bounded by $1$.
	This condition will be relaxed in Proposition \ref{prop:ID} and Corollary \ref{cor:ID}
	below where the following analytical lemma will play a role.
	(The proof of this analytic lemma is included in Appendix \ref{append-D}.)
	\begin{lemma} \label{lem:IM}
		For each $i\in \mathbb N$, let $(z^{(m)}_i)_{m\in \mathbb N}$ be an increasing sequence in $[0,z^*]$ whose limit is denoted by $z_i\in [0,z^*]$.
		Then
		\begin{equation} \label{eq:UM}
			\lim_{m\to \infty} \prod_{i=1}^\infty \pr{1-z_i^{(m)}} = \prod_{i=1}^\infty (1-z_i).
		\end{equation}
	\end{lemma}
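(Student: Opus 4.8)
The plan is to argue in the spirit of the representation \eqref{eq:infinite-product}, keeping in mind that, since $z^* < 2$ by Lemma \ref{lem:CC} (using \eqref{asp:A2}), every factor $1 - z_i^{(m)}$ and $1 - z_i$ lies in $[1-z^*, 1] \subseteq (-1, 1]$ and hence has modulus at most $1$. Consequently the partial products of the moduli are non-increasing in the truncation level, and one checks as in the proof of Lemma \ref{Lemma: Duality3} that the products $P^{(m)} := \prod_{i=1}^\infty \pr{1 - z_i^{(m)}}$ and $P := \prod_{i=1}^\infty \pr{1-z_i}$ are all well-defined, with $\abs{P^{(m)}} = \prod_{i=1}^\infty \abs{1-z_i^{(m)}}$ and $\abs{P} = \prod_{i=1}^\infty \abs{1-z_i}$. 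I would then distinguish the two cases $P = 0$ and $P \neq 0$.

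When $P = 0$, that is $\prod_{i=1}^\infty \abs{1-z_i} = 0$, fix $\varepsilon > 0$ and pick $k$ with $\prod_{i=1}^k \abs{1-z_i} < \varepsilon$. Since every factor has modulus at most $1$, for each $m$ we have $\abs{P^{(m)}} = \prod_{i=1}^\infty \abs{1-z_i^{(m)}} \leq \prod_{i=1}^k \abs{1-z_i^{(m)}}$; letting $m \to \infty$ in this finite product gives $\limsup_{m\to\infty} \abs{P^{(m)}} \leq \prod_{i=1}^k \abs{1-z_i} < \varepsilon$. As $\varepsilon$ is arbitrary, $P^{(m)} \to 0 = P$.

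When $P \neq 0$, the modulus product $\prod_{i=1}^\infty \abs{1-z_i}$ is positive, forcing $\abs{1-z_i} \to 1$ and hence, because $z_i \leq z^* < 2$, $z_i \to 0$; in particular $A := \brc{i : z_i \geq 1}$ is finite, and since a vanishing factor would make $P = 0$ we in fact have $z_i > 1$ for $i \in A$. Picking $k_0$ with $A \subseteq \brc{1, \dots, k_0}$ and factoring partial products, $P = \pr{\prod_{i\in A}(1-z_i)}\pr{\prod_{i\notin A}(1-z_i)}$ and, for each $m$, $P^{(m)} = \pr{\prod_{i\in A}(1-z_i^{(m)})}\pr{\prod_{i\notin A}(1-z_i^{(m)})}$, where the tail products run over factors $1 - z_i^{(m)} \in (0,1]$ (as $z_i^{(m)} \leq z_i < 1$ there), hence converge and are bounded below by $\prod_{i\notin A}(1-z_i) > 0$. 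The finite product over $A$ converges to $\prod_{i\in A}(1-z_i)$ by continuity of a finite product. For the tail, write $\prod_{i\notin A}(1-z_i^{(m)}) = \exp\brc{-\sum_{i\notin A}\pr{-\log(1-z_i^{(m)})}}$; the terms $-\log(1-z_i^{(m)}) \geq 0$ increase in $m$ to the summable sequence $-\log(1-z_i)$, so the monotone convergence theorem gives $\sum_{i\notin A}\pr{-\log(1-z_i^{(m)})} \to \sum_{i\notin A}\pr{-\log(1-z_i)} < \infty$, whence $\prod_{i\notin A}(1-z_i^{(m)}) \to \prod_{i\notin A}(1-z_i)$. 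Multiplying the two limits yields $P^{(m)} \to P$, completing the argument.

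The main obstacle is bookkeeping rather than depth: the factors $1 - z_i$ need not be non-negative, so $P$ and $P^{(m)}$ are not ordinary positive-term infinite products, and one must be careful both about their existence and about interchanging the truncation limit ($k \to \infty$) with the approximation limit ($m \to \infty$). Isolating the finitely many negative factors when $P \neq 0$, and using the monotone bound $\abs{P^{(m)}} \leq \prod_{i=1}^k \abs{1-z_i^{(m)}}$ when $P = 0$, is what legitimizes these interchanges; the hypothesis $z^* < 2$ — equivalently \eqref{asp:A2} — enters precisely to force $z_i \to 0$ in the second case and hence the finiteness of $A$.
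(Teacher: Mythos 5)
Your proof is correct, and it rests on the same two mechanisms as the paper's own argument -- domination of the modulus of the infinite product by a truncated partial product when the limit vanishes, and an exp--log rewriting plus monotone convergence for a convergent tail -- but it organizes the cases differently. The paper splits according to the structure of the limit sequence: some $z_i=1$; infinitely many $z_i>1$; finitely many $z_i>1$ and none equal to $1$. The first two are handled by squeezing $\abs{\prod_i(1-z_i^{(m)})}$ against finitely many factors, and the third (including the sub-case where $\sum_{i}-\log(1-z_i)$ diverges) by monotone convergence inside the exponential. You instead split on whether $P=\prod_{i=1}^\infty(1-z_i)$ vanishes: the bound $\abs{P^{(m)}}\leq \prod_{i=1}^k\abs{1-z_i^{(m)}}$ lets a single squeeze dispose of all possible reasons for $P=0$ at once (this unifies the paper's first two steps and the degenerate part of its third), while in the case $P\neq 0$ you first deduce $z_i\to 0$ -- which is where $z^*<2$, i.e.\ \eqref{asp:A2} via Lemma \ref{lem:CC}, really enters, just as it does in the paper's second step -- so that $A=\brc{i:z_i\geq 1}$ is finite and the exp--log/MCT argument applies to the tail, matching the paper's third step. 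The only point worth spelling out fully is the summability of $-\log(1-z_i)$ over $i\notin A$ in the nonzero case: it follows in one line because $P\neq 0$ and the finite product over $A$ is nonzero (each $z_i>1$ there), so the tail product $\prod_{i\notin A}(1-z_i)$ is strictly positive; with that remark your appeal to monotone convergence is complete.
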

	
	Recall that, throughout Section \ref{sec:SS5}, $f$ is a measurable function on $\mathbb R$ which can be approximated by the elements of $\mathcal C(\mathbb R, [0,z^*])$ monotonically from below.
	
	\begin{proposition} \label{prop:ID}
		Let $t>0$. It holds that
		\begin{equation}
			\int \pr{\prod_{x\in \mathbb R}\pr{1-f(x)}^{\nu(\{x\})} } \mathscr P^{(\Lambda,\mu)}_t(\mathrm{d} \nu)  = \tilde {\mathbb E}_f\brk{\prod_{i=1}^\infty (1-u_t(x_i))}.
		\end{equation}
	\end{proposition}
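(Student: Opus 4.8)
The plan is to prove the identity first for continuous, compactly supported approximations of $f$, for which $\nu\mapsto\prod_{x\in\mathbb R}(1-f(x))^{\nu(\{x\})}$ is a bounded continuous functional on $\mathcal N$, and then to upgrade to general $f$ by a monotone passage to the limit on both sides. First I would fix a pointwise non-decreasing sequence $(g^{(m)})_{m\in\mathbb N}$ in $\mathcal C(\mathbb R,[0,z^*])$ with $g^{(m)}\uparrow f$, and cut it down to compact support: with continuous cutoffs $\chi_m$ satisfying $\mathbf 1_{[-m,m]}\le\chi_m\le\mathbf 1_{[-m-1,m+1]}$ and $\chi_m\le\chi_{m+1}$, set $f^{(m)}:=\chi_m g^{(m)}$, so that $(f^{(m)})_m$ is still non-decreasing in $\mathcal C(\mathbb R,[0,z^*])$, each $f^{(m)}$ is compactly supported, and $f^{(m)}(x)\to f(x)$ for every $x\in\mathbb R$. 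Using the weak comparison principle (Lemma \ref{eq:WC}) and the construction behind Proposition \ref{prop:GI}, I would then realize the process $(u_t)_{t>0}$ of that proposition (with initial value $f$) under $\tilde{\mathbb P}_f$ on one probability space carrying, for each $m$, a solution $(u_t^{(m)})_{t>0}$ of \eqref{eq:GSPDE} with initial value $f^{(m)}$, coupled so that $u_t^{(m)}(x)\uparrow u_t(x)$ almost surely for every $t>0$ and $x\in\mathbb R$. This involves no loss: by Lemma \ref{Lemma: Duality3}, $\tilde{\mathbb E}_f\brk{\prod_{i=1}^\infty(1-u_t(x_i))}=\lim_{n\to\infty}\mathbb E\brk{\prod_{\alpha\in I^{(n)}_t}(1-f(X^{(n),\alpha}_t))}$, which does not depend on the realization of the SPDE.

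Next I would establish the identity for each $f^{(m)}$. Since $f^{(m)}$ is continuous, $[0,2)$-valued (here $z^*<2$ by Lemma \ref{lem:CC} together with \eqref{asp:A2}) and compactly supported, Lemma \ref{lemma:general-vague-convergence1} applied with $g=1-f^{(m)}$ shows that $\nu\mapsto\prod_{x\in\mathbb R}(1-f^{(m)}(x))^{\nu(\{x\})}$ is a bounded continuous function on $\mathcal N$. Writing $Z^{(n)}_t=\sum_{\alpha\in I^{(n)}_t}\delta_{X^{(n),\alpha}_t}$ and using that $Z^{(n)}_t$ converges in distribution to $\tilde Z_t\sim\mathscr P^{(\Lambda,\mu)}_t$ (Proposition \ref{Part-1-proof-of-thrm-1.2}),
\begin{align*}
\mathbb E\brk{\prod_{\alpha\in I^{(n)}_t}(1-f^{(m)}(X^{(n),\alpha}_t))}&=\mathbb E\brk{\prod_{x\in\mathbb R}(1-f^{(m)}(x))^{Z^{(n)}_t(\{x\})}}\\&\xrightarrow[n\to\infty]{}\int\prod_{x\in\mathbb R}(1-f^{(m)}(x))^{\nu(\{x\})}\,\mathscr P^{(\Lambda,\mu)}_t(\diff\nu).
\end{align*}
On the other hand, Lemma \ref{Lemma: Duality3}, applied with initial value $f^{(m)}$ (legitimate since $f^{(m)}\in\mathcal C(\mathbb R,[0,z^*])$), shows that the same left-hand side converges to $\tilde{\mathbb E}_f\brk{\prod_{i=1}^\infty(1-u_t^{(m)}(x_i))}$. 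Hence these two limits agree, which is the asserted identity with $f$ replaced by $f^{(m)}$.

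Finally I would let $m\to\infty$ in this identity. On the left, for $\nu\in\mathcal N$ with atoms $(y_i)_i$ (listed with multiplicity) one has $f^{(m)}(y_i)\uparrow f(y_i)$ in $[0,z^*]$, so Lemma \ref{lem:IM} gives $\prod_{x\in\mathbb R}(1-f^{(m)}(x))^{\nu(\{x\})}\to\prod_{x\in\mathbb R}(1-f(x))^{\nu(\{x\})}$; since all these integrands are bounded by $1$ in modulus, bounded convergence yields convergence of the integrals against $\mathscr P^{(\Lambda,\mu)}_t$. On the right, $u_t^{(m)}(x_i)\uparrow u_t(x_i)$ almost surely for each $i$ by the coupling, so Lemma \ref{lem:IM} gives $\prod_{i=1}^\infty(1-u_t^{(m)}(x_i))\to\prod_{i=1}^\infty(1-u_t(x_i))$ almost surely, and bounded convergence under $\tilde{\mathbb P}_f$ (again the bound $1$) gives $\tilde{\mathbb E}_f\brk{\prod_i(1-u_t^{(m)}(x_i))}\to\tilde{\mathbb E}_f\brk{\prod_i(1-u_t(x_i))}$. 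Combining the two convergences gives the claimed equality.

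I expect the genuine difficulty to lie in the first step combined with the two applications of Lemma \ref{lem:IM}: since $z^*\ge1$ the factors $1-u_t(x_i)$ and $1-f(x_i)$ may be negative, so the (infinite) products are not monotone in the approximation, and the limits can be taken only because the monotone coupling $u_t^{(m)}(x)\uparrow u_t(x)$ provided by the construction in Proposition \ref{prop:GI} feeds exactly into the sign-sensitive convergence statement of Lemma \ref{lem:IM}. The cutoff reduction to compact support and the weak-convergence argument in the middle step are routine.
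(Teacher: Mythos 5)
Your proposal is correct and follows essentially the same route as the paper: first prove the identity for continuous compactly supported initial data (continuity of the product functional via Lemma \ref{lemma:general-vague-convergence1}, the weak convergence $Z_t^{(n)}\Rightarrow \tilde Z_t$ from Proposition \ref{Part-1-proof-of-thrm-1.2}, and the finite-system duality of Lemma \ref{Lemma: Duality3}), then pass to general $f$ via the monotone compactly supported approximation $f^{(m)}=\chi_m g^{(m)}$, the monotone coupling of the corresponding SPDE solutions from the construction behind Proposition \ref{prop:GI}, Lemma \ref{lem:IM}, and bounded convergence. The only cosmetic difference is that the paper states the coupling's monotone convergence for Lebesgue-a.e.\ $(s,y)$ while you state it pointwise; what is actually used (convergence at the fixed time $t$ and the countably many points $x_i$, a.s.) is available in both formulations, so there is no gap.
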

	\begin{proof}
		\firststep \label{step:CS}
		Let us first assume that $f$ is a continuous $[0,z^*]$-valued function on $\mathbb R$ with compact support.
		From Lemma \ref{lemma:general-vague-convergence1}, we know that the map $\nu \mapsto \prod_{x\in \mathbb R}(1-f(x))^{\nu(\{x\})}$ from $\mathcal N$ to $[-1,1]$ is continuous.
		Now, from  Proposition \ref{Part-1-proof-of-thrm-1.2} and Lemma \ref{Lemma: Duality3},  we have
		\begin{equation}
			\mathbb E_{(\Lambda, \mu)} \brk{\prod_{x\in \mathbb R}\pr{1-f(x)}^{\tilde Z_t(\{x\})}}
			= \lim_{n\to \infty} \mathbb E\brk{\prod_{x\in \mathbb R}\pr{1-f(x)}^{Z^{(n)}_t(\{x\})}}
			= \tilde {\mathbb E}_f \brk{\prod_{i=1}^\infty \pr{1-u_t(x_i)}}
		\end{equation}
		as desired.
		
		\nextstep Let us now assume that $f$ is a $[0,z^*]$-valued measurable function on $\mathbb R$ which can be approximated by a sequence $(\tilde f_m)_{m\in \mathbb N}$ in $\mathcal C(\mathbb R, [0,z^*])$ monotonically from below.
		Let $(\tilde g_m)_{m\in \mathbb N}$ be a sequence of continuous functions on $\mathbb R$ with compact support which approximates $\mathbf 1_\mathbb R$ from below.
		Then it is clear that $f$ can be approximated from below by $(f_m)_{m\in \mathbb N} := (\tilde f_m\tilde g_m)_{m\in \mathbb N}$, 	a sequence of $[0,z^*]$-valued continuous functions on $\mathbb R$ with compact support.
		Without loss of generality, it is standard (see the proof of Proposition \ref{prop:GI} in Appendix \ref{append-A}) to assume
		the existence of a sequence of $\mathcal C([0,\infty), \mathcal C(\mathbb R, [0,z^*]))$-valued random elements $(u^{(m)})_{m\in \mathbb N}$ such that
		\begin{itemize}
			\item for each $m\in \mathbb N$, $u^{(m)}$ has the law $\mathscr L_{f_m}$;
			\item almost surely, $u^{(m)} \leq u^{(m+1)}$ on $[0,\infty) \times \mathbb R$ for each $m\in \mathbb N$; and that
			\item almost surely, for almost every $(s,y)\in (0,\infty)\times \mathbb R$ w.r.t.~the Lebesgue measure, $u^{(m)}_s(y)\uparrow u_s(y)$.
		\end{itemize}
		From what we have proved in Step \ref{step:CS}, we have
		\begin{equation}
			\mathbb E_{(\Lambda, \mu)} \brk{\prod_{x\in \mathbb R}\pr{1-f_m(x)}^{\tilde Z_t(\{x\})}}
			= \tilde {\mathbb E}_f\brk{\prod_{i=1}^\infty \pr{1-u^{(m)}_t(x_i)}}.
		\end{equation}
		Taking $m\uparrow \infty$, from Lemma \ref{lem:IM} and bounded convergence theorem, we obtain the desired result.
	\end{proof}

	From Proposition \ref{prop:ID} and Remark \ref{rem:LQ}, one can verify the following result which generalizes \eqref{eq:KC}.
	\begin{corollary} \label{cor:ID}
		Let $t>0$ and $\nu \in \mathcal N$. It holds that
		\begin{equation}
			\int \pr{\prod_{x\in \mathbb R} \pr{1-f(x)}^{\tilde \nu(\{x\})} } \mathscr Q_t(\nu, \mathrm{d} \tilde \nu)  = \tilde {\mathbb E}_{f}\brk{\prod_{x\in \mathbb R}\pr{1-u_t(x)}^{\nu(\{x\})}}.
		\end{equation}
	\end{corollary}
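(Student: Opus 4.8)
The plan is to deduce the identity from Proposition~\ref{prop:ID} and Remark~\ref{rem:LQ}, splitting according to whether $\nu$ is an infinite or a finite measure. First I would record that, by \eqref{eq:MM}, the map $\tilde\nu\mapsto\prod_{x\in\mathbb R}\pr{1-f(x)}^{\tilde\nu(\{x\})}$ is a measurable function from $\mathcal N$ to $(-1,1]$, so both sides of the asserted equality make sense; and that, by Remark~\ref{rem:LQ}, the left-hand side equals $\int\pr{\prod_{x\in\mathbb R}(1-f(x))^{\tilde\nu(\{x\})}}\mathscr P^{(\emptyset,\nu)}_t(\diff\tilde\nu)$. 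Thus it suffices to identify this quantity with $\tilde{\mathbb E}_f\brk{\prod_{x\in\mathbb R}\pr{1-u_t(x)}^{\nu(\{x\})}}$.

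Suppose first that $\nu(\mathbb R)=\infty$. Since $\nu\in\mathcal N$ is locally finite, I would enumerate its atoms with multiplicity as an infinite sequence $(y_i)_{i=1}^\infty$ in $\mathbb R$, which then has no accumulation point; hence the initial trace of $(y_i)_{i=1}^\infty$ defined through \eqref{Def-Lambda} and \eqref{Def-mu} is precisely $(\emptyset,\nu)$. All the results leading to Proposition~\ref{prop:ID} (in particular Proposition~\ref{Part-1-proof-of-thrm-1.2} and Lemma~\ref{Lemma: Duality3}) hold for an arbitrary initial sequence, so applying Proposition~\ref{prop:ID} with $(y_i)_{i=1}^\infty$ in place of $(x_i)_{i=1}^\infty$ gives
\[
\int\pr{\prod_{x\in\mathbb R}(1-f(x))^{\tilde\nu(\{x\})}}\mathscr P^{(\emptyset,\nu)}_t(\diff\tilde\nu)=\tilde{\mathbb E}_f\brk{\prod_{i=1}^\infty(1-u_t(y_i))}=\tilde{\mathbb E}_f\brk{\prod_{x\in\mathbb R}(1-u_t(x))^{\nu(\{x\})}},
\]
where the last equality is the definition \eqref{eq:IP}, which (as recorded after \eqref{eq:infinite-product}) does not depend on the chosen enumeration of the atoms of $\nu$. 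This settles the claim in this case.

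It remains to treat $\nu(\mathbb R)=k<\infty$, say $\nu=\sum_{i=1}^k\delta_{y_i}$; here $(\emptyset,\nu)$ is not the initial trace of any infinite sequence, so Proposition~\ref{prop:ID} cannot be quoted verbatim, and I would argue directly instead. Combining \eqref{eq:KC}, the duality Proposition~\ref{prop:D}, and the fact that a probability measure on $\mathcal N$ is determined by its Laplace functionals \cite[Theorem~4.11~(iii)]{MR3642325}, the measure $\mathscr Q_t(\nu,\cdot)$ must coincide with the law of $Z_t^{(k)}$, the SBBM started from the configuration $(y_i)_{i=1}^k$. Then, using the extended duality of Proposition~\ref{prop:GI}~\eqref{eq:S2} (which applies to the present $f$, being approximable from below by elements of $\mathcal C(\mathbb R,[0,z^*])$),
\[
\int\pr{\prod_{x\in\mathbb R}(1-f(x))^{\tilde\nu(\{x\})}}\mathscr Q_t(\nu,\diff\tilde\nu)=\mathbb E\brk{\prod_{\alpha\in I_t^{(k)}}(1-f(X_t^{(k),\alpha}))}=\tilde{\mathbb E}_f\brk{\prod_{i=1}^k(1-u_t(y_i))}=\tilde{\mathbb E}_f\brk{\prod_{x\in\mathbb R}(1-u_t(x))^{\nu(\{x\})}},
\]
which completes the argument.

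I do not expect a genuine obstacle here; the statement is a routine upgrade of \eqref{eq:KC}. The one point requiring care is the infinite/finite dichotomy for $\nu$: a finite integer-valued measure never arises as the trace of an infinite sequence, so Proposition~\ref{prop:ID} must be replaced there by the finite-particle duality (Propositions~\ref{prop:D} and \ref{prop:GI}). Beyond that, one should check that the monotone-approximation mechanism underlying Proposition~\ref{prop:ID} — reducing to continuous, compactly supported $[0,z^*]$-valued $f$ via Lemma~\ref{lemma:general-vague-convergence1} and then passing to the limit via Lemma~\ref{lem:IM} and bounded convergence — transfers unchanged, which it does.
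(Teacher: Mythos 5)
Your proof is correct and follows essentially the route the paper indicates for Corollary \ref{cor:ID}, namely combining Proposition \ref{prop:ID} with Remark \ref{rem:LQ} (the paper itself leaves this as ``one can verify''). Your separate treatment of the finite-$\nu$ case—identifying $\mathscr Q_t(\nu,\cdot)$ with the law of $Z_t^{(k)}$ via \eqref{eq:KC}, Proposition \ref{prop:D} and the uniqueness of Laplace functionals, and then invoking the extended duality \eqref{eq:S2} of Proposition \ref{prop:GI}—correctly fills in the one detail the paper glosses over, since a finite integer-valued measure never arises as the initial trace of an infinite sequence, while for infinite $\nu$ your enumeration argument and the enumeration-independence of \eqref{eq:IP} make the application of Proposition \ref{prop:ID} to the trace $(\emptyset,\nu)$ legitimate.
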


	As an application of Proposition \ref{prop:ID}, the following proposition controls the expected number of particles in specific intervals at any fixed time. This first-moment estimate is crucial for later establishing that $(\tilde Z_t)_{t>0}$ possesses a c\`adl\`ag version.

	\begin{proposition}\label{prop1}
		Suppose that $F$ is a closed interval containing $\cup_{i\in \N}(x_i -1, x_i +1)$ and that $U$ is an open interval. Suppose that $U\cap F$ is bounded.
		Let $\gamma\in (0,1)$ and $\gamma_0= \gamma \Psi'(0+)/(2\beta_\mathrm c)$.
		Then for any $t>0$,
		\begin{align}
			\mathbb{E}_{(\Lambda, \mu)}\brk{\tilde Z_t(U)}
			\leq \frac{e^{\lambda_{\mathrm o} t}}{1-\gamma}\pr{\int_U v_{t}^{(\Lambda, \mu)}(y)\mathrm{d}  y + \frac{\Psi'(0+)e^{\lambda_{\mathrm o} t}}{2}\mathcal{V}_t^{(\Lambda, \mu, F)}}+
			3 \Cr{c:UFtg}(U,F,t,  \gamma_0)
			< \infty.
		\end{align}
		Here, $\Cr{c:UFtg}(\cdot, \cdot, \cdot, \cdot)$ is the constant given as in Lemma \ref{lem: Prob-w}.
	\end{proposition}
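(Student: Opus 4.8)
The plan is to extract the first moment $\mathbb{E}_{(\Lambda, \mu)}[\tilde Z_t(U)]$ from the duality identity of Proposition~\ref{prop:ID}. First I would record the elementary identity $\tfrac{1-(1-\varepsilon)^N}{\varepsilon}=\sum_{j=0}^{N-1}(1-\varepsilon)^j$, valid for all $N\in\mathbb Z_+$ and $\varepsilon\in(0,1)$, whose right-hand side increases to $N$ as $\varepsilon\downarrow 0$. Applying this with $N=\tilde Z_t(U)$ and the monotone convergence theorem gives
\[
\mathbb{E}_{(\Lambda, \mu)}[\tilde Z_t(U)]=\lim_{\varepsilon\downarrow 0}\frac1\varepsilon\pr{1-\mathbb{E}_{(\Lambda, \mu)}\brk{(1-\varepsilon)^{\tilde Z_t(U)}}}.
\]
Since $U$ is an open interval, $\varepsilon\mathbf 1_U$ is the pointwise non-decreasing limit of continuous $[0,z^*]$-valued functions (recall $z^*\ge 1$), so Proposition~\ref{prop:ID} applies with $f=\varepsilon\mathbf 1_U$, giving $\mathbb{E}_{(\Lambda, \mu)}[(1-\varepsilon)^{\tilde Z_t(U)}]=\tilde{\mathbb E}_{\varepsilon\mathbf 1_U}[\prod_{i=1}^\infty(1-u_t(x_i))]$. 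Hence it remains to bound $\tfrac1\varepsilon(1-\tilde{\mathbb E}_{\varepsilon\mathbf 1_U}[\prod_{i=1}^\infty(1-u_t(x_i))])$ from above, uniformly in small $\varepsilon$.

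For this I would chain the lower bounds already available. Note $\gamma_0=\gamma\Psi'(0+)/(2\beta_{\mathrm c})\le\gamma$ because $\Psi'(0+)=\beta_{\mathrm c}(2-\sum_k kq_k)\le 2\beta_{\mathrm c}$, and restrict to $\varepsilon\in(0,\gamma_0/2)\subset(0,\gamma)\cap(0,\tfrac12)$, so that the hypotheses of Lemmas~\ref{lem:1},~\ref{lem:2} and~\ref{lem: Prob-w} are all met. Combining the lower bound \eqref{Lower:w} of Lemma~\ref{lem:1} with the lower bound \eqref{eq:LIL} of Lemma~\ref{lem:2} (both with this $\gamma$) gives
\begin{align*}
\tilde{\mathbb E}_{\varepsilon\mathbf 1_U}\brk{\prod_{i=1}^\infty(1-u_t(x_i))}
&\ge\exp\brc{-\frac{\varepsilon e^{\lambda_{\mathrm o}t}}{1-\gamma}\int_U v_t^{(\Lambda,\mu)}(y)\diff y}-\frac{\varepsilon\Psi'(0+)e^{2\lambda_{\mathrm o}t}}{2(1-\gamma)}\mathcal V_t^{(\Lambda,\mu,F)}\\
&\quad-\tilde{\mathbb P}_{\varepsilon\mathbf 1_U}\pr{\sup_{s\le t,y\in F}u_s(y)>\gamma_0}-2\tilde{\mathbb P}_{\varepsilon\mathbf 1_U}\pr{\sup_{s\le t,y\in F}u_s(y)>\gamma}.
\end{align*}
I would then apply $1-e^{-a}\le a$ to the first term, use $\{\sup_{s\le t,y\in F}u_s(y)>\gamma\}\subset\{\sup_{s\le t,y\in F}u_s(y)>\gamma_0\}$ on the last term, divide by $\varepsilon$, and bound $\tfrac1\varepsilon\tilde{\mathbb P}_{\varepsilon\mathbf 1_U}(\sup_{s\le t,y\in F}u_s(y)>\gamma_0)\le\Cr{c:UFtg}(U,F,t,\gamma_0)$ via Lemma~\ref{lem: Prob-w}. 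This yields, for every $\varepsilon\in(0,\gamma_0/2)$,
\[
\frac1\varepsilon\pr{1-\tilde{\mathbb E}_{\varepsilon\mathbf 1_U}\brk{\prod_{i=1}^\infty(1-u_t(x_i))}}\le\frac{e^{\lambda_{\mathrm o}t}}{1-\gamma}\pr{\int_U v_t^{(\Lambda,\mu)}(y)\diff y+\frac{\Psi'(0+)e^{\lambda_{\mathrm o}t}}{2}\mathcal V_t^{(\Lambda,\mu,F)}}+3\Cr{c:UFtg}(U,F,t,\gamma_0).
\]

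Letting $\varepsilon\downarrow 0$ gives the displayed inequality. For finiteness of the right-hand side: $U\cap\{x_i:i\in\mathbb N\}\subset U\cap F$ is bounded (as $F\supset\{x_i\}$), so $\int_U v_t^{(\Lambda,\mu)}<\infty$ by Lemma~\ref{lemma:integral-of-v}(i); $\mathcal V_t^{(\Lambda,\mu,F)}<\infty$ by Lemma~\ref{lemma:integral-of-v}(ii); and $\Cr{c:UFtg}(U,F,t,\gamma_0)<\infty$ by Lemma~\ref{lem: Prob-w}, using that $U\cap F$ is bounded. I do not expect any substantial obstacle here: the proposition is an assembly of the estimates from Sections~\ref{sec:DSPDE} and~\ref{sec:SS5}, and the only points demanding care are checking that the parameter constraints $\varepsilon\le\gamma$, $\varepsilon<\tfrac12$, $\gamma_0\le\gamma$ are compatible with the cited lemmas and keeping track of the constants.
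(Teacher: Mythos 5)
Your proposal is correct and follows essentially the same route as the paper's own proof: extract the first moment via $\mathbb E[Z]=\lim_{\varepsilon\downarrow 0}\varepsilon^{-1}(1-\mathbb E[(1-\varepsilon)^Z])$, apply Proposition \ref{prop:ID} with $f=\varepsilon\mathbf 1_U$, chain the lower bounds \eqref{Lower:w} and \eqref{eq:LIL}, absorb all three probability terms into $3\varepsilon\Cr{c:UFtg}(U,F,t,\gamma_0)$ via $\gamma_0\le\gamma$ and Lemma \ref{lem: Prob-w}, and conclude finiteness from Lemma \ref{lemma:integral-of-v}(i)--(ii). Your parameter bookkeeping ($\varepsilon\in(0,\gamma_0/2)$, $\gamma_0\le\gamma$) is consistent with the cited lemmas, so there is nothing to fix.
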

	
	\begin{proof}
		It is clear from \eqref{asp:A1} and \eqref{eq:Psi'} that $\gamma_0 \leq \gamma$.
		Let $t>0$ and $\varepsilon \in (0, \gamma_0 \wedge \frac{1}{2})$. From Proposition \ref{prop:ID},
		\begin{equation}
			\mathbb{E}_{(\Lambda,\mu)}\brk{(1-\varepsilon)^{\tilde Z_t(U)}}
			=  \tilde {\mathbb E}_{\varepsilon \mathbf 1_{U}} \brk{\prod_{i=1}^\infty \pr{1-u_t(x_i)}}.
		\end{equation}
		Together with Lemmas  \ref{lem:1}  and \ref{lem:2}, we conclude that
		\begin{align}
			& \mathbb{E}_{(\Lambda,\mu)}\brk{(1-\varepsilon)^{\tilde Z_t(U)}}
			\geq  \tilde{\mathbb{E}}_{\varepsilon\mathbf 1_U}\brk{\exp\brc{-\theta(\gamma)\sum_{i=1}^\infty u_{t}(x_i) }} -2\tilde{\mathbb{P}}_{\varepsilon  \mathbf 1_U}\pr{\sup_{s\leq t, y\in F} u_s(y)> \gamma}                                                                         \\
			& \geq \exp\brc{-\frac{\varepsilon e^{\lambda_{\mathrm o}  t}}{1-\gamma}\int_U v_{t}^{(\Lambda,\mu)}(y)\mathrm{d}  y } - \tilde{\mathbb{P}}_{\varepsilon  \mathbf 1_U }\pr{\sup_{s\leq t, y\in F} u_s(y)> \frac{\gamma}{2 \beta_{\mathrm c} }\Psi'(0+)}                        \\
			& \quad - \frac{\varepsilon \Psi'(0+)e^{2\lambda_{\mathrm o}  t}}{2(1-\gamma)} \mathcal{V}_t^{(\Lambda, \mu, F)}-2\tilde{\mathbb{P}}_{\varepsilon \mathbf 1_U }\pr{\sup_{s\leq t, y\in F} u_s(y) > \gamma}                                                                     \\
			& \geq \exp\brc{-\frac{\varepsilon e^{\lambda_{\mathrm o}  t}}{1-\gamma}\int_U v_{t}^{(\Lambda,\mu)}(y)\mathrm{d}  y } - 3\varepsilon  \Cr{c:UFtg}(U,F,t, \gamma_0) - \frac{\varepsilon \Psi'(0+)e^{2  \lambda_{\mathrm o} t}}{2(1-\gamma)} \mathcal{V}_t^{(\Lambda, \mu, F)},
		\end{align}
		where in the last inequality we used Lemma \ref{lem: Prob-w}.
		It is clear from the monotone convergence theorem that  $\mathbb E[Z] = \lim_{\varepsilon \downarrow 0}\frac{1}{\varepsilon} \mathbb E [1- (1-\varepsilon)^Z]$ for any non-negative integer-valued random variable $Z$. Therefore,
		\begin{align}
			& \mathbb{E}_{(\Lambda,\mu)} \brk{\tilde Z_t(U)}
			= \lim_{\varepsilon \downarrow 0} \frac{1}{\varepsilon}\pr{ 1- \mathbb{E}_{(\Lambda, \mu)}\brk{(1-\varepsilon)^{\tilde Z_t(U)}} }
			\\& \leq \begin{old}  \lim_{\varepsilon \downarrow 0} \frac{1}{\varepsilon}  \pr{1-  \exp\brc{-\frac{\varepsilon e^{\lambda_{\mathrm o}  t}}{1-\gamma}\int_U v_{t}^{(\Lambda,\mu)}(y)\mathrm{d}  y } } + 3\Cr{c:UFtg}(U,F,t,  \gamma_0)
				+  \frac{ \Psi'(0+)e^{2\lambda_{\mathrm o}  t}}{2(1-\gamma)} \mathcal{V}_t^{(\Lambda, \mu, F)} \end{old}
			\frac{e^{\lambda_{\mathrm o}  t}}{1-\gamma}\int_U v_{t}^{(\Lambda,\mu)}(y)\mathrm{d}  y + \frac{ \Psi'(0+)e^{2\lambda_{\mathrm o}  t}}{2(1-\gamma)} \mathcal{V}_t^{(\Lambda, \mu, F)} + 3\Cr{c:UFtg}(U,F,t, \gamma_0), \label{eq:BNM}
		\end{align}
		which implies the desired result.
		From Lemma \ref{lemma:integral-of-v} (i) and (ii), we know that the right hand side of \eqref{eq:BNM} is finite.
	\end{proof}

	\begin{corollary} \label{cor:FE}
		Suppose that $g$ is a bounded non-negative continuous function whose support is contained in an open interval $U$.
		Suppose that $U\cap F$ is bounded where $F$ is a closed interval containing $\cup_{i=1}^\infty (x_i-1,x_i+1)$.
		Then for any $b\geq a > 0$,
		\begin{equation}
			\sup_{t\in [a,b]} \mathbb E_{(\Lambda, \mu)} \brk{\tilde Z_t(g)} < \infty.
		\end{equation}
	\end{corollary}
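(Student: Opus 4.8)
The plan is to reduce the statement to the first-moment estimate already recorded in Proposition \ref{prop1}. Since $g$ is bounded and $\operatorname{supp} g\subset U$, we have the pointwise bound $\tilde Z_t(g)\le\|g\|_\infty\,\tilde Z_t(U)$, so it suffices to show that $\sup_{t\in[a,b]}\mathbb E_{(\Lambda,\mu)}[\tilde Z_t(U)]<\infty$. Fix once and for all $\gamma=\tfrac12$ and $\gamma_0=\gamma\Psi'(0+)/(2\beta_{\mathrm c})$; note that $\gamma_0\in(0,1)$, because $\Psi'(0+)/\beta_{\mathrm c}=2-\sum_{k} kq_k\in(0,2)$ by \eqref{asp:A1} and \eqref{eq:Psi'}. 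Proposition \ref{prop1} then bounds $\mathbb E_{(\Lambda,\mu)}[\tilde Z_t(U)]$ by a sum of three terms built respectively from $\int_U v_t^{(\Lambda,\mu)}(y)\diff y$, $\mathcal V_t^{(\Lambda,\mu,F)}$, and $\Cr{c:UFtg}(U,F,t,\gamma_0)$, the first two multiplied by exponential factors that are at most $e^{2\lambda_{\mathrm o}b}$ for $t\in[a,b]$. Hence the whole problem reduces to bounding these three analytic quantities uniformly for $t\in[a,b]$.

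For the first quantity, I would observe that, since $F\supseteq\bigcup_i(x_i-1,x_i+1)$, we have $\{x_i:i\in\mathbb N\}\subseteq F$, so $U\cap\{x_i:i\in\mathbb N\}\subseteq U\cap F$ is bounded; Lemma \ref{lemma:integral-of-v}(i) then gives $\sup_{t\in[a,b]}\int_U v_t^{(\Lambda,\mu)}(y)\diff y<\infty$. For the second, $\mathcal V_t^{(\Lambda,\mu,F)}=\int_0^t\!\int_{F^{\mathrm c}}v_r^{(\Lambda,\mu)}(z)^2\diff z\diff r$ is nondecreasing in $t$ because its integrand is non-negative, so $\sup_{t\in[a,b]}\mathcal V_t^{(\Lambda,\mu,F)}=\mathcal V_b^{(\Lambda,\mu,F)}<\infty$ by Lemma \ref{lemma:integral-of-v}(ii) (this is where the assumption $F\supseteq\bigcup_i(x_i-1,x_i+1)$ is needed). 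For the third, the constant $\Cr{c:UFtg}(U,F,t,\gamma_0)$ of \eqref{eq:UFtg} is likewise nondecreasing in $t$: for each fixed $\varepsilon$ the events $\{\sup_{s\le t,\,y\in F}u_s(y)>\gamma_0\}$ increase with $t$, and taking a supremum over $\varepsilon$ preserves this monotonicity, so $\sup_{t\in[a,b]}\Cr{c:UFtg}(U,F,t,\gamma_0)=\Cr{c:UFtg}(U,F,b,\gamma_0)<\infty$ by Lemma \ref{lem: Prob-w}, which applies because $U\cap F$ is bounded and $\gamma_0\in(0,1)$.

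Assembling these, the right-hand side of the estimate in Proposition \ref{prop1} is dominated by a constant depending only on $a,b,\Lambda,\mu,F,U$ and the model parameters, which gives $\sup_{t\in[a,b]}\mathbb E_{(\Lambda,\mu)}[\tilde Z_t(U)]<\infty$ and hence the corollary. I do not expect any genuine obstacle: the only two points requiring a moment's care are the monotonicity-in-$t$ observations for $\mathcal V_t^{(\Lambda,\mu,F)}$ and for $\Cr{c:UFtg}(U,F,t,\gamma_0)$ (which upgrade the $\limsup_{t\downarrow 0}$ assertions of Lemmas \ref{lemma:integral-of-v} and \ref{lem: Prob-w} to genuine suprema over $[a,b]$), and the verification that $U\cap\{x_i:i\in\mathbb N\}$ is bounded, so that Lemma \ref{lemma:integral-of-v}(i) is applicable.
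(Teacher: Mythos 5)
Your proof is correct and follows essentially the same route as the paper: bound $\tilde Z_t(g)\le\|g\|_\infty\tilde Z_t(U)$, apply Proposition \ref{prop1}, and control the three terms uniformly on $[a,b]$ via Lemma \ref{lemma:integral-of-v} (i)--(ii) and Lemma \ref{lem: Prob-w} together with monotonicity in $t$. The only (inessential) difference is that the paper invokes Proposition \ref{prop1} and Lemma \ref{lem: Prob-w} with the closed interval $\mathbb R$ in place of $F$, whereas you keep the given $F$ from the hypothesis, which fits the stated assumptions at least as directly.
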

	\begin{proof}
		Let $\gamma\in (0,1)$ and $\gamma_0= \gamma \Psi'(0+)/(2\beta_\mathrm c)$.
		From Proposition \ref{prop1}, for every $t>0$,
		\begin{align}
			\label{upp-of-exp}
			& \mathbb E_{(\Lambda, \mu)} \brk{\tilde Z_t(g)}
			\leq  \|g\|_\infty \mathbb E_{(\Lambda, \mu)} \brk{\tilde Z_t(U)}
			\\&\leq
			\|g\|_\infty \frac{e^{\lambda_{\mathrm o} t}}{1-\gamma}\pr{\int_U v_{t}^{(\Lambda, \mu)}(y)\mathrm{d}  y + \frac{\Psi'(0+)e^{\lambda_{\mathrm o} t}}{2}\mathcal{V}_t^{(\Lambda, \mu, \mathbb R)}}+ 3  \|g\|_\infty \Cr{c:UFtg}(U,\mathbb R,t,  \gamma_0).
		\end{align}
		From Lemma \ref{lemma:integral-of-v} (i),
		\begin{align}
			\sup_{t\in [a,b]} \int_U v_{t}^{(\Lambda, \mu)}(y)\mathrm{d}  y
			<\infty;
		\end{align}
		from Lemma \ref{lemma:integral-of-v} (ii), $t\mapsto \mathcal V_t^{(\Lambda, \mu, \mathbb R)}$ is a finite increasing function on $(0,\infty)$;
		from Lemma \ref{lem: Prob-w}, $t\mapsto \Cr{c:UFtg}(U, \mathbb R, t, \gamma_0)$ is a finite increasing function on $(0,\infty)$.
		Therefore, for every $b\geq t\geq a>0$, we have
		\begin{align}
			& \sup_{t\in[a,b]}\mathbb E_{(\Lambda, \mu)} \brk{\tilde Z_t(g)}
			\\&\leq  \|g\|_\infty \frac{e^{\lambda_{\mathrm o} b}}{1-\gamma}\pr{\sup_{t\in [a,b]} \int_U v_{t}^{(\Lambda, \mu)}(y)\mathrm{d}  y + \frac{\Psi'(0+)e^{\lambda_{\mathrm o} b}}{2}\mathcal{V}_b^{(\Lambda, \mu, \mathbb R)}}+ 3  \|g\|_\infty \Cr{c:UFtg}(U,\mathbb R,b,  \gamma_0)
			\\&< \infty
		\end{align}
		as desired for this corollary.
	\end{proof}
	
	As mentioned earlier, we want to show that $(\tilde Z_t)_{t>0}$ has a c\`adl\`ag modification.
	The idea is to consider, for each $g\in \mathcal C_{\mathrm c}^\infty(\mathbb R)$, the following ``super-martingale'':
	\begin{equation}\label{eq:PM}
		e^{\Phi'(0+)t}\tilde Z_t(g) - \frac{1}{2}\int_0^t e^{\Phi'(0+)s} \tilde Z_s(g) \mathrm ds, \quad t\geq 0.
	\end{equation}
	Two technical problems arise:
	\begin{itemize}
		\item[\eq\label{P1}] To utilize the regularization theory of martingales/super-martingales, e.g.~\cite{MR4226142}*{Theorem 9.28}, one typically needs to work with a filtration satisfying the usual hypothesis rather than the natural filtration $(\mathcal F^{\tilde Z}_t)_{t>0}$.
		\item[\eq\label{P2}] The integral term on the right hand side of \eqref{eq:PM} is not well-defined yet, because it is not clear whether $\tilde Z_s(g)$ is measurable in $s$ or not.
	\end{itemize}
	Let the $\sigma$-field $\mathcal F$ and filtration $(\mathcal F_t)_{t> 0}$ be the usual augmentation of $\mathcal F^{\tilde Z}$ and  $(\mathcal F^{\tilde Z}_t)_{t> 0}$ w.r.t.~the probability $\mathbb P_{(\Lambda, \mu)}$ in the sense of \cite{MR4226142}*{Lemma 9.8}.
	We will fix the first technical problem \eqref{P1} by showing that $(\tilde Z_t)_{t> 0}$ is a Markov process w.r.t.~ the filtration $(\mathcal F_t)_{t>0}$ in Proposition \ref{prop:WQ} below.
	We will fix the second problem \eqref{P2} by showing that $(\tilde Z_s(g))_{s > 0}$ has a measurable version in Proposition \ref{Prop:measurability-of-Z} below.
	Here, we say a real-valued process $(A_t)_{t> 0}$ defined on the probability space $(\Omega, \mathcal F, \mathbb P_{(\Lambda, \mu)})$ is measurable,
	if $(\omega, t) \mapsto A_t(\omega)$ is a measurable map from the product measurable space $(\Omega \times (0,\infty), \mathcal F \otimes \mathcal B_{(0,\infty)})$ to $(\mathbb R, \mathcal B(\mathbb R))$.
	Our proofs of Propositions \ref{prop:WQ} and \ref{Prop:measurability-of-Z} rely on some preliminary results saying that $(\tilde Z_t)_{t>0}$ is stochastically right-continuous.
	We will establish those results in Lemmas \ref{lem:SCg} and \ref{lem:SC} below.

	\begin{lemma} \label{lem:SCg}
		Suppose that $F$ is a closed interval containing $\cup_{i\in \N}(x_i -1, x_i +1)$ and that $U$ is an open interval. Suppose that $U\cap F$ is bounded.
		Let  $g$ be a  bounded  continuous function on $\mathbb{R}$ such that  the support of $g$ is contained in $U$.
		Then $(\tilde Z_t(g))_{t>0}$ is stochastically right-continuous, i.e., for any $\epsilon>0$ and $s>0$,
		\begin{align}\label{Conv-in-prob-2}
			\lim_{t\downarrow s}  \mathbb{P}_{(\Lambda, \mu)}
			\pr{ \abs{ \tilde{Z}_t(g) -\tilde{Z}_s(g)} >\epsilon }=0.
		\end{align}
	\end{lemma}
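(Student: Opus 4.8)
The plan is to prove \eqref{Conv-in-prob-2} first when $g$ is nonnegative and compactly supported, and then to remove the compact support by a cutoff argument. Since $g^{+}:=\max(g,0)$ and $g^{-}:=\max(-g,0)$ are bounded continuous with supports contained in $U$, and $\tilde Z_t(g)=\tilde Z_t(g^{+})-\tilde Z_t(g^{-})$, we may assume $g\ge 0$. Replacing $g$ by a small positive multiple changes nothing in \eqref{Conv-in-prob-2}, so in the compactly supported case we may furthermore assume that $1-e^{-g}$ is bounded by $\Psi'(0+)/(4\beta_{\mathrm c})$. Fix once and for all a closed interval $F\supseteq\cup_{i}(x_i-1,x_i+1)$ with $U\cap F$ bounded, as in the statement.

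\emph{Step 1: compactly supported $g$.} By Corollary \ref{cor:FE}, $\sup_{t\in[s,s+1]}\mathbb E_{(\Lambda,\mu)}[\tilde Z_t(g)]<\infty$, so by Markov's inequality the family $\{(\tilde Z_t(g),\tilde Z_s(g)):t\in(s,s+1]\}$ is tight in $\mathbb R^2$. To identify its only possible distributional subsequential limit, fix $\eta\ge 0$ and a small $\theta>0$ (so that $1-e^{-\theta g}$ is compactly supported and bounded by $\Psi'(0+)/(4\beta_{\mathrm c})$), and use the Markov property of $(\tilde Z_t)_{t>0}$ with respect to its natural filtration (Proposition \ref{Part-1-proof-of-thrm-1.2}) together with \eqref{eq:KC} to get
\begin{align}
\mathbb E_{(\Lambda,\mu)}\big[e^{-\theta\tilde Z_t(g)-\eta\tilde Z_s(g)}\big]
=\mathbb E_{(\Lambda,\mu)}\big[e^{-\eta\tilde Z_s(g)}\,H_{t-s}^{1-e^{-\theta g}}(\tilde Z_s)\big],\qquad t\ge s,
\end{align}
where $H^{h}_{r}(\nu)=\tilde{\mathbb E}_{h}\big[\prod_{z}(1-u_r(z))^{\nu(\{z\})}\big]$ is as in \eqref{Def-H-f} and $|H^{h}_{r}|\le 1$. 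The crux is that, for $\mathscr P^{(\Lambda,\mu)}_s$-a.e.\ $\nu$, $H_{r}^{1-e^{-\theta g}}(\nu)\to e^{-\theta\nu(g)}$ as $r\downarrow0$: if $(u_r)_{r\ge0}$ denotes the dual SPDE started from the compactly supported small function $h:=1-e^{-\theta g}\in\mathcal C(\mathbb R,[0,z^*))$, then $u_r\to h$ locally uniformly as $r\downarrow0$, and by the compact support property of \eqref{eq:GSPDE} (Lemma \ref{Local-Compactness}, in a form uniform over the small window $[0,t-s]$) the supports of all the $u_r$ with $r$ small are, almost surely, contained in a common bounded interval $[-R,R]$; since $\nu([-R,R])<\infty$, the product $\prod_{z}(1-u_r(z))^{\nu(\{z\})}$ is then a finite product whose factors converge to those of $\prod_{z}(1-h(z))^{\nu(\{z\})}=\prod_{z}e^{-\theta g(z)\nu(\{z\})}=e^{-\theta\nu(g)}$, and bounded convergence inside $\tilde{\mathbb E}_{h}$ gives the claim. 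Dominated convergence in the outer expectation then yields $\lim_{t\downarrow s}\mathbb E_{(\Lambda,\mu)}[e^{-\theta\tilde Z_t(g)-\eta\tilde Z_s(g)}]=\mathbb E_{(\Lambda,\mu)}[e^{-(\theta+\eta)\tilde Z_s(g)}]$ for all $\eta\ge0$ and all small $\theta>0$. Since $\theta\mapsto\mathbb E[e^{-\theta X-\eta Y}]$ is analytic on $\{\operatorname{Re}\theta>0\}$ and continuous up to the imaginary axis whenever $X,Y\ge0$, any distributional subsequential limit $(X,Y)$ of $(\tilde Z_t(g),\tilde Z_s(g))$ satisfies $\mathbb E[e^{-\theta X-\eta Y}]=\mathbb E[e^{-(\theta+\eta)\tilde Z_s(g)}]$ for all $\theta,\eta\ge0$, i.e.\ $(X,Y)\stackrel{d}{=}(\tilde Z_s(g),\tilde Z_s(g))$, so $\mathbb P(X=Y)=1$. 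Hence $(\tilde Z_t(g),\tilde Z_s(g))\Rightarrow(\tilde Z_s(g),\tilde Z_s(g))$ as $t\downarrow s$; the limit being supported on the diagonal, $\tilde Z_t(g)-\tilde Z_s(g)\to0$ in $\mathbb P_{(\Lambda,\mu)}$-probability, which is \eqref{Conv-in-prob-2}.

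\emph{Step 2: removing compact support.} Pick $\chi_K\in\mathcal C^{\infty}(\mathbb R)$ with $\mathbf 1_{[-K,K]}\le\chi_K\le\mathbf 1_{[-K-1,K+1]}$, so $g\chi_K$ is compactly supported with support in $U$ and $0\le\tilde Z_t(g)-\tilde Z_t(g\chi_K)=\tilde Z_t\big(g(1-\chi_K)\big)\le\|g\|_\infty\,\tilde Z_t\big(U\cap\{|x|>K\}\big)$. Applying Proposition \ref{prop1} to each component of the open set $U\cap\{|x|>K\}$ with a $K$-dependent choice of the auxiliary closed interval (an enlargement of $F$, respectively a suitably receding half-line), and using Lemma \ref{lemma:integral-of-v} to send the tail integral of $v^{(\Lambda,\mu)}$ and the corresponding $\mathcal V$-quantities to $0$, together with the final assertion of Lemma \ref{lem: Prob-w} and the translation/reflection invariance of \eqref{eq:GSPDE} to send the relevant $\Cr{c:UFtg}$-constants to $0$, one obtains $\sup_{t\in[s,s+1]}\mathbb E_{(\Lambda,\mu)}\big[\tilde Z_t(g)-\tilde Z_t(g\chi_K)\big]\to0$ as $K\to\infty$. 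Given $\epsilon>0$, choose $K$ so that this supremum is $<\epsilon^{2}$; then for $t\in(s,s+1]$, Markov's inequality gives
\begin{align}
\mathbb P_{(\Lambda,\mu)}\big(|\tilde Z_t(g)-\tilde Z_s(g)|>3\epsilon\big)
\le\mathbb P_{(\Lambda,\mu)}\big(|\tilde Z_t(g\chi_K)-\tilde Z_s(g\chi_K)|>\epsilon\big)+2\epsilon,
\end{align}
and the first term tends to $0$ as $t\downarrow s$ by Step 1. Letting $t\downarrow s$ and then $\epsilon\downarrow0$ gives \eqref{Conv-in-prob-2}.

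\emph{Main obstacle.} The delicate step is the almost-sure limit $H_{r}^{1-e^{-\theta g}}(\nu)\to e^{-\theta\nu(g)}$ in Step 1: one must rule out any contribution to the infinite product from atoms of $\nu$ far from the support of $g$, which requires a compact support property of the dual SPDE that is uniform over a small time interval rather than merely pointwise in time. I expect this to follow with little extra work from Lemma \ref{Local-Compactness} and its proof (note that the estimates in Lemmas \ref{lem:2} and \ref{lem: Prob-w} already carry suprema over $s\le t$), but it is the part that demands the most care; the estimates collected in Step 2 are routine, if a bit lengthy.
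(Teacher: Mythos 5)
Your overall architecture is the same as the paper's: everything reduces, via the Markov property at time $s$, to the entrance behaviour of the transition kernel, i.e.\ to showing $\int e^{-\theta\tilde\nu(g)}\mathscr Q_r(\nu,\diff\tilde\nu)\to e^{-\theta\nu(g)}$ as $r\downarrow 0$ for the relevant $\nu$ (the paper then concludes by an $L^2$ computation rather than your tightness/joint-Laplace-transform route, a harmless difference, and it treats non-compactly supported $g$ directly instead of by your truncation Step 2). The genuine gap is exactly at the point you flag as the main obstacle. You justify $H^{1-e^{-\theta g}}_r(\nu)\to e^{-\theta\nu(g)}$ by asserting that, almost surely, the supports of $u_r$ for \emph{all} small $r$ lie in a common bounded interval, and that this follows ``with little extra work'' from Lemma \ref{Local-Compactness}. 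It does not: Lemma \ref{Local-Compactness} is a fixed-time statement about the marginal law of $u_t$, and an almost-sure support bound simultaneous over an uncountable time window cannot be extracted from one-time marginals by any union bound; it would be a separate (Mueller--Perkins type) compact-support theorem that the paper never proves and whose proof is not short. Without it, your pathwise bounded-convergence argument breaks precisely in the delicate case, namely when $\nu$ has infinitely many atoms whose masses grow at infinity (which is exactly what $\tilde Z_s$ looks like when $\Lambda\neq\emptyset$), since nothing in your argument prevents tiny far-field excursions of $u_r$ from being amplified by rapidly growing $\nu((i-1,i+1))$.

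Two repairs are available. A minimal one within your scheme: you only need $\sup_{r\le\delta}\tilde{\mathbb P}_{h}\pr{\exists\,|x|>K:\ u_r(x)>0}\to 0$ as $K\to\infty$, because the full product and the product truncated to atoms in $[-K,K]$ differ only on that event and the integrand is bounded by $1$; this weaker, one-time-marginal-but-uniform-in-$r$ bound does follow from the quantitative estimates behind Lemma \ref{Local-Compactness} (the bounds in Lemmas \ref{lem:2}, \ref{lem: Prob-w} and \ref{lemma:integral-of-v} are monotone or uniform in $t$ on compacts), though you should say this rather than the a.s.\ common-support claim. The paper's route avoids any support property altogether: it splits the product at a cone-shaped neighbourhood $K$ of $U$ (see \eqref{eq:DefK}), bounds the expected discrepancy by $e^{\lambda_{\mathrm o}r}\sum_{z\in K^{\mathrm c}}\mathbf P_z(B_r\in U)\,\nu(\{z\})\lesssim r\sum_i \nu((i-1,i+1))/(1+|i|)^2$ using the first-moment bound of Lemma \ref{Upper-bound-w-t-x-2}, and therefore needs the growth control $F_U(\nu)<\infty$; it then proves $F_U(\tilde Z_s)<\infty$ a.s.\ by applying Proposition \ref{prop1} to the unit intervals $(i-1,i+1)$ with constants uniform in $i$. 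This last moment step has no counterpart in your proposal, so if you switch to the expectation-based repair you must add it.
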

	\begin{proof}
		Without loss of generality, we assume that $g$ is non-negative.
		Define
		\begin{align}\label{Condition-initial-measure}
			F_U(\nu):= \nu(U)+ \sum_{i\in \mathbb{Z}} \frac{\nu((i-1, i+1))}{(|i|+1)^2}, \quad \nu\in \mathcal N.
		\end{align}
		
		\firststep \label{S1}
		We will show in this step that $\mathbb E_{(\Lambda, \mu)}[F_U(\tilde Z_t)] < \infty$ for every $t>0$.
		Let $\gamma\in (0,1)$ and denote $\gamma_0 := \gamma \Psi'(0+)/(2\beta_{\mathrm c}).$
		From Proposition \ref{prop1}, we have
		\begin{align}\label{Moment-of-F-U-Z}
			& \mathbb{E}_{(\Lambda,\mu)}\brk{\tilde{Z}_t((i-1,i+1))}
			\\&
			\leq \frac{e^{\lambda_{\mathrm o} t}}{1-\gamma}\pr{\int_{i-1}^{i+1} v_{t}^{(\Lambda, \mu)}(y)\mathrm{d}  y +\frac{\Psi'(0+)e^{\lambda_{\mathrm o} t}}{2}\mathcal{V}_t^{(\Lambda, \mu, \mathbb R)}}+ 3 \Cr{c:UFtg}((i-1, i+1),\mathbb R,t,  \gamma_0).
		\end{align}
		From \eqref{eq:Upper-bound-of-v},  the fact that $\mathcal{V}_t^{(\Lambda, \mu, \mathbb R)}=0$,
		and how $\Cr{c:UFtg}(\cdot, \cdot, \cdot, \cdot)$ is defined in Lemma \ref{lem: Prob-w}, we have
		\begin{align}
			\mathbb{E}_{(\Lambda,\mu)}\brk{\tilde{Z}_t((i-1,i+1))}
			\leq
			\frac{e^{\lambda_{\mathrm o} t}}{1-\gamma}\frac{4}{\Psi'(0+)t}  + 3 \Cr{c:UFtg}((-1, 1), \mathbb R, t, \gamma_0).
		\end{align}
		This, and the fact that $\Cr{c:UFtg}((-1, 1), \mathbb R, t, \gamma_0)$ is non-decreasing in $t$, implies that
		\begin{equation}\label{eq:ES}
			\sup_{t\in [t_0, T]} \sup_{i\in \mathbb{Z}} \mathbb{E}_{(\Lambda,\mu)}[\tilde{Z}_t((i-1,i+1))] < \frac{e^{\lambda_{\mathrm o} T}}{1-\gamma}\frac{4}{\Psi'(0+)t_0}  + 3 \Cr{c:UFtg}((-1, 1), \mathbb R, T, \gamma_0)
		\end{equation}
		for any $0<t_0<T<\infty$.
		Also from Proposition \ref{prop1}, it holds that $\mathbb{E}_{(\Lambda,\mu)}[\tilde{Z}_t(U)]<\infty$.
		Therefore, the desired result for this step follows.
		
		\nextstep \label{S2SC}
		Fixing an arbitrary $\nu \in \mathcal N$ satisfying that $F_U(\nu) < \infty$, we will show that
		\begin{align}\label{eq4}
			\lim_{s\downarrow 0} \int e^{-\theta \tilde \nu (g)} \mathscr Q_s(\nu, \mathrm{d}\tilde \nu) = e^{-\theta  \nu(g)}, \quad \theta\geq 0.
		\end{align}
		To do this, let us fix an arbitrary $\theta > 0$. By Corollary \ref{cor:ID},  for any $s>0$,
		\begin{align}\label{eq1}
			&
			\int e^{-\theta \tilde \nu(g)} \mathscr Q_s(\nu, \mathrm{d} \tilde \nu)  =
			\tilde{ \mathbb{E}}_{1-e^{-\theta g}}\brk{\prod_{z\in \R}\pr{1-u_s(z)}^{\nu(\brc{z})}}.
		\end{align}
		It is clear that almost surely under $\tilde {\mathbb P}_{1-e^{-\theta g}}$, for every $z\in \mathbb R$, $u_s(z)$ converges to $1-e^{-\theta g(z)}$ when $s\downarrow 0$.
		Define the closed interval
		\begin{equation} \label{eq:DefK}
			K
			:=\brc{z\in \mathbb R: \operatorname{dist}(\{z\}, U) \leq 1+\frac{ |z|}{2}}.
		\end{equation}
		Notice that $U \subset K$ and $K\setminus U$ is bounded.
		Since $\operatorname{supp}(g)\subset U \subset K$, we see that $\nu(g)= (\mathbf 1_K \cdot \nu)(g)$.
		From the condition $F_U(\nu) < \infty$, we have $\nu (U)<\infty$.
		Therefore $\nu(K) = \nu(U) + \nu(K\setminus U) < \infty$. In particular, $\mathbf 1_K \cdot \nu$ is a finite integer-valued measure on $\mathbb R$.
		Therefore, by the bounded convergence theorem
		\begin{align}\label{eq3}
			\lim_{s\to 0}\tilde{\mathbb{E}}_{1-e^{-\theta g}} \brk{\prod_{z\in \R}\pr{1-u_s(z)}^{(\mathbf 1_K \cdot \nu)(\brc{z}) } }
			= e^{-\theta(\mathbf 1_K\cdot\nu)(g)}
			=e^{-\theta \nu(g)}.
		\end{align}
		By Lemmas \ref{lem: general-Bernoulli-ineq} and \ref{Upper-bound-w-t-x-2},  for every $s>0$,
		\begin{align}
			& \tilde{\mathbb{E}}_{1-e^{-\theta g}} \brk{  \abs{ \prod_{z\in \R}\pr{1-u_s(z)}^{\nu(\brc{z}) } -  \prod_{z\in \R}\pr{1-u_s(z)}^{(\mathbf 1_K \cdot \nu)(\brc{z}) } }}
			\\&
			\leq
			\tilde{\mathbb{E}}_{1-e^{-\theta g}} \brk{  \abs{ \prod_{z\in \R}\pr{1-u_s(z)}^{(\mathbf 1_{K^\mathrm c}\cdot \nu)(\brc{z}) } -1}}
			\\&\leq \sum_{z\in \R} \tilde{\mathbb{E}}_{1-e^{-\theta g}} \brk{ u_s(z) } (\mathbf 1_{K^c}\cdot \nu)(\brc{z})
			\leq e^{\lambda_{\mathrm o}s}  \sum_{z\in \R} {\mathbf{E}}_{z} \brk{ 1-e^{-\theta g(B_s)} } \mathbf 1_{K^\mathrm c}(z)\nu(\brc{z}).
		\end{align}
		From $1-e^{-\theta g}\leq \mathbf{1}_U$, \eqref{eq:DefK}, and Markov's inequality, we have for every $s>0$,
		\begin{align}\label{eq2}
			& \tilde{\mathbb{E}}_{1-e^{-\theta g}} \brk{  \abs{ \prod_{z\in \R}\pr{1-u_s(z)}^{\nu(\brc{z}) } -  \prod_{z\in \R}\pr{1-u_s(z)}^{(\mathbf 1_K \cdot \nu)(\brc{z}) } }}
			\\&\leq e^{\lambda_{\mathrm o}s}  \sum_{z\in \R} {\mathbf{P}}_{z} \pr{ B_s\in U } \mathbf 1_{\brc{\operatorname{dist}(\{z\},U) > 1+\frac{|z|}{2}}}\nu(\brc{z})
			\\&\leq e^{\lambda_{\mathrm o}s}  \sum_{z\in \mathbb R} {\mathbf{P}}_{0} \pr{ |B_s|\geq 1+\frac{|z|}{2}} \nu(\brc{z})
			\leq 4s e^{\lambda_{\mathrm o}s}  \sum_{z\in \R} \frac{\nu(\brc{z}) }{(2+|z|)^2}
			\\&\leq 4s e^{\lambda_{\mathrm o}s}  \sum_{i\in \mathbb Z} \sum_{z\in \R} \frac{\nu(\brc{z}) }{(2+|z|)^2}  \mathbf 1_{\{z\in (i-1,i+1)\}}
			\leq 4s e^{\lambda_{\mathrm o}s}  \sum_{i\in \mathbb Z}\frac{\nu((i-1,i+1)) }{(1+|i|)^2}.
		\end{align}
		Therefore, from the condition that $F_U(\nu)< \infty$, the left hand side of \eqref{eq2} converges to $0$ when $s\to 0$.
		Combining this with \eqref{eq1} and \eqref{eq3}, we obtain \eqref{eq4} as desired.
		
		\nextstep \label{S3}
		We will finish the proof.
		Let $\theta >0$ be fixed. According to  Step \ref{S2SC},   for any $\nu\in \mathcal{N}$ with $F_U(\nu)<\infty$,
		\begin{align}
			& \lim_{s\downarrow 0} \int  \pr{ e^{-\theta \tilde{\nu}(g)} - e^{-\theta \nu(g)}}^2  \mathscr Q_s(\nu, \mathrm{d} \tilde \nu)                                                                                                                        \\
			& = \lim_{s\downarrow 0} \int e^{-2\theta \tilde \nu(g)} \mathscr Q_s(\nu, \mathrm{d} \tilde \nu) - 2e^{-\theta \nu(g)} \lim_{s\downarrow 0} \int e^{-\theta \tilde \nu(g)} \mathscr Q_s(\nu, \mathrm{d} \tilde \nu)   + e^{-2\theta \nu(g)}=0. \quad
		\end{align}
		Therefore, from Step \ref{S1},  for every $s>0$ almost surely,
		\begin{equation}\label{eq5}
			\lim_{r\downarrow 0} \int  \pr{ e^{-\theta \tilde{\nu}(g)} - e^{-\theta \tilde Z_s(g)}}^2  \mathscr Q_r(\tilde Z_s, \mathrm{d} \tilde \nu) = 0.
		\end{equation}
		By the Markov property of the process $(\tilde Z_\cdot)$, we conclude from the bounded convergence theorem that, for any $s>0$,
		\begin{align}\label{Conv-in-prob-1}
			& \lim_{t\downarrow s} \mathbb{E}_{(\Lambda,\mu)} \brk{\pr{ e^{-\theta \tilde{Z}_t(g)} - e^{-\theta \tilde{Z}_s(g)} }^2}
			\\&= \lim_{t\downarrow s} \mathbb{E}_{(\Lambda,\mu)} \brk{\int  \pr{ e^{-\theta \tilde{\nu}(g)} - e^{-\theta \tilde{Z}_s(g)}}^2  \mathscr Q_{t-s}(\tilde{Z}_s, \mathrm{d} \tilde \nu)} =0.
		\end{align}
		This says that, for any $s>0$,  the random variable $e^{-\theta \tilde Z_t(g)}$ converges to $e^{-\theta \tilde Z_s(g)}$ when $t\downarrow s$ in $L^2$, and therefore, in probability.
		From the continuous mapping theorem (e.g.~\cite{MR4226142}*{Lemma 5.3}), we get the desired result for this lemma.
	\end{proof}

	\begin{lemma} \label{lem:SC}
		The process $(\tilde Z_t)_{t>0}$ is stochastically right-continuous, i.e., for any $\epsilon > 0$ and $s>0$,
		\begin{equation}
			\lim_{t\downarrow s} \mathbb P_{(\Lambda, \mu)} \pr{d_{\mathcal N}(\tilde Z_t,\tilde Z_s) > \epsilon } = 0.
		\end{equation}
	\end{lemma}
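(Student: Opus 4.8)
The plan is to deduce the stochastic right-continuity of the $\mathcal N$-valued process $(\tilde Z_t)_{t>0}$ from the coordinate-wise stochastic right-continuity already obtained in Lemma \ref{lem:SCg}, using only the explicit form of the metric $d_{\mathcal N}$ introduced just above.

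First I would check that each test function $h_i$ in the fixed sequence $(h_i)_{i\in \mathbb N}\subset \mathcal C_{\mathrm c}^\infty(\mathbb R)$ satisfies the hypotheses of Lemma \ref{lem:SCg}. Indeed, take $F:=\mathbb R$, which is a closed interval containing $\cup_{j\in \mathbb N}(x_j-1,x_j+1)$, and let $U_i$ be any bounded open interval containing $\operatorname{supp}(h_i)$; then $U_i\cap F=U_i$ is bounded and $h_i$ is a bounded continuous function supported in $U_i$. Hence Lemma \ref{lem:SCg} applies, and for every $i\in \mathbb N$ and every $s>0$ we have $\tilde Z_t(h_i)\to \tilde Z_s(h_i)$ in $\mathbb P_{(\Lambda,\mu)}$-probability as $t\downarrow s$.

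Next, fix $\epsilon>0$ and $s>0$; we may assume $\epsilon<1$ since $d_{\mathcal N}\le \sum_{i\ge 1}2^{-i}=1$, so the statement is trivial otherwise. Choose $N=N(\epsilon)\in \mathbb N$ with $\sum_{i>N}2^{-i}<\epsilon/2$. Then for every $t>s$,
\[
d_{\mathcal N}(\tilde Z_t,\tilde Z_s)\le \sum_{i=1}^N 2^{-i}\pr{1\wedge \abs{\tilde Z_t(h_i)-\tilde Z_s(h_i)}}+\frac{\epsilon}{2}\le \max_{1\le i\le N}\pr{1\wedge \abs{\tilde Z_t(h_i)-\tilde Z_s(h_i)}}+\frac{\epsilon}{2},
\]
where the last inequality uses $\sum_{i=1}^N 2^{-i}\le 1$. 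Consequently $\{d_{\mathcal N}(\tilde Z_t,\tilde Z_s)>\epsilon\}\subseteq \bigcup_{i=1}^N\{\abs{\tilde Z_t(h_i)-\tilde Z_s(h_i)}>\epsilon/2\}$, and a union bound gives
\[
\mathbb P_{(\Lambda,\mu)}\pr{d_{\mathcal N}(\tilde Z_t,\tilde Z_s)>\epsilon}\le \sum_{i=1}^N \mathbb P_{(\Lambda,\mu)}\pr{\abs{\tilde Z_t(h_i)-\tilde Z_s(h_i)}>\epsilon/2}.
\]
Letting $t\downarrow s$, each of the finitely many terms on the right-hand side tends to $0$ by the previous paragraph, which yields the claim.

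There is essentially no serious obstacle here: the argument is the standard passage from coordinate-wise convergence in probability to convergence in probability for a metric metrizing the vague topology, reduced to finitely many coordinates via the geometric tail $\sum_{i>N}2^{-i}$. The only point requiring a little attention is verifying that each $h_i$ fits the hypotheses of Lemma \ref{lem:SCg}, which is immediate once one takes $F=\mathbb R$, since the boundedness conditions there are automatically met by the compactly supported $h_i$.
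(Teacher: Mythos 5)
Your proposal is correct and takes essentially the same route as the paper: both deduce the claim from the coordinate-wise stochastic right-continuity of $(\tilde Z_t(h_i))_{t>0}$ given by Lemma \ref{lem:SCg} together with the explicit series form of the metric $d_{\mathcal N}$. The only difference is cosmetic—the paper passes through expectations via the characterization of convergence in probability in \cite[Lemma 5.2]{MR4226142} and bounded convergence of the series, whereas you truncate the series at a finite level and use a union bound; your preliminary check that each $h_i$ fits the hypotheses of Lemma \ref{lem:SCg} (e.g.\ with $F=\mathbb R$ and a bounded open interval around $\operatorname{supp}(h_i)$) is a valid, slightly more explicit version of what the paper leaves implicit.
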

	\begin{proof}
		From \cite{MR4226142}*{Lemma 5.2} and the fact that $d_{\mathcal N}$ is bounded by $1$,  we only have to show that, for any $s>0$,
		\[
		\lim_{t\downarrow s}\mathbb E_{(\Lambda, \mu)} \brk{d_{\mathcal N}(\tilde Z_t, \tilde Z_s)}
		= \lim_{t\downarrow s}\sum_{i=1}^\infty 2^{-i}\mathbb E_{(\Lambda, \mu)}\brk{1\wedge \abs{\tilde Z_t(h_i) - \tilde Z_s (h_i)}}
		= 0.
		\]
		Note that the above holds since, from \cite{MR4226142}*{Lemma 5.2} again and Lemma \ref{lem:SCg}, we have, for any $s>0$ and $i\in \mathbb N$,
		\begin{equation}
			\lim_{t\downarrow s} \mathbb E_{(\Lambda, \mu)} \brk{1\wedge \abs{\tilde Z_t(h_i)-\tilde Z_s(h_i)}} = 0.
		\end{equation}
	\end{proof}

	As mentioned earlier, the first technical problem \eqref{P1} will be handled with the help of the next proposition.
	\begin{proposition} \label{prop:WQ}
		$(\tilde Z_t)_{t>0}$ is a Markov process with transition kernels $(\mathscr Q_s)_{s\geq 0}$ w.r.t.~the filtration $(\mathcal F_t)_{t>0}$, i.e.~$(\tilde Z_t)_{t>0}$ is $(\mathcal F_t)_{t>0}$-adapted and
		\begin{equation} \label{eq:DR}
			\mathbb P_{(\Lambda, \mu)}\pr{\tilde Z_{t+s} \in A \middle| \mathcal F_t}
			= \mathscr Q_s(\tilde Z_t, A),
			\quad a.s.,A \in \mathcal B_{\mathcal N}, s\geq 0, t>0
		\end{equation}
		where $\mathcal B_{\mathcal N}$ is the Borel $\sigma$-field of $\mathcal N$ generated by the vague topology.
	\end{proposition}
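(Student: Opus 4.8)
The plan is to upgrade the Markov property of $(\tilde Z_t)_{t>0}$ established in Proposition \ref{Part-1-proof-of-thrm-1.2} with respect to its natural filtration $(\mathcal F^{\tilde Z}_t)_{t>0}$, first to the right-continuous filtration $(\mathcal F^{\tilde Z}_{t+})_{t>0}$ and then to the usual augmentation $(\mathcal F_t)_{t>0}$. Adaptedness of $(\tilde Z_t)_{t>0}$ to $(\mathcal F_t)_{t>0}$ is immediate since $\mathcal F^{\tilde Z}_t\subseteq\mathcal F_t$. Because $\mathcal F_t$ is obtained from $\mathcal F^{\tilde Z}_{t+}$ by adjoining $\mathbb P_{(\Lambda,\mu)}$-null sets and conditional expectations are unchanged under such an enlargement, it suffices to prove \eqref{eq:DR} with $\mathcal F_t$ replaced by $\mathcal F^{\tilde Z}_{t+}$. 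The case $s=0$ is trivial as $\tilde Z_t$ is $\mathcal F^{\tilde Z}_t$-measurable; for $s>0$, a functional monotone class argument based on the multiplicative class $\{\nu\mapsto e^{-\nu(g)}:\,0\le g\in\mathcal C_{\mathrm c}(\mathbb R)\}$, which generates $\mathcal B_{\mathcal N}$, reduces the claim to showing, for every fixed $t>0$, $s>0$ and every such $g$,
\[
\mathbb E_{(\Lambda,\mu)}\bigl[\,e^{-\tilde Z_{t+s}(g)}\ \big|\ \mathcal F^{\tilde Z}_{t+}\,\bigr]
=\int e^{-\tilde\nu(g)}\,\mathscr Q_s(\tilde Z_t,\mathrm d\tilde\nu),\qquad\text{a.s.}
\]
Moreover, applying this with $g$ replaced by $\theta g$ and comparing the two sides as Laplace transforms in $\theta$ (of a regular conditional law of $\tilde Z_{t+s}(g)$, respectively of $\tilde\nu(g)$ under $\mathscr Q_s(\tilde Z_t,\cdot)$), an analyticity argument lets me further assume $\|1-e^{-g}\|_\infty<\Psi'(0+)/(4\beta_{\mathrm c})$, so that by Corollary \ref{lemma:general-vague-convergence2} the right-hand side equals $H^{1-e^{-g}}_s(\tilde Z_t)$ with $H$ as in \eqref{Def-H-f} and $\nu\mapsto H^{1-e^{-g}}_s(\nu)$ bounded and continuous on $\mathcal N$.

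Next, fix such $t,s,g$ and a sequence $t_n\downarrow t$ with $t_n\in(t,t+s)$, so $r_n:=(t+s)-t_n\uparrow s$. The Markov property with respect to $(\mathcal F^{\tilde Z}_t)_{t>0}$ together with \eqref{eq:KC} (or Corollary \ref{cor:ID}) gives $\mathbb E_{(\Lambda,\mu)}[e^{-\tilde Z_{t+s}(g)}\mid\mathcal F^{\tilde Z}_{t_n}]=H^{1-e^{-g}}_{r_n}(\tilde Z_{t_n})$ a.s. Since $\bigcap_n\mathcal F^{\tilde Z}_{t_n}=\mathcal F^{\tilde Z}_{t+}$, L\'evy's reverse martingale convergence theorem shows the left-hand side converges a.s.\ and in $L^1$ to $\mathbb E_{(\Lambda,\mu)}[e^{-\tilde Z_{t+s}(g)}\mid\mathcal F^{\tilde Z}_{t+}]$. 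It therefore remains to prove that $H^{1-e^{-g}}_{r_n}(\tilde Z_{t_n})\to H^{1-e^{-g}}_s(\tilde Z_t)$ in probability, which is the technical core.

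To that end I would split
\[
H^{1-e^{-g}}_{r_n}(\tilde Z_{t_n})-H^{1-e^{-g}}_s(\tilde Z_t)
=\bigl(H^{1-e^{-g}}_{r_n}(\tilde Z_{t_n})-H^{1-e^{-g}}_s(\tilde Z_{t_n})\bigr)+\bigl(H^{1-e^{-g}}_s(\tilde Z_{t_n})-H^{1-e^{-g}}_s(\tilde Z_t)\bigr).
\]
The second bracket tends to $0$ in probability by the stochastic right-continuity of $(\tilde Z_t)_{t>0}$ (Lemma \ref{lem:SC}), the continuity of $\nu\mapsto H^{1-e^{-g}}_s(\nu)$ (Corollary \ref{lemma:general-vague-convergence2}), and the continuous mapping theorem. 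For the first bracket, Lemma \ref{lem: general-Bernoulli-ineq} yields, for each $\nu\in\mathcal N$,
\[
\bigl|H^{1-e^{-g}}_{r_n}(\nu)-H^{1-e^{-g}}_s(\nu)\bigr|
\le\tilde{\mathbb E}_{1-e^{-g}}\Bigl[\bigl|\textstyle\prod_z(1-u_{r_n}(z))^{\nu(\{z\})}-\prod_z(1-u_s(z))^{\nu(\{z\})}\bigr|\Bigr];
\]
splitting this expectation over the event that both $u_{r_n}$ and $u_s$ vanish outside $[-K,K]$ and its complement, on the former only the finitely many atoms of $\nu$ in $[-K,K]$ contribute and the elementary inequality $|a^m-b^m|\le m|a-b|$ for $a,b\in[-1,1]$ (applicable since $1-u_\cdot$ is $[1-z^*,1]$-valued and $z^*<2$ by Lemma \ref{lem:CC} and \eqref{asp:A2}) bounds the integrand by $\nu([-K,K])\sup_{|z|\le K}|u_{r_n}(z)-u_s(z)|$, while on the complement it is at most $2$ and its $\tilde{\mathbb P}_{1-e^{-g}}$-probability is controlled by a time-uniform version of the compact support estimate in Lemma \ref{Local-Compactness} (whose proof carries over, the relevant quantities in Lemmas \ref{lem:2} and \ref{lem: Prob-w} being monotone in the time variable). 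Evaluating at $\nu=\tilde Z_{t_n}$, the tightness of $\{\tilde Z_{t_n}([-K,K])\}_n$ — which follows from $\tilde Z_{t_n}\to\tilde Z_t$ in $\mathcal N$ in probability tested against a compactly supported function dominating $\mathbf 1_{[-K,K]}$ — together with $\sup_{|z|\le K}|u_{r_n}(z)-u_s(z)|\to0$ in $\tilde{\mathbb P}_{1-e^{-g}}$-probability (path continuity of the dual SPDE solution) lets me send $n\to\infty$ and then $K\to\infty$ to conclude that the first bracket also vanishes in probability. The hard part is exactly this joint limit: one must combine the continuity of the Laplace functional in its measure argument — available only for small $g$, which is why the analyticity reduction is needed — with continuity in the time variable of the dual SPDE, while the measures $\tilde Z_{t_n}$ carry no uniform control on their mass over compacts, a difficulty circumvented by the compact support truncation and tightness rather than by a moment bound. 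Assembling these pieces gives $\mathbb E_{(\Lambda,\mu)}[e^{-\tilde Z_{t+s}(g)}\mid\mathcal F^{\tilde Z}_{t+}]=H^{1-e^{-g}}_s(\tilde Z_t)$ a.s., hence \eqref{eq:DR}.
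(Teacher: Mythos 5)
Your overall strategy is sound and close in spirit to the paper's, but at the decisive limiting step you take a genuinely different route. The paper also reduces to Laplace functionals $e^{-\nu(g)}$ with $\|1-e^{-g}\|_\infty\leq \Psi'(0+)/(4\beta_{\mathrm c})$ and also runs a decreasing-$\sigma$-field (reverse martingale, \cite[Theorem 4.7.3]{MR3930614}) argument along $\bar{\mathcal F}^{\tilde Z}_{t+1/k}\downarrow \mathcal F_t$; the difference is \emph{which} random variable is conditioned. You fix the target $\tilde Z_{t+s}$ and condition on $\mathcal F^{\tilde Z}_{t_n}$ with $t_n\downarrow t$, so the kernel's time index $r_n=(t+s)-t_n$ varies, and you are forced to prove $H^{1-e^{-g}}_{r_n}(\tilde Z_{t_n})\to H^{1-e^{-g}}_{s}(\tilde Z_t)$, i.e.\ a joint continuity of $(r,\nu)\mapsto H^{f}_r(\nu)$ along a tight family of random measures. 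The paper instead conditions $\tilde Z_{t+1/k+s}$ on $\bar{\mathcal F}^{\tilde Z}_{t+1/k}$, keeping the lag fixed at $s$: the Markov property then yields $H^{1-e^{-g}}_{s}(\tilde Z_{t+1/k})$ exactly, so only the fixed-time continuity of $\nu\mapsto H_s^{1-e^{-g}}(\nu)$ (Corollary \ref{lemma:general-vague-convergence2}) plus Lemma \ref{lem:SC} is needed, and the mismatch between $\tilde Z_{t+1/k+s}$ and $\tilde Z_{t+s}$ is absorbed inside the conditional expectation by Jensen and stochastic right-continuity at $t+s$. Your route buys nothing over this and costs the extra equicontinuity-in-time argument; the paper's shift of the future time point is the simpler device. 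Your "analyticity in $\theta$" reduction is also unnecessary: as in the paper, equality of Laplace functionals for small $g$ already identifies the conditional law via \cite[Theorem 8.5]{MR4226142} and \cite[Theorem 2.2]{MR3642325}.

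The one place where your argument, as written, has a real gap is the control of the first bracket: you invoke a "time-uniform version of the compact support estimate in Lemma \ref{Local-Compactness}", i.e.\ that $\sup_{r\in[s/2,s]}\tilde{\mathbb P}_{1-e^{-g}}\pr{\exists\,|x|>K:\ u_r(x)>0}\to 0$ as $K\to\infty$, and justify it only by saying the relevant quantities in Lemmas \ref{lem:2} and \ref{lem: Prob-w} are "monotone in the time variable". Monotonicity indeed takes care of the terms $\mathcal V_r^{(\Lambda_K,\mathbf 0,F_K)}$ and $\tilde{\mathbb P}(\sup_{u\leq r,y\in F_K}u_u(y)>\gamma_0)$, but not of the term $\exp\brc{-\frac{\varepsilon e^{\lambda_{\mathrm o}r}}{1-\gamma}\int_U v_r^{(\Lambda_K,\mathbf 0)}(y)\diff y}$ in \eqref{Lower:w-2}: there you need $\sup_{0<r\leq s}\int_U v_r^{(\Lambda_K,\mathbf 0)}(y)\diff y\to 0$ as $K\to\infty$, which is a separate uniform-in-$r$ estimate. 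It does hold — by \eqref{eq:TEV}, \eqref{eq:SV} and \eqref{Property-2} the integrand is bounded by $C\frac1r\pr{1+\frac{d}{\sqrt r}}e^{-d^2/r}$ with $d=\operatorname{dist}(U,\Lambda_K)$, and optimizing over $r$ gives a bound of order $d^{-2}$, uniform in $r$ — but this computation must be supplied; it is not a monotonicity statement. With that lemma in place (and the routine tightness of $\tilde Z_{t_n}([-K,K])$ you describe), your truncation argument does close, so the proposal is repairable; as it stands, this auxiliary estimate is asserted rather than proved, and it is precisely the complication the paper's fixed-lag trick avoids.
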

	
	\begin{proof}
		It is clear that $(\tilde Z_t)_{t>0}$ is $(\mathcal F_t)_{t>0}$-adapted.
		So we only have to verify \eqref{eq:DR}.
		Note that \eqref{eq:DR} is trivial when $s=0$.
		So, let us fix $s> 0$ and $t>0$.
		Take a non-negative continuous function $g$ on $\mathbb R$ with compact support satisfying that $\|1-e^{-g}\|_\infty \leq \Psi'(0+)/(4\beta_{\mathrm c})$.
		Let $\mathscr N:=\{B\in \mathcal F: \mathbb P_{(\Lambda, \mu)}(B) = 0\}$ be the collection of the null  subsets of $\Omega$.
		Define $\bar {\mathcal F}^{\tilde Z}_r:= \sigma(\mathcal F^{\tilde Z}_r, \mathscr N)$ for every $r>0$.
		From \cite{MR4226142}*{Lemma 9.8}, we have $\mathcal F_t= \cap_{k=1}^\infty \bar {\mathcal F}^{\tilde Z}_{t+1/k}$.
		From Proposition \ref{Part-1-proof-of-thrm-1.2}, we have almost surely for each $k\in \mathbb N$,
		\begin{align}
			& \mathbb E_{(\Lambda, \mu)}\brk{e^{-\tilde Z_{t+1/k+s}(g)}\middle| \bar{\mathcal F}^{\tilde Z}_{t+1/k}}
			=\mathbb E_{(\Lambda, \mu)}\brk{e^{-\tilde Z_{t+1/k+s}(g)}\middle| \mathcal F^{\tilde Z}_{t+1/k}}
			\\&\label{eq:PMP}=\int e^{-\nu(g)} \mathscr Q_s(\tilde Z_{t+1/k},\mathrm{d} \nu)
			= H^{1-e^{-g}}_{s}(\tilde Z_{t+1/k}),
		\end{align}
		where $\nu\mapsto H_s^{1-e^{-g}}(\nu)$ is the bounded continuous function on $\mathcal N$ given as in \eqref{Def-H-f}.
		Taking $k\uparrow \infty$, from the continuous mapping theorem (e.g.~\cite{MR4226142}*{Lemma 5.3}) and Lemma \ref{lem:SC}, we know that the most right hand side of  \eqref{eq:PMP} converges to $H_s^{1-e^{-g}}(\tilde Z_{t})$ in probability when $k\uparrow \infty$.
		From \cite{MR3930614}*{Theorem 4.7.3}, we know that
		\begin{equation} \label{eq:PL1}
			\mathbb E_{(\Lambda, \mu)} \brk{e^{-\tilde Z_{t+s}(g)}\middle| \bar{\mathcal F}^{\tilde Z}_{t+1/k} }
			\xrightarrow[k\uparrow \infty]{L^1} \mathbb E_{(\Lambda, \mu)} \brk{e^{-\tilde Z_{t+s}(g)}\middle| \mathcal F_t}.
		\end{equation}
		Also, from Jensen's inequality, Lemma \ref{lem:SC}, and the bounded convergence theorem, we have that
		\begin{align}
			& \mathbb E_{(\Lambda, \mu)}\brk{\abs{\mathbb E_{(\Lambda, \mu)} \brk{e^{-\tilde Z_{t+1/k+s}(g)}\middle| \bar{\mathcal F}^{\tilde Z}_{t+1/k} } - \mathbb E_{(\Lambda, \mu)} \brk{e^{-\tilde Z_{t+s}(g)}\middle| \bar{\mathcal F}^{\tilde Z}_{t+1/k}} } }
			\\& \leq \mathbb E_{(\Lambda, \mu)}\brk{\mathbb E_{(\Lambda, \mu)} \brk{\abs{e^{-\tilde Z_{t+1/k+s}(g)} - e^{-\tilde Z_{t+s}(g)} }\middle| \bar{\mathcal F}^{\tilde Z}_{t+1/k} }}
			\\&=\mathbb E_{(\Lambda, \mu)}\brk{\abs{e^{-\tilde Z_{t+1/k+s}(g)} - e^{-\tilde Z_{t+s}(g)} }} \xrightarrow[k\uparrow \infty]{} 0.
		\end{align}
		Combining this with \eqref{eq:PL1}, we conclude that the most left hand side of \eqref{eq:PMP} converges to the right hand side of \eqref{eq:PL1} in $L^1$ when $k\uparrow \infty$.
		Now, taking $k\uparrow \infty$ in \eqref{eq:PMP}, we obtain that almost surely
		\begin{equation}
			\mathbb E_{(\Lambda, \mu)} \brk{e^{-\tilde Z_{t+s}(g)}\middle| \mathcal F_t} = H_s^{1-e^{-g}} (\tilde Z_{t}) = \int e^{-\nu(g)} \mathscr Q_s(\tilde Z_t,\mathrm{d} \nu).
		\end{equation}
		\begin{final}
			From \cite{MR4226142}*{Theorem 8.5} and \cite{MR3642325}*{Theorem 2.2}, we can verify the desired result for this proposition.
		\end{final}
		\begin{draft}
			Since $\mathcal N$ is Polish, by \cite{MR4226142}*{Theorem 8.5}, there exists a probability kernel $\mathcal A$ from $(\Omega, \mathcal F_t)$ to $(\mathcal N, \mathcal B_{\mathcal N})$ such that
			\begin{equation} \label{eq:EK}
				\mathbb P_{(\Lambda, \mu)}\pr{\tilde Z_{t+s} \in A \middle| \mathcal F_t} (\omega)
				= \mathcal A(\omega, A),
				\quad a.s.~\omega\in \Omega,A \in \mathcal B_{\mathcal N}.
			\end{equation}
			Moreover, it holds that
			\begin{equation}
				\int e^{-\nu(g)} \mathcal A(\omega, \mathrm{d} \nu)
				= \mathbb E_{(\Lambda, \mu)}\brk{e^{-\tilde Z_{t+s}(g)}\middle| \mathcal F_t} (\omega)
				= \int e^{-\nu(g)} \mathscr Q_s(\tilde Z_t(\omega), \mathrm{d} \nu), \quad a.s.~\omega \in \Omega.
			\end{equation}
			Therefore, fixing an arbitrary non-negative bounded measurable random variable $H$ on $(\Omega,\mathcal F_t)$, we have
			\begin{equation}
				\int_\Omega H(\omega) \mathbb P_{(\Lambda, \mu)}(\mathrm{d} \omega)\int_{\mathcal N} e^{-\nu(g)} \mathcal A(\omega, \mathrm{d} \nu) =
				\int_\Omega H(\omega) \mathbb P_{(\Lambda, \mu)}(\mathrm{d} \omega)\int_{\mathcal N} e^{-\nu(g)} \mathscr Q_s(\tilde Z_t(\omega), \mathrm{d} \nu).
			\end{equation}
			Since $g$ is an arbitrarily chosen non-negative continuous function with compact support satisfying that  $\|1-e^{-g}\|_\infty \leq \Psi'(0+)/(4\beta_{\mathrm c})$, from  \cite{MR3642325}*{Theorem 2.2}, it is not hard to observe that
			\[
			\int_\Omega \mathcal A(\omega, A)H(\omega) \mathbb P_{(\Lambda, \mu)} (\mathrm{d} \omega)  = \int_{\Omega} \mathscr Q_s(\tilde Z_t(\omega),A) H(\omega) \mathbb P_{(\Lambda, \mu)} (\mathrm{d} \omega), \quad A\in \mathcal B_{\mathcal N}.
			\]
			For any $A\in \mathcal B_{\mathcal N}$, since $H$ is an arbitrarilly chosen non-negative bounded measurable function on $(\Omega, \mathcal F_t)$, and since both of $\mathcal A(\cdot, A) $ and $\mathscr Q_s(\tilde Z_t,A)$ are random variables on $(\Omega, \mathcal F_t)$, we conclude from above that $\mathcal A(\cdot, A) = \mathscr Q_s(\tilde Z_t,A)$ almost surely.
			Now the desired result of this proposition follows from \eqref{eq:EK}.
		\end{draft}
	\end{proof}
	
	The second technical problem \eqref{P2} will be handled by the next proposition.

	\begin{proposition}\label{Prop:measurability-of-Z}
		Suppose that $F$ is a closed interval containing $\cup_{i\in \N}(x_i -1, x_i +1)$ and that $U$ is an open interval. Suppose that $U\cap F$ is bounded.
		Let  $g$ be a  bounded  continuous function on $\mathbb{R}$ such that  the support of $g$ is contained in $U$.
		Then under $\mathbb{P}_{(\Lambda, \mu)}$, there exists a measurable version $(\tilde{Y}_t^g)_{t>0}$ of the process $(\tilde{Z}_t(g))_{t>0}$.
	\end{proposition}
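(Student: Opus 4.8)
The plan is to realise $(\tilde Z_t(g))_{t>0}$ as the almost-everywhere limit, on the product space $\Omega\times(0,\infty)$, of a sequence of manifestly jointly measurable step processes obtained by dyadic discretisation of the time variable. For $n\in\mathbb N$ and $t>0$ put $d_n(t):=2^{-n}\lceil 2^n t\rceil$ and $Y^{(n)}_t:=\tilde Z_{d_n(t)}(g)$. Writing $Y^{(n)}_t(\omega)=\sum_{j\geq 1}\mathbf 1_{((j-1)2^{-n},\,j2^{-n}]}(t)\,\tilde Z_{j2^{-n}}(g)(\omega)$ exhibits $(\omega,t)\mapsto Y^{(n)}_t(\omega)$ as a countable sum of products of an $\mathcal F$-measurable factor and a Borel factor, hence jointly $\mathcal F\otimes\mathcal B_{(0,\infty)}$-measurable; here each $\tilde Z_{j2^{-n}}(g)$ is a genuine $\mathbb R$-valued random variable because $\mathbb E_{(\Lambda,\mu)}[\tilde Z_s(g)]\leq\|g\|_\infty\mathbb E_{(\Lambda,\mu)}[\tilde Z_s(U)]<\infty$ for every $s>0$ by Proposition~\ref{prop1}. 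Two elementary features of $d_n$ will be used repeatedly: $d_n(t)\downarrow t$ as $n\to\infty$ for every $t$, and $d_n(t)=t$ for all large $n$ whenever $t$ is a dyadic rational, so that $Y^{(n)}_t=\tilde Z_t(g)$ surely for all large $n$ at dyadic times.

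The first substantive step is to upgrade stochastic right continuity to convergence in the product measure. For each fixed $t>0$, since $d_n(t)\downarrow t$, Lemma~\ref{lem:SCg} gives $Y^{(n)}_t=\tilde Z_{d_n(t)}(g)\to\tilde Z_t(g)$ in $\mathbb P_{(\Lambda,\mu)}$-probability, so $(Y^{(n)}_t)_n$ is Cauchy in probability; in particular $\mathbb P_{(\Lambda,\mu)}(|Y^{(m)}_t-Y^{(n)}_t|>\epsilon)\to 0$ as $m,n\to\infty$ for every $\epsilon>0$. Fix $T>0$ and equip $\Omega\times(0,T]$ with the finite measure $\mathbb Q:=\mathbb P_{(\Lambda,\mu)}\otimes\mathrm{Leb}$. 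By joint measurability of $Y^{(m)}-Y^{(n)}$ and Tonelli's theorem, $t\mapsto\mathbb P_{(\Lambda,\mu)}(|Y^{(m)}_t-Y^{(n)}_t|>\epsilon)$ is Lebesgue measurable and bounded by $1$, so the dominated convergence theorem yields $\mathbb Q(|Y^{(m)}-Y^{(n)}|>\epsilon)=\int_0^T\mathbb P_{(\Lambda,\mu)}(|Y^{(m)}_t-Y^{(n)}_t|>\epsilon)\,\mathrm dt\to 0$ as $m,n\to\infty$; that is, $(Y^{(n)})_n$ is Cauchy in $\mathbb Q$-measure on $\Omega\times(0,T]$. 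Diagonalising over $T=1,2,\dots$ extracts a subsequence $(n_k)$ along which $Y^{(n_k)}$ converges $(\mathbb P_{(\Lambda,\mu)}\otimes\mathrm{Leb})$-almost everywhere on $\Omega\times(0,\infty)$. I then define $\tilde Y^g_t(\omega)$ to be this limit on the (jointly measurable) set where $\lim_k Y^{(n_k)}_t(\omega)$ exists in $\mathbb R$, and $0$ elsewhere; by construction $(\tilde Y^g_t)_{t>0}$ is a measurable process.

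Finally one checks that $\tilde Y^g$ is a version of $(\tilde Z_t(g))_{t>0}$. At a dyadic time $t$ the sequence $Y^{(n_k)}_t$ is eventually constant equal to $\tilde Z_t(g)$, so $\tilde Y^g_t=\tilde Z_t(g)$ surely; and by Tonelli together with $Y^{(n_k)}_t\to\tilde Z_t(g)$ in probability for each fixed $t$ and uniqueness of limits in probability, $\tilde Y^g_t=\tilde Z_t(g)$ almost surely for Lebesgue-a.e.\ $t$. The remaining point is to promote "a.e.\ $t$'' to "every $t$'': given arbitrary $t>0$, take dyadic rationals $t_m\downarrow t$, so that $\tilde Y^g_{t_m}=\tilde Z_{t_m}(g)\to\tilde Z_t(g)$ in probability by Lemma~\ref{lem:SCg}, and combine this with the stochastic right continuity of $\tilde Y^g$ itself to conclude $\tilde Y^g_t=\tilde Z_t(g)$ a.s.; alternatively one may appeal to the classical fact that a stochastically right continuous adapted process admits a measurable (indeed progressively measurable, via Proposition~\ref{prop:WQ} and Lemma~\ref{lem:SC}) modification. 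I expect this last upgrade, from Lebesgue-a.e.\ $t$ to every $t$, to be the only delicate point; the rest is the routine measure-theoretic machinery for measurable modifications, whose essential inputs here are the stochastic right continuity of Lemma~\ref{lem:SCg} and the first-moment finiteness of Proposition~\ref{prop1}.
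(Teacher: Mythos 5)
Your construction up to the almost-everywhere statement is essentially the paper's own argument: dyadic right-discretisation in time, joint measurability of the resulting step processes (using Proposition \ref{prop1} to make $\tilde Z_s(g)$ a genuine finite random variable), Cauchy-in-measure on $\Omega\times(0,T]$ via Lemma \ref{lem:SCg}, Tonelli and dominated convergence, and an a.e.\ convergent subsequence. The gap is in the final upgrade from Lebesgue-a.e.\ $t$ to every $t$, and the route you sketch does not close it. At a time $t$ in the exceptional Lebesgue-null set, your $\tilde Y^g_t$ is defined to be $0$ on the event $A_t^{\mathrm c}$ where $\lim_k Y^{(n_k)}_t$ fails to exist, and nothing prevents $\mathbb P_{(\Lambda,\mu)}(A_t^{\mathrm c})>0$ at such a $t$; on $A_t$ the limit does equal $\tilde Z_t(g)$ a.s.\ (by your in-probability argument), so the only obstruction is precisely $A_t^{\mathrm c}$, and there $\tilde Y^g_t\neq \tilde Z_t(g)$ in general. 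Your proposed fix invokes ``the stochastic right continuity of $\tilde Y^g$ itself'', but $\tilde Y^g$ is only known to agree with $\tilde Z_\cdot(g)$ at dyadic times and at a.e.\ $t$; its value at a bad time $t$ is an ad hoc object, and stochastic right continuity of $\tilde Y^g$ at such a $t$ is equivalent to the modification property you are trying to establish there, so the argument is circular. Knowing $\tilde Y^g_{t_m}=\tilde Z_{t_m}(g)\to\tilde Z_t(g)$ in probability along dyadic $t_m\downarrow t$ says nothing about $\tilde Y^g_t$ without some continuity of $\tilde Y^g$ at $t$, which is not available. The alternative you mention, appealing to ``the classical fact'' that a stochastically right continuous (adapted) process admits a measurable (or progressively measurable) modification, is not a proof here: it is a citation of essentially the statement being proved, which the paper instead establishes by hand following \cite{MR1367959}.

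The repair is cheap and is exactly what the paper does: let $\Xi$ be a jointly measurable $(\mathbb P_{(\Lambda,\mu)}\otimes\mathrm{Leb})$-null set containing all $(t,\omega)$ at which $Y^{(n_k)}_t(\omega)$ fails to converge, put $K:=\{t:\ \int\mathbf 1_{\Xi}(t,\omega)\,\mathbb P_{(\Lambda,\mu)}(\diff\omega)>0\}$, a Borel set of Lebesgue measure zero by Fubini, and define $\tilde Y^g_t:=\tilde Z_t(g)$ for $t\in K$ while $\tilde Y^g_t:=\hat Y^g(t,\cdot)$ for $t\notin K$. For $t\notin K$ the subsequential limit exists a.s.\ and equals $\tilde Z_t(g)$ a.s.\ by uniqueness of limits in probability; for $t\in K$ the identity holds surely by definition, so the process is a version at every $t>0$. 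The paper then notes that this alteration, confined to the deterministic null time set $K$, leaves the process measurable, which is how its proof concludes. Replacing your ``limit where it exists, $0$ elsewhere'' definition by this $K$-patched one turns your write-up into the paper's proof; the rest of your argument is sound.
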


	We omit the proof of the above proposition, because it follows the standard argument similar to that of \cite{MR1367959}*{Theorem 6.2.3} noticing, from Lemma \ref{lem:SCg}, that $(\tilde Z_t(g))_{t\geq 0}$ is stochastically right continuous.
	(A detailed proof is included in Appendix \ref{append-D}.)

	\subsection{C\`adl\`ag realization}\label{sec:CR}

	In this subsection, we establish that the $\mathcal N$-valued Markov process $(\tilde Z_t)_{t>0}$ given as in Proposition \ref{Part-1-proof-of-thrm-1.2} has a c\`adl\`ag modification. The probability measure, and the expectation operator, corresponding to $(\tilde Z_t)_{t>0}$ will be denoted by
	$\mathbb P_{(\Lambda, \mu)}$, and $\mathbb E_{(\Lambda, \mu)}$, respectively.
	
	\begin{lemma}\label{lemma: Continuity-for-expectation}
		Fix a  smooth function $g$ with bounded derivatives of all orders.
		Assume that $U$ is an open interval containing the support of $g$.
		Let $\nu\in\mathcal{N}$ satisfy $F_U(\nu)<\infty$ where $F_U(\nu)$ is given as in \eqref{Condition-initial-measure}.
		\begin{enumerate}
			\item[(i)]
			It holds that
			\begin{align}
				\abs{ \E_{(\emptyset, \nu)} \brk{\tilde Z_t(g)} - \E_{(\emptyset, \mathbf 1_U\nu)} \brk{\tilde  Z_t(g)} }
				\leq e^{-\Phi'(0+) t} \int_{U^c}
				\mathbf{E}_x\brk{|g(B_t)|}
				\nu(\mathrm{d} x), \quad t>0.
			\end{align}
			Here, for each $x\in \mathbb R$, $(B_t)_{t\geq 0}$ is a Brownian motion initiated at position $x$ w.r.t.~the expectation operator $\mathbf E_x$.
			\item[(ii)]
			$(\E_{(\emptyset, \nu)}[\tilde  Z_t(g)])_{t> 0}$ is continuous, and $\lim_{t\downarrow 0}\E_{(\emptyset, \nu)}[\tilde  Z_t(g)] = \nu(g)$.
			\item[(iii)]
			Suppose further that $g$ is non-negative.
			Then for every $t>0$,
			\begin{align}\label{Ineq:expectation-non-positive}
				e^{\Phi'(0+) t} \E_{(\emptyset, \nu)}\brk{\tilde Z_t(g)}-\nu(g)- \int_0^t e^{\Phi'(0+)s}\frac{1}{2} \E_{(\emptyset, \nu)}
				\brk{\tilde Z_s(g'')} \mathrm{d} s
				\leq 0.
			\end{align}
		\end{enumerate}
	\end{lemma}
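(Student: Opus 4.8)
The plan is to reduce everything to the finite-particle SBBM via Proposition \ref{prop:ID} and Remark \ref{rem:LQ}, which identify $\mathbb P_{(\emptyset, \nu)}$ with the transition kernel $\mathscr Q_t(\nu, \cdot)$, and then use the martingale identity from Proposition \ref{prop:TM} together with the first-moment bounds already available. First I would observe that when $\nu$ is a \emph{finite} integer-valued measure, say $\nu = \sum_{j=1}^N \delta_{y_j}$, then under $\mathbb P_{(\emptyset, \nu)}$ the process $(\tilde Z_t)_{t>0}$ is (the entrance law of) a genuine SBBM started from $N$ particles, so $\E_{(\emptyset,\nu)}[\tilde Z_t(g)]$ can be computed via Proposition \ref{prop:TM}: taking expectations in the martingale $M_t^g$ of \eqref{eq:Mgt} and using that the local-time term is nonnegative and $\Psi'(0+)>0$, one gets
\[
e^{\Phi'(0+)t}\E_{(\emptyset,\nu)}[\tilde Z_t(g)] - \nu(g) - \tfrac12\int_0^t e^{\Phi'(0+)s}\E_{(\emptyset,\nu)}[\tilde Z_s(g'')]\diff s \leq 0
\]
once $g\geq 0$ guarantees that dropping the local-time term only decreases the right-hand side; this is exactly (iii) in the finite case. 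For a general $\nu$ with $F_U(\nu)<\infty$, I would approximate $\mathbf 1_U \nu$ by finite measures $\mathbf 1_U\nu \wedge k\delta$-truncations and pass to the limit using the linearity of $g\mapsto \tilde Z_t(g)$ and the first-moment control from Proposition \ref{prop1} / Corollary \ref{cor:FE}; the monotone convergence theorem handles the nonnegative quantities $\tilde Z_s(g^\pm)$ separately.

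For (i), the key point is a branching-property/coupling argument: writing $\nu = \mathbf 1_U\nu + \mathbf 1_{U^c}\nu$ and using that under $\mathbb P_{(\emptyset,\nu)}$ the initial particles evolve, \emph{ignoring the catalytic interaction}, as independent BBMs, one has the upper bound
\[
\bigl|\E_{(\emptyset,\nu)}[\tilde Z_t(g)] - \E_{(\emptyset, \mathbf 1_U\nu)}[\tilde Z_t(g)]\bigr| \leq \E_{(\emptyset, \mathbf 1_{U^c}\nu)}\bigl[\tilde Z_t(|g|)\bigr],
\]
because adding more initial particles can only change $\tilde Z_t(g)$ through the descendants of the extra particles and through extra catalytic annihilations among all particles; since $g\geq 0$ is not assumed here I would bound $|\tilde Z_t(g)|\leq \tilde Z_t(|g|)$ throughout. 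Then applying Lemma \ref{Upper-bound-w-t-x-2}-type first-moment bounds (or, more directly, the many-to-one formula for BBM, which gives $\E_{x}[\tilde Z_t(|g|)]\leq e^{-\Phi'(0+)t}\mathbf E_x[|g(B_t)|]$ for a single initial particle at $x$, the catalytic branching only reducing the mean) and summing over the atoms of $\mathbf 1_{U^c}\nu$ yields the stated estimate. Here I must be a little careful: the clean inequality $\E_{(\emptyset,\delta_x)}[\tilde Z_t(|g|)]\le e^{-\Phi'(0+)t}\mathbf E_x[|g(B_t)|]$ should be extracted from Proposition \ref{prop:TM} applied with a nonnegative smooth $\hat g\geq |g|$ and then a monotone approximation, or quoted from the duality-free first-moment computation; I would state it as a short sub-step.

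For (ii), continuity and the initial condition follow by combining (i) with the already-established facts about the single-particle (BBM) first moments and Lemma \ref{lemma:integral-of-v}: write $\E_{(\emptyset,\nu)}[\tilde Z_t(g)] = \E_{(\emptyset,\mathbf 1_U\nu)}[\tilde Z_t(g)] + r_t$ with $|r_t|\leq e^{-\Phi'(0+)t}\int_{U^c}\mathbf E_x[|g(B_t)|]\nu(\diff x)$. The remainder $r_t\to 0$ as $t\downarrow 0$ because $g$ has support in $U$, so $\mathbf E_x[|g(B_t)|]\to 0$ for $x\in U^c$, with a domination furnished by the Gaussian tail bound $\mathbf E_x[|g(B_t)|]\le \|g\|_\infty\,\mathbf P_x(B_t\in U)$ and the assumption $F_U(\nu)<\infty$ (this is the same computation as in \eqref{eq2} in the proof of Lemma \ref{lem:SCg}); continuity of $t\mapsto r_t$ on $(0,\infty)$ is then dominated-convergence in $x$. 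For the main term $\E_{(\emptyset,\mathbf 1_U\nu)}[\tilde Z_t(g)]$, since $\mathbf 1_U\nu$ is a finite measure the martingale representation of Proposition \ref{prop:TM} gives
\[
e^{\Phi'(0+)t}\E_{(\emptyset,\mathbf 1_U\nu)}[\tilde Z_t(g)] = (\mathbf 1_U\nu)(g) + \tfrac12\int_0^t e^{\Phi'(0+)s}\E_{(\emptyset,\mathbf 1_U\nu)}[\tilde Z_s(g'')]\diff s - \tfrac12\Psi'(0+)\int_0^t e^{\Phi'(0+)s}\E_{(\emptyset,\mathbf 1_U\nu)}\Bigl[\sum_{\{\alpha,\beta\}}g(X_s^\alpha)\diff L_s^{\{\alpha,\beta\}}\Bigr],
\]
and each term on the right is continuous in $t$ (the integrands are locally bounded in expectation by the finite-measure first-moment bound \eqref{eq:MU1}--\eqref{eq:MU2}), so the left side is continuous and tends to $(\mathbf 1_U\nu)(g)=\nu(g)$ as $t\downarrow 0$. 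The main obstacle I anticipate is making the ``adding particles only decreases the mean'' step in (i) fully rigorous without a monotonicity/coupling structure for the general SBBM — I would resolve it by not coupling the whole configurations but only comparing first moments via Proposition \ref{prop:TM}: the extra initial mass $\mathbf 1_{U^c}\nu$ contributes to $\E[\tilde Z_t(g)]$ an amount that is bounded above by the mean of an \emph{independent} BBM started from $\mathbf 1_{U^c}\nu$ with no catalytic term (obtained by dropping the nonnegative local-time compensator), which gives exactly the claimed Brownian-motion bound.
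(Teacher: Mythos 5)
Your part (iii) is essentially the paper's argument (finite initial measure via the martingale identity of Proposition \ref{prop:TM} with the nonnegative local-time term dropped, then approximation), but parts (i) and (ii) contain genuine gaps. For (i), the inequality $\abs{\mathbb E_{(\emptyset,\nu)}[\tilde Z_t(g)]-\mathbb E_{(\emptyset,\mathbf 1_U\nu)}[\tilde Z_t(g)]}\le \mathbb E_{(\emptyset,\mathbf 1_{U^c}\nu)}[\tilde Z_t(|g|)]$ is asserted on the grounds that ``adding more initial particles can only change $\tilde Z_t(g)$ through the descendants of the extra particles,'' but the SBBM has no branching property and no monotone coupling: catalytic events pair a descendant of $\mathbf 1_U\nu$ with a descendant of $\mathbf 1_{U^c}\nu$, so the difference of the two means is a gain (from the extra particles) \emph{minus} a loss (the cross-interactions depress the contribution of the $\mathbf 1_U\nu$-descendants), and your bound controls only the gain. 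Your proposed repair --- ``compare first moments via Proposition \ref{prop:TM}'' --- does not resolve this, since that proposition gives an identity for the whole configuration and offers no way to isolate the contribution of the extra atoms; nor does it by itself yield the single-particle bound $\mathbb E_{(\emptyset,\delta_x)}[\tilde Z_t(|g|)]\le e^{-\Phi'(0+)t}\mathbf E_x[|g(B_t)|]$, because the $g''$ term in \eqref{eq:Mgt} has no sign. The paper's proof of (i) avoids the particle picture altogether: it writes the difference of means as $\lim_{\varepsilon\downarrow0}\varepsilon^{-1}$ of a difference of Laplace-type functionals, applies the duality of Proposition \ref{prop:ID}, bounds the product over the atoms in $U$ by $1$, and controls $\abs{\prod_{x\in U^{\mathrm c}}(1-u_t(x))^{\nu(\{x\})}-1}$ by $\sum u_t(x)$ via Lemma \ref{lem: general-Bernoulli-ineq}, finishing with the dual-field moment bound of Lemma \ref{Upper-bound-w-t-x-2}; this two-sided deterministic product inequality is precisely what replaces the missing monotonicity.

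For (ii), your decomposition $\mathbb E_{(\emptyset,\nu)}[\tilde Z_t(g)]=\mathbb E_{(\emptyset,\mathbf 1_U\nu)}[\tilde Z_t(g)]+r_t$ does not give continuity on $(0,\infty)$: part (i) only supplies a \emph{bound} on $\abs{r_t}$, not continuity of $r_t$ (claiming continuity of $r_t$ ``by dominated convergence'' is circular, since $r_t$ contains the very quantity whose continuity is at stake), and the bound does not become small at a fixed $t_0>0$, so only the $t\downarrow0$ statement would follow from your scheme. The paper instead works with the expanding neighbourhoods $U_m$, proves continuity of $R^{(m)}_t(g)=\mathbb E_{(\emptyset,\mathbf 1_{U_m}\nu)}[\tilde Z_t(g)]$ via Proposition \ref{prop:TM} together with \eqref{eq:MU1}--\eqref{eq:MU2}, and then shows $R^{(m)}\to R$ \emph{uniformly} on compact time intervals (estimate \eqref{proof-Remain}, using (i) with $U_m$ and $F_U(\nu)<\infty$), so that the uniform limit theorem yields continuity; the same uniform estimate is what lets the finite-measure inequality pass to the limit in (iii). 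To make your proposal work you would need to replace the single fixed $U$ by such an exhausting family and prove the uniform-in-$t$ approximation, and to reroute (i) through the duality rather than through a particle-level comparison.
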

	\begin{proof}
		
		(i). Fix an arbitrary $t>0$.
		From $F_U(\nu)<\infty$ we have $\nu(U)<\infty$.
		In particular, since $\nu$ is an integer-valued measure, $\operatorname{supp}(\nu)\cap U$ is bounded.
		Therefore, from Corollary~\ref{cor:FE}, $\mathbb E_{(\emptyset, \nu)}[\tilde Z_t(g)]$ is finite, and similarly, so is $\mathbb E_{(\emptyset, \mathbf 1_U\nu)}[\tilde Z_t(g)]$.
		We can assume without loss of generality that $g$ is non-negative.
		From Proposition~\ref{prop:ID}, we see that
		\begin{align}
			& \abs{ \E_{(\emptyset, \nu)} \brk{\tilde Z_t(g)} - \E_{(\emptyset, \mathbf 1_U\nu)} \brk{\tilde Z_t(g)} }
			= \abs{\lim_{\varepsilon\downarrow 0} \frac{1}{\varepsilon} \pr{ \E_{(\emptyset, \nu)} \brk{e^{-\varepsilon \tilde Z_t(g)}} - \E_{(\emptyset, \mathbf 1_U\nu)} \brk{ e^{-\varepsilon \tilde Z_t(g)}} }}              \\
			& =\abs{\lim_{\varepsilon\downarrow 0} \frac{1}{\varepsilon}  \tilde{\E}_{1-e^{-\varepsilon g} } \brk{\pr{\prod_{x\in U} \pr{1-u_t(x)}^{\nu(\brc{x})} }\pr{ \prod_{x\in U^c} \pr{1-u_t(x)}^{\nu(\brc{z})} -1 }  } }
			\\&\leq \lim_{\varepsilon\downarrow 0} \frac{1}{\varepsilon}  \tilde{\E}_{1-e^{-\varepsilon g} } \brk{\abs{ \prod_{x\in U^c} \pr{1-u_t(x)}^{\nu(\brc{z})} -1 }  }
		\end{align}
		From Lemmas~\ref{lem: general-Bernoulli-ineq} and~\ref{Upper-bound-w-t-x-2}, Fubini's theorem, and inequality $1-e^{-|x|}\leq |x|$, we get that
		\begin{align}
			& \abs{ \E_{(\emptyset, \nu)} \brk{\tilde Z_t(g)} - \E_{(\emptyset, \mathbf 1_U\nu)} \brk{\tilde Z_t(g)} }
			\leq \lim_{\varepsilon\downarrow 0} \frac{1}{\varepsilon}  \tilde{\E}_{1-e^{-\varepsilon g} } \brk{  \int_{U^c} u_t(x) \nu(\mathrm{d} x)}                     \\
			& \leq  e^{-\Phi'(0+)  t}\lim_{\varepsilon\downarrow 0} \frac{1}{\varepsilon} \int_{U^c}  \mathbf{E}_{x} \brk{  1-e^{-\varepsilon g(B_t)}} \nu(\mathrm{d} x)
			\leq e^{-\Phi'(0+)  t} \int_{U^c} \mathbf{E}_x\brk{g(B_t)} \nu(\mathrm{d} x).
		\end{align}
		
		(ii).
		Note that for each $m\in \N$, $U_m:= \{z: \mbox{dist}(\{z\}, U)<m\}$ is an open interval containing $U$ and $\nu(U_m)<\infty$.
		Fix an arbitrary $m\in \mathbb N$ and keep it fixed for the rest of this paragraph.
		Let $(\hat I_t)_{t\geq 0}$, $(\hat X^{\alpha}_{t})_{\alpha \in \hat I_t,t\geq 0}$ and $(\hat Z_t)_{t\geq 0}$
		be notations  $(I_t)_{t\geq 0}$, $(X^{\alpha}_{t})_{\alpha \in I_t,t\geq 0}$ and $(Z_t)_{t\geq 0}$ given as in Subsection \ref{sec:MR} (right after Proposition \ref{prop:WD}) for an SBBM with ordinary branching rate $\beta_{\mathrm o}$, ordinary offspring law $(p_k)_{k=0}^\infty$, catalytic branching rate $\beta_{\mathrm c}$, and catalytic offspring law $(q_k)_{k=0}^\infty$, given as in \eqref{eq:OBR}--\eqref{eq:CB}, and an initial configuration $(\hat x_i)_{i=1}^{\nu(U_m)}$ 	satisfying $\mathbf 1_{U_m}\nu = \sum_{i=1}^{\nu(U_m)} \delta_{\hat x_i}$.
		It is clear from Propositions~\ref{prop:D} and~\ref{prop:ID} that
		\begin{equation}
			\mathbb E_{(\emptyset, \mathbf 1_{U_m} \nu)}\brk{\prod_{x\in \mathbb R} (1-f(x))^{\tilde Z_t(\{x\})}}
			= \mathbb E\brk{\prod_{x\in \mathbb R} (1-f(x))^{\hat Z_t(\{x\})}}, \quad t>0.
		\end{equation}
		Since $f$ is arbitrary, it follows that
		\begin{itemize}
			\item[\eq\label{eq:ED}]
			for each $t>0$, $\tilde Z_t$ under the probability measure  $\mathbb P_{(\emptyset, \mathbf 1_{U_m} \nu)}$ has the same law as $\hat Z_t$ under $\mathbb P $.
		\end{itemize}
		Define
		\begin{equation}\label{Def-of-R-m}
			R^{(m)}_t(g)
			:= \mathbb E\brk{\hat Z_t(g)}
			= \mathbf 1_{\{0\}}(t) \nu(\mathbf 1_{U_m}g) + \mathbf 1_{(0,\infty)}(t) \mathbb E_{(\emptyset, \mathbf 1_{U_m}\nu)}\brk{\tilde Z_t(g)} , \quad t\geq 0.
		\end{equation}
		From Proposition~\ref{prop:TM}, \eqref{eq:MU1} and \eqref{eq:MU2}, we have for every $t\geq 0$,
		\begin{align}\label{eq: MD2}
			e^{\Phi'(0+) t} \E\brk{\hat Z_t(g)} & = \nu(\mathbf 1_{U_m}g)+ \E\brk{\int_0^t e^{\Phi'(0+) s} \frac{1}{2}\hat Z_s(g'')\mathrm{d} s}                                                                                         \\
			& \quad -\frac{1}{2}\Psi'(0+) \E\brk{\int_0^t e^{\Phi'(0+)s} \sum_{\{\alpha,\beta\}\subset \hat I_{s-}: \alpha \neq \beta} g(\hat X_s^\alpha) \mathrm{d}  \hat L_s^{\{\alpha,\beta\}} }.
		\end{align}
		Here, $(\hat L^{\{\alpha, \beta\}}_t)_{t\geq 0}$ represents the intersection local time between any two particles labelled by $\alpha$ and $\beta$.
		Combining \eqref{eq:MU1}, \eqref{eq:MU2}, \eqref{eq: MD2} and the dominated convergence theorem, we see that $(R^{(m)}_t(g))_{t\geq 0}$ is continuous.
		
		Define
		\begin{equation}\label{Def-of-R}
			R_t(g)
			: = \mathbf 1_{\{0\}}(t) \nu(g) + \mathbf 1_{(0,\infty)}(t) \mathbb E_{(\emptyset,\nu)}\brk{\tilde Z_t(g)},
			\quad t\geq 0.
		\end{equation}
		We want to approximate $(R_t(g))_{t\geq 0}$ by $(R^{(m)}_t(g))_{t\geq 0}$.
		Notice that there exists a $c_0 > 0$ such that for every $x\in U_1^\mathrm c$, $\operatorname{dist}(\{x\},U) \geq c_0( |x|+2)$.
		Also note that for every $m\geq 3$ and $i\in \mathbb{Z}$ with $(i-1, i+1)\nsubseteq U_m$, we have $(i-1,i+1)\subset U_1^\mathrm c$.
		Therefore, for every $m\geq 3$, $i\in \mathbb{Z}$ with $(i-1, i+1)\nsubseteq U_m$, and $x\in (i-1,i+1)$, it holds that $\mbox{dist}(\{x\}, U) \geq c_0( |x|+2)\geq c_0(|i|+1)$.
		It also holds that  $U_m^{\mathrm c} \subset \bigcup_{i\in \mathbb{Z}: (i-1, i+1)\nsubseteq U_m} (i-1, i+1)$ for every $m\in \N$.
		Now, by Lemma~\ref{lemma: Continuity-for-expectation} (i), for $t\geq 0$ and $m\geq 3$,
		\begin{align}
			& e^{\Phi'(0+) t} \abs{ R_t(g) - R^{(m)}_t(g) }
			\leq \int_{U_m^\mathrm c} \mathbf{E}_x\brk{\abs{g(B_t)}} \nu(\mathrm{d} x)
			\\& \leq \sum_{ i\in \mathbb{Z}: (i-1, i+1)\nsubseteq U_m}  \nu((i-1, i+1))  \sup_{x\in (i-1, i+1)} \mathbf{E}_x \brk{ \abs{g(B_t)}}\\
			& \leq  \|g\|_\infty \sum_{ i\in \mathbb{Z}: (i-1, i+1)\nsubseteq U_m}  \nu((i-1, i+1))  \sup_{x\in (i-1, i+1)} \mathbf{P}_x \pr{ B_t \in U} \\
			& \leq  \|g\|_\infty \sum_{ i\in \mathbb{Z}: (i-1, i+1)\nsubseteq U_m}  \nu((i-1, i+1))  \mathbf{P}_0 \pr{ |B_t| \geq c_0 (|i|+1)}.
		\end{align}
		Together with Markov's inequality,  we have
		\begin{align}\label{proof-Remain}
			& e^{\Phi'(0+) t} \abs{ R_t(g) - R^{(m)}_t(g) }
			\leq  \frac{t}{c_0^2}\|g\|_\infty \sum_{ i\in \mathbb{Z}: (i-1, i+1)\nsubseteq U_m}  \frac{\nu((i-1, i+1))  }{(|i|+1)^2},
			\quad t\geq 0.
		\end{align}
		By the condition $F_U(\nu)<\infty$ and the monotone convergence theorem, we have
		\begin{equation} \label{eq:ULT}
			\lim_{m\to \infty}\sup_{t\in [0,T]} \abs{R_t(g) - R_t^{(m)}(g)} = 0.
		\end{equation}
		By the uniform limit theorem, $(R_t(g))_{t\geq 0}$ must be continuous.
		
		(iii).
		Fix an arbitrary $m\in \mathbb N$, and let $(\hat Z_t)_{t\geq 0}$ be the SBBM considered in (ii).
		From \eqref{eq:ED}, \eqref{eq: MD2} and the non-negativity of $g$, we have for every $t>0$
		\begin{align}
			& e^{\Phi'(0+) t} \E_{(\emptyset, \mathbf 1_{U_m}\nu)}\brk{\tilde Z_t(g)}-\nu(\mathbf 1_{U_m} g)- \int_0^t e^{\Phi'(0+)s}\frac{1}{2} \E_{(\emptyset,   \mathbf 1_{U_m}\nu)}\brk{\tilde Z_s(g'')} \mathrm{d} s
			\\&=e^{\Phi'(0+) t} \E\brk{\hat Z_t(g)}-\nu(\mathbf 1_{U_m} g)- \int_0^t e^{\Phi'(0+)s}\frac{1}{2} \E\brk{\hat Z_s(g'')} \mathrm{d} s \leq 0.
		\end{align}
		Now, for any $t>0$,  we have
		\begin{align}
			& e^{\Phi'(0+) t} \E_{(\emptyset, \nu)}\brk{\tilde Z_t(g)}-\nu(g)- \int_0^t e^{\Phi'(0+)s}\frac{1}{2} \E_{(\emptyset, \nu)}\brk{\tilde Z_s(g'')} \mathrm{d} s
			\\& \leq  e^{\Phi'(0+) t} \E_{(\emptyset, \mathbf 1_{U_m}\nu)}\brk{\tilde Z_t(g)}-\nu(\mathbf 1_{U_m}g)- \int_0^t e^{\Phi'(0+)s}\frac{1}{2} \E_{(\emptyset,   \mathbf 1_{U_m}\nu)}\brk{\tilde Z_s(g'')} \mathrm{d} s
			\\& \qquad +
			e^{\Phi'(0+) t} \abs{ R_t(g) - R^{(m)}_t(g) } +  \nu(\mathbf 1_{U_m^{\mathrm c}}g)+\int_0^t e^{\Phi'(0+) s} \abs{ R_s(g'') - R^{(m)}_s(g'') }  \mathrm{d} s
			\\&
			\quad \leq  e^{\Phi'(0+) t} \abs{ R_t(g) - R^{(m)}_t(g) } +  \nu(\mathbf 1_{U_m^{\mathrm c}}g)+\int_0^t e^{\Phi'(0+) s} \abs{ R_s(g'') - R^{(m)}_s(g'') }  \mathrm{d} s. \label{eq:FE}
		\end{align}
		Since $\nu(g)<\infty$, it follows from \eqref{eq:ULT} and the fact that \eqref{eq:ULT} also holds with $g$ being replaced by $g''$ that the right hand side of \eqref{eq:FE} converges to $0$ as $m\to \infty$.
		Therefore, the desired result in (iii) of this lemma holds.
	\end{proof}

	\begin{lemma}\label{lemma: super-martingale}
		Let $g$ be a non-negative smooth function on $\mathbb R$
		with bounded derivatives of all orders whose support is contained in an open interval $U$.
		Suppose that $U\cap F$ is bounded, where $F$ is a closed interval containing $\cup_{i=1}^\infty (x_i-1, x_i+1)$.
		Let $a>0$.
		Let $(\tilde Y^{g''}_t)_{t>0}$ be a measurable version of the process $(\tilde Z_t(g''))_{t>0}$ given in Proposition~\ref{Prop:measurability-of-Z}.
		Then
		\begin{equation} \label{Def-of-martingale}
			M_g(t;a)
			:= e^{\Phi'(0+) t} \tilde Z_t(g) - \int_a^t e^{\Phi'(0+) s}\frac{1}{2} \tilde{Y}_s^{g''}\mathrm{d} s, \quad t\geq a,
		\end{equation}
		is a super-martingale on the filtered probability space $(\Omega, \mathcal F, (\mathcal F_t)_{t\geq a}, \mathbb P_{(\Lambda,\mu)})$. In particular, both $(M_g(t;a))_{t\geq a}$ and $(\tilde Z_t(g))_{t\geq a}$ have c\`adl\`ag modification.
	\end{lemma}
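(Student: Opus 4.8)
The plan is to verify directly that $(M_g(t;a))_{t\geq a}$ is an $(\mathcal F_t)_{t\geq a}$-adapted, integrable process obeying the super-martingale inequality, and then to invoke the standard regularisation theorem to get the c\`adl\`ag modification. Adaptedness will follow from Proposition \ref{prop:WQ} (so that $t\mapsto e^{\Phi'(0+)t}\tilde Z_t(g)$ is adapted) together with the remark that the measurable version $(\tilde Y^{g''}_t)_{t>0}$ of Proposition \ref{Prop:measurability-of-Z} can be chosen $(\mathcal F_t)_{t>0}$-progressively measurable, whence $t\mapsto\int_a^t e^{\Phi'(0+)s}\tfrac12\tilde Y^{g''}_s\,\diff s$ is continuous and adapted. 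For integrability, since $\operatorname{supp} g\cup\operatorname{supp} g''\subset U$ and $U\cap F$ is bounded, Proposition \ref{prop1} and the $t$-uniform bounds used in the proof of Corollary \ref{cor:FE} give $\sup_{s\in[a,t]}\E_{(\Lambda,\mu)}[\tilde Z_s(U)]<\infty$; hence $\E_{(\Lambda,\mu)}\abs{\tilde Z_t(g)}<\infty$ and, by Fubini applied to the measurable version, $\E_{(\Lambda,\mu)}\int_a^t e^{\Phi'(0+)s}\tfrac12\abs{\tilde Y^{g''}_s}\,\diff s<\infty$, so $M_g(t;a)\in L^1(\mathbb P_{(\Lambda,\mu)})$ for each $t\geq a$.

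The core step is the super-martingale inequality. Fix $a\leq s\leq t$. For $h\in\{g,g''\}$ and each fixed $r\in[s,t]$, the Markov property (Proposition \ref{prop:WQ}) together with Remark \ref{rem:LQ} will give, $\mathbb P_{(\Lambda,\mu)}$-a.s., that $\E_{(\Lambda,\mu)}[\tilde Z_r(h)\mid\mathcal F_s]=\int\tilde\nu(h)\,\mathscr Q_{r-s}(\tilde Z_s,\diff\tilde\nu)$, which is the value at $\nu=\tilde Z_s$ of the map $\nu\mapsto\E_{(\emptyset,\nu)}[\tilde Z_{r-s}(h)]$. Integrating the $h=g''$ identity over $r\in[s,t]$ via the conditional Fubini theorem (justified by the integrability above) and combining with the $h=g$ identity, one obtains, a.s.,
\[
\E_{(\Lambda,\mu)}\brk{M_g(t;a)-M_g(s;a)\mid\mathcal F_s}
= e^{\Phi'(0+)t}\,\Xi_g(t-s)-e^{\Phi'(0+)s}\tilde Z_s(g)-\int_s^t e^{\Phi'(0+)r}\tfrac12\,\Xi_{g''}(r-s)\,\diff r,
\]
where $\Xi_h(u)$ denotes the value at $\nu=\tilde Z_s$ of $\nu\mapsto\E_{(\emptyset,\nu)}[\tilde Z_u(h)]$. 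Since $F_U(\tilde Z_s)<\infty$ a.s.\ (Step 1 of the proof of Lemma \ref{lem:SCg}) and $g\geq0$, Lemma \ref{lemma: Continuity-for-expectation}(iii) applies with $\nu=\tilde Z_s$ and horizon $t-s$; multiplying its conclusion by $e^{\Phi'(0+)s}$ and re-parametrising the time integral shows the right-hand side above is $\leq0$ a.s. This yields $\E_{(\Lambda,\mu)}[M_g(t;a)\mid\mathcal F_s]\leq M_g(s;a)$ a.s.

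For the c\`adl\`ag modification, note that $(\mathcal F_t)_{t\geq a}$, being the restriction of the usual augmentation, satisfies the usual hypotheses, so by the super-martingale regularisation theorem (e.g.\ \cite[Theorem 9.28]{MR4226142}) it suffices to show that $t\mapsto\E_{(\Lambda,\mu)}[M_g(t;a)]=e^{\Phi'(0+)t}\E_{(\Lambda,\mu)}[\tilde Z_t(g)]-\int_a^t e^{\Phi'(0+)r}\tfrac12\E_{(\Lambda,\mu)}[\tilde Z_r(g'')]\,\diff r$ is right-continuous. The integral term is continuous in $t$ because, by Proposition \ref{prop1}, its integrand is bounded on compact subsets of $(0,\infty)$; and the right-continuity of $t\mapsto\E_{(\Lambda,\mu)}[\tilde Z_t(g)]$ will follow by writing, for $t\geq s$, $\E_{(\Lambda,\mu)}[\tilde Z_t(g)]=\E_{(\Lambda,\mu)}[\,\Xi_g(t-s)\,]$ (Markov property), using Lemma \ref{lemma: Continuity-for-expectation}(ii) for a.s.\ convergence $\Xi_g(t-s)\to\tilde Z_s(g)$ as $t\downarrow s$, and Lemma \ref{lemma: Continuity-for-expectation}(i) combined with Gaussian tail estimates for Brownian motion and the first-moment bounds $\sup_i\E_{(\Lambda,\mu)}[\tilde Z_s((i-1,i+1))]<\infty$, $\E_{(\Lambda,\mu)}[\tilde Z_s(U)]<\infty$ (from Lemma \ref{lem:SCg} and Proposition \ref{prop1}) to produce a $\mathbb P_{(\Lambda,\mu)}$-integrable dominating function valid for all $t\in(s,s+1]$, so that dominated convergence applies.

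The main obstacle I anticipate is the super-martingale inequality: one must transport the fixed-time Markov/duality identities through a $\diff r$-integral by conditional Fubini --- which is exactly why a measurable version of $(\tilde Z_t(g''))_{t>0}$ (Proposition \ref{Prop:measurability-of-Z}) is needed --- and then substitute the \emph{random} measure $\tilde Z_s$ into the deterministic-initial-data estimate Lemma \ref{lemma: Continuity-for-expectation}(iii), which is legitimate because $\{F_U<\infty\}$ carries full $\mathbb P_{(\Lambda,\mu)}$-mass and the maps involved are measurable in the initial measure. A secondary technical point is that the right-continuity of $t\mapsto\E_{(\Lambda,\mu)}[\tilde Z_t(g)]$ does not follow from stochastic right-continuity alone and has to be upgraded to $L^1$ convergence using the uniform first-moment bounds.
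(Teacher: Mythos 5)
Your proposal is correct and follows essentially the same route as the paper: the Markov property w.r.t.\ the augmented filtration (Proposition \ref{prop:WQ}) plus Remark \ref{rem:LQ} to identify $\E_{(\Lambda,\mu)}[\tilde Z_{s+u}(h)\mid\mathcal F_s]=\E_{(\emptyset,\tilde Z_s)}[\tilde Z_u(h)]$, conditional Fubini through the measurable version from Proposition \ref{Prop:measurability-of-Z}, substitution of the random measure $\tilde Z_s$ into Lemma \ref{lemma: Continuity-for-expectation}(iii) using $F_U(\tilde Z_s)<\infty$ a.s.\ from Step 1 of Lemma \ref{lem:SCg}, and finally continuity of $t\mapsto\E_{(\Lambda,\mu)}[M_g(t;a)]$ via Lemma \ref{lemma: Continuity-for-expectation}(ii) together with \cite[Theorem 9.28]{MR4226142}. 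The only detail you gloss over (and the paper spells out) is that passing from the bounded duality functionals to the unbounded linear functional $\nu\mapsto\nu(h)$ requires a truncation (the paper's $\mathcal X_m$) justified by the first-moment bounds of Corollary \ref{cor:FE}, which fits naturally into your $L^1$-upgrade remarks.
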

	
	\begin{proof}
		
		From Corollary~\ref{cor:FE} and Proposition~\ref{Prop:measurability-of-Z},
		we know that the second term on the right-hand side of \eqref{Def-of-martingale} is a well-defined random variable with finite mean.
		Clearly the process $(M_g(t;a))_{t\geq a}$ is adapted to the filtration $(\mathcal F_t)_{t\geq a}$.
		Notice that almost surely, for every $t\geq a$ and $r\geq 0$,
		\begin{equation}
			M_g(t+r;a) - M_g(t ; a)
			= e^{\Phi'(0+) t}
			\pr{
				e^{\Phi'(0+)r} \tilde Z_{t+r}(g)
				- \tilde Z_t(g)
				- \int_{0}^{r}
				e^{\Phi'(0+)s}
				\frac{1}{2}
				\tilde Y_{t+s}^{g''}
				\mathrm{d} s
			}.
		\end{equation}
		Let us fix an arbitrary $t\geq a$ and an arbitrary bounded continuous function $\phi$ on $\mathbb R$ whose support is contained in $U$.
		
		From Remark~\ref{rem:LQ}, Proposition~\ref{prop:WQ} and \cite{MR4226142}*{Theorem 8.5},
		for each $r\geq 0$ and $m\in \mathbb N$, we have almost surely,
		\[
		\mathbb E_{(\Lambda, \mu)}\brk{\mathcal X_m\pr{\tilde Z_{t+r}(\phi)} \middle | \mathcal F_t}
		= \int \mathcal X_m(\nu(\phi)) \mathscr Q_{r}(\tilde Z_t, \mathrm{d} \nu)
		=\mathbb E_{(\emptyset, \tilde Z_t)}\brk{\mathcal X_m\pr{\tilde Z_{r}(\phi)}},
		\]
		where the truncation function $\mathcal X_m (z) := z \mathbf 1_{[-m,m]}(z)$ for every $z\in \mathbb R$.
		Taking $m\uparrow \infty$, it follows from the dominated convergence theorem and Corollary~\ref{cor:FE} that, for each $r\geq 0$, almost surely, $\mathbb E_{(\Lambda, \mu)}[\tilde Z_{t+r}(\phi) |\mathcal F_t]= \mathbb E_{(\emptyset, \tilde Z_t)}[\tilde Z_{r}(\phi)].$
		Combining this with Proposition~\ref{Prop:measurability-of-Z}, we have, for each $r\geq 0$, almost surely, $\mathbb E_{(\Lambda, \mu)}[\tilde Y^\phi_{t+r}|\mathcal F_t] =  \mathbb E_{(\emptyset, \tilde Z_t)}[\tilde Z_{r}(\phi)]. $
		By Lemma~\ref{lemma: Continuity-for-expectation} (ii), almost surely, $\mathbb E_{(\emptyset, \tilde Z_t)}[\tilde Z_r(\phi)]$ is continuous, and therefore measurable, in $r>0$.
		From Corollary~\ref{cor:FE} and Fubini's theorem, we verify that, for each $r\geq 0,$ almost surely,
		\begin{equation}
			\mathbb E_{(\Lambda, \mu)} \brk{\int_0^r e^{\Phi'(0+)s} \frac{1}{2} \tilde Y^{\phi}_{t+s}\mathrm{d} s \middle| \mathcal F_t}
			= \int_0^r e^{\Phi'(0+)s}\frac{1}{2}\mathbb E_{(\emptyset, \tilde Z_t)}\brk{ \tilde Z_s(\phi)} \mathrm{d} s.
		\end{equation}
		Now, we verify that for each $r\geq 0$, almost surely
		\begin{align}
			& \mathbb E_{(\Lambda, \mu)} \brk{M_g(t+r;a) - M_g(t; a) \middle| \mathcal F_t}
			\\&= e^{\Phi'(0+)t} \pr{e^{\Phi'(0+)r} \mathbb E_{(\emptyset, \tilde Z_t)}\brk{\tilde Z_{r}(g)} - \tilde Z_t(g) -\int_0^r e^{\Phi'(0+)s}\frac{1}{2}  \mathbb E_{(\emptyset, \tilde Z_t)}\brk{ \tilde Z_s(g'')}\mathrm{d} s}.
		\end{align}
		From the Step \ref{S1} of the proof  of Lemma \ref{lem:SCg}, we know that $F_U(\tilde Z_t) < \infty$ almost surely w.r.t.~$\mathbb P_{(\Lambda, \mu)}$.
		Therefore, by Lemma~\ref{lemma: Continuity-for-expectation} (iii),
		for each $r\geq 0$, almost surely,
		$\mathbb E_{(\Lambda, \mu)} [M_g(t+r;a) - M_g(t;a) | \mathcal F_t] \leq 0.$
		Since $t\geq a$ is chosen arbitrarily, we conclude that $(M_g(t;a))_{t\geq a}$ is a super-martingale with respect to the filtration $(\mathcal F_t)_{t\geq a}$.
		It also follows from Lemma~\ref{lemma: Continuity-for-expectation} (ii) and the dominated convergence theorem that $(\mathbb E_{(\Lambda, \mu)}[M_g(t;a)])_{t\geq a}$ is a continuous process.
		Therefore, by \cite{MR4226142}*{Theorem 9.28}, $(M_g(t;a))_{t\geq a}$ has a c\`adl\`ag modification, as does $(\tilde Z_t(g))_{t\geq a}$.
	\end{proof}
	
	The following lemma is standard and the proof is included
	in Appendix~\ref{append-D}.

	\begin{lemma} \label{lem:SRM}
		Suppose that $t_0 \geq 0$ and $(\tilde X_t)_{t\geq t_0}$ is an $\mathcal N$-valued process such that the real-valued process $(\tilde X_t(g))_{t\geq t_0}$ admits a c\`adl\`ag modification for every
		$g\in \mathcal C_{\mathrm c}^\infty (\mathbb R)$.
		Then $(\tilde X_t)_{t\geq t_0}$ itself admits a c\`adl\`ag modification.
	\end{lemma}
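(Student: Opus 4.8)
The plan is to exploit the metrization of $\mathcal N$ fixed just before Lemma \ref{lem:SC}, namely the complete metric $d_{\mathcal N}(\nu_0,\nu_1) = \sum_{i=1}^\infty 2^{-i}\bigl(1\wedge|\nu_0(h_i)-\nu_1(h_i)|\bigr)$ associated with a fixed sequence $(h_i)_{i\in\mathbb N}$ in $\mathcal C_{\mathrm c}^\infty(\mathbb R)$, which is compatible with the vague topology. For each $i\in\mathbb N$ the hypothesis supplies a c\`adl\`ag modification $(Y^i_t)_{t\geq a}$ of the real-valued process $(\tilde Z_t(h_i))_{t\geq a}$. First I would fix, by taking a countable intersection over $i\in\mathbb N$ and $q\in\mathbb Q_a := \mathbb Q\cap[a,\infty)$ of the almost-sure events $\{Y^i_q = \tilde Z_q(h_i)\}$, a single null set $N_0$ such that $Y^i_q(\omega) = \tilde Z_q(h_i)(\omega)$ for all $i\in\mathbb N$, $q\in\mathbb Q_a$ and $\omega\notin N_0$.

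Next I would build the modification pathwise. For $\omega\notin N_0$ and $t\geq a$, pick any $(q_n)$ in $\mathbb Q_a\cap(t,\infty)$ with $q_n\downarrow t$; right-continuity of each $Y^i$ gives $\tilde Z_{q_n}(h_i)(\omega) = Y^i_{q_n}(\omega)\to Y^i_t(\omega)$, so
\[
d_{\mathcal N}\bigl(\tilde Z_{q_n}(\omega),\tilde Z_{q_m}(\omega)\bigr) = \sum_{i=1}^\infty 2^{-i}\bigl(1\wedge|Y^i_{q_n}(\omega)-Y^i_{q_m}(\omega)|\bigr)\xrightarrow[n,m\to\infty]{}0
\]
by dominated convergence (each summand is bounded by $2^{-i}$). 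Hence $(\tilde Z_{q_n}(\omega))_n$ is Cauchy in the complete space $(\mathcal N,d_{\mathcal N})$ and converges to some element of $\mathcal N$, which I would call $Z_t(\omega)$ (and set $Z_t:=\mathbf 0$ on $N_0$). Since $Z_t(\omega)(h_i) = \lim_n Y^i_{q_n}(\omega) = Y^i_t(\omega)$ for every $i$, and the $(h_i)$ separate points of $\mathcal N$ (being a metric-generating family), this value does not depend on the choice of $(q_n)$, and one gets $Z_t(\omega)(h_i) = Y^i_t(\omega)$ for all $t\geq a$, $i\in\mathbb N$, $\omega\notin N_0$.

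It then remains to check that $(Z_t)_{t\geq a}$ is a c\`adl\`ag modification of $(\tilde Z_t)_{t\geq a}$. For the modification property, I would fix $t$ and note that on the almost-sure event $N_0^{\mathrm c}\cap\bigcap_{i}\{Y^i_t=\tilde Z_t(h_i)\}$ one has $Z_t(h_i)=\tilde Z_t(h_i)$ for all $i$, hence $Z_t=\tilde Z_t$ there. For path regularity, I would fix $\omega\notin N_0$: since $Z_s(\omega)(h_i)=Y^i_s(\omega)$ with each $Y^i_\cdot(\omega)$ c\`adl\`ag, the same dominated-convergence estimate gives $d_{\mathcal N}(Z_s(\omega),Z_t(\omega))\to0$ as $s\downarrow t$ (right continuity), while as $s\uparrow t$ the coordinates $Z_s(\omega)(h_i)=Y^i_s(\omega)$ all converge, so the Cauchy-plus-completeness argument again shows $Z_s(\omega)$ converges in $\mathcal N$, i.e.\ the left limit exists in $(\mathcal N,d_{\mathcal N})$. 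The only point requiring genuine care is this repeated use of completeness of $(\mathcal N,d_{\mathcal N})$ to upgrade coordinatewise convergence of the $h_i$-integrals to convergence within $\mathcal N$; once that is in place, the rest is routine bookkeeping with null sets.
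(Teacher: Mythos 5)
Your proposal is correct and follows essentially the same route as the paper's own proof: metrize $\mathcal N$ via the separating family $(h_i)\subset\mathcal C_{\mathrm c}^\infty(\mathbb R)$, take c\`adl\`ag modifications of the coordinate processes $(\tilde Z_t(h_i))_{t\geq a}$, define $Z_t$ as the limit of $\tilde Z_q$ along rational $q\downarrow t$ using Cauchyness plus completeness of $(\mathcal N,d_{\mathcal N})$, and then identify $Z_t=\tilde Z_t$ almost surely for each fixed $t$ through the coordinates. The only difference is presentational (you verify the c\`adl\`ag property directly from $Z_s(h_i)=Y^i_s$ rather than by interleaving real and rational sequences), which does not change the argument.
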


	\begin{proposition}\label{prop:cadlag}
		The $\mathcal N$-valued process $(\tilde Z_t)_{t>0}$ admits a c\`adl\`ag modification.
	\end{proposition}
	\begin{proof}
		Let $t_0 >0$ be arbitrary.
		It suffices to show that the $\mathcal N$-valued process  $(\tilde Z_t)_{t\geq t_0}$ admits a c\`adl\`ag modification.
		By Lemma~\ref{lem:SRM}, it further suffices to show that the real-valued process $(\tilde Z_t(g))_{t\geq t_0}$ has a c\`adl\`ag modification for each $g\in \mathcal C_\mathrm{c}^\infty(\mathbb R)$.
		Fix such a $g\in \mathcal C_{\mathrm{c}}^\infty(\mathbb{R})$.
		We decompose $g$ into the difference of two non-negative smooth functions with compact support: to this end, construct a function $\psi \in \mathcal{C}_{\mathrm{c}}^\infty(\mathbb{R})$ dominating $|g|$ on its support.
		Define
		\[
		g_1 := \frac{\psi + g}{2}, \quad g_2 := \frac{\psi - g}{2}.
		\]
		Then $g_1, g_2 \in \mathcal{C}_{\mathrm{c}}^\infty(\mathbb{R})$ and $g_1, g_2 \ge 0$. Moreover, $g = g_1 - g_2$, and by the linearity of $\tilde Z$, $\tilde Z_t(g) = \tilde Z_t(g_1) - \tilde Z_t(g_2)$ for every $t>0$ almost surely.
		From Lemma~\ref{lemma: super-martingale}, both of the real-valued processes $(\tilde Z_t(g_1))_{t\geq t_0}$ and $(\tilde Z_t(g_2))_{t\geq t_0}$ admit c\`adl\`ag modification. Therefore, so does $(\tilde Z_t(g))_{t\geq t_0}$.
	\end{proof}

	\subsection{Skorohod convergence}\label{SS5.4}
	In this subsection, we complete the proof of Theorem~\ref{thm:existence-of-Z-t}.
	
	Recall from \eqref{eq:dN} that we have regarded the space of Radon measures on $\R$ as a separable complete metric space $(\mathcal N, d_{\mathcal N})$.
	For any separable complete metric space $\mathcal S = (\mathcal S, d_{\mathcal S})$ and $t_0\geq 0$, denote by $\mathbb{D}([t_0,\infty), \mathcal{S})$ the space of $\mathcal{S}$-valued c\`adl\`ag functions on $[t_0,\infty)$ equipped with the Skorokhod $J_1$-topology in the sense of \cite{MR4226142}*{Lemma A5.3}. It is known that $\mathbb{D}([t_0,\infty), \mathcal{S})$ is a Polish space.
	
	\begin{definition}
		Let $\mathcal S = (\mathcal S, d_{\mathcal S})$ be a separable complete metric space.
		For each $n\in \mathbb N$, let $(X^{(n)}_t)_{t\geq 0}$ be an $\mathcal{S}$-valued c\`adl\`ag process on a filtered probability space $(\Omega^{(n)}, \mathcal F^{(n)}, (\mathcal F^{(n)}_t)_{t\geq 0}, \mathbb P^{(n)})$.
		Let $t_0\geq 0$.
		We say the sequence $((X^{(n)}_t)_{t\geq 0})_{n\in \mathbb N}$ satisfies Aldous' criteria in $\mathbb{D}([t_0,\infty), \mathcal{S})$ provided that:
		for any
		\begin{itemize}
			\item[\eq\label{eq:T0}] $T>t_0$,
			\item[\eq\label{eq:tau0}] sequence of $[t_0,T]$-valued random variables $(\tau_n)_{n\in \mathbb N}$ satisfying that, for each $n\in \mathbb N$, $\tau_n$ is an $(\mathcal F^{(n)}_t)_{t\geq 0}$-optional time,
			\item[\eq\label{eq:delta0}] and sequence of positive numbers $(\delta_n)_{n\in \mathbb N}$ converging to $0$,
		\end{itemize}
		it holds that
		\[
		d_{\mathcal S}\pr{X^{(n)}_{\tau_n+\delta_n} , X^{(n)}_{\tau_n}} \xrightarrow[n\to \infty]{} 0, \quad \text{in probability}.
		\]
	\end{definition}
	
	\begin{lemma}\label{lem:convergence_aldous}
		Let $t_0\geq 0$.
		For each $n\in \mathbb N$, let $(X_t^{(n)})_{t\geq 0}$ be a c\`adl\`ag process in $\mathcal N$ on a filtered probability space $(\Omega^{(n)}, \mathcal F^{(n)}, (\mathcal F^{(n)}_t)_{t\geq 0}, \mathbb P^{(n)})$.
		Assume that for any
		$h\in \mathcal C_{\mathrm c}^\infty(\mathbb{R})$,
		the sequence of real-valued processes $((X^{(n)}_t(h))_{t\geq 0})_{n\in \mathbb N}$ satisfies Aldous' criteria in $\mathbb{D}([t_0,\infty), \mathbb R)$.
		Then $((X^{(n)}_t)_{t\geq 0})_{n\in \mathbb N}$ itself satisfies Aldous' criteria in $\mathbb{D}([t_0,\infty), \mathcal{N})$.
	\end{lemma}
	
	\begin{proof}
		Fix arbitrary $T$, $(\tau_n)_{n\in \mathbb N}$, and $(\delta_n)_{n\in \mathbb N}$ as in \eqref{eq:T0}--\eqref{eq:delta0}. Let $\eta > 0$ be arbitrary and let $\mathbb P^{(n)}$ be the probability measure corresponding to the process $(X_t^{(n)})_{t\geq 0}$ 	for each $n\in \mathbb N$.
		It suffices to show that
		\begin{equation}\label{eq:DR2}
			\mathbb P^{(n)}\pr{ d_{\mathcal{N}}\pr{X^{(n)}_{\tau_n + \delta_n}, X^{(n)}_{\tau_n}} > \eta } \xrightarrow[n\to \infty]{} 0.
		\end{equation}
		Choose an integer $K$ large enough such that
		\[
		\sum_{k=K+1}^\infty \frac{1}{2^k} 		< \frac{\eta}{2}.
		\]
		For every $n\in \mathbb N$, $\mathbb P^{(n)}$-almost surely,
		\[
		d_{\mathcal{N}}\pr{X^{(n)}_{\tau_n + \delta_n}, X^{(n)}_{\tau_n}}
		\overset{\eqref{eq:dN}} \leq \sum_{k=1}^K \frac{1}{2^k} \left| X^{(n)}_{\tau_n + \delta_n}(h_k) - X^{(n)}_{\tau_n}(h_k) \right| 		+ \frac{\eta}{2},
		\]
		and therefore, we have the event inclusion
		\[
		\left\{ d_{\mathcal{N}}(X^{(n)}_{\tau_n + \delta_n}, X^{(n)}_{\tau_n}) > \eta \right\}
		\subseteq \bigcup_{k=1}^K \left\{ \left| X^{(n)}_{\tau_n + \delta_n}(h_k) - X^{(n)}_{\tau_n}(h_k) \right| > \frac{2^k\eta}{2K} \right\}
		\]
		up to a $\mathbb P^{(n)}$-null set.
		From this, we obtain
		\[
		\bbP^{(n)}\left( d_{\mathcal{N}}(X^{(n)}_{\tau_n + \delta_n}, X^{(n)}_{\tau_n}) > \eta \right)
		\leq \sum_{k=1}^K \bbP^{(n)}\left( \left| X^{(n)}_{\tau_n + \delta_n}(h_k) - X^{(n)}_{\tau_n}(h_k) \right| > \frac{2^k\eta}{2K} \right) \xrightarrow{n \to \infty} 0
		\]
		where in the last step we used the condition that, for any $h\in \mathcal C_{\mathrm c}^\infty(\mathbb{R})$, the sequence of real-valued processes $((X^{(n)}_t(h))_{t\geq 0})_{n\in \mathbb N}$ satisfies Aldous' criteria in $\mathbb{D}([t_0,\infty), \mathbb R)$.
		Now \eqref{eq:DR2} is verified.
	\end{proof}
	
	\begin{lemma}\label{lem:global_convergence}
		Let $f \in \mathcal{C}_{\mathrm{c}}(\mathbb{R}, [0, z^*])$. Let the continuous $\mathcal{C}(\mathbb{R}, [0, z^*])$-valued process $(u_t)_{t \ge 0}$ be a solution to the SPDE~\eqref{eq:GSPDE} with initial condition $u_0=f$. Then,
		\begin{equation}
			\sup_{x \in \mathbb{R}} \tilde{\mathbb{E}}_{f} \brk{ \abs{u_t(x) - f(x)} } \xrightarrow[t\downarrow 0]{} 0.
		\end{equation}
	\end{lemma}
	
	\begin{proof}
		Using the mild formulation of the SPDE~\eqref{eq:mild}, we write for every $t>0$ and $x\in \mathbb R$, almost surely, the difference $u_t(x) - u_0(x)$ as the sum of three terms:
		\begin{align*}
			u_t(x) - u_0(x) & = \underbrace{\int_{\mathbb{R}} p_t(x-y) u_0(y) \, \mathrm{d} y - u_0(x)}_{=:I_1(t, x)} - \underbrace{\iint_0^t p_{t-s}(x-y) \Phi(u_s(y)) \, \mathrm{d} s \mathrm{d} y}_{=:I_2(t, x)} \\
			& \quad + \underbrace{\iint_0^t p_{t-s}(x-y) \sqrt{\Psi(u_s(y))} \, W(\mathrm{d} s \mathrm{d} y)}_{=:I_3(t, x)}.
		\end{align*}
		By the triangle inequality, $\tilde{\mathbb{E}}_f[\abs{u_t(x) - u_0(x)}] \le \abs{I_1(t, x)} + \tilde{\mathbb{E}}_f[\abs{I_2(t, x)}] + \tilde{\mathbb{E}}_f[\abs{I_3(t, x)}]$ for each $t>0$ and $x\in \mathbb R$.
		
		By \cite{MR1681462}*{Lemma 8.4}, $u_0 = f\in \mathcal C_{\mathrm c}(\mathbb R)$ is bounded and uniformly continuous.
		Therefore, by \cite{MR1681462}*{Theorem 8.14},
		\[ \lim_{t \to 0} \sup_{x \in \mathbb R} \abs{I_1(t, x)} = 0. \]
		Since $\Phi$ is bounded on $[0, z^*]$, uniformly for every $t>0$ and $x\in \mathbb R$,
		\begin{align*}
			\tilde{\mathbb{E}}_f\brk{\abs{I_2(t, x)}} & \lesssim \int_0^t \left( \int_{\mathbb{R}} p_{t-s}(x-y) \, \mathrm{d} y \right) \mathrm{d} s = t.
		\end{align*}
		Since $\sqrt{\Psi(\cdot)}$ is bounded on $[0, z^*]$, uniformly for every $t>0$ and $x\in \mathbb R$,
		\begin{align*}
			& \tilde{\mathbb{E}}_f\brk{\abs{I_3(t, x)}}^2 \overset{\text{Jensen}}{\leq} \tilde{\mathbb{E}}_f\brk{\abs{I_3(t, x)}^2}
			\overset{\text{It\^o's isometry}}{=} \iint_0^t  p_{t-s}(x-y)^2 \tilde{\mathbb{E}}_f[\Psi(u_s(y))] \, \mathrm{d} s \mathrm{d} y \\
			& \lesssim \iint_0^t \frac{1}{t-s} e^{-\frac{(x-y)^2}{t-s}} \, \mathrm{d} s \mathrm{d} y
			\lesssim \int_0^t \frac{1}{\sqrt{t-s}} \left( \int_{\mathbb{R}} p_{\frac{t-s}{2}}(x-y) \mathrm{d} y \right) \mathrm{d} s
			\\& = \int_0^t \frac{1}{\sqrt{s}} \mathrm{d} s \lesssim \sqrt{t}.
		\end{align*}
		
		The desired result now follows from those estimates.
	\end{proof}
	
	\begin{lemma}\label{lem:convergence_2}
		
		Let $f \in \mathcal{C}_{\mathrm{c}}(\mathbb{R}, [0, z^*])$ and  $(u_t)_{t \ge 0}$ be the continuous $\mathcal{C}(\mathbb{R}, [0, z^*])$-valued process that is a solution to the SPDE~\eqref{eq:GSPDE} with initial condition $u_0=f$. Then,
		\begin{equation}
			\lim_{t \downarrow 0} \sup_{x \in \mathbb{R}} \pr{(1+x^2) \tilde{\mathbb{E}}_{f} \brk{ \abs{u_t(x) - f(x)} }} = 0.
		\end{equation}
	\end{lemma}
	
	\begin{proof}
		Since $f$ has compact support, there exists $R > 0$ such that $\text{supp}(f) \subset [-R, R]$. Decompose the real line into two parts: the bounded region $D_1 := [-R-1, R+1]$ and the tail region $D_2 := \mathbb{R} \setminus D_1$.
		
		\textit{The bounded region.}
		On $D_1$, the weight $(1+x^2)$ is bounded by $1 + (R+1)^2$. Using Lemma~\ref{lem:global_convergence}, we have
		\[
		\sup_{x \in D_1} (1+x^2) \tilde{\mathbb{E}}_{f} \brk{ \abs{u_t(x) - f(x)} }
		\le (1 + (R+1)^2)\sup_{x \in \mathbb{R}} \tilde{\mathbb{E}}_{f} \brk{ \abs{u_t(x) - f(x)} } \xrightarrow{t \downarrow 0} 0.
		\]
		
		\textit{The tail region.}
		For $x \in D_2$, we have $x \notin \text{supp}(f)$, so $f(x) = 0$. Using Lemma~\ref{Upper-bound-w-t-x-2}, we have for $x\in D_2$ and $t>0$,
		\[
		\tilde{\mathbb{E}}_{f} \brk{ \abs{u_t(x) - f(x)} } = \tilde{\mathbb{E}}_{f}[u_t(x)] \le e^{\lambda_{\mathrm o} t} \mathbf E_x[f(B_t)]
		\]
		where $B$ is a standard Brownian motion starting at $x$ under the probability $\mathbf P_x$.
		Since $f$ is bounded by $z^*$ and supported in $[-R, R]$, for $x\in D_2$ and $t>0$,
		\begin{align}
			& \mathbf E_x[f(B_t)] \le z^* \mathbf P_x(B_t \in [-R, R]) \le z^* \mathbf P_0(\abs{B_t} \ge \abs{x} - R)             \\
			& \overset{\text{Chebyshev}} \leq z^* \frac{\mathbf E[\abs{B_t}^2]}{(\abs{x} - R)^2} = \frac{z^* t}{(\abs{x} - R)^2}.
		\end{align}
		Substituting this back, bound the term on $D_2$:
		\[
		\sup_{x \in D_2} (1+x^2) \tilde{\mathbb{E}}_{f} \brk{ \abs{u_t(x) - f(x)} }
		\le e^{\lambda_{\mathrm o} t} z^* t \underbrace{\sup_{x \in D_2} \frac{1+x^2}{(\abs{x} - R)^2}}_{<\infty}
		\xrightarrow{t \downarrow 0} 0.
		\]
		
		Combining the analysis for $D_1$ and $D_2$ completes the proof.
	\end{proof}
	
	Recall that the probability transition kernels $(\mathscr Q_t)_{t\geq 0}$ for SBBMs with branching mechanisms $(\Phi, \Psi)$ are given by Proposition~\ref{Part-1-proof-of-thrm-1.2}.
	\begin{lemma}\label{lem:transition_bound}
		Suppose that $h \in \mathcal{C}_{\mathrm{c}}^+(\mathbb{R})$.
		Let $(u_t)_{t \ge 0}$ be the solution to the dual SPDE~\eqref{eq:GSPDE} with initial condition $u_0 = 1 - e^{-h} \in \mathcal C_{\mathrm c}(\mathbb R, [0,1])$.
		Then for any $t\geq 0$ and $\mu\in \mathcal N$,
		\begin{align}\label{eq:KK}
			& \abs{ \int_{\mathcal{N}} e^{-\nu(h)} \mathscr{Q}_t(\mu, \mathrm{d} \nu) - e^{-\mu(h)} }
			\\&\le \pr{\int_{\R} (1+x^2)^{-1} \mu(\mathrm d x )} \sup_{x\in \mathbb R} \pr{ (1+x^2) \tilde{\mathbb E}_{1-e^{-h}}\brk{\abs{u_t(x)-u_0(x)}} }.
		\end{align}
	\end{lemma}
	
	\begin{proof}
		By \eqref{eq:KC}, we have for every $t\geq 0$,
		\begin{equation}\label{eq:dual_ident}
			\int_{\mathcal{N}} e^{-\nu(h)} \mathscr{Q}_t(\mu, \mathrm{d} \nu)
			= \tilde{\E}_{f} \brk{ \prod_{x \in \mathbb R} (1 - u_t(x))^{\mu(\{x\})} },
		\end{equation}
		where $f := 1 - e^{-h}$. 	Note that by definition, $u_0 = f = 1 - e^{-h}$, and thus $e^{-\mu(h)} = \prod_{x \in \mathbb R} (1 - u_0(x))^{\mu(\{x\})}$ for any $\mu \in \mathcal N$.
		
		The random field $u$ takes values in $[0, z^*]$ with $z^* \in [1, 2)$. Consequently, the random field $1 - u$ takes values in the interval
		
		$[1-z^*, 1] \subset (-1, 1]$.
		We use the following elementary inequality from \cite{MR3930614}*{Lemma 3.4.3}: for sequences $(a_i)_{i\in \mathbb N}$ and $(b_i)_{i\in \mathbb N}$ in $[-1,1]$, it holds that $\abs{\prod_{i\in \mathbb N} a_i - \prod_{i\in \mathbb N} b_i} \le \sum_{i\in \mathbb N} \abs{a_i - b_i}$.
		By this inequality, we obtain $\tilde{\mathbb P}_f$-almost surely for every $t>0$ and $\mu\in \mathcal N$:
		\begin{align*}
			& \abs{ \prod_{x\in \mathbb R} (1 - u_t(x))^{\mu(\{x\})} - \prod_{x\in \mathbb R} (1 - u_0(x))^{\mu(\{x\})} }
			\le \sum_{x \in \mathbb R} \abs{ (1 - u_t(x)) - (1 - u_0(x)) } \mu(\{x\})                                      \\
			& = \int_{\mathbb{R}} \abs{ u_t(x) - u_0(x) } \, \mu(\mathrm{d} x).
		\end{align*}
		Take the dual expectation $\tilde{\E}_f$ on both sides and apply Jensen's inequality to get
		\begin{align*}
			& \abs{ \int_{\mathcal{N}} e^{-\nu(h)} \mathscr{Q}_t(\mu, \mathrm{d} \nu) - e^{-\mu(h)} }
			\le \tilde{\E}_{f} \brk{ \int_{\mathbb{R}} \abs{ u_t(x) - u_0(x) } \, \mu(\mathrm{d} x) }                         \\
			& \overset{\text{Fubini}}= \int_{\mathbb{R}} \tilde{\E}_{f} \brk{ \abs{ u_t(x) - u_0(x) } } \, \mu(\mathrm{d} x)
			\\&\leq \pr{\int_{\R} (1+x^2)^{-1} \mu(\mathrm d x )} \sup_{x\in \mathbb R} \pr{ (1+x^2) \tilde{\mathbb E}_{f}\brk{\abs{u_t(x)-u_0(x)}} }
		\end{align*}
		for every $t\geq 0$ and $\mu\in \mathcal N$ as desired.
	\end{proof}
	
	Recall that we have fixed the branching mechanisms $\Phi$ and $\Psi$.
	\begin{lemma}\label{lem:stopping_time_bound}
		Let $0 < t_0 < T < \infty$.
		Define the weight function $g(x) := \frac{1}{1+x^2}$ for each $x\in \mathbb R$.
		Then uniformly for any SBBM $(Z_t)_{t\geq 0}$ adapted to a filtration $(\mathcal F_t)_{t\geq 0}$ with branching mechanisms $(\Phi,\Psi)$ whose initial state $Z_0$ is deterministic and finite, and uniformly for any $[t_0, T]$-valued $(\mathcal F_t)_{t\geq 0}$-optional time $\tau$, we have
		\begin{equation}\label{eq:Ztg}
			\mathbb{E}\brk{ Z_{\tau}(g) }
			\lesssim 1,
		\end{equation}
		and, uniformly for any $M>0$,
		\begin{equation} \label{eq:PtM}
			\mathbb P\pr{\sup_{t\in [t_0, T]} Z_t(g) > M} \lesssim \frac{1}{M}.
		\end{equation}
	\end{lemma}
	
	\begin{proof}
		The weight function $g$ belongs to $\mathcal{C}_{\mathrm{b}}^\infty(\mathbb{R})$. Moreover, for any $x \in \mathbb{R}$,
		\[
		g''(x) = \frac{6x^2-2}{(1+x^2)^3} \implies \abs{g''(x)} \le \frac{6(1+x^2)}{(1+x^2)^3} \le \frac{6}{(1+x^2)^2} \le 6 g(x).
		\]
		
		Let $(Z_t)_{t\geq 0}$ be an arbitrary SBBM adapted to a filtration $(\mathcal F_t)_{t\geq 0}$ with branching mechanisms $(\Phi,\Psi)$ whose initial mass $Z_0(\mathbb R)$ is finite, and let $\tau$ be an arbitrary $[t_0, T]$-valued $(\mathcal F_t)_{t\geq 0}$-optional time.
		By reasoning similar to \eqref{eq:ES}, we have
		\begin{equation}
			\sup_{t\in [t_0, T]} \sup_{i\in \mathbb Z} \mathbb E[Z_t((i-1,i+1))] \leq K_0:=\frac{e^{\lambda_{\mathrm o} T}}{1-\gamma}\frac{4}{\Psi'(0+)t_0}  + 3 \Cr{c:UFtg}((-1, 1), \mathbb R, t, \gamma_0) < \infty
		\end{equation}
		where we fix an arbitrary $\gamma\in (0,1)$ and denote $\gamma_0 := \gamma \Psi'(0+)/(2\beta_{\mathrm c}).$
		From this, we see
		\begin{align}\label{eq:AI}
			& \sup_{t\in [t_0, T]} \mathbb E[Z_t(g)] \leq \sup_{t\in [t_0, T]}  \sum_{i\in \mathbb Z} \mathbb E[Z_t((i-1,i+1))] \sup_{x\in (i-1,i+1)} \frac{1}{1+x^2}
			\\&\leq K_0 \sum_{i\in \mathbb Z}\sup_{x\in (i-1,i+1)} \frac{1}{1+x^2} =: K< \infty.
		\end{align}
		Note that the constant $K$ is independent of the arbitrary choice of the SBBM $Z$ and the optional time $\tau$.
		
		From Proposition~\ref{prop:TM} and the fact that the initial state $Z_0$ is deterministic and finite, the process
		\[
		M_t(g) := e^{\Phi'(0+) t} Z_t(g) - \int_0^t \frac{1}{2} e^{\Phi'(0+) s} Z_s(g'') \, \mathrm{d} s, \quad t \ge 0
		\]
		is a supermartingale. By the Optional Stopping Theorem \cite{MR4226142}*{Theorem 9.30} applied to the bounded optional times $t_0$ and $\tau$ (recall $t_0 \le \tau \le T$ a.s.), we have
		\[
		\mathbb{E}\brk{ M_{\tau}(g) } \le \mathbb{E}\brk{ M_{t_0}(g) }.
		\]
		Substituting the definition of $M_\cdot(g)$ and rearranging terms:
		\[
		\mathbb{E}\brk{ e^{\Phi'(0+) \tau} Z_\tau(g) } \le \mathbb{E}\brk{ e^{\Phi'(0+) t_0} Z_{t_0}(g) } + \mathbb{E}\brk{ \int_{t_0}^\tau \frac{1}{2} e^{\Phi'(0+) s} Z_s(g'') \mathrm{d} s }.
		\]
		Since $\tau \in [t_0, T]$, we have $e^{\Phi'(0+) \tau} \ge e^{-|\Phi'(0+)|T}$ almost surely. For the integral term, we use $\abs{Z_s(g'')} \le 6 Z_s(g)$:
		\[
		\abs{\mathbb{E}\brk{ \int_{t_0}^\tau \frac{1}{2} e^{\Phi'(0+) s} Z_s(g'') \mathrm{d} s }}
		\le 3 \int_{t_0}^T e^{\Phi'(0+) s} \mathbb{E}[Z_s(g)] \mathrm{d} s
		\le 3 T e^{|\Phi'(0+)|T} K.
		\]
		Combining these estimates:
		\[
		e^{-|\Phi'(0+)|T} \mathbb{E}[Z_\tau(g)] \le e^{\Phi'(0+) t_0} K + 3 T e^{|\Phi'(0+)|T} K.
		\]
		Multiplying by $e^{|\Phi'(0+)|T}$ yields the first desired inequality for this lemma:
		\begin{equation} \label{eq:3TK}
			\mathbb{E}[Z_\tau(g)] \le e^{2|\Phi'(0+)|T}(1 + 3T) K.
		\end{equation}
		
		For the second inequality, define the first passage time
		\[ \tau_M := \inf \{ t \in [t_0, T] : Z_t(g) \ge M \} \wedge T. \]
		Note that $\tau_M$ is an optional time with values in $[t_0, T]$.
		The event $\{ \sup_{t \in [t_0, T]} Z_t(g) > M \}$ implies $\{ Z_{\tau_M}(g) \ge M \}$ almost surely (using the right-continuity of the process). By Markov's inequality:
		\begin{align}
			& \mathbb{P}\pr{ \sup_{t \in [t_0, T]} Z_t(g) > M } \le \mathbb{P}(Z_{\tau_M}(g) \ge M) \le \frac{\mathbb{E}[Z_{\tau_M}(g)]}{M}
			\\&\overset{\eqref{eq:3TK}}\leq \frac{e^{2|\Phi'(0+)| T} (1 + 3T) K}{M}.
		\end{align}
		This yields the second desired inequality for this lemma.
	\end{proof}
	
	\begin{lemma}\label{lem:stopping_time_bound_t0}
		Let $T > 0$ and define the weight function $g(x) := \frac{1}{1+x^2}$ for each $x\in \mathbb R$.
		Suppose that $\Lambda = \emptyset$.
		Then, uniformly for any $n\in \mathbb N$ and any $[0, T]$-valued $(\mathcal F^{(n)}_t)_{t\geq 0}$-optional time $\tau_n$, we have
		\begin{equation}\label{eq:Ztg0}
			\mathbb{E}\brk{ Z^{(n)}_{\tau_n}(g) }
			\lesssim 1,
		\end{equation}
		and, uniformly for any $M>0$,
		\begin{equation} \label{eq:PtM0}
			\mathbb P^{(n)}\pr{\sup_{t\in [0, T]} Z^{(n)}_t(g) > M} \lesssim \frac{1}{M}.
		\end{equation}
	\end{lemma}
	
	\begin{proof}
		Fix an arbitrary $n\in \mathbb N$.
		Since $(Z^{(n)}_t)_{t\geq 0}$ is an SBBM with deterministic and finite initial state $Z^{(n)}_0 = \mu_n$, by Proposition~\ref{prop:TM}, the process
		\[
		M_t(g) := e^{\Phi'(0+) t} Z^{(n)}_t(g) - \int_0^t \frac{1}{2} e^{\Phi'(0+) s} Z^{(n)}_s(g'') \, \mathrm{d} s, \quad t \ge 0
		\]
		is a supermartingale.
		Applying the Optional Stopping Theorem \cite{MR4226142}*{Theorem 9.30} to the bounded optional times $0$ and $\tau_n$ (recall $0 \le \tau_n \le T$ a.s.), we obtain
		\[
		\mathbb{E}\brk{ M_{\tau_n}(g) } \le \mathbb{E}\brk{ M_{0}(g) } = Z^{(n)}_0(g) = \mu_n(g).
		\]
		Substituting the definition of $M_\cdot(g)$ and using $\abs{g''(x)} \le 6g(x)$:
		\begin{equation}\label{eq:Ztg0-OST}
			e^{-|\Phi'(0+)|T} \, \mathbb{E}\brk{ Z^{(n)}_{\tau_n}(g) } \le \mu_n(g) + 3\int_0^T e^{|\Phi'(0+)|s} \mathbb{E}\brk{ Z^{(n)}_s(g) } \, \mathrm{d} s.
		\end{equation}
		Substituting $\tau_n$ and $T$ by any deterministic time $t \in [0,T]$ into the preceding inequality, we obtain for every $t \in [0,T]$:
		\[
		e^{-|\Phi'(0+)|t} \, \mathbb{E}\brk{ Z^{(n)}_t(g) } \le \mu_n(g) + 3\int_0^t e^{2|\Phi'(0+)|s} \, \big(e^{-|\Phi'(0+)|s}\mathbb{E}\brk{ Z^{(n)}_s(g) }\big) \, \mathrm{d} s.
		\]
		By the Gronwall inequality in the measure-theoretic form \cite{MR0838085}*{Lemma 3.1.4}, we conclude that for every $t \in [0, T]$,
		\[
		\mathbb{E}\brk{ Z^{(n)}_t(g) }
		\le 2\mu_n(g) \, e^{|\Phi'(0+)|t} \sum_{j=0}^{\lfloor 6\ell(t)\rfloor} (6\ell(t))^j
		\le 2\mu(g) \, e^{|\Phi'(0+)|T} \sum_{j=0}^{\lfloor 6\ell(T)\rfloor} (6\ell(T))^j,
		\]
		where $\ell(t) := \frac{1}{2|\Phi'(0+)|}(e^{2|\Phi'(0+)|t} - 1)$, and in the last inequality we used the monotonicity $\mu_n \preceq \mu$ (which holds since $(\mu_n)_{n\in\mathbb N}$ is monotonically increasing and converges m-weakly to $(\emptyset, \mu)$).
		\C{c:C1} \C{c:C2}
		It follows that
		\begin{equation}\label{eq:Zt0 Gronwall}
			\sup_{t\in [0,T]} \mathbb{E}\brk{ Z^{(n)}_t(g) } \le \mu(g) \, \Cr{c:C1}(T),
		\end{equation}
		where $\Cr{c:C1}(T) := 2 e^{|\Phi'(0+)|T} \sum_{j=0}^{\lfloor 6\ell(T)\rfloor} (6\ell(T))^j$.
		Substituting \eqref{eq:Zt0 Gronwall} into the OST inequality \eqref{eq:Ztg0-OST}, we obtain
		\begin{align}
			& e^{-|\Phi'(0+)|T} \, \mathbb{E}\brk{ Z^{(n)}_{\tau_n}(g) }
			\le \mu_n(g) + 3\int_0^T e^{|\Phi'(0+)|s} \, \mathbb{E}\brk{ Z^{(n)}_s(g) } \, \mathrm{d} s
			\\&\le \mu_n(g) + 3\mu(g)\, \Cr{c:C1}(T) \int_0^T e^{|\Phi'(0+)|s} \, \mathrm{d} s
			\le \mu(g)\, \Cr{c:C2}(T),
		\end{align}
		where $\Cr{c:C2}(T) := 1 + 3\Cr{c:C1}(T) \frac{e^{|\Phi'(0+)|T}-1}{|\Phi'(0+)|}$. This yields \eqref{eq:Ztg0}.
		
		For the second inequality, define the first passage time
		\[ \sigma^{(n)}_M := \inf \{ t \in [0, T] : Z^{(n)}_t(g) \ge M \} \wedge T, \quad n\in\mathbb{N}, M>0. \]
		Note that $\sigma^{(n)}_M$ is an optional time with values in $[0, T]$.
		Moreover, on the event \[ \left\{\sup_{t \in [0, T]} Z^{(n)}_t(g) > M\right\}, \] right-continuity implies $Z^{(n)}_{\sigma^{(n)}_M}(g) \ge M$ almost surely.
		By Markov's inequality and \eqref{eq:Ztg0} applied to the optional time $\sigma^{(n)}_M$: uniform in $n\in \mathbb N$ and $M>0$,
		\[
		\mathbb P\pr{ \sup_{t \in [0, T]} Z^{(n)}_t(g) > M } \le \frac{\mathbb{E}\brk{ Z^{(n)}_{\sigma^{(n)}_M}(g) }}{M} \lesssim \frac{1}{M}.
		\]
		This yields \eqref{eq:PtM0}.
	\end{proof}
	
	Recall from the beginning of Section \ref{sec:SS5} that
	\begin{equation}
		Z_t^{(n)} = \sum_{\alpha \in I_t^{(n)}} \delta_{X^{(n), \alpha}_t}, \quad t \geq 0, \quad n\in \mathbb N
	\end{equation}
	is a sequence of SBBMs with initial states $(x_i)_{i=1}^n$ sharing the same branching mechanisms $\Phi$ and $\Psi$.
	
	\begin{lemma}\label{lem:aldous_duality}
		Let $t_0 > 0$. For any non-negative function $h \in \mathcal{C}_{\mathrm{c}}^+(\mathbb{R})$, the sequence of real-valued processes
		\[ Y^{(n)}_t := \exp\pr{-Z^{(n)}_t(h)}, \quad t \ge 0, \quad n\in \mathbb N \]
		satisfies Aldous' criteria in $\mathbb{D}([t_0, \infty), \mathbb R)$.
		Moreover, if we assume in addition that $\Lambda = \emptyset$, then this sequence also satisfies Aldous' criteria in $\mathbb{D}([0, \infty), \mathbb{R})$.
	\end{lemma}
	
	\begin{proof}
		Let $T$, $(\tau_n)_{n \in \mathbb{N}}$ and $(\delta_n)_{n\in \mathbb N}$ be given as in \eqref{eq:T0}--\eqref{eq:delta0}.
		Thanks to Chebyshev's inequality, we only have to show that
		\begin{equation}
			\label{eq:Sq}
			\lim_{n \to \infty} \mathbb{E}\brk{ \abs{Y^{(n)}_{\tau_n + \delta_n} - Y^{(n)}_{\tau_n}}^2 } = 0. \end{equation}
		Let $(u_t)_{t \ge 0}$ be the solution to the dual SPDE \eqref{eq:GSPDE} with initial condition $u_0 = 1 - e^{-\theta h} \in \mathcal C_{\mathrm c}(\mathbb R, [0,1])$ where $\theta \in (0,1)$ is arbitrary.
		Then uniformly for any $n\in \mathbb N$,
		\begin{align}
			& \E\brk{\abs{ \int_{\mathcal N} e^{-\nu(\theta h)} \mathscr{Q}_{\delta_n}(Z^{(n)}_{\tau_n}, \mathrm d \nu) - e^{-Z^{(n)}_{\tau_n}(\theta h)} }}
			\\&\overset{\text{Lemma \ref{lem:transition_bound}}} \le \E\brk{ \int_{\mathbb{R}} (1+x^2)^{-1} Z^{(n)}_{\tau_n}(\mathrm d x) } \sup_{x \in \mathbb{R}} \pr{ (1+x^2) \tilde{\mathbb{E}}_{1-e^{-\theta h}}\brk{\abs{u_{\delta_n}(x) - u_0(x)}} }
			\\& \overset{\eqref{eq:Ztg}}\lesssim \sup_{x \in \mathbb{R}} \pr{ (1+x^2) \tilde{\mathbb{E}}_{1-e^{-\theta h}}\brk{\abs{u_{\delta_n}(x) - u_0(x)}} }
			\xrightarrow[n\to \infty]{\text{Lemma \ref{lem:convergence_2}}} 0. \label{eq:Ce}
		\end{align}
		Expanding the square in \eqref{eq:Sq}, we have for each $n\in \mathbb N$,
		\begin{equation}\label{eq:aldous_expansion}
			\E\brk{ \abs{Y^{(n)}_{\tau_n + \delta_n} - Y^{(n)}_{\tau_n}}^2 }
			= \E\brk{ e^{-2Z^{(n)}_{\tau_n+\delta_n}(h)} } - 2\E\brk{ e^{-Z^{(n)}_{\tau_n+\delta_n}(h)} e^{-Z^{(n)}_{\tau_n}(h)} } + \E\brk{ e^{-2Z^{(n)}_{\tau_n}(h)} }.
		\end{equation}
		We analyze the middle term using the strong Markov property: for each $n\in \mathbb N$,
		\begin{align}
			& \E\brk{ e^{-Z^{(n)}_{\tau_n+\delta_n}(h)} e^{-Z^{(n)}_{\tau_n}(h)} }
			= \E\brk{ e^{-Z^{(n)}_{\tau_n}(h)} \E\brk{ e^{-Z^{(n)}_{\tau_n+\delta_n}(h)} \middle| \mathcal{F}_{\tau_n} } }
			\\&= \E\brk{ e^{-Z^{(n)}_{\tau_n}(h)} \int_{\mathcal N} e^{-\nu(h)} \mathscr{Q}_{\delta_n}(Z^{(n)}_{\tau_n}, \mathrm d \nu) }.
		\end{align}
		Therefore
		\begin{align}
			& \abs{ \E\brk{ e^{-Z^{(n)}_{\tau_n+\delta_n}(h)} e^{-Z^{(n)}_{\tau_n}(h)} } - \E\brk{ e^{-2Z^{(n)}_{\tau_n}(h)} } }
			\\
			& = \abs{\E\brk{e^{-Z^{(n)}_{\tau_n}(h)} \pr{ \int_{\mathcal N} e^{-\nu(h)} \mathscr{Q}_{\delta_n}(Z^{(n)}_{\tau_n}, \mathrm d \nu) - e^{-Z^{(n)}_{\tau_n}(h)} }}}
			\\& \overset{\text{Jensen}}\leq  \E\brk{\abs{ \int_{\mathcal N} e^{-\nu(h)} \mathscr{Q}_{\delta_n}(Z^{(n)}_{\tau_n}, \mathrm d \nu) - e^{-Z^{(n)}_{\tau_n}(h)} }}
			\xrightarrow[n\to \infty]{\text{by \eqref{eq:Ce}}} 0. \label{eq:A1}
		\end{align}
		Similarly, by the strong Markov property again, for each $n\in \mathbb N$,
		\begin{align}
			\E\brk{ e^{-2Z^{(n)}_{\tau_n+\delta_n}(h)}  }
			= \E\brk{  \E\brk{ e^{-2Z^{(n)}_{\tau_n+\delta_n}(h)} \middle| \mathcal{F}_{\tau_n} } }
			= \E\brk{ \int_{\mathcal N} e^{-2\nu(h)} \mathcal{Q}_{\delta_n}(Z^{(n)}_{\tau_n}, \mathrm d \nu) }.
		\end{align}
		Therefore
		\begin{align}
			& \abs{ \E\brk{ e^{-2Z^{(n)}_{\tau_n+\delta_n}(h)}  } - \E\brk{ e^{-2Z^{(n)}_{\tau_n}(h)} } }
			\\
			& = \abs{\E\brk{ \int_{\mathcal N} e^{-2\nu(h)} \mathcal{Q}_{\delta_n}(Z^{(n)}_{\tau_n}, \mathrm d \nu) - e^{-2Z^{(n)}_{\tau_n}(h)} }}
			\\& \overset{\text{Jensen}}\leq  \E\brk{\abs{ \int_{\mathcal N} e^{-2\nu(h)} \mathcal{Q}_{\delta_n}(Z^{(n)}_{\tau_n}, \mathrm d \nu) - e^{-2Z^{(n)}_{\tau_n}(h)} }}
			\xrightarrow[n\to \infty]{\text{by \eqref{eq:Ce}}} 0. \label{eq:A2}
		\end{align}
		Now by taking $n\to \infty$ in \eqref{eq:aldous_expansion}, from \eqref{eq:A1} and \eqref{eq:A2}, we get the desired result \eqref{eq:Sq}.
		
		If we assume in addition that $\Lambda = \emptyset$, then the above argument remains valid with $t_0$ being replaced by $0$, and the reference to \eqref{eq:Ztg} being replaced by \eqref{eq:Ztg0}.
	\end{proof}
	
	\begin{lemma}\label{lem:aldous_duality2}
		Let $t_0 > 0$. For any non-negative function $h \in \mathcal{C}_{\mathrm{c}}^+(\mathbb{R})$, the sequence of real-valued processes $((Z^{(n)}_{t}(h))_{t \ge 0})_{n \in \mathbb{N}}$ satisfies Aldous' criteria in $\mathbb{D}([t_0, \infty), \mathbb{R})$.
		Moreover, if we assume in addition that $\Lambda = \emptyset$, then this sequence also satisfies Aldous' criteria in $\mathbb{D}([0, \infty), \mathbb{R})$.
	\end{lemma}
	
	\begin{proof}
		Let $T$, $(\tau_n)_{n \in \mathbb{N}}$ and $(\delta_n)_{n\in \mathbb N}$ be given as in \eqref{eq:T0}--\eqref{eq:delta0}.
		WLOG, we assume that the sequence $(\delta_n)_{n\in \mathbb N}$ is bounded by $1$.
		We only have to show that for any $\varepsilon > 0$,
		\[ \lim_{n \to \infty} \mathbb{P}\pr{ \abs{Z^{(n)}_{\tau_n + \delta_n}(h) - Z^{(n)}_{\tau_n}(h)} > \varepsilon } = 0. \]
		
		Let $g(x) := (1+x^2)^{-1}$ for every $x\in \mathbb R$.
		Since $h \in \mathcal{C}_{\mathrm{c}}^+(\mathbb{R})$, we have $h(x)\leq \|h/g\|_\infty g(x)$ where $\|h/g\|_\infty < \infty$.
		Thus, $Z^{(n)}_t(h) \le \|h/g\|_\infty Z^{(n)}_t(g)$ almost surely for every $n\in \mathbb N$ and $t\geq 0$.
		Now, uniformly for every $n\in \mathbb N$ and $M > 0$,
		\begin{equation}\label{eq:stoch_bound}
			\mathbb{P}\pr{ \sup_{t \in [t_0, T+1]} Z^{(n)}_t(h) > \|h/g\|_\infty M }
			\leq	\mathbb{P}\pr{ \sup_{t \in [t_0, T+1]} Z^{(n)}_t(g) > M }
			\overset{\eqref{eq:PtM}}\lesssim \frac{1}{M}.
		\end{equation}
		
		Let $Y^{(n)}_t := \exp(-Z^{(n)}_t(h))$ for every $t\geq 0$ and $n\in \mathbb N$.
		Note that, for any $M>0$, on the interval $[e^{-\|h/g\|_\infty M}, 1]$, the derivative $\partial_y (\log y) = 1/y$ is bounded in absolute value by $e^{\|h/g\|_\infty M}$.
		Therefore,
		\begin{itemize}
			\item[\eq\label{eq:MV}] for any $n\in \mathbb N$ and $M>0$, almost surely on the event
			\begin{equation}
				\brc{\sup_{t \in [t_0, T+1]} Z^{(n)}_t(h) \leq \|h/g\|_\infty M}
			\end{equation}
			we have by the Mean Value Theorem that
			\begin{align}
				\abs{Z^{(n)}_{\tau_n+\delta_n}(h) - Z^{(n)}_{\tau_n}(h)}
				= \abs{\log (Y^{(n)}_{\tau_n+\delta_n}) - \log(Y^{(n)}_{\tau_n})}
				\le e^{\|h/g\|_\infty M} \abs{Y^{(n)}_{\tau_n+\delta_n} - Y^{(n)}_{\tau_n}}.
			\end{align}
		\end{itemize}
		
		We now estimate the following probability: uniformly for any $n\in \mathbb N$, $M>0$ and $\varepsilon>0$,
		\begin{align*}
			& \mathbb{P}\pr{ \abs{Z^{(n)}_{\tau_n + \delta_n}(h) - Z^{(n)}_{\tau_n}(h)} > \varepsilon }                                                                                                                    \\
			& \le \mathbb{P}\pr{ \abs{Z^{(n)}_{\tau_n + \delta_n}(h) - Z^{(n)}_{\tau_n}(h)} > \varepsilon,  \sup_{t \in [t_0, T+1]} Z^{(n)}_t(h) \leq \|h/g\|_\infty M}
			\\&\quad + \mathbb{P}\pr{\sup_{t \in [t_0, T+1]} Z^{(n)}_t(h) > \|h/g\|_\infty M} \\
			& \overset{\eqref{eq:MV}}\le \mathbb{P}\pr{ e^{\|h/g\|_\infty M} \abs{Y^{(n)}_{\tau_n + \delta_n} - Y^{(n)}_{\tau_n}} > \varepsilon } + \mathbb{P}\pr{\sup_{t \in [t_0, T+1]} Z^{(n)}_t(h) > \|h/g\|_\infty M} \\
			& \overset{\eqref{eq:stoch_bound}}\lesssim \mathbb{P}\pr{  \abs{Y^{(n)}_{\tau_n + \delta_n} - Y^{(n)}_{\tau_n}} > e^{-\|h/g\|_\infty M}\varepsilon }  + \frac{1}{M}.
		\end{align*}
		Taking $n \to \infty$ and then $M\to \infty$, by Lemma \ref{lem:aldous_duality} we see that
		\[\lim_{n \to \infty}
		\mathbb{P}\pr{ \abs{Z^{(n)}_{\tau_n + \delta_n}(h) - Z^{(n)}_{\tau_n}(h)} > \varepsilon } =0\]
		as desired for this lemma.
		
		If we assume in addition that $\Lambda = \emptyset$, then the above argument remains valid with $t_0$ being replaced by $0$, where \eqref{eq:PtM} in \eqref{eq:stoch_bound} is replaced by \eqref{eq:PtM0}.
	\end{proof}
	
	\begin{lemma}\label{lem:FS}
		Let $t_0 > 0$. The sequence of $\mathcal N$-valued processes $((Z^{(n)}_{t})_{t \geq 0})_{n \in \mathbb{N}}$ satisfies Aldous' criteria in $\mathbb{D}([t_0, \infty), \mathcal N)$.
		Moreover, if we assume in addition that $\Lambda = \emptyset$, then the aforementioned sequence also satisfies Aldous' criteria in $\mathbb{D}([0, \infty), \mathcal N)$.
	\end{lemma}
	
	\begin{proof}
		Let $h \in \mathcal{C}_{\mathrm{c}}^\infty(\mathbb{R})$ be arbitrary.
		Thanks to Lemma \ref{lem:convergence_aldous}, we only have to show that the sequence of real-valued processes $((Z^{(n)}_{t}(h))_{t \geq 0})_{n \in \mathbb{N}}$ satisfies Aldous' criteria in $\mathbb{D}([t_0, \infty), \mathbb R)$.
		We can decompose $h$ into the difference of two non-negative smooth functions with compact support: Construct a function $\psi \in \mathcal{C}_{\mathrm{c}}^\infty(\mathbb{R})$ dominating $|h|$ on its support.
		Define
		\[
		h_1 := \frac{\psi + h}{2}, \quad h_2 := \frac{\psi - h}{2}.
		\]
		Then $h_1, h_2 \in \mathcal{C}_{\mathrm{c}}^\infty(\mathbb{R})$ and $h_1, h_2 \ge 0$. Moreover, $h = h_1 - h_2$.
		
		By linearity, $Z^{(n)}_t(h) = Z^{(n)}_t(h_1) - Z^{(n)}_t(h_2)$.
		Let $T $, $(\tau_n)_{n \in \mathbb{N}}$ and $(\delta_n)_{n\in \mathbb N}$ be given as in \eqref{eq:T0}--\eqref{eq:delta0}.
		By the triangle inequality, for every $n\in \mathbb N$, almost surely,
		\[
		\abs{ Z^{(n)}_{\tau_n+\delta_n}(h) - Z^{(n)}_{\tau_n}(h) }
		\le \abs{ Z^{(n)}_{\tau_n+\delta_n}(h_1) - Z^{(n)}_{\tau_n}(h_1) } + \abs{ Z^{(n)}_{\tau_n+\delta_n}(h_2) - Z^{(n)}_{\tau_n}(h_2) }.
		\]
		Specifically, for any $\varepsilon > 0$ and $n\in \mathbb N$,
		\begin{align*}
			& \mathbb{P}\pr{ \abs{ Z^{(n)}_{\tau_n+\delta_n}(h) - Z^{(n)}_{\tau_n}(h) } > \varepsilon }                   \\
			& \le \mathbb{P}\pr{ \abs{ Z^{(n)}_{\tau_n+\delta_n}(h_1) - Z^{(n)}_{\tau_n}(h_1) } > \frac{\varepsilon}{2} }
			+ \mathbb{P}\pr{ \abs{ Z^{(n)}_{\tau_n+\delta_n}(h_2) - Z^{(n)}_{\tau_n}(h_2) } > \frac{\varepsilon}{2} }.
		\end{align*}
		As $n \to \infty$, by Lemma \ref{lem:aldous_duality2} both probabilities on the right hand side converge to $0$. Thus, the sequence $((Z^{(n)}_t(h))_{t \ge 0})_{n\in \mathbb N}$ satisfies Aldous' criteria in $\mathbb D([t_0, \infty), \mathbb R)$.
		If we assume in addition that $\Lambda = \emptyset$, the extension of Lemma \ref{lem:aldous_duality2} guarantees that the above argument holds also for $t_0 = 0$.
		This yields the desired result.
	\end{proof}
	
	\begin{proof}[Proof of Theorem \ref{thm:existence-of-Z-t}]
		Recall from the beginning of this section that our objective reduces to verifying the statements \eqref{eq:MM1}, \eqref{eq:MM2}, and \eqref{eq:MM3}.
		
		\textit{Verification of \eqref{eq:MM1}.} Let $(\tilde Z_t)_{t> 0}$ be a $\mathcal N$-valued Markov process given as in Proposition \ref{Part-1-proof-of-thrm-1.2}.
		From Proposition \ref{prop:cadlag}, $(\tilde Z_t)_{t> 0}$ admits a c\`adl\`ag modification which will be denoted by $(Z_t)_{t>0}$.
		Note that the $\mathcal N$-valued c\`adl\`ag processes $(Z^{(n)}_t)_{t > 0}$ converge in finite dimensional distributions to $(Z_t)_{t > 0}$ as $n\to\infty$.
		Fix an arbitrary $t_0>0$.
		We only have to argue that the $\mathcal N$-valued c\`adl\`ag processes $(Z^{(n)}_t)_{t\geq t_0}$ converge in distributions to $(Z_t)_{t\geq t_0}$ in the path space $\mathbb D([t_0, \infty), \mathcal N)$ as $n\to \infty$.
		Thanks to \cite{MR4226142}*{Theorems 23.9 (1) \& 23.11}, this reduces to verifying that the sequence of processes $((Z^{(n)}_t)_{t> 0})_{n\in \mathbb N}$ satisfies Aldous' criteria in $\mathbb D([t_0, \infty), \mathcal N)$, which is already established by Lemma \ref{lem:FS}. This verifies \eqref{eq:MM1}.
		
		\textit{Verification of \eqref{eq:MM2}.} It is clear that the limit process $(Z_t)_{t> 0}$ and $(\tilde Z_t)_{t> 0}$ share the same finite dimensional distributions.
		Therefore, both of $(Z_t)_{t> 0}$ and $(\tilde Z_t)_{t> 0}$ are Markov processes sharing the same entrance laws $(\mathscr P^{(\Lambda, \mu)}_t)_{t> 0}$ and transition kernels $(\mathscr Q_t)_{t\geq 0}$ given as in Proposition \ref{Part-1-proof-of-thrm-1.2}.
		As a consequence, the law of the process $(Z_t)_{t>0}$ is uniquely determined by the initial trace $(\Lambda, \mu)$ and the branching mechanisms $(\Psi, \Phi)$. This establishes \eqref{eq:MM2}.
		
		\textit{Verification of \eqref{eq:MM3}.} Assume in addition that $\Lambda = \emptyset$. Since the monotonically increasing sequence $(\mu_n)_{n\in \mathbb N}$ in $\mathcal N$ converges m-weakly to $(\emptyset, \mu)$, the restriction condition in the definition of m-weak convergence reduces to the vague convergence $\mu_n \to \mu$ in $\mathcal N$. In particular, $Z^{(n)}_0 = \mu_n$ converges in $\mathcal N$ to $\mu =: Z_0$. Combining the convergence in finite-dimensional distributions on $(0,\infty)$ (established in Proposition \ref{Part-1-proof-of-thrm-1.2}), the initial consistency $Z_0^{(n)} \to Z_0$ in $\mathcal N$, and the Aldous' criteria in $\mathbb D([0, \infty), \mathcal N)$ provided by Lemma \ref{lem:FS}, we conclude from \cite{MR4226142}*{Theorems 23.9 (1) \& 23.11} that $(Z^{(n)}_\cdot)_{n\in \mathbb N}$ converges in distribution to $(Z_t)_{t\geq 0}$ in $\mathbb D([0, \infty), \mathcal N)$. This establishes \eqref{eq:MM3}.
	\end{proof}

	\section{Coming down from infinity: Proof of \texorpdfstring{\textcolor{blue}{Theorems \ref{Comming-down-finite-first-moment} and \ref{thm:CDIrate}}}{Theorems \ref{Comming-down-finite-first-moment} and \ref{thm:CDIrate}}}\label{Main}

	Let the offspring parameters $\beta_{\mathrm o}$, $(p_k)_{k=0}^\infty$, $\beta_{\mathrm c}$ and $(q_k)_{k=0}^\infty$ be given as in \eqref{eq:OBR}--\eqref{eq:CB}.
	Assume that \eqref{asp:A3}, \eqref{asp:A1} and \eqref{asp:A2} hold.
	
	Let $(\Lambda, \mu) \in \mathcal T_\mathrm{a}$.
	Let $\Phi$ and $\Psi$ be given as in \eqref{Def-Phi} and \eqref{Def-Psi} respectively.
	Let $(Z_t)_{t>0}$  be an SBBM with initial trace $(\Lambda, \mu)$, ordinary branching mechanism $\Phi$, and catalytic branching mechanism $\Psi$.
	That is to say, $(Z_t)_{t>0}$ is the unique in law $\mathcal N$-valued c\`{a}dl\`{a}g Markov process given as in Theorem \ref{thm:existence-of-Z-t}.
	Note that the entrance law $(\mathscr P_t^{(\Lambda, \mu)})_{t>0}$ and the transition kernels $(\mathscr Q_t)_{t\geq 0}$ of the process $(Z_t)_{t>0}$ are given as in Proposition \ref{Part-1-proof-of-thrm-1.2}.
	For every $(\tilde \Lambda, \tilde \mu)\in \mathcal T$, let $(v_t^{(\tilde \Lambda, \tilde \mu)}(x))_{t>0,x\in \mathbb R} \in \mathcal C^{1,2}((0,\infty)\times \mathbb R)$ be the unique non-negative solution to the equation \eqref{PDE}.
	Let $(x_i)_{i=1}^\infty$ be a sequence in $\mathbb R$ such that $(\Lambda, \mu)$ is the m-weak limit of $(\sum_{i=1}^n \delta_{x_i})_{n\in\mathbb N}$.
	
	In this section, we will prove
	Theorems \ref{Comming-down-finite-first-moment} and \ref{thm:CDIrate}. We will first establish Theorem \ref{thm:CDIrate}, as its rate of convergence is instrumental for proving the Coming Down from Infinity (CDI) property in Theorem \ref{Comming-down-finite-first-moment} (iv).
	We assume without loss of generality that $(Z_t)_{t>0}$ is the canonical process of $\mathbb D((0,\infty), \mathcal N)$, the space of $\mathcal N$-valued c\`adl\`ag paths indexed by $(0,\infty)$.
	More precisely, for any $\omega \in \mathbb D((0,\infty), \mathcal N)$ and $t>0$, $Z_t(\omega)= w_t$.
	Note that  this setup is different from Section \ref{sec:PZ} where $(Z_t)_{t>0}$ was the c\`adl\`ag modification of the canonical process of the path space $\mathcal N^{(0,\infty)}$.
	Correspondingly, we redefine our probability space  $\Omega := \mathbb D((0,\infty), \mathcal N)$.
	Let $\mathcal F^Z$ and $(\mathcal F^Z_t)_{t>0}$ be the natural $\sigma$-field and the natural filtration generated by the process $(Z_t)_{t>0}$.
	For any closed subset $\tilde \Lambda$ of $\mathbb R$ and integer-valued locally finite measure $\tilde \mu$ on $\tilde \Lambda^{\mathrm c}$, denote by $\mathbb P_{(\tilde \Lambda, \tilde \mu)}$ the law of an SBBM with initial trace $(\tilde \Lambda, \tilde \mu)$ induced on $(\Omega,\mathcal F^Z)$.
	Let the $\sigma$-field $\mathcal F$ and the filtration $(\mathcal F_t)_{t>0}$ be the usual augmentation of $\mathcal F^Z$ and $(\mathcal F^Z_t)_{t>0}$ with respect to the probability $\mathbb P_{(\Lambda, \mu)}$.
	
	Intuitively speaking, $Z_t$ behaves similarly to its mean field counterpart $v_t$ when $t$ is small.
	The next lemma gives the integrability of $v_t$ on a given interval $U$ in terms of the intersection between $U$ and the initial trace.
	
	\begin{lemma}\label{lemma5}
		Let $U$ be an open interval of $\mathbb R$.
		\begin{itemize}
			\item[(i)] Let $t>0$. Then
			$ \int_U v_{t}^{(\Lambda, \mu)}(x)\mathrm{d}  x<\infty $
			if and only if $U\cap \textup{supp}(\Lambda, \mu)$ is bounded.
			\item[(ii)] Suppose that $U\cap \textup{supp}(\Lambda, \mu)$ is bounded.
			Then \[
			\bar{U} \cap \Lambda = \emptyset \implies \limsup_{t\downarrow 0} \int_U v_{t}^{(\Lambda, \mu)}(x)\mathrm{d}  x<\infty\]
			and
			\[
			\bar{U} \cap \Lambda \neq \emptyset \implies \lim_{t\downarrow 0} \int_U v_{t}^{(\Lambda, \mu)}(x)\mathrm{d}  x=\infty.
			\]
		\end{itemize}
	\end{lemma}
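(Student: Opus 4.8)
The plan is to reduce everything, via the scaling identity \eqref{Identity}, to the corresponding facts for the unit-coefficient solution $\hat v$, which is precisely the object analysed in \cite[Lemma 2.3 and Lemma 3.1]{MR4698025}; but since all the estimates \eqref{eq:Upper-bound-of-v}--\eqref{Property-3} are already available for $v^{(\tilde\Lambda,\tilde\mu)}$ itself, I will argue directly. The two structural tools used throughout are the monotonicity of $(\tilde\Lambda,\tilde\mu)\mapsto v^{(\tilde\Lambda,\tilde\mu)}$ in the natural partial order on traces, and the subadditivity ``the solution attached to a trace that is the union of two traces is bounded by the sum of the two solutions'' and its variants; both are standard consequences of the sub/supersolution structure of \eqref{PDE} (cf.\ \cite{MR1429263}). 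I will also use the elementary identity $\operatorname{supp}(\Lambda,\mu)=\overline{\{x_i:i\in\mathbb N\}}$, which gives, for an \emph{open} interval $U$, the equivalence of ``$U\cap\operatorname{supp}(\Lambda,\mu)$ bounded'' with ``$U\cap\{x_i:i\in\mathbb N\}$ bounded'' (any $z\in U\cap\overline{\{x_i\}}$ with $U$ open lies in $\overline{U\cap\{x_i\}}$).

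For (i), the ``if'' direction is then immediate from Lemma \ref{lemma:integral-of-v}(i) and the equivalence above. For ``only if'' I argue by contraposition. If $U\cap\operatorname{supp}(\Lambda,\mu)$ is unbounded, say from above, then $U=(a,\infty)$ and I may choose $z_k\in U\cap\operatorname{supp}(\Lambda,\mu)$ with $z_k\uparrow\infty$, $z_{k+1}>z_k+2$ and $(z_k-1,z_k+1)\subset U$. Each $z_k$ lies in $\Lambda$ or in $\operatorname{supp}(\mu)$: in the former case monotonicity gives $v_t^{(\Lambda,\mu)}\ge v_t^{(\{z_k\},\mathbf 0)}$; in the latter there is an atom $y_k$ of $\mu$ with $\mu(\{y_k\})\ge 1$ in $(z_k-\tfrac12,z_k+\tfrac12)$ and $v_t^{(\Lambda,\mu)}\ge v_t^{(\emptyset,\delta_{y_k})}$. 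Using translation invariance \eqref{eq:TEV} and the fact that, for every fixed $t>0$, both $v_t^{(\{0\},\mathbf 0)}$ and $v_t^{(\emptyset,\delta_0)}$ are strictly positive on $\mathbb R$ (non-negative, not identically zero, and supersolutions of a linear parabolic equation on each $[s,t]$ by \eqref{eq:Upper-bound-of-v}, hence positive everywhere by the strong maximum principle), one gets $\int_{z_k-1}^{z_k+1}v_t^{(\Lambda,\mu)}(x)\,\diff x\ge c_t$ with $c_t>0$ not depending on $k$; summing over the disjoint intervals $(z_k-1,z_k+1)\subset U$ yields $\int_U v_t^{(\Lambda,\mu)}=\infty$.

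For (ii), assume $U\cap\operatorname{supp}(\Lambda,\mu)$ is bounded. If $\bar U\cap\Lambda=\emptyset$, then $r_0:=\operatorname{dist}(\bar U,\Lambda)>0$ ($\Lambda$ is closed and disjoint from the closed interval $\bar U$); fix any $R>0$, put $B_R:=\{x:\operatorname{dist}(x,U)<R\}$, and use subadditivity to write $v_t^{(\Lambda,\mu)}\le v_t^{(\Lambda,\mathbf 0)}+v_t^{(\emptyset,\mathbf 1_{B_R}\mu)}+v_t^{(\emptyset,\mathbf 1_{B_R^{\mathrm c}}\mu)}$. The hypothesis forces $\mu(B_R)<\infty$ (the set $B_R\cap\operatorname{supp}\mu$ is bounded), so $\int_U v_t^{(\emptyset,\mathbf 1_{B_R}\mu)}\le\int_U p_t\ast(\mathbf 1_{B_R}\mu)\le\mu(B_R)$. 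By monotonicity and subadditivity, $v_t^{(\Lambda,\mathbf 0)}$ and $v_t^{(\emptyset,\mathbf 1_{B_R^{\mathrm c}}\mu)}$ are dominated by sums of solutions whose singular sets are half-lines at distance $\ge\min(r_0,R)$ from $U$; translating via \eqref{eq:TEV} and applying \eqref{Property-2}, their integrals over $U$ are $O\!\bigl(t^{-1/2}e^{-\min(r_0,R)^2/(2t)}\bigr)\to 0$ as $t\downarrow 0$. Hence $\limsup_{t\downarrow 0}\int_U v_t^{(\Lambda,\mu)}\le\mu(B_R)<\infty$. If instead $\bar U\cap\Lambda\ne\emptyset$, pick $z\in\bar U\cap\Lambda$; by monotonicity $v_t^{(\Lambda,\mu)}\ge v_t^{(\{z\},\mathbf 0)}$, and since $z$ belongs to the singular set of the trace $(\{z\},\mathbf 0)$, the defining property in \eqref{PDE} gives $\lim_{t\downarrow 0}\int_{z-r}^{z+r}v_t^{(\{z\},\mathbf 0)}=\infty$ for every $r>0$. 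Choosing $r$ so small that $(z-r,z+r)\subset U$ when $z\in U$, and otherwise using the reflection symmetry \eqref{eq:SV} of $v_t^{(\{z\},\mathbf 0)}$ about $z$ to retain a positive fraction of that integral when $z\in\partial U$, I conclude $\lim_{t\downarrow 0}\int_U v_t^{(\Lambda,\mu)}=\infty$.

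The main obstacle — the place where the argument is least routine — is the ``only if'' half of (i) together with the $\bar U\cap\Lambda=\emptyset$ case of (ii): one must prevent a distant but arbitrarily concentrated part of $\mu$ from spoiling finiteness, or small-time boundedness, of $\int_U v_t$. The key point making this work is that monotonicity dominates \emph{any} portion of $\mu$ at distance $\ge R$ from $U$ by the minimal large solution with a half-line singular set at distance $\ge R$; by \eqref{Property-2} this contributes $o(1)$ to $\int_U v_t$ as $t\downarrow 0$ once $R>0$ is fixed, so its concentration is immaterial. This is exactly the mechanism of \cite[Lemma 2.3]{MR4698025}, to which the statement reduces via \eqref{Identity}; the remaining verifications (the elementary Gaussian integrals, the identity $\operatorname{supp}(\Lambda,\mu)=\overline{\{x_i\}}$, and finiteness of $\mu$ on bounded sets) are routine.
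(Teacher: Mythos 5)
Your argument is correct in substance, and it diverges from the paper mainly in part (ii). For (i), your proof is essentially the paper's: the ``if'' direction is Lemma \ref{lemma:integral-of-v} (i) (your remark that ``$U\cap\operatorname{supp}(\Lambda,\mu)$ bounded'' is equivalent, for open $U$, to ``$U\cap\{x_i\}$ bounded'' is a small point the paper leaves implicit), and the ``only if'' direction is the paper's disjoint-intervals argument, where the paper simply writes $v^{(\Lambda,\mu)}\ge v^{(\emptyset,\delta_{z_i})}$ via the monotonicity \cite[(2.4)]{MR4698025} and translation invariance, while you split into the cases $z_k\in\Lambda$ and $z_k\in\operatorname{supp}(\mu)$ and justify positivity of the resulting constant; both versions rest on the same monotonicity and on strict positivity of $v_t^{(\emptyset,\delta_0)}$, which the paper also uses tacitly. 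For (ii), the paper does not argue at all: after the scaling identity \eqref{Identity} it cites \cite[Lemma 3.2]{MR4698025}. You instead give a self-contained proof from the toolbox \eqref{eq:Upper-bound-of-v}--\eqref{Property-3}: in the case $\bar U\cap\Lambda=\emptyset$ you split the trace, dominate the far part of $\mu$ and the set $\Lambda$ by maximal solutions with half-line singular sets and kill their contribution on $U$ as $t\downarrow0$ via \eqref{Property-2}, and dominate the nearby part of $\mu$ by the heat semigroup; in the case $\bar U\cap\Lambda\neq\emptyset$ you use monotonicity down to $(\{z\},\mathbf 0)$ together with the blow-up clause in \eqref{PDE} and the symmetry \eqref{eq:SV} when $z\in\partial U$. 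This buys independence from the cited lemma at the cost of re-proving it; the mechanism is the same as in \cite[Lemma 3.2]{MR4698025}.

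One step needs repair as written: in the case $\bar U\cap\Lambda=\emptyset$ you ``fix any $R>0$'' and assert that the hypothesis forces $\mu(B_R)<\infty$ because $B_R\cap\operatorname{supp}\mu$ is bounded. That inference is false in general: $\mu$ is only a Radon measure on $\Lambda^{\mathrm c}$, so it can put infinite mass on a bounded set whose closure meets $\Lambda$ (take $U=(0,1)$, $\Lambda=\{2\}$, $\mu=\sum_i\delta_{2-1/i}$, $R=3$). The fix is to use the standing assumption $\bar U\cap\Lambda=\emptyset$, which for an interval $U$ gives $r_0:=\operatorname{dist}(\bar U,\Lambda)>0$, and to choose $R<r_0$; then $\overline{B_R}\subset\Lambda^{\mathrm c}$, the set $\overline{B_R}\cap\operatorname{supp}\mu$ is compact (its part outside $U$ lies within distance $R$ of the finite endpoints of $U$, and its part inside $U$ is bounded by hypothesis), and local finiteness of $\mu$ on $\Lambda^{\mathrm c}$ gives $\mu(B_R)<\infty$, after which your estimate $\limsup_{t\downarrow0}\int_U v_t^{(\Lambda,\mu)}\,\diff x\le\mu(B_R)$ goes through. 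With this adjustment (and granting the standard facts you invoke: subadditivity of traces via the supersolution property, domination $v^{(\emptyset,\nu)}_t\le p_t*\nu$, and strict positivity of $v_t^{(\emptyset,\delta_0)}$ and $v_t^{(\{0\},\mathbf 0)}$), the proof is complete.
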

	\begin{proof}
		(i).
		The sufficiency of the boundedness of $U\cap \textup{supp}(\Lambda, \mu)$ follows from Lemma \ref{lemma:integral-of-v} (i).
		To show its necessity, let us assume otherwise that $U\cap \textup{supp}(\Lambda, \mu)$ is unbounded.
		In this case, we can find a sequence $(z_i)_{i\in \mathbb N}$ in $U\cap \textup{supp}(\Lambda, \mu)$ such that $\{(z_i-1,z_i+1): i\in \mathbb N\}$ is a family of disjoint subintervals of $U$.
		Then, by \eqref{Identity} and \cite{MR4698025}*{(2.4)}, we can verify
		\begin{align}
			& \int_U v_{t}^{(\Lambda, \mu)}(x)\mathrm{d}  x
			\geq \sum_{i=1}^\infty \int_{z_i-1}^{z_i+1} v_{t}^{(\Lambda, \mu)}(x)\mathrm{d}  x
			\\&\geq \sum_{i=1}^\infty \int_{z_i-1}^{z_i+1} v_{t}^{(\emptyset, \delta_{z_i})}(x)\mathrm{d}  x
			= \sum_{i=1}^\infty \int_{-1}^{1} v_{t}^{(\emptyset, \delta_{0})}(x)\mathrm{d}  x = \infty.
		\end{align}
		
		(ii). Observing \eqref{Identity}, this is done in \cite{MR4698025}*{Lemma 3.2}.
	\end{proof}

	The next two lemmas demonstrate how the ``density'' of $Z_t$ is comparable to the solution $v^{(\Lambda, \mu)}_t$ of the MFE.

	\begin{lemma}\label{lemma6}
		Let $U$ be an open interval such that $U\cap \textup{supp}(\Lambda, \mu)$ is bounded. Suppose that $\bar{U}\cap \Lambda \neq \emptyset$. Then
		\[
		\pr{\int_U v_{t}^{(\Lambda,\mu)}(x)\mathrm{d}  x}^{-1} \mathbb{E}_{(\Lambda,\mu)} \brk{Z_t(U)}\stackrel{t\downarrow 0}{\longrightarrow}1.
		\]
	\end{lemma}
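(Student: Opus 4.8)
The plan is to mimic the proof of \cite[Lemma 3.3]{MR4698025}, combining the moment duality with the two-sided Laplace-transform estimates of Lemmas \ref{lem:1} and \ref{lem:2}. Write $V_t:=\int_U v_t^{(\Lambda,\mu)}(x)\diff x$. By Lemma \ref{lemma5}(i) we have $V_t<\infty$ for every $t>0$ (as $U\cap\operatorname{supp}(\Lambda,\mu)$ is bounded), and by Lemma \ref{lemma5}(ii) we have $V_t\to\infty$ as $t\downarrow 0$ (as $\bar U\cap\Lambda\neq\emptyset$). First I would fix a closed interval $F\supseteq\bigcup_{i\in\mathbb N}(x_i-1,x_i+1)$ with $U\cap F$ bounded: since $\{x_i\}\subseteq\operatorname{supp}(\Lambda,\mu)$, boundedness of $U\cap\operatorname{supp}(\Lambda,\mu)$ forces $\operatorname{supp}(\Lambda,\mu)$ — hence $\{x_i\}$ — to lie in a half-line not containing the unbounded end of $U$ (unless $U$ is itself bounded, in which case any such $F$ works), so an admissible $F$ exists. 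With this $F$, Proposition \ref{prop1} gives $\mathbb E_{(\Lambda,\mu)}[Z_t(U)]<\infty$, Lemma \ref{lem: Prob-w} gives finite constants $\Cr{c:UFtg}(U,F,t,\cdot)$ with $\limsup_{t\downarrow 0}\Cr{c:UFtg}(U,F,t,\cdot)<\infty$, and Lemma \ref{lemma:integral-of-v}(ii) gives $\mathcal V_t^{(\Lambda,\mu,F)}\to 0$ as $t\downarrow 0$.

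The key step is to note that for small $\varepsilon\in(0,1)$ the function $\varepsilon\mathbf 1_U$ is a legitimate (lower-approximable, $[0,z^*]$-valued) initial datum for the dual SPDE, and by Proposition \ref{prop:ID} with $f=\varepsilon\mathbf 1_U$,
\[
\mathbb E_{(\Lambda,\mu)}\brk{(1-\varepsilon)^{Z_t(U)}}=\tilde{\mathbb E}_{\varepsilon\mathbf 1_U}\brk{\prod_{i=1}^\infty\pr{1-u_t(x_i)}}.
\]
Since $\varepsilon^{-1}\pr{1-(1-\varepsilon)^n}=\sum_{k=0}^{n-1}(1-\varepsilon)^k\uparrow n$ as $\varepsilon\downarrow 0$, the monotone convergence theorem yields $\mathbb E_{(\Lambda,\mu)}[Z_t(U)]=\lim_{\varepsilon\downarrow 0}\varepsilon^{-1}\bigl(1-\tilde{\mathbb E}_{\varepsilon\mathbf 1_U}[\prod_i(1-u_t(x_i))]\bigr)$. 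Now I would insert the bounds: chaining \eqref{Lower:w} of Lemma \ref{lem:1} with \eqref{eq:LIL} of Lemma \ref{lem:2} (parameter $\gamma\in(0,1)$) gives $\tilde{\mathbb E}_{\varepsilon\mathbf 1_U}[\prod_i(1-u_t(x_i))]\ge\exp\{-\tfrac{\varepsilon e^{\lambda_{\mathrm o}t}}{1-\gamma}V_t\}-\varepsilon C_1(t,\gamma)$, while chaining \eqref{Upper:w} of Lemma \ref{lem:1} with \eqref{eq:LIU} of Lemma \ref{lem:2} (parameter $\gamma'\in(0,1)$) gives $\tilde{\mathbb E}_{\varepsilon\mathbf 1_U}[\prod_i(1-u_t(x_i))]\le\exp\{-\varepsilon\kappa(\gamma')e^{-\beta_{\mathrm o}t}V_t\}+\varepsilon C_2(t,\gamma')$, where $C_1,C_2$ collect the $\mathcal V_t^{(\Lambda,\mu,F)}$-term and the $\Cr{c:UFtg}$-terms, all of which are $O(\varepsilon)$ with $\varepsilon$-independent constants by Lemma \ref{lem: Prob-w}. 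Dividing by $\varepsilon$, letting $\varepsilon\downarrow 0$ and using $\lim_{\varepsilon\downarrow 0}\varepsilon^{-1}(1-e^{-c\varepsilon})=c$, I get
\[
\kappa(\gamma')\,e^{-\beta_{\mathrm o}t}\,V_t-C_2(t,\gamma')\ \le\ \mathbb E_{(\Lambda,\mu)}\brk{Z_t(U)}\ \le\ \frac{e^{\lambda_{\mathrm o}t}}{1-\gamma}\,V_t+C_1(t,\gamma).
\]

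Finally I would divide through by $V_t$ and send $t\downarrow 0$: since $V_t\to\infty$, $e^{\pm\beta_{\mathrm o}t}\to1$, $e^{\lambda_{\mathrm o}t}\to1$, $\mathcal V_t^{(\Lambda,\mu,F)}\to0$, and $\limsup_{t\downarrow0}\Cr{c:UFtg}(U,F,t,\cdot)<\infty$, the error quotients $C_1(t,\gamma)/V_t$ and $C_2(t,\gamma')/V_t$ vanish, leaving $\kappa(\gamma')\le\liminf_{t\downarrow0}V_t^{-1}\mathbb E_{(\Lambda,\mu)}[Z_t(U)]$ and $\limsup_{t\downarrow0}V_t^{-1}\mathbb E_{(\Lambda,\mu)}[Z_t(U)]\le(1-\gamma)^{-1}$. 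Letting $\gamma\downarrow0$ and $\gamma'\downarrow0$ (recall $\kappa(\gamma')\to1$ by \eqref{Def-Of-kappa}) pins the limit to $1$, which is the assertion since $V_t=\int_U v_t^{(\Lambda,\mu)}(x)\diff x$. The only delicate point is the bookkeeping in the iterated limit $\varepsilon\downarrow0$ then $t\downarrow0$ — ensuring the error constants produced by Lemmas \ref{lem:1}, \ref{lem:2} and \ref{lem: Prob-w} are uniform in $\varepsilon$ and stay bounded (resp.\ vanish) as $t\downarrow0$ — which is precisely what those lemmas were designed to supply, so no essentially new obstacle arises beyond that already handled in \cite{MR4698025}.
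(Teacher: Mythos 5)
Your proposal is correct and follows essentially the same route as the paper: duality (Proposition \ref{prop:ID}) combined with the two-sided Laplace-transform bounds of Lemmas \ref{lem:1} and \ref{lem:2}, with the error terms controlled by Lemmas \ref{lem: Prob-w} and \ref{lemma:integral-of-v} and then killed by $\int_U v_t^{(\Lambda,\mu)}\,\diff x\to\infty$ from Lemma \ref{lemma5}(ii). The only cosmetic difference is that the paper obtains the upper bound by citing Proposition \ref{prop1} (whose proof is exactly your chain of \eqref{Lower:w} and \eqref{eq:LIL}), whereas you re-derive that bound inline.
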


	\begin{proof}
		Let $\gamma \in (0,1)$ be arbitrary and $\gamma_0=\gamma \Psi'(0+)/(2\beta_\mathrm c)$.
		Let $F$ be the smallest closed interval containing $\cup_{i=1}^\infty (x_i-1,x_i+1)$.
		It is clear that $U\cap F$ is bounded.
		By Lemmas \ref{lemma:integral-of-v} (ii), \ref{lem: Prob-w} and \ref{lemma5} (ii), we have
		\begin{align}\label{limit-zero}
			\lim_{t\downarrow 0} \frac{\mathcal{V}_t^{(\Lambda, \mu, F)}}{\int_U v_{t}^{(\Lambda,\mu)}(x)\mathrm{d}  x} = \lim_{t\downarrow 0} \frac{\Cr{c:UFtg}(U,F,t,\gamma_0)}{\int_U v_{t}^{(\Lambda,\mu)}(x)\mathrm{d}  x} = \lim_{t\downarrow 0} \frac{\Cr{c:UFtg}(U,F,t,\gamma)}{\int_U v_{t}^{(\Lambda,\mu)}(x)\mathrm{d}  x}=0.
		\end{align}
		Therefore, by Proposition \ref{prop1}, we see that
		\begin{align}\label{step_1}
			\limsup_{t\downarrow 0} \frac{\mathbb{E}_{(\Lambda,\mu)}\brk{Z_t(U)}}{\int_U v_{t}^{(\Lambda,\mu)}(x)\mathrm{d}  x}\leq \frac{1}{1-\gamma}\stackrel{\gamma\downarrow 0}{\longrightarrow}1.
		\end{align}
		On the other hand, combining Proposition \ref{prop:ID}, Lemmas \ref{lem:1},  \ref{lem:2} and \ref{lem: Prob-w},  we have for any $\gamma \in (0,1/2)$, $\varepsilon \in (0,\gamma/2)$ and $t>0$, taking $\kappa(\gamma)$ as in \eqref{Def-Of-kappa},
		\begin{align}\label{summary-upper-Z-t-U}
			& \qquad  \mathbb{E}_{(\Lambda,\mu)}\brk{  (1-\varepsilon)^{Z_t(U)} }
			= \tilde {\mathbb E}_{\varepsilon\mathbf 1_U}\brk{\prod_{i=1}^\infty (1-u_t(x_i))}
			\\&\leq \tilde{\mathbb E}_{\varepsilon \mathbf 1_U }\brk{\exp\brc{-\sum_{i=1}^\infty u_t(x_i)}} +  \tilde{\mathbb{P}}_{\varepsilon  \mathbf 1_U }\pr{\sup_{s\leq t, y\in F} u_s(y)> \gamma}.
			\\&\leq \exp\brc{-\varepsilon \kappa(\gamma) e^{-\beta_{\mathrm o}  t}  \int_U v_{t}^{(\Lambda,\mu)}(y) \mathrm{d}  y  } +2\tilde{\mathbb{P}}_{\varepsilon  \mathbf 1_U }\pr{\sup_{s\leq t, y\in F} u_s(y)> \gamma} + \varepsilon \beta_{\mathrm c} e^{\lambda_{\mathrm o} t} \mathcal{V}_t^{(\Lambda, \mu, F)}\\
			& \leq  \exp\brc{-\varepsilon \kappa(\gamma) e^{-\beta_{\mathrm o}  t}  \int_U v_{t}^{(\Lambda,\mu)}(y) \mathrm{d}  y  } +  2\varepsilon \Cr{c:UFtg}(U,F,t,\gamma) + \varepsilon \beta_{\mathrm c} e^{\lambda_{\mathrm o} t} \mathcal{V}_t^{(\Lambda, \mu, F)}.
		\end{align}
		Therefore, for any $\gamma \in (0,1/2)$ and $t>0$,
		\begin{align}
			& \E_{(\Lambda, \mu)} \brk{Z_t(U)}
			= \lim_{\varepsilon \downarrow 0} \frac{1}{\varepsilon}\pr{1- \E_{(\Lambda, \mu)} \brk{(1-\varepsilon)^{Z_t(U)}}}                                                                                                    \\
			& \geq \kappa(\gamma) e^{-\beta_{\mathrm o}  t}  \int_U v_{t}^{(\Lambda,\mu)}(y) \mathrm{d}  y -2\Cr{c:UFtg} \pr{U, F, t, \gamma } - \beta_{\mathrm c} e^{\lambda_{\mathrm o} t} \mathcal{V}_t^{(\Lambda, \mu, F)}.
		\end{align}
		Using \eqref{limit-zero}, we conclude that
		\begin{align}\label{step_2}
			\liminf_{t\downarrow 0} \frac{\mathbb{E}_{(\Lambda, \mu)}\brk{Z_t(U)}}{\int_U v_{t}^{(\Lambda,\mu)}(x)\mathrm{d}  x}\geq \kappa(\gamma) \stackrel{\gamma\downarrow 0}{\longrightarrow} 1.
		\end{align}
		Therefore, we arrive at the desired result by \eqref{step_1} and \eqref{step_2}.
	\end{proof}
	
	\begin{old}
		\begin{lemma}\label{lemma6+}
			Let $U$ be an open interval such that $U\cap \textup{supp}(\Lambda, \mu)$ is bounded. Suppose that $\bar{U}\cap \Lambda \neq \emptyset$. Then
			\[
			\pr{\int_U v_{t}^{(\Lambda,\mu)}(x)\mathrm{d}  x}^{-1}Z_t(U)\stackrel{t\downarrow 0}{\longrightarrow}1, \quad \mbox{in probability}.
			\]
		\end{lemma}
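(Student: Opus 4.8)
Write $a_t:=\int_U v_t^{(\Lambda,\mu)}(x)\diff x$. Since $\bar U\cap\Lambda\neq\emptyset$, Lemma~\ref{lemma5}(ii) gives $a_t\to\infty$ as $t\downarrow 0$, so the assertion is genuinely a ``law of large numbers''. The plan is to prove that $\mathbb{E}_{(\Lambda,\mu)}\brk{e^{-\theta Z_t(U)/a_t}}\to e^{-\theta}$ as $t\downarrow 0$ for every $\theta>0$; since $Z_t(U)/a_t\geq 0$, the continuity theorem for Laplace transforms then yields $Z_t(U)/a_t\to 1$ in distribution, and a constant limit upgrades this to convergence in probability.

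Fix $\theta>0$ and set $\varepsilon_t:=1-e^{-\theta/a_t}$, so that $\varepsilon_t\downarrow 0$ and $\varepsilon_t a_t\to\theta$ as $t\downarrow 0$. First I would apply the duality of Proposition~\ref{prop:ID} with the initial datum $f=\varepsilon_t\mathbf 1_U$, which is $[0,z^*]$-valued and approximable from below by elements of $\mathcal C(\mathbb R,[0,z^*])$ because $U$ is open; using $(1-\varepsilon_t)^{Z_t(U)}=e^{-\theta Z_t(U)/a_t}$ this gives
\[
\mathbb{E}_{(\Lambda,\mu)}\brk{e^{-\theta Z_t(U)/a_t}}=\tilde{\mathbb E}_{\varepsilon_t\mathbf 1_U}\brk{\prod_{i=1}^\infty\pr{1-u_t(x_i)}}.
\]
Next I would fix a closed interval $F\supseteq\bigcup_{i\in\mathbb N}(x_i-1,x_i+1)$ with $U\cap F$ bounded; this is possible because the boundedness of $U\cap\operatorname{supp}(\Lambda,\mu)$ forces $U\cap\{x_i:i\in\mathbb N\}$ to be bounded, so one may take $F=\mathbb R$ when $U$ is bounded and an appropriate half-line otherwise. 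With $\gamma\in(0,1)$ fixed and $t$ small enough that $\varepsilon_t$ lies below every threshold required, I would then feed the right-hand side first through Lemma~\ref{lem:1} (with $\varepsilon=\varepsilon_t$) and afterwards through Lemma~\ref{lem:2}, which produces
\begin{align*}
\exp\brc{-\varepsilon_t\kappa(\gamma)e^{-\beta_{\mathrm o}t}a_t}+E_t^{+}(\gamma)
\ &\geq\ \mathbb{E}_{(\Lambda,\mu)}\brk{e^{-\theta Z_t(U)/a_t}}\\
&\geq\ \exp\brc{-\frac{\varepsilon_t e^{\lambda_{\mathrm o}t}}{1-\gamma}a_t}-E_t^{-}(\gamma),
\end{align*}
where $E_t^{\pm}(\gamma)$ are finite sums of terms of the forms $(\mathrm{const})\cdot\varepsilon_t\,\mathcal V_t^{(\Lambda,\mu,F)}$ and $\tilde{\mathbb P}_{\varepsilon_t\mathbf 1_U}\pr{\sup_{s\leq t,\,y\in F}u_s(y)>c}$ for various $c\in(0,1)$.

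Then I would send $t\downarrow 0$ with $\gamma$ frozen. Every term of $E_t^{\pm}(\gamma)$ vanishes: $\varepsilon_t\,\mathcal V_t^{(\Lambda,\mu,F)}\to0$ since $\varepsilon_t\to0$ and $\mathcal V_t^{(\Lambda,\mu,F)}<\infty$ by Lemma~\ref{lemma:integral-of-v}(ii), while each probability is bounded by $\varepsilon_t\,\Cr{c:UFtg}(U,F,t,c)\to0$ by Lemma~\ref{lem: Prob-w} (using $U\cap F$ bounded); meanwhile $\varepsilon_t a_t\to\theta$ and $e^{\pm\lambda_{\mathrm o}t},\,e^{-\beta_{\mathrm o}t}\to1$. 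Hence
\begin{align*}
e^{-\theta\kappa(\gamma)}
\ &\geq\ \limsup_{t\downarrow0}\mathbb{E}_{(\Lambda,\mu)}\brk{e^{-\theta Z_t(U)/a_t}}\\
&\geq\ \liminf_{t\downarrow0}\mathbb{E}_{(\Lambda,\mu)}\brk{e^{-\theta Z_t(U)/a_t}}\ \geq\ e^{-\theta/(1-\gamma)}.
\end{align*}
Letting $\gamma\downarrow0$ and using $\kappa(\gamma)\to1$ pins the limit at $e^{-\theta}$, completing the argument. (Combined with the mean convergence in Lemma~\ref{lemma6}, this also delivers the $L^1$ convergence claimed in Theorem~\ref{Comming-down-finite-first-moment}(v), via the usual Vitali/Scheff\'e argument for nonnegative random variables.)

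I do not expect a conceptual obstacle, since the scheme follows that of \cite{MR4698025}; the delicate points are organizational — carrying out the two-parameter limit in the order $t\downarrow0$ then $\gamma\downarrow0$ so that the $\gamma$-dependent constants coming from Lemmas~\ref{lem:1} and~\ref{lem:2} are handled correctly, and choosing the single interval $F$ so that it simultaneously satisfies the hypotheses of Lemmas~\ref{lem:1}, \ref{lem:2}, \ref{lemma:integral-of-v}(ii) and~\ref{lem: Prob-w}, in particular keeping $U\cap F$ bounded.
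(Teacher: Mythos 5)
Your proposal is correct, and its overall scheme coincides with the paper's: both prove $\mathbb{E}_{(\Lambda,\mu)}[e^{-\theta Z_t(U)/a_t}]\to e^{-\theta}$ by writing the Laplace functional through Proposition \ref{prop:ID} with initial datum $\varepsilon_t\mathbf 1_U$, where $\varepsilon_t=1-e^{-\theta/a_t}$ so that $\varepsilon_t a_t\to\theta$, controlling it with Lemma \ref{lem:2}, disposing of the error terms via Lemmas \ref{lem: Prob-w} and \ref{lemma:integral-of-v}(ii), and then sending $\gamma\downarrow 0$ after $t\downarrow 0$, exactly in that order. The only genuine divergence is in the lower bound on the Laplace transform: you derive it from the duality estimates \eqref{Lower:w} and \eqref{eq:LIL}, giving $\liminf_{t\downarrow 0}\geq e^{-\theta/(1-\gamma)}$, whereas the paper obtains it more cheaply from Jensen's inequality $\mathbb{E}[e^{-Y}]\geq e^{-\mathbb{E}[Y]}$ together with the first-moment asymptotics of Lemma \ref{lemma6}, which is already available at that point. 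Both routes are sound: yours is self-contained in that it never invokes Lemma \ref{lemma6} for this lemma (though, as you observe, Lemma \ref{lemma6} is still needed to upgrade the convergence in probability to the $L^1$ statement of Theorem \ref{Comming-down-finite-first-moment}(v)), while the paper's reuses the first-moment computation and thereby avoids repeating the $\varepsilon$--$\gamma$ bookkeeping on the lower-bound side. One small remark on your choice of $F$: ``a half-line otherwise'' does not cover the case $U=\mathbb{R}$ (where the support, hence $F$, must be bounded); the canonical choice used throughout the paper --- the smallest closed interval containing $\bigcup_{i\in\mathbb N}(x_i-1,x_i+1)$ --- satisfies all the hypotheses of Lemmas \ref{lem:1}, \ref{lem:2}, \ref{lemma:integral-of-v}(ii) and \ref{lem: Prob-w} simultaneously in every case, and is what you should quote.
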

	\end{old}
	
	Now we are ready to prove Theorem \ref{thm:CDIrate}.

	\begin{proof}[Proof of Theorem \ref{thm:CDIrate}]
		For any $\vartheta>0$ and $t>0$, define
		\begin{align}\label{Def-VarEp}
			\varepsilon (U, \vartheta, t):= 1- \exp\brc{- \pr{\int_U v_{t}^{(\Lambda,\mu)}(x)\mathrm{d}  x}^{-1}  \vartheta }.
		\end{align}
		By Lemma \ref{lemma5} (ii), we have
		\[
		e^{-\beta_{\mathrm o}  t}  \varepsilon (U, \vartheta, t)\int_U v_{t}^{(\Lambda,\mu)}(x)\mathrm{d}  x \stackrel{t\downarrow 0}{\longrightarrow} \vartheta, \quad \vartheta > 0.
		\]
		Let $F$ be the smallest closed interval containing $\cup_{i\in \mathbb N}(x_i-1, x_i+1)$.
		Note from \eqref{limit-zero} that for any $\vartheta > 0$,
		\[
		\lim_{t\downarrow 0}  \varepsilon (U, \vartheta, t) \Cr{c:UFtg}(U,F,t,\gamma)
		= \lim_{t\downarrow 0}  \varepsilon (U, \vartheta, t) \pr{\int_U v_{t}^{(\Lambda,\mu)}(x)\mathrm{d} x}  \frac{\Cr{c:UFtg}(U,F,t,\gamma) }{\int_U v_{t}^{(\Lambda,\mu)}(x)\mathrm{d}  x}=0,
		\]
		and similarly,
		$\lim_{t\downarrow 0}  \varepsilon (U, \vartheta, t) \mathcal V_t^{(\Lambda, \mu, F)} = 0. $
		From \eqref{summary-upper-Z-t-U}, for every $\gamma \in (0,1/2)$, $\vartheta>0$, and $t>0$ small enough such that $\varepsilon(U, \vartheta, t) < \gamma/2$,
		\begin{align}\label{step_5}
			& \mathbb{E}_{(\Lambda, \mu)}\brk{\exp\brc{-\vartheta\pr{\int_U v_{t}^{(\Lambda,\mu)}(x)\mathrm{d}  x}^{-1}  Z_t(U)}}
			= \mathbb{E}_{(\Lambda, \mu)}\brk{ \pr{1-\varepsilon (U, \vartheta, t)}^{Z_t(U)} }
			\\& \leq  \exp\brc{-\varepsilon(U, \vartheta, t) \kappa(\gamma) e^{-\beta_{\mathrm o}  t}  \int_U v_{t}^{(\Lambda,\mu)}(y) \mathrm{d}  y  } +  2 \varepsilon(U, \vartheta, t) \Cr{c:UFtg}(U,F,t,\gamma)
			\\& \qquad + \varepsilon(U, \vartheta, t) \beta_{\mathrm c} e^{\lambda_{\mathrm o} t} \mathcal{V}_t^{(\Lambda, \mu, F)}
			\\&\stackrel{ t\downarrow 0}{\longrightarrow}e^{-\vartheta \kappa(\gamma)}\stackrel{\gamma\downarrow 0}{\longrightarrow} e^{-\vartheta}.
		\end{align}
		Therefore, for every $\vartheta > 0$,
		\begin{equation}
			\limsup_{t\downarrow 0} \mathbb{E}_{(\Lambda, \mu)}\brk{\exp\brc{-\vartheta\pr{\int_U v_{t}^{(\Lambda,\mu)}(x)\mathrm{d}  x}^{-1}  Z_t(U)}} \leq e^{-\vartheta}.
		\end{equation}
		On the other hand, by Jensen's inequality $\mathbb{E}\brk{e^{-|Y|}}\geq e^{-\mathbb{E}[|Y|]}$ and Lemma \ref{lemma6}, for every $\vartheta > 0$,
		\begin{align}
			& \mathbb{E}_{(\Lambda, \mu)}\brk{\exp\brc{-\vartheta\pr{\int_U v_{t}^{(\Lambda,\mu)}(x)\mathrm{d}  x}^{-1}  Z_t(U)}}                                                                \\
			& \geq \exp\brc{-\vartheta \pr{\int_U v_{t}^{(\Lambda,\mu)}(x)\mathrm{d}  x}^{-1} \mathbb{E}_{(\Lambda, \mu)} \brk{Z_t(U)} }\stackrel{t\downarrow 0}{\longrightarrow}e^{-\vartheta}.
		\end{align}
		Therefore, we have
		\begin{equation}
			\lim_{t\downarrow 0}\mathbb{E}_{(\Lambda, \mu)}\brk{\exp\brc{-\vartheta\pr{\int_U v_{t}^{(\Lambda,\mu)}(x)\mathrm{d}  x}^{-1}  Z_t(U)}} = e^{-\vartheta}, \quad \vartheta >0.
		\end{equation}
		We are done.
	\end{proof}
	
	The following technical lemma will be used in the proof of Theorem \ref{Comming-down-finite-first-moment}.
	\begin{lemma}\label{lemma1}
		Let $m\in \mathbb N$, $(t_i)_{i=1}^m$ be a list in $(0,\infty)$, and $(f_i)_{i=1}^m$ be a list of non-negative elements in $\mathcal C_\mathrm c(\mathbb R)$.
		Then
		\begin{equation}
			\nu_0\mapsto \int_{\mathcal N^{m}} \prod_{i=1}^m \pr{e^{-\nu_i(f_i)}\mathscr Q_{t_i}(\nu_{i-1}, \mathrm{d} \nu_{i})}
		\end{equation}
		is a continuous function on $\mathcal N$.
	\end{lemma}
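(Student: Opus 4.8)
The plan is to prove the claim by induction on $m$, peeling off the last factor and using the Feller-type continuity of the individual transition kernels $\mathscr Q_{t}$ as captured by the bounded continuous functions $H_t^{h}$ from \eqref{Def-H-f} and Corollary \ref{lemma:general-vague-convergence2}. The base case $m=1$ is essentially Corollary \ref{cor:ID} combined with Corollary \ref{lemma:general-vague-convergence2}: for a non-negative $f_1\in\mathcal C_{\mathrm c}(\mathbb R)$, one first reduces to the case $\|1-e^{-f_1}\|_\infty\le \Psi'(0+)/(4\beta_{\mathrm c})$ by replacing $f_1$ with $\lambda f_1$ for small $\lambda>0$ and noting that $\int e^{-\lambda \nu_1(f_1)}\mathscr Q_{t_1}(\nu_0,\diff\nu_1)$, as a function of $\lambda$, determines $\int e^{-\nu_1(f_1)}\mathscr Q_{t_1}(\nu_0,\diff\nu_1)$ via analytic continuation / the identity theorem for Laplace transforms; more simply, one can write $\nu_0\mapsto \int e^{-\nu_1(f_1)}\mathscr Q_{t_1}(\nu_0,\diff\nu_1)$ directly in the form $H^{h}_{t_1}(\nu_0)$ with $h:=1-e^{-f_1}$ \emph{after} truncating $f_1$, so it is cleanest just to observe that for $g$ non-negative continuous with compact support and small sup-norm, $\nu_0\mapsto H_{t_1}^{1-e^{-g}}(\nu_0)=\int e^{-\nu_1(g)}\mathscr Q_{t_1}(\nu_0,\diff\nu_1)$ is bounded and continuous on $\mathcal N$ by Corollary \ref{lemma:general-vague-convergence2}, and then handle general non-negative $f_1\in\mathcal C_{\mathrm c}(\mathbb R)$ by the Laplace-transform argument just mentioned.

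For the inductive step, suppose the statement holds with $m-1$ factors. Write
\[
G_m(\nu_0):=\int_{\mathcal N^m}\prod_{i=1}^m\pr{e^{-\nu_i(f_i)}\mathscr Q_{t_i}(\nu_{i-1},\diff\nu_i)}
= \int_{\mathcal N} e^{-\nu_1(f_1)}\, G_{m-1}^{(1)}(\nu_1)\,\mathscr Q_{t_1}(\nu_0,\diff\nu_1),
\]
where $G_{m-1}^{(1)}(\nu_1):=\int_{\mathcal N^{m-1}}\prod_{i=2}^m\pr{e^{-\nu_i(f_i)}\mathscr Q_{t_i}(\nu_{i-1},\diff\nu_i)}$ is, by the induction hypothesis (applied to the shifted list $(t_i)_{i=2}^m$, $(f_i)_{i=2}^m$), a $[0,1]$-valued continuous function of $\nu_1$. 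Thus the integrand $\nu_1\mapsto e^{-\nu_1(f_1)}G_{m-1}^{(1)}(\nu_1)$ is a bounded continuous function on $\mathcal N$, and $G_m(\nu_0)=\int \Psi_1(\nu_1)\,\mathscr Q_{t_1}(\nu_0,\diff\nu_1)$ with $\Psi_1\in \mathcal C_{\mathrm b}(\mathcal N)$, $\Psi_1\in[0,1]$. It therefore suffices to show that $\mathscr Q_{t}$ is Feller in the sense that $\nu_0\mapsto \int \Psi(\nu_1)\mathscr Q_{t}(\nu_0,\diff\nu_1)$ is continuous for every $\Psi\in\mathcal C_{\mathrm b}(\mathcal N)$ with $0\le\Psi\le 1$.

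To get this Feller property, recall from Remark \ref{rem:LQ} that $\mathscr Q_t(\nu,\cdot)=\mathscr P^{(\emptyset,\nu)}_t$, and from Corollary \ref{cor:ID} (equivalently \eqref{eq:KC} extended) that the Laplace functionals $\int e^{-\nu_1(g)}\mathscr Q_t(\nu_0,\diff\nu_1)=\tilde{\mathbb E}_{1-e^{-g}}\big[\prod_{z\in\mathbb R}(1-u_t(z))^{\nu_0(\{z\})}\big]$ for $g$ non-negative continuous with compact support and small sup-norm are continuous in $\nu_0$ by Corollary \ref{lemma:general-vague-convergence2}. If $\nu_0^{(k)}\to\nu_0$ in $\mathcal N$, then the measures $\mathscr Q_t(\nu_0^{(k)},\cdot)$ converge weakly to $\mathscr Q_t(\nu_0,\cdot)$ on $\mathcal N$: indeed their Laplace functionals converge for the class of test functions $g$ just described, this class separates points of $\mathcal N$ and is closed under addition, and — using the uniform tightness that follows from the first-moment bound of Proposition \ref{prop1} together with Remark \ref{rem:LQ} (taking $\nu=\nu_0^{(k)}$, which have uniformly bounded masses on compacts since $\nu_0^{(k)}\to\nu_0$ vaguely) — one concludes convergence in distribution on $\mathcal N$ by \cite[Theorem 4.11]{MR3642325} or an analogous weak-convergence criterion on the space of locally finite measures. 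Consequently $\int\Psi\,\mathrm d\mathscr Q_t(\nu_0^{(k)},\cdot)\to\int\Psi\,\mathrm d\mathscr Q_t(\nu_0,\cdot)$ for every $\Psi\in\mathcal C_{\mathrm b}(\mathcal N)$, which is exactly the Feller property needed. Applying this with $\Psi=\Psi_1$ gives continuity of $G_m$ and completes the induction.

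The main obstacle is the weak-convergence step $\mathscr Q_t(\nu_0^{(k)},\cdot)\Rightarrow\mathscr Q_t(\nu_0,\cdot)$ on $\mathcal N$: convergence of Laplace functionals on a point-separating class does not by itself give weak convergence of probability measures on the non-compact Polish space $\mathcal N$ without a tightness input, so one genuinely needs the uniform first-moment control from Proposition \ref{prop1} (valid because a vaguely convergent sequence of locally finite integer measures is uniformly bounded on each compact set) to rule out escape of mass. Once tightness is in hand, identification of the limit via the convergent Laplace functionals is routine, and the rest of the argument is the bookkeeping of the induction described above.
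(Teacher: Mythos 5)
Your proof follows essentially the same route as the paper: establish continuity of $\nu_0\mapsto\int G\,\mathrm d\mathscr Q_{t}(\nu_0,\cdot)$ for bounded continuous $G$ by upgrading the continuity of the Laplace functionals (Corollaries \ref{lemma:general-vague-convergence2} and \ref{cor:ID}) to weak convergence of $\mathscr Q_{t}(\nu^{(N)},\cdot)$, and then induct on $m$ by peeling off the first kernel, exactly as in the paper. The only differences are cosmetic: the separate tightness argument via Proposition \ref{prop1} and the analytic-continuation treatment of the case $m=1$ are unnecessary, since \cite[Theorem 4.11 (iii)]{MR3642325} already yields convergence in distribution on $\mathcal N$ from the Laplace-functional convergence (the restricted class of small test functions suffices by scaling), and the resulting continuity against all bounded continuous test functions covers $\nu_1\mapsto e^{-\nu_1(f_1)}$ directly.
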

	\begin{proof}
		Let $(\nu^{(N)})_{N\in \mathbb N}$ be an arbitrary sequence in $\mathcal N$ converging to some element $\nu\in \mathcal N$.
		From Corollaries  \ref{lemma:general-vague-convergence2} and \ref{cor:ID}, we know that
		\begin{equation}
			\lim_{N\to \infty} \int_{\mathcal N} e^{- \nu_1(f)} \mathscr Q_{t_1}(\nu^{(N)}, \mathrm{d} \nu_1)
			= \int_{\mathcal N} e^{-\nu_1(f)} \mathscr Q_{t_1}(\nu, \mathrm{d} \nu_1)
		\end{equation}
		for every non-negative $f\in \mathcal C_\mathrm c(\mathbb R)$ satisfying $\|1-e^{- f}\|_{\infty}\leq \Psi'(0+)/(4\beta_{\mathrm c})$.
		From this and \cite{MR3642325}*{Theorem 4.11 (iii)}, we know that the probability measures
		$\mathscr Q_{t_1}(\nu^{(N)}, \cdot)$ converge weakly to $\mathscr Q_{t_1}(\nu, \cdot)$ as $N\uparrow \infty$.
		Therefore, for any bounded continuous function $G$ on $\mathcal N$,
		\[
		\lim_{N\to \infty}\int_{\mathcal N} G(\nu_1) \mathscr Q_{t_1}(\nu^{(N)}, \mathrm{d} \nu_1) =  \int_{\mathcal N} G(\nu_1) \mathscr Q_{t_1}(\nu, \mathrm{d} \nu_1).
		\]
		Since the converging sequence  $(\nu^{(N)})_{N\in \mathbb N}$ in $\mathcal N$ is chosen arbitrarily, the above says that
		\begin{itemize}
			\item[\eq\label{eq:CBI}]
			the map $
			\nu_0 \mapsto \int_{\mathcal N} G(\nu_1) \mathscr Q_{t_1}(\nu_0, \mathrm{d} \nu_1)
			$
			from $\mathcal N$ to $\mathbb R$ is continuous for every bounded continuous function $G$ on $\mathcal N$.
		\end{itemize}
		In particular, the desired result of this lemma holds when $m = 1$.
		
		Let us now assume, for the sake of induction, that the desired result of this lemma holds when $m$ is replaced by $m-1$.
		Under this assumption,
		\begin{equation}
			\tilde G(\nu_1) := \int_{\mathcal N^{m-1}} \prod_{i=2}^m (e^{-\nu_i(f_i)}\mathscr Q_{t_i}(\nu_{i-1}, \mathrm{d} \nu_i)), \quad \nu_1\in \mathcal N
		\end{equation}
		is a bounded continuous function on $\mathcal N$.
		Taking $G(\nu_1):=e^{-\nu_1(f_1)}\tilde G(\nu_1)$ in the statement \eqref{eq:CBI}, we obtain the desired result for this lemma.
	\end{proof}

	We are now ready to prove Theorem \ref{Comming-down-finite-first-moment}.
	
	\begin{proof}[Proof of Theorem \ref{Comming-down-finite-first-moment} (i)]

		\firststep \label{SF2}
		Let $T$ be an arbitrary $(0,\infty]$-valued  optional time with respect to the filtration $(\mathcal F_t)_{t\geq 0}$.
		In this step, we show that for any $m\in \mathbb N$, list of distinct times $(s_i)_{i=1}^m$ in $(0,\infty)$, and list of non-negative functions $(f_i)_{i=1}^m$ in $\mathcal C_\mathrm c(\mathbb R)$,
		\begin{equation}
			\mathbb{E}_{(\Lambda,\mu)}\brk{\mathbf 1_{ \{T<\infty\}} \exp\brc{-\sum_{i=1}^m Z_{T+s_i}(f_i)}}
			= \mathbb{E}_{(\Lambda,\mu)} \brk{\mathbf 1_{\{T<\infty\}} \E_{(\emptyset, Z_{T})}\brk{\exp\brc{-\sum_{i=1}^mZ_{s_i}(f_i)}} }.
		\end{equation}
		Without loss of generality, we assume that $0=:s_0 < s_1 < \dots < s_m$.
		Fix an arbitrary $k\in \mathbb N$.
		Define optional time
		\[
		T^{(k)}: = \frac{\lfloor 2^k T \rfloor+1}{2^k} \mathbf 1_{\{T<\infty\}} + \infty \mathbf 1_{\{T = \infty\}}
		\]
		which takes values in the discrete space $2^{-k}\mathbb N\cup \{\infty\}$.
		Notice that $T^{(k)}\downarrow T$ as $k\uparrow \infty$ and that $\{T^{(k)}=d\}\in \mathcal{F}_d$ for every $d \in 2^{-k}\mathbb N$.
		Using the Markov property of the process $(Z_t)_{t>0}$ with respect to the augmented filtration $(\mathcal F_t)_{t>0}$, cf.~Proposition \ref{prop:WQ}, we have
		\begin{equation} \label{eq:Psum}
			\mathbb{E}_{(\Lambda,\mu)}\brk{\mathbf 1_{\{T^{(k)} = d\}} \exp\brc{-\sum_{i=1}^m Z_{d+s_i}(f_i)}}
			= \mathbb{E}_{(\Lambda,\mu)} \brk{\mathbf 1_{\{T^{(k)} = d\}}G(Z_d)}, \quad d\in 2^{-k}\mathbb N,
		\end{equation}
		where
		\begin{equation}
			G(\nu_0):= \int_{\mathcal N^m}\prod_{i=1}^m \pr{e^{-\nu_i(f_i)}\mathscr Q_{s_i - s_{i-1}}(\nu_{i-1},\mathrm{d} \nu_i)}
			= \mathbb E_{(\emptyset, \nu_0)} \brk{\exp\brc{-\sum_{i=1}^m Z_{s_i}(f_i)}}
			, \quad \nu_0 \in \mathcal N
		\end{equation}
		is a bounded continuous function on $\mathcal N$, thanks to Lemma \ref{lemma1}.
		Summing over $d\in 2^{-k}\mathbb N$ in \eqref{eq:Psum}, we get by Fubini's theorem that
		\begin{equation}
			\mathbb{E}_{(\Lambda,\mu)}\brk{\mathbf 1_{\{T<\infty\}} \exp\brc{-\sum_{i=1}^m Z_{T^{(k)}+s_i}(f_i)}}
			= \mathbb{E}_{(\Lambda,\mu)} \brk{\mathbf 1_{\{T<\infty\}} G(Z_{T^{(k)}}) }.
		\end{equation}
		Taking $k\uparrow \infty$, from the fact that the process $(Z_t)_{t>0}$ is right-continuous, we obtain
		\begin{equation}
			\mathbb{E}_{(\Lambda,\mu)}\brk{\mathbf 1_{\{T<\infty\}} \exp\brc{-\sum_{i=1}^m Z_{T+s_i}(f_i)}}
			= \mathbb{E}_{(\Lambda,\mu)} \brk{\mathbf 1_{\{T<\infty\}} G(Z_{T}) }
		\end{equation}
		as desired for this step.

		\nextstep \label{SF3}
		Let $T$ be an arbitrary $(0,\infty]$-valued  optional time with respect to the filtration $(\mathcal F_t)_{t\geq 0}$.
		In this step, we will show that for any $0<a<b<\infty$, $x>0$ and non-negative $f\in \mathcal C_{\mathrm c}(\mathbb R)$,
		\begin{align}
			& \mathbb{P}_{(\Lambda,\mu)}\pr{T<\infty,   \sup_{ s\in [a,b]}  Z_{T+s}(f)\leq x}
			= \mathbb{E}_{(\Lambda,\mu)}\brk{1_{\{T<\infty\}} \mathbb{P}_{(\emptyset, Z_{T})} \pr{ \sup_{s\in [a,b]} Z_{s}(f)\leq x}}.
		\end{align}
		To do this, let $(s_i)_{i=1}^\infty$ be a sequential arrangement of the elements in $[a,b]\cap \mathbb Q$.
		From Step \ref{SF2}, we see that for any $m\in \mathbb N$,
		\begin{equation}\label{eq6}
			\mathbb{P}_{(\Lambda,\mu)}\pr{T<\infty,   \sup_{ 1\leq i\leq m}  Z_{T+s_i}(f)\leq x}
			= \mathbb{E}_{(\Lambda,\mu)}\brk{\mathbf 1_{\{T<\infty\}} \mathbb{P}_{(\emptyset, Z_{T})} \pr{ \sup_{1\leq i\leq m} Z_{s_i}(f)\leq x}}.
		\end{equation}
		Notice that $\sup_{s\in [a,b]} w_s(f)= \sup_{s\in [a,b]\cap \mathbb{Q}} w_s(f)$ for every
		$w\in \mathbb D((0,\infty), \mathcal N)$.
		Therefore, $\mathbb P_{(\Lambda, \mu)}$-almost surely on the event $\{T<\infty\}$,
		\[
		\mathbf 1_{\brc{\sup_{s\in [a,b]}Z_{T+s}(f) \leq x }} = \lim_{m\to \infty} \mathbf 1_{\brc{\sup_{1\leq i \leq m}Z_{T+s_i}(f) \leq x }};
		\]
		and similarly, for any $\nu \in \mathcal N$, $\mathbb P_{(\emptyset, \nu)}$-almost surely,
		\[
		\mathbf 1_{\brc{\sup_{s\in [a,b]}Z_{s}(f) \leq x }} = \lim_{m \to \infty} \mathbf 1_{\brc{\sup_{1\leq i \leq m} Z_{s_i}(f)\leq x}}.
		\]
		Taking $m\uparrow \infty$ in \eqref{eq6}, we get the desired result.

		\nextstep \label{SF4}
		Let $T$ be an arbitrary $(0,\infty]$-valued  optional time with respect to the filtration $(\mathcal F_t)_{t\geq 0}$.
		In this step, we will show that for any $0<a<b<\infty$ and $x>0$,
		\begin{equation} \label{eq:SF4}
			\mathbb{P}_{(\Lambda,\mu)}\pr{T<\infty,   \sup_{ s\in [a,b]}  Z_{T+s}(U)\leq x}
			= \mathbb{E}_{(\Lambda,\mu)}\brk{1_{\{T<\infty\}} \mathbb{P}_{(\emptyset, Z_{T})} \pr{ \sup_{s\in [a,b]} Z_{s}(U)\leq x}}.
		\end{equation}
		Let $(g_N)_{N\in \mathbb N}$ be an increasing sequence of compactly supported non-negative continuous functions on $\mathbb R$ approximating the indicator function $\mathbf 1_U$.
		We claim that for every $\omega \in \Omega=\mathbb D((0,\infty), \mathcal N)$,
		\begin{itemize}
			\item[\eq\label{Claim}]
			$\sup_{s\in [a,b]} w_{s}(g_N)$  increasingly converges to $\sup_{s\in [a,b]} w_{s}(U)$ as $N\uparrow \infty$.
		\end{itemize}
		Indeed, it is obvious from the monotonicity that the large $N$ limit of $\sup_{s\in [a,b]} w_s(g_N)$ exists and is bounded by $\sup_{s\in [a,b]} w_s(U)$.
		On the other hand:
		\begin{itemize}
			\item If $\sup_{s\in [a,b]} w_s(U)<\infty$, then for any $\varepsilon>0$, there exists $s_0\in [a,b]$ such that $w_{s_0}(U)> \sup_{s\in [a,b]} w_s(U)-\varepsilon$, which implies that
			\[
			\sup_{s\in [a,b]} w_s(U)-\varepsilon
			<  w_{s_0}(U)
			= \lim_{N\to\infty} w_{s_0}(g_N)\leq \lim_{N\to\infty} \sup_{s\in [a,b]} w_s(g_N).
			\]
			Letting $\varepsilon \downarrow 0$, we have $\sup_{s\in [a,b]} w_s(U)\leq \lim_{N\to\infty} \sup_{s\in [a,b]} w_s(g_N).$
			\item If $\sup_{s\in [a,b]} w_s(U)=\infty$, then for any $K>0$, there exists $s_0\in [a,b]$ such that $w_{s_0}(U)> K$, which implies that
			\[
			K < w_{s_0}(U) = \lim_{N\to\infty} w_{s_0}(g_N)
			\leq \lim_{N\to\infty} \sup_{s\in [a,b]} w_s(g_N).
			\]
			Letting $K\uparrow \infty$, we have  $\sup_{s\in [a,b]} w_s(U)\leq \lim_{N\to\infty} \sup_{s\in [a,b]} w_s(g_N).$
		\end{itemize}
		Thus \eqref{Claim} is valid.
		From \eqref{Claim}, we see that $\mathbb P_{(\Lambda, \mu)}$-almost surely on the event $\{T<\infty\}$,
		\[
		\mathbf 1_{\brc{\sup_{s\in [a,b]}Z_{T+s}(U) \leq x }} = \lim_{N\to \infty} \mathbf 1_{\brc{\sup_{s\in [a,b]} Z_{T+s}(g_N)\leq x}}, \quad x>0;
		\]
		and similarly, for any $\nu \in \mathcal N$, $\mathbb P_{(\emptyset, \nu)}$-almost surely,
		\[
		\mathbf 1_{\brc{\sup_{s\in [a,b]}Z_{s}(U) \leq x }} = \lim_{N\to \infty} \mathbf 1_{\brc{\sup_{s\in [a,b]} Z_{s}(g_N)\leq x}}, \quad x>0.
		\]
		From Step \ref{SF3}, for every $N\in \mathbb N$ and $x > 0$,
		\begin{equation}
			\mathbb{P}_{(\Lambda,\mu)}\pr{T<\infty,   \sup_{ s\in [a,b]}  Z_{T+s}(g_N)\leq x}
			= \mathbb{E}_{(\Lambda,\mu)}\brk{1_{\{T<\infty\}} \mathbb{P}_{(\emptyset, Z_{T})} \pr{ \sup_{s\in [a,b]} Z_{s}(g_N)\leq x}}.
		\end{equation}
		Taking $N\uparrow\infty$ we get the desired result for this step.

		\nextstep \label{SF5}
		Fix an arbitrary $k\in \mathbb N$ and define $T_k:= \inf\brc{t > 1/k: Z_t(U) \leq k}$ which is an optional time with respect to $(\mathcal F_t)_{t>0}$.
		Let $0<a<b<\infty$.
		We verify in this step that $\mathbb P_{(\Lambda, \mu)}$-almost surely on the event $\{T_k<\infty\}$,
		\begin{equation}
			\mathbb{P}_{(\emptyset, Z_{T_{k}})} \pr{ \sup_{s\in [a,b]} Z_{s}(U)<\infty}=1.
		\end{equation}
		Note that $\mathbb P_{(\Lambda, \mu)}$-almost surely there exists a decreasing sequence $(t_m)_{m\in \mathbb N}$ in $\mathbb R_+$ converging to $0$ such that $Z_{T_k +t_m}(U)\leq k$.
		Let $(g_N)_{N\in \mathbb N}$ be an increasing sequence of compactly supported continuous functions on $\mathbb R$ approximating the indicator function $\mathbf 1_U$.
		Then, by the fact that $(Z_t)_{t>0}$ is an $\mathcal N$-valued c\`adl\`ag process, $\mathbb P_{(\Lambda, \mu)}$-almost surely on the event $\{T_k < \infty\}$,
		\begin{equation}\label{Finite-Stopping-time}
			Z_{T_k}(U)= \lim_{N\to\infty} Z_{T_k}(g_N)= \lim_{N\to\infty} \lim_{m\to\infty} Z_{T_k+t_m}(g_N) \leq k.
		\end{equation}
		
		Fix an arbitrary $\nu \in \mathcal N$ such that $\nu(U)<\infty$.
		Let \[U_1:= \{x\in \mathbb R: |x-x_0|\leq 1~\text{for some}~x_0\in U\}\] be the unit enlargement of the open interval $U$, 	and let  $g$ be a smooth function with bounded derivatives of all orders satisfying that $\mathbf 1_U\leq g\leq \mathbf 1_{U_{1}}$.
		Note that $g''$ is compactly supported.
		Note that $U_1\cap \tilde F$ is bounded if $\tilde F$ is the smallest closed interval containing each $(x-1,x+1)$ such that $\nu(\{x\})>0$.
		Therefore, cf.~ Lemma \ref{lemma: super-martingale},
		\begin{equation}
			M_g(t;a) := e^{\Phi'(0+) t} Z_t(g) - \int_a^t e^{\Phi'(0+) s}\frac{1}{2} Z_s(g'')\mathrm{d} s, \quad t\geq a,
		\end{equation}
		is a super-martingale on the filtered probability space $(\Omega, \mathcal F^Z, (\mathcal F^Z_t)_{t\geq a}, \mathbb P_{(\emptyset,\nu)})$.
		From this and \cite{MR2152573}*{Theorem 1 of Section 1.4.}, we can verify that $\mathbb P_{(\emptyset, \nu)}$-a.s.,
		\[
		\sup_{q\in [a,b+1]\cap \mathbb Q}Z_q(U)\leq \sup_{q\in [a,b+1]\cap \mathbb Q}Z_q(g)
		<\infty.
		\]
		Therefore, $\mathbb P_{(\emptyset, \nu)}$-a.s.,
		\begin{align}
			& \sup_{s\in [a,b]}Z_s(U)
			=  \sup_{s\in [a,b]}\lim_{N\to \infty}   Z_s(g_N)
			= \sup_{s\in [a,b]} \lim_{N\to \infty}  \lim_{q\downarrow s,q\in \mathbb Q} Z_q(g_N)
			\\&\leq \sup_{s\in [a,b]} \lim_{N\to \infty}  \limsup_{q\downarrow s,q\in \mathbb Q} Z_q(U)
			\leq \sup_{q\in [a,b+1]\cap \mathbb Q}Z_q(U)<\infty.
		\end{align}
		
		Now, since we have shown
		\begin{align}
			\mathbb{P}_{(\emptyset, \nu)} \pr{ \sup_{s\in [a,b]} Z_{s}(U)<\infty}=1
		\end{align}
		for the arbitrary $\nu\in \mathcal N$ satisfying $\nu(U)<\infty$, the desired result for this step follows from \eqref{Finite-Stopping-time}.

		\nextstep \label{SF6}
		In this step, we show that
		\[
		\mathbb P_{(\Lambda, \mu)}(Z_t(U) = \infty, \forall t\in \mathbb Q\cap (0,\infty))
		= 1.
		\]
		To do this, for any measurable function $f$ on $\mathbb R$ which can be approximated by the elements of $\mathcal C(\mathbb R, [0,z^*])$ monotonically from below, let $(u_t)_{t>0}$ be a $\mathcal C(\mathbb R, [0,z^*])$-valued continuous process given as in Proposition \ref{prop:GI} with  initial value $f$ on a probability space whose expectation operator will be denoted by $\tilde {\mathbb E}_f$.
		Let $F$ be the smallest closed interval containing $\cup_{i=1}^\infty (x_i-1,x_i+1)$.
		From the condition $U \cap \operatorname{supp}(\Lambda, \mu)$ is unbounded, we have $U\cap F$ is unbounded.
		From Proposition \ref{prop:ID}, Lemmas \ref{lem:1}, \ref{lem:2} and \ref{lemma5} (i), for any $\varepsilon \in (0,1/2)$ and $t>0$,
		\begin{align}
			& \mathbb{E}_{(\Lambda,\mu)}\brk{ (1-\varepsilon)^{Z_t(U)}}
			= \tilde {\mathbb E}_{\varepsilon \mathbf 1_U}\brk{\prod_{i=1}^\infty (1-u_t(x_i))}
			\\& \leq \tilde{\mathbb E}_{\varepsilon \mathbf 1_U }\brk{\exp\brc{-\sum_{i=1}^\infty u_t(x_i)}} +  \tilde{\mathbb{P}}_{\varepsilon  \mathbf 1_U }\pr{\sup_{s\leq t, y\in F} u_s(y)> \frac{1}{2}}
			\\& \leq \exp\brc{-\varepsilon \kappa(1/2) e^{-\beta_{\mathrm o}  t}  \int_U v_{t}^{(\Lambda,\mu)}(y) \mathrm{d}  y  } + 2\tilde{\mathbb{P}}_{\varepsilon  \mathbf 1_U}\pr{\sup_{s\leq t, y\in F} u_s(y)> \frac{1}{2} } + \varepsilon \beta_{\mathrm c}
			e^{\lambda_{\mathrm o} t}
			\mathcal{V}_t^{(\Lambda, \mu, F)}
			\\&= 2\tilde{\mathbb{P}}_{\varepsilon  \mathbf 1_U }\pr{\sup_{s\leq t, y\in F} u_s(y) > \frac{1}{2} } + \varepsilon \beta_{\mathrm c}e^{\lambda_{\mathrm o} t} \mathcal{V}_t^{(\Lambda, \mu, F)}.
		\end{align}
		Here, $\mathcal V_t^{(\Lambda, \mu, F)}$, $\lambda_{\mathrm o}$ and $\kappa(\cdot)$ are given as in \eqref{eq:CV},  \eqref{Def-of-lambda}
		and \eqref{Def-Of-kappa} respectively.
		From Lemma \ref{lemma:integral-of-v} (ii) and Lemma \ref{lem: Prob-w},  taking $\varepsilon\downarrow 0$, we obtain that for any $t>0$, $\mathbb{P}_{(\Lambda,\mu)}\pr{ Z_t(U) =\infty}=1$.
		The desired result for this step follows.

		\nextstep
		Let $k\in \mathbb N$ be arbitrary, and let $T_k$ be the optional time given as in Step \ref{SF5}.
		Let $0<a < b < \infty$ and $x>0$ be arbitrary.
		From Step \ref{SF4}, we know that \eqref{eq:SF4} holds with $T$ being replaced by $T_k$, which, by taking $x\uparrow \infty$, implies that
		\begin{equation} \label{eq:SF7}
			\mathbb{P}_{(\Lambda,\mu)}\pr{T_k<\infty,   \sup_{ s\in [a,b]}  Z_{T_k+s}(U) < \infty}
			= \mathbb{E}_{(\Lambda,\mu)}\brk{\mathbf 1_{\{T_k<\infty\}} \mathbb{P}_{(\emptyset, Z_{T_k})} \pr{ \sup_{s\in [a,b]} Z_{s}(U) < \infty}}.
		\end{equation}
		From Step \ref{SF6}, we know that the left hand side of \eqref{eq:SF7} equals $0$.
		From Step \ref{SF5}, we know that the right hand side of \eqref{eq:SF7} equals $\mathbb P_{(\Lambda, \mu)}(T_k < \infty)$.
		Therefore, $\mathbb P_{(\Lambda, \mu)}(T_k < \infty) = 0$.
		As a consequence,
		\begin{equation}
			1 = \mathbb P_{(\Lambda,\mu)}(T_k = \infty) = \mathbb P_{(\Lambda,\mu)}(Z_t(U)>k, \forall t > 1/k).
		\end{equation}
		Taking the arbitrary $k\uparrow \infty$, by the monotone convergence theorem, we get the desired result (i) of Theorem \ref{Comming-down-finite-first-moment}.
	\end{proof}
	
	\begin{proof}[Proof of Theorem \ref{Comming-down-finite-first-moment} (ii)]
		Let the open interval $U_1:= \{x\in \mathbb R: \exists x_0\in U, |x-x_0|\leq 1\}$ be the unit enlargement of $U$, and let $g$ be a smooth function with bounded derivatives of all orders 	satisfying that $\mathbf 1_U \leq g \leq \mathbf 1_{U_1}$.
		Note that $g''$ is compactly supported.
		Note that, from the condition that $U\cap \operatorname{supp}(\Lambda, \mu)$ is bounded, we have $U_1 \cap F$ is bounded where $F$ is the smallest closed interval containing $\cup_{i\in \mathbb N}(x_i-1,x_i+1)$.
		Therefore, cf.~ Lemma \ref{lemma: super-martingale}, for any $a>0$,
		\begin{equation}\label{Def-of-M-g}
			M_g(t;a) := e^{\Phi'(0+)t}Z_t(g) - \int_a^t e^{\Phi'(0+)s} \frac{1}{2} Z_s(g'') \mathrm{d} s, \quad t\geq a
		\end{equation}
		is a super-martingale on the filtered probability space $(\Omega, \mathcal F, (\mathcal F_t)_{t\geq a}, \mathbb P_{(\Lambda, \mu)})$.
		From this and \cite{MR2152573}*{Theorem 1 of Section 1.4.}, we can verify that
		
		$\mathbb P_{(\Lambda, \mu)}$-a.s.,
		for any $b > a >0$,
		\begin{align}\label{sup-finite}
			\sup_{q\in [a,b]\cap \mathbb Q}Z_q(U)\leq \sup_{q\in [a,b]\cap \mathbb Q}Z_q(g)
			<\infty.
		\end{align}
		Let $(g_N)_{N\in \mathbb N}$ be an increasing sequence of compactly supported continuous functions on $\mathbb R$ approximating the indicator function $\mathbf 1_U$.
		Then,
		
		$\mathbb P_{(\Lambda, \mu)}$-almost surely,
		for every $t>0$,
		\begin{equation}\label{sup-finite2}
			Z_t(U)=\lim_{N\uparrow \infty}Z_t(g_N)
			= \lim_{N\uparrow \infty} \lim_{q\downarrow t, q\in \mathbb Q} Z_q(g_N)
			\leq \sup_{ q\in \mathbb Q\cap[t,t+1]} Z_q(U)
			< \infty
		\end{equation}
		as desired.
	\end{proof}
	
	\begin{proof}[Proof of Theorem \ref{Comming-down-finite-first-moment} (iii)--(iv)]
		\begin{old}
			(iii) follows from Proposition \ref{prop1}.
		\end{old}
		(iii) To apply Proposition \ref{prop1}, we recall the sequence $(x_i)_{i=1}^\infty$ associated with the initial trace and select $F$ as the smallest closed interval containing the union $\cup_{i=1}^\infty(x_i-1, x_i+1)$. Since the condition of the theorem assumes $U\cap \operatorname{supp}(\Lambda, \mu)$ is bounded, the interval $U$ intersects only a bounded accumulation of the points $x_i$. Therefore, the intersection $U\cap F$ is rigorously bounded, which satisfies the prerequisite of Proposition \ref{prop1}.
		
		From Proposition \ref{prop1}, for any $t > 0$ and $\gamma\in(0,1)$, taking $\gamma_0 := \gamma \Psi'(0+)/(2\beta_\mathrm c)$, we obtain the explicit upper bound:
		\[
		\mathbb{E}_{(\Lambda, \mu)}\brk{Z_t(U)} \leq \frac{e^{(\lambda_{\mathrm o} - \beta_{\mathrm o}) t}}{1-\gamma}\pr{\int_U v_{t}^{(\Lambda, \mu)}(y)\mathrm{d} y + \frac{\Psi'(0+)e^{\lambda_{\mathrm o} t}}{2}\mathcal{V}_t^{(\Lambda, \mu, F)}} + 3 e^{-\beta_{\mathrm o}t}\Cr{c:UFtg}(U,F,t,  \gamma_0).
		\]
		By Lemmas \ref{lemma:integral-of-v} (ii) and \ref{lem: Prob-w}, we have
		\[
		\limsup_{t\downarrow 0}\mathcal{V}_t^{(\Lambda, \mu, F)}<\infty \quad \text{and} \quad \limsup_{t\downarrow 0}\Cr{c:UFtg}(U,F,t,\gamma_0)<\infty.
		\]
		By Lemma \ref{lemma5} (ii), the condition $\bar{U} \cap \Lambda = \emptyset$ guarantees that
		\[
		\limsup_{t\downarrow 0} \int_U v_{t}^{(\Lambda, \mu)}(y)\mathrm{d} y < \infty.
		\]
		Therefore, taking the limit superior in the upper bound yields
		\[
		\limsup_{t\downarrow 0} \mathbb{E}_{(\Lambda, \mu)}\brk{Z_t(U)} < \infty.
		\]
		By Markov's inequality, for any $K>0$,
		\[
		\limsup_{t\downarrow 0} \mathbb{P}_{(\Lambda, \mu)}\pr{Z_t(U) > K} \leq \frac{1}{K} \limsup_{t\downarrow 0} \mathbb{E}_{(\Lambda, \mu)}\brk{Z_t(U)}.
		\]
		By taking $K$ large enough, the right-hand side becomes strictly less than $1$. This establishes that $\limsup_{t\downarrow 0} \mathbb{P}_{(\Lambda, \mu)}\pr{Z_t(U) > K} < 1$, mathematically preventing $Z_t(U)$ from converging to $\infty$ in probability.
		Thus, the CDI property does not hold.
		
		(iv) Since $U\cap \operatorname{supp}(\Lambda, \mu)$ is bounded, we know from (ii) that $Z_t(U) < \infty$ for all $t > 0$ almost surely. To establish the CDI property \eqref{eq:CDIP}, it suffices to prove that $Z_t(U)$ diverges to infinity in probability as $t\downarrow 0$. By the hypothesis $\bar{U} \cap \Lambda \neq \emptyset$, Lemma \ref{lemma5} (ii) guarantees that
		\[
		\lim_{t\downarrow 0} \int_U v_{t}^{(\Lambda, \mu)}(x)\mathrm{d} x = \infty.
		\]
		Concurrently, Theorem \ref{thm:CDIrate} states that the ratio of $Z_t(U)$ to this deterministic integral converges to $1$ in $L^1$ as $t\downarrow 0$. Since $L^1$ convergence implies convergence in probability, and the denominator diverges to infinity, it immediately follows that $Z_t(U)$ converges to infinity in probability as $t\downarrow 0$. 		Therefore, the CDI property holds.
	\end{proof}

	\begin{appendix}
		\section{Proof of Lemma \ref{lem:AE}}
		\label{sec:AA}
		\begin{proof}[Proof of Lemma \ref{lem:AE}]
			Let us define the transformation
			\begin{equation}
				w_t(x) := e^{-\Phi'(0+) t} \tilde v_t(x), \quad t>0, x\in \R.
			\end{equation}
			It is clear that $w$ is the unique non-negative solution to the reaction-diffusion equation with time-dependent reaction coefficient:
			\begin{equation} \label{eq:w_pde}
				\partial_t w_t(x) = \frac{1}{2} \Delta w_t(x) - \frac{\lambda(t) \Psi'(0+)}{2} w_t(x)^2,
			\end{equation}
			where $\lambda(t) := e^{\Phi'(0+) t}$. Note that $w$ is subject to the same
			initial trace
			condition as $\tilde v$ (and consequently $v$), because the factor $e^{-\Phi'(0+) t}$ tends to $1$ as $t \downarrow 0$.

			Fix $T>0$ and define the constants
			\begin{equation} \label{eq:lambda_bounds}
				\underline{\lambda}_T := \inf_{s \in (0, T]} e^{\Phi'(0+) s} \quad \text{and} \quad \overline{\lambda}_T := \sup_{s \in (0, T]} e^{\Phi'(0+) s}.
			\end{equation}
			Observe that on the interval $t\in (0, T]$, the time-dependent coefficient satisfies $\underline{\lambda}_T \Psi'(0+) \leq \lambda(t) \Psi'(0+) \leq \overline{\lambda}_T \Psi'(0+)$.
			
			For any parameter $\lambda > 0$, we define the scaled function
			\begin{equation} \label{eq:scaling}
				V^{(\lambda)}_t(x) := \frac{1}{\lambda} v_t(x), \quad t>0, x\in \mathbb R,
			\end{equation}
			where $v$ is the solution to the original equation \eqref{PDE}. A straightforward computation shows that $V^{(\lambda)}$ is the unique non-negative solution to the equation
			\begin{equation} \label{eq:V_pde}
				\partial_t V^{(\lambda)} = \frac{1}{2} \Delta V^{(\lambda)} - \frac{\lambda \Psi'(0+)}{2} (V^{(\lambda)})^2,
			\end{equation}
			with initial trace condition given by the scaled measure $(\tilde\Lambda, \tilde\mu/\lambda)$.
			
			Crucially, since $\underline{\lambda}_T \le 1 \le \overline{\lambda}_T$, the initial traces for the scaled functions are naturally ordered with respect to the initial trace of $w_t$, meaning $(\tilde\Lambda, \tilde\mu/\overline{\lambda}_T) \le (\tilde\Lambda, \tilde\mu) \le (\tilde\Lambda, \tilde\mu/\underline{\lambda}_T)$. Because both the initial data and the reaction coefficients follow the necessary ordering required for the parabolic comparison principle, we obtain:
			\begin{equation} \label{eq:comparison_ineq}
				V^{(\overline{\lambda}_T)}_t(x) \leq w_t(x) \leq V^{(\underline{\lambda}_T)}_t(x), \quad t\in (0,T], x\in \mathbb R.
			\end{equation}
			The comparison principle here for the non-negative solutions to the parabolic equations \eqref{eq:w_pde} and \eqref{eq:V_pde} is justified, in the presence of the very singular initial data $(\Lambda, \mu)$, because such solutions can be constructed as monotone limits of solutions with locally bounded measure initial data (c.f.~ \cite{MR1697494}*{Proposition 4.7}).
			Since the classical parabolic comparison principle holds for those approximating solutions, it extends to their limits.
			
			Using the scaling relation \eqref{eq:scaling} and the definition of $w$, the inequality \eqref{eq:comparison_ineq} becomes:
			\begin{equation}
				\frac{1}{\overline{\lambda}_T} v_t(x) \leq e^{-\Phi'(0+) t} \tilde v_t(x) \leq \frac{1}{\underline{\lambda}_T} v_t(x), \quad t\in (0,T], x \in \mathbb R.
			\end{equation}
			This implies the desired result for this lemma.
		\end{proof}

		\section{Proofs of Lemmas \ref{lem:CC}, \ref{eq:WC}, and Proposition \ref{prop:GI}} \label{append-A}
		\begin{proof}[Proof of Lemma \ref{lem:CC}]
			From the continuity of the function $\Psi(\cdot)$, $\Psi(1) = \beta_{\mathrm c} q_0 \geq 0$ and that
			\begin{equation}
				\Psi(2)= \beta_{\mathrm c}\pr{\sum_{k=0}^\infty q_k (-1)^k - 1} \leq \beta_{\mathrm c}\pr{\sum_{k=0}^\infty q_k - 1} = 0,
			\end{equation}
			we have $z^* \in [1,2]$ and $\Psi(z^*) = 0$.
			Observe that for any $k\in \mathbb Z_+$, $z^k - z = z(z^{k-1} - 1) \geq 0$ for every $z\in [-1,0]$.
			Therefore, for every $z\in [1,2]$, we have
			\begin{equation}
				\Phi(z)= \beta_{\mathrm o} \pr{p_0 z+ \sum_{k=1}^\infty p_k \pr{(1-z)^k - (1-z)} } \geq 0.
			\end{equation}
			In particular, $\Phi(z^*)\geq 0$.
			The fact $\Phi(0)=\Psi(0) = 0$ can be verified directly from their expressions.
			From the definition of $z^*$, the continuity of the function $\Psi(\cdot)$ and the fact that $\Psi(1)\geq 0$, we have $\Psi(z)\geq 0$ for every $z\in [1,z^*]$.
			Finally observe that,
			for every $z\in [0,1)$,
			since $x\mapsto z^x$ is a convex function on $\mathbb R$, by Jensen's inequality and \eqref{asp:A1},
			\begin{equation}
				\sum_{k=0}^\infty q_k z^k \geq z^{\sum_{k=0}^\infty kq_k} \geq z^2.
			\end{equation}
			This proves that $\Psi(z)\geq 0$ for every $z\in (0,1]$.
			
			If the additional assumption \eqref{asp:A2} holds, then just take an odd number $k_0$ with $q_{k_0}>0$, we see that
			\begin{align}
				\Psi(2)= \beta_{\mathrm c}\pr{\sum_{k=0}^\infty q_k (-1)^k - 1} \leq \beta_{\mathrm c}\pr{\sum_{k\neq k_0}^\infty q_k - q_{k_0}- 1} =-2\beta_{\mathrm c} q_{k_0}<0,
			\end{align}
			which implies that $z^*<2$. We are done.
		\end{proof}
		
		\begin{proof}[Proof of Lemma \ref{eq:WC}]
			Let $(\sigma_m)_{m=1}^\infty$ be a sequence of non-negative Lipschitz functions on $[0,z^*]$ converging to $\sqrt{\Psi}$ uniformly, satisfying that $\sigma_m(0) = \sigma_m(z^*) = 0$ for each $m\in \mathbb N$.
			For each $i\in \{1,2\}$ and $m\in \mathbb N$, let $u^{(i,m)}$ be a $\mathcal C(\mathbb R, [0,z^*])$-valued solution to the SPDE \eqref{eq:GSPDE} with $f$ and $\sqrt{\Psi}$ being replaced by $f^{(i)}$ and $\sigma_m$ respectively.
			For each $i\in \{1,2\}$ and $m\in \mathbb N$, the existence of the  $\mathcal C([0,\infty), \mathcal C(\mathbb R, [0,z^*]))$-valued random element $u^{(i,m)}$ is guaranteed by the standard theory of 1-d stochastic heat equation with Lipschitz coefficients \cite{MR1271224}.
			
			By the strong comparison principle \cite{MR1271224}*{Corollary 2.4}, we can assume without loss of generality that, for each $m \in \mathbb N$, $u^{(1,m)}$ and $u^{(2,m)}$ are defined on the same probability space satisfying that almost surely $u^{(1,m)}_t(x) \leq u^{(2,m)}_t(x)$ for every $t\geq 0$ and $x\in \mathbb R$.
			For each $i\in \{1,2\}$, it is also standard to argue, c.f.~\cite{MR1271224}*{Proof of Theorem 2.6}, that the family of $\mathcal C([0,\infty), \mathcal C(\mathbb R, [0,z^*]))$-valued  random elements $\{u^{(i,m)}:m\in \mathbb N\}$ is tight, and any sub-sequential convergence-in-distribution limit has the law $\mathscr L_{f^{(i)}}$.
			This implies that the family of $\mathcal C([0,\infty), \mathcal C(\mathbb R, [0,z^*]))^2$-valued random elements $\{(u^{(1,m)},u^{(2,m)}):m\in \mathbb N\}$ is tight.
			
			Let $(u^{(1)}, u^{(2)})$ be one of its sub-sequential convergence-in-distribution limit.
			Clearly, the law of $u^{(1)}$ and $u^{(2)}$ are given by $\mathscr L_{f^{(1)}}$ and $\mathscr L_{f^{(2)}}$ respectively; and almost surely, $u^{(1)}_t(x) \leq u^{(2)}_t(x)$ for every $t\geq 0$ and $x\in \mathbb R$.
			By the disintegration theorem \cite{MR4226142}*{Theorem 8.5},
			there exists a probability kernel $\mathscr K_{f^{(1)}, f^{(2)}}$ on $\mathcal C([0,\infty), \mathcal C(\mathbb R, [0,z^*]))$ such that  $\mathscr K_{f^{(1)}, f^{(2)}}(u^{(1)}, \cdot)$ is the regular conditional distribution of $u^{(2)}$ conditioned on given $u^{(1)}$.
			It is then straightforward to verify that $\mathscr K_{f^{(1)}, f^{(2)}}$ satisfies all the properties desired for this Lemma.
		\end{proof}
		
		\begin{proof}[Proof of Proposition \ref{prop:GI}]
			Construct a $\mathcal C([0,\infty),\mathcal C(\mathbb R, [0,z^*]))$-valued Markov chain $(u^{(m)})_{m\in \mathbb N}$, on a probability space with its probability measure denoted by $\tilde {\mathbb  P}_g$, such that the initial value $u^{(1)}$ has the law $\mathscr L_{f^{(1)}}$, and that, for each $m\in \mathbb N$, the transition kernel from steps $m$ to $m+1$ is $\mathscr K_{f^{(m)}, f^{(m+1)}}$.
			Here, $\mathscr L_{\cdot}$ and $\mathscr K_{\cdot, \cdot}$ are  the same as in Lemma \ref{eq:WC}.
			It is clear that $u^{(m)}$ has the law $\mathscr L_{f^{(m)}}$ for each $m\in \mathbb N$; and
			almost surely $u^{(m)}_t(x) \leq u^{(m+1)}_t(x)$ for every $t\geq 0, x\in \mathbb R$ and $m\in \mathbb N$.
			This allows us to define a random field $\bar u = (\bar u_t(x))_{t\geq 0, x\in \mathbb R}$ as the pointwisely non-decreasing limit of $u^{(m)}$ when $m\uparrow \infty$.
			
			It is standard (c.f.~\cite{MR4698025}*{p.~82}) to verify from the mild formulation \eqref{eq:mild}, Burkholder-Davis-Gundy inequality, Jensen's inequality, the property of the heat kernels, and the fact that $(u^{(m)})_{m\in \mathbb N}$ are bounded random fields, that, for any $T>0$ and $l > 2$,
			\begin{equation}
				\tilde {\mathbb E}_g\brk{\abs{U^{(m)}_t(x) - U^{(m)}_s(y)}^{2l}} \lesssim
				\pr{|x-y| + \sqrt{|t-s|}}^{l}
			\end{equation}
			uniformly in $m\in \mathbb N$, $x,y\in \mathbb R$ and $t,s\in [0,T]$.
			Here, for any  $t>0$ and  $x\in \mathbb R$,
			\begin{equation}
				U^{(m)}_t(x) :=
				u^{(m)}_t(x) - \int p_t(x-y) f^{(m)}(y) \mathrm{d} y + \iint_0^t p_{t-s}(x-y) \Phi\pr{u^{(m)}_s(y)} \mathrm{d} s\mathrm{d} y
			\end{equation}
			and $U^{(m)}_0(x) := 0$.
			Taking $m\uparrow \infty$, we know from the bounded convergence theorem that for any $T>0$ and $l>2$,
			\begin{equation} \label{eq:BU}
				\tilde{\mathbb E}_g\brk{\abs{\bar U_t(x) - \bar U_s(y)}^{2l}} \lesssim \pr{|x-y| + \sqrt{|t-s|}}^{l}
			\end{equation}
			uniformly in $x,y\in \mathbb R$ and $t,s\in [0,T]$, where, for each  $t> 0$  and $x\in \mathbb R$,
			\begin{equation}
				\bar U_t(x) := \bar u_t(x) - \int p_t(x-y) g(y) \mathrm{d} y + \iint_0^t p_{t-s}(x-y) \Phi(\bar u_s(y)) \mathrm{d} s\mathrm{d} y
			\end{equation}
			and $\bar U_0(x) := 0$.
			From \eqref{eq:BU}, we can verify (c.f.~\cite{MR1271224}*{Lemma 6.3 (i)}) that there exists a jointly continuous modification $(U_t(x))_{t\geq 0, x\in \mathbb R}$ of $(\bar U_t(x))_{t\geq 0, x\in \mathbb R}$.
			Define, for every $t>0$ and $x\in \mathbb R$,
			\begin{equation}
				u_t(x):= U_t(x) + \int p_t(x-y) g(y) \mathrm{d} y - \iint_0^t p_{t-s}(x-y) \Phi(\bar u_s(y)) \mathrm{d} s\mathrm{d} y
			\end{equation}
			and $u_0(x) := g(x).$
			It is then not hard to see that  $(u_t(x))_{t\geq 0, x\in \mathbb R}$ is a modification of $(\bar u_t(x))_{t\geq 0, x\in \mathbb R}$  and that  $(u_t)_{t> 0}$  is a $\mathcal C(\mathbb R, [0,z^*])$-valued continuous process.
			In particular, from Fubini's theorem, almost surely, for almost every $t>0$ and $x\in \mathbb R$ w.r.t.~the Lebesgue measure, $u_t(x) = \bar u_t(x)$.
			
			Notice from Proposition \ref{prop:D} that, for each $m\in \mathbb N$, \eqref{eq:Duality} holds with $f$ and $u$ being replaced by $f^{(m)}$ and $u^{(m)}$ respectively.
			Taking $m\uparrow \infty$, by the bounded convergence theorem, we have
			\begin{equation}
				\tilde {\mathbb E}_g\brk{\prod_{i=1}^n \pr{1-\bar u_t(x_i)}} = \mathbb E\brk{\prod_{\alpha \in I_t^{(n)}} \pr{1-g\pr{X_t^{(n,\alpha)}}}}, \quad  t\geq 0.
			\end{equation}
			Now, since $u$ is a modification of $\bar u$,   the desired statement \eqref{eq:S2} holds.
			
			Let us fix an arbitrary testing function $\phi \in \mathcal C_\mathrm c^\infty(\mathbb R)$.
			From \cite{MR1271224}*{Theorem 2.1}, almost surely for every $t\geq 0$ and $m\in \mathbb N$,
			\begin{align}\label{eq:MP}
				\int u^{(m)}_t(x)\phi(x)\mathrm{d} x
				& = \int f^{(m)}(x) \phi(x)\mathrm{d} x +  \iint_0^t u^{(m)}_s(y)\frac{\phi''(y)}{2} \mathrm{d} s\mathrm{d} y - {}
				\\&\qquad \iint_0^t \Phi\pr{u^{(m)}_s(y)}\phi(y)\mathrm{d} s\mathrm{d} y+ M^{(m,\phi)}_t
			\end{align}
			where $M^{(m,\phi)}_\cdot$ is a $(\mathcal G^{(m)}_t)_{t\geq 0}$-adapted
			continuous martingale with quadratic variation
			\begin{equation} \label{eq:QQV}
				\chv{M^{(m,\phi)}_{\cdot}}_t := \iint_0^t \Psi\pr{u^{(m)}_s(y)} \phi(y)^2 \mathrm{d} s \mathrm{d} y, \quad t\geq 0
			\end{equation}
			and $(\mathcal G_t^{(m)})_{t\ge 0}$ is the natural filtration of the process $(u^{(m)}_t)_{t\geq 0}$.
			In particular, for any $0\leq s\leq t,$ $N\in \mathbb N$, bounded continuous map $G$ from $\mathbb R^N$ to $\mathbb R$, and list $((s_i,y_i))_{i=1}^N$ in $[0,s] \times \mathbb R$, we have
			\begin{equation} \label{eq:AA}
				\tilde {\mathbb E}_g\brk{ G\pr{u^{(m)}_{s_1}(y_1), \dots, u^{(m)}_{s_N}(y_N)} M^{(m,\phi)}_t}
				=\tilde {\mathbb E}_g\brk{ G\pr{u^{(m)}_{s_1}(y_1), \dots, u^{(m)}_{s_N}(y_N)} M^{(m,\phi)}_s}
			\end{equation}
			and
			\begin{align}\label{eq:BB}
				& \tilde {\mathbb E}_g\brk{ G\pr{u^{(m)}_{s_1}(y_1), \dots, u^{(m)}_{s_N}(y_N)} \pr{\pr{M^{(m,\phi)}_t}^2 - \chv{M^{(m,\phi)}_\cdot}_t}}
				\\&=\tilde {\mathbb E}_g\brk{ G\pr{u^{(m)}_{s_1}(y_1), \dots, u^{(m)}_{s_N}(y_N)} \pr{\pr{M^{(m,\phi)}_s}^2 - \chv{M^{(m,\phi)}_\cdot}_s}}.
			\end{align}
			Taking $m\uparrow \infty$ in \eqref{eq:MP}, by the dominated convergence theorem, we know that $M_t^{(m,\phi)}$ converges to $M_t^{(\phi)}$ almost surely for every $t\geq 0$, where  the continuous process $(M^{(\phi)}_t)_{t\geq 0}$  is defined through
			\begin{align}\label{eq:RMP}
				\int u_t(x)\phi(x)\mathrm{d} x
				& = \int g(x) \phi(x)\mathrm{d} x + \iint_0^t u_s(y)\frac{\phi''(y)}{2} \mathrm{d} s\mathrm{d} y - {}
				\\&\qquad \iint_0^t \Phi\pr{u_s(y)}\phi(y)\mathrm{d} s\mathrm{d} y+ M^{(\phi)}_t, \quad t\geq 0.
			\end{align}
			Taking $m\uparrow \infty$ in \eqref{eq:QQV}, we know that $\langle M^{(m,\phi)}_\cdot\rangle_t$ converges to $\langle M^{(\phi)}_\cdot\rangle_t$ almost surely for every $t\geq 0$, where
			\[
			\chv{ M^{(\phi)}_\cdot}_t :=  \iint_0^t \Psi\pr{u_s(y)} \phi(y)^2 \mathrm{d} s \mathrm{d} y, \quad t\geq 0.
			\]
			Note that, for each $t\geq 0$, the families of random variables $(M_t^{(m,\phi)}:m\in \mathbb N)$ and $((M_t^{(m,\phi)})^2-\langle M_\cdot^{(m,\phi)}\rangle_t: m\in \mathbb N)$ are bounded by a deterministic constant depending only on $\Phi$, $\Psi$, $t$ and $\phi$.
			Therefore, taking $m\uparrow \infty$ in \eqref{eq:AA} and \eqref{eq:BB}, for any $0\leq s\leq t,$ $N\in \mathbb N$, bounded continuous map $G$ from $\mathbb R^N$ to $\mathbb R$, and list $((s_i,y_i))_{i=1}^N$ in $[0,s] \times \mathbb R$, we have
			\begin{equation}
				\tilde {\mathbb E}_g\brk{ G\pr{u_{s_1}(y_1), \dots, u_{s_N}(y_N)} M^{(\phi)}_t}
				=\tilde {\mathbb E}_g\brk{ G\pr{u_{s_1}(y_1), \dots, u_{s_N}(y_N)} M^{(\phi)}_s}
			\end{equation}
			and
			\begin{align}
				& \tilde {\mathbb E}_g\brk{ G\pr{u_{s_1}(y_1), \dots, u_{s_N}(y_N)} \pr{\pr{M^{(\phi)}_t}^2 - \chv{M^{(\phi)}_\cdot}_t}}
				\\&=\tilde {\mathbb E}_g\brk{ G\pr{u_{s_1}(y_1), \dots, u_{s_N}(y_N)} \pr{\pr{M^{(\phi)}_s}^2 - \chv{M^{(\phi)}_\cdot}_s}}.
			\end{align}
			These imply that $(M^{(\phi)}_t)_{t\geq 0}$ is an $(\mathcal G_t)_{t\geq 0}$-adapted continuous martingale with quadratic variation $(\langle M_\cdot^{(\phi)}\rangle_t)_{t\geq 0}$.
			We are done.
		\end{proof}
		
		\section{Proof of Lemma   \ref{lem:2}}\label{append-B}
		
		\begin{proof}[Proof of Lemma \ref{lem:2}]
			Define $\tau_{z} := \inf\{s\geq 0: \sup_{y\in F} u_s(y)> z\}$ for every $z \in (0,1)$.
			Let $t>0$.
			For every $\tilde \gamma \in [0,1)$,
			define
			\begin{align}\label{Def: Semi-martingale}
				M_s^{(\tilde \gamma, n)}:=
				\begin{cases}
					\displaystyle\frac{1}{1-\tilde \gamma}\int u_{s}(y)v_{t-s}^{(\emptyset, \mu^{(\tilde \gamma,n)})}(y)\mathrm{d}  y,\quad & s\in [0, t); \\
					\displaystyle\frac{1}{1-\tilde \gamma}\int u_{t}(y)\mu^{(\tilde \gamma, n)}\pr{\mathrm{d}  y},\quad                     & s=t,
				\end{cases}
			\end{align}
			with $\mu^{(\tilde \gamma,n)}:= (1-\tilde \gamma)\theta(\tilde \gamma)\sum_{i=1}^n \delta_{x_i}.$
			From stochastic Fubini's theorem \cite{MR3236753}*{Theorem 4.33}, one can verify that
			almost surely  for every $\tilde \gamma \in [0,1)$ and $s\in [0,t]$,
			\begin{align}\label{eq:SMM}
				& M_s^{(\tilde \gamma, n)}- M_0^{(\tilde \gamma,n)}                                                                                                                                \\
				& = \frac{1}{1-\tilde \gamma}\iint_0^s \pr{-v_{t-r}^{(\emptyset, \mu^{(\tilde \gamma,n)})}(y) \Phi(u_{r}(y))+\frac{\Psi'(0+)}{2}v_{t-r}^{(\emptyset, \mu^{(\tilde \gamma,n)})}(y)^2
					u_{r}(y)}\mathrm{d}  r\mathrm{d}  y \\&\quad +\frac{1}{1-\tilde \gamma}\iint_0^s
				v_{t-r}^{(\emptyset, \mu^{(\tilde \gamma,n)})}(y) \sqrt{\Psi(u_r(y))}W(\mathrm{d} r \mathrm{d} y).
			\end{align}
			In particular, $(M_s^{(\tilde \gamma,n)})_{s\in [0,t]}$ is a continuous semi-martingale.
			By \eqref{eq:SMM} and It\^{o}'s formula, it is easy to verify that almost surely for every $s\in [0,t],$
			\begin{align}\label{eq:HC}
				& \exp\brc{-e^{\lambda_{\mathrm o} (t-s)}M_s^{(\gamma,n)}} - \exp\brc{-e^{\lambda_{\mathrm o}t}M_0^{(\gamma,n)}}
				\\&
				=\iint_0^s \exp\brc{-e^{\lambda_0(t-r)}M^{(\gamma,n)}_r}\frac{e^{\lambda_0(t-r)}}{1-\gamma} \times {}
				\\& \qquad \qquad \Bigg( -v_{t-r}^{(\emptyset, \mu^{(\gamma,n)})}(y) \sqrt{\Psi(u_r(y))}W(\mathrm{d} r\mathrm{d} y) + {}
				\\& \qquad \qquad \qquad v_{t-r}^{(\emptyset, \mu^{(\gamma,n)})}(y) \pr{\Phi(u_{r}(y))+\lambda_{\mathrm o} u_{r}(y)} \mathrm{d}  r\mathrm{d}  y - {}
				\\& \qquad \qquad \qquad \frac{1}{2}v_{t-r}^{(\emptyset, \mu^{(\gamma,n)})}(y)^2 \pr{\Psi'(0+)u_r(y) - \frac{e^{\lambda_0(t-r)}}{1-\gamma} \Psi(u_r(y))}\mathrm{d} r \mathrm{d} y
				\Bigg).
			\end{align}
			Set
			\[
			\gamma_0:= \frac{\gamma}{2  \beta_{\mathrm c}} \Psi'(0+) =\frac{\gamma}{2}\pr{2-\sum_{k=0}^\infty kq_k} \in (0,\gamma).
			\]
			We see that for any $w\in (0, \gamma_0]$,
			\begin{align}\label{Lower-bound-of-Psi}
				& \Psi(w)= \int_0^w \Psi'(z)\mathrm{d}  z=
				\beta_{\mathrm c} \int_0^w \pr{2(1-z)-\sum_{k=1}^\infty k q_k(1-z)^{k-1}}\mathrm{d} z         \\
				& \geq \beta_{\mathrm c} w \pr{2(1-\gamma_0)- \sum_{k=1}^\infty kq_k} =(1-\gamma)\Psi'(0+)w.
			\end{align}
			Note the fact that almost surely for $r\leq \tau_{\gamma_0}\land t$ and $y\in F$,
			\[
			\Psi'(0+) u_r(y)-\frac{e^{\lambda_{\mathrm o}(t-r)}}{1-\gamma}\Psi(u_r(y)) \leq \Psi'(0+)u_r(y)-\frac{1}{1-\gamma}\Psi(u_r(y))\leq 0,
			\]
			where the last inequality follows from \eqref{Lower-bound-of-Psi}.
			Combining this with \eqref{eq: bounds-for-branching-mechanism}, $\lambda_{\mathrm o}>0$, and $M_{r}^{(\gamma, n)}\geq 0$,
			we get  by taking expectation for  \eqref{eq:HC} with $s = \tau_{\gamma_0} \wedge t$  that
			\begin{align}
				& \tilde{\mathbb{E}}_{\varepsilon \mathbf 1_U }\brk{\exp\brc{-e^{\lambda_{\mathrm o} (t-\tau_{\gamma_0}\land t)}M_{\tau_{\gamma_0}\land t}^{(\gamma,n)}} }
				\\&\geq \exp\brc{-e^{\lambda_{\mathrm o} t}M_0^{(\gamma,n)}} - {}
				\\& \qquad \frac{1}{2(1-\gamma)^2} \tilde{\mathbb{E}}_{\varepsilon \mathbf 1_U }  \bigg[ \int_0^{\tau_{\gamma_0} \land t} \int_{F^c} \exp\brc{-e^{\lambda_{\mathrm o} (t-r)}M_r^{(\gamma,n)}} e^{\lambda_{\mathrm o} (t-r)}v_{t-r}^{(\emptyset, \mu^{(\gamma,n)})}(y)^2 \times {} \\
				& \qquad\qquad \pr{(1-\gamma)\Psi'(0+)u_r(y)-\Psi(u_r(y))e^{\lambda_{\mathrm o} (t-r)}}  \mathrm{d} y \mathrm{d} r  \bigg]                                                                                                                                                         \\
				& \geq \exp\brc{-e^{\lambda_{\mathrm o} t}M_0^{(\gamma,n)}} - \frac{\Psi'(0+) e^{\lambda_{\mathrm o} t}}{2(1-\gamma)}\int_0^{ t} \int_{F^c}  v_{t-r}^{(\emptyset, \mu^{(\gamma,n)})}(y)^2 \tilde{\mathbb{E}}_{\varepsilon  \mathbf 1_U } \brk{u_r(y)}  \mathrm{d} y \mathrm{d} r .
			\end{align}
			From Lemma \ref{Upper-bound-w-t-x-2},  we get that
			\begin{align}
				& \tilde{\mathbb{E}}_{\varepsilon \mathbf 1_U }\brk{\exp\brc{-e^{\lambda_{\mathrm o}(t-\tau_{\gamma_0}\land t)}M_{\tau_{\gamma_0}\land t}^{(\gamma,n)}} }                                                                        \\
				& \geq  \exp\brc{-e^{\lambda_{\mathrm o} t}M_0^{(\gamma,n)}} - \frac{\varepsilon\Psi'(0+) e^{2\lambda_{\mathrm o}t} }{2(1-\gamma)}\int_0^{ t} \int_{F^c} v_{t-r}^{(\emptyset, \mu^{(\gamma,n)})}(y)^2  \mathrm{d} y\mathrm{d} r.
			\end{align}
			By the definition of $M_t^{(\gamma,n)}$  and that
			\[
			\tilde{\mathbb{E}}_{\varepsilon  \mathbf 1_U }\brk{e^{-M_t^{(\gamma,n)}} }
			\geq \tilde{\mathbb{E}}_{\varepsilon \mathbf 1_U}\brk{\exp\brc{-e^{\lambda_{\mathrm o}(t-\tau_{\gamma_0}\land t)}M_{\tau_{\gamma_0}\land t}^{(\gamma,n)}} } -\tilde{\mathbb{P}}_{\varepsilon  \mathbf 1_U }\pr{\tau_{\gamma_0}<t},
			\]
			we have
			\begin{align}\label{eq:1}
				\qquad & \tilde{\mathbb{E}}_{\varepsilon  \mathbf 1_U }\brk{\exp\brc{-\theta(\gamma)\sum_{i=1}^n u_{t}(x_i) }}
				\\& \geq \exp\brc{-\frac{\varepsilon e^{\lambda_{\mathrm o} t}}{1-\gamma}\int_U v_{t}^{(\emptyset,\mu^{(\gamma, n)})} (y)\mathrm{d}  y } - \frac{\varepsilon \Psi'(0+)e^{2\lambda_{\mathrm o} t}}{2(1-\gamma)} \int_0^t  \int_{F^c} v_{t-r}^{(\emptyset, \mu^{(\gamma,n)})}(y)^2 \mathrm{d}  r\mathrm{d}  y - {}
				\\&\qquad  \tilde{\mathbb{P}}_{\varepsilon  \mathbf 1_U }\pr{\sup_{s\leq t, y\in F} u_{s}(y)> \gamma_0}.
			\end{align}
			It has been explained in \cite{MR4698025}*{(2.19)} that
			\begin{align}
				& v_{r,y}^{(\emptyset, \mu^{(\gamma, n)})}
				\xrightarrow[n\uparrow \infty]{\text{increasingly}}
				v_{r,y}^{(\Lambda, (1-\gamma)\theta(\gamma)\mu)}\leq v_{r,y}^{(\Lambda, \mu)}, \quad r>0,y\in \mathbb R.
			\end{align}
			Now we get the first desired result \eqref{eq:LIL} by monotone convergence theorem while taking $n\uparrow\infty$ in \eqref{eq:1}.

			Let us now prove the upper bound \eqref{eq:LIU}.
			Similarly using \eqref{eq:SMM} and It\^{o}'s formula, we see that
			almost surely for every $s\in [0,t]$,
			\begin{align}
				& \exp\brc{-\kappa(\gamma) e^{-\beta_{\mathrm o} (t-s)}M_s^{(0,n)}} - \exp\brc{-\kappa(\gamma) e^{-\beta_{\mathrm o} t}M_0^{(0,n)}}
				\\&
				= \kappa(\gamma)\iint_0^s
				\exp\brc{- \kappa(\gamma)e^{\beta_0(t-r)}M^{(0,n)}_r}e^{-\beta_{\mathrm o} (t-r)}
				\times {}
				\\& \qquad \qquad \Bigg( -v_{t-r}^{(\emptyset, \mu^{(0,n)})}(y) \sqrt{\Psi(u_r(y))}W(\mathrm{d} r\mathrm{d} y)) + {}
				\\& \qquad \qquad \qquad v_{t-r}^{(\emptyset, \mu^{(0,n)})}(y) \pr{\Phi(u_{r}(y))-\beta_{\mathrm o}  u_{r}(y)} \mathrm{d} r\mathrm{d} y - {}
				\\& \qquad \qquad \qquad \frac{1}{2}v_{t-r}^{(\emptyset, \mu^{(\gamma,n)})}(y)^2 \pr{\Psi'(0+)u_r(y)-\kappa(\gamma) \Psi(u_r(y))e^{-\beta_{\mathrm o} (t-r)}} \mathrm{d} r\mathrm{d} y
				\Bigg).
			\end{align}
			Replacing $s$ with $\tau_{\gamma} \wedge t$ and taking the expectation of the above equation, combining \eqref{eq: bounds-for-branching-mechanism} and the fact that almost surely for all $r\leq t\wedge \tau_{\gamma}$ and $y\in F$,
			\[
			\Psi'(0+)u_r(y)-\kappa(\gamma) \Psi(u_r(y))e^{-\beta_{\mathrm o} (t-r)}\geq  \Psi'(0+)u_r(y)-\kappa(\gamma) \Psi(u_r(y)) \geq 0,
			\]
			we have
			\begin{align}
				& \tilde{\mathbb{E}}_{\varepsilon \mathbf 1_U }\brk{\exp\brc{-\kappa(\gamma) e^{-\beta_{\mathrm o} (t-t\land \tau_{\gamma})}M_{\tau_\gamma\land t}^{(0,n)}} }
				\\& \leq \exp\brc{-\kappa(\gamma) e^{-\beta_{\mathrm o} t}M_0^{(0,n)}} - {}
				\\& \qquad  \frac{\kappa(\gamma) }{2}\tilde{\mathbb{E}}_{\varepsilon \mathbf 1_U} \bigg[ \int_0^{\tau_\gamma \land t} \int_{F^c} \exp\brc{-\kappa(\gamma) e^{-\beta_{\mathrm o} (t-r)}M_r^{(0,n)}} e^{-\beta_{\mathrm o} (t-r)}v_{t-r}^{(\emptyset, \mu^{(0,n)})}(y)^2 \times {}
				\\&\qquad\qquad \pr{\Psi'(0+)u_r(y)-\kappa(\gamma) \Psi(u_r(y))e^{-\beta_{\mathrm o} (t-r)}} \mathrm{d} r\mathrm{d} y\bigg]
				\\&\leq  \exp\brc{-\kappa(\gamma) e^{-\beta_{\mathrm o} t}M_0^{(0,n)}} + \frac{1}{2} \tilde{\mathbb{E}}_{\varepsilon  \mathbf 1_U} \bigg[ \int_0^{t} \int_{F^c} v_{t-r,y}^{(\emptyset, \mu^{(0,n)})}(y)^2 \Psi(u_r(y)) \mathrm{d} r\mathrm{d} y\bigg],
			\end{align}
			where in the last inequality we used the fact that $\kappa(\gamma)\leq 1$.
			
			According to \eqref{eq: bounds-for-branching-mechanism} and Lemma \ref{Upper-bound-w-t-x-2},
			we concluded from above inequality that
			\begin{align}
				& \tilde{\mathbb{E}}_{\varepsilon  \mathbf 1_U}\brk{\exp\brc{-\kappa(\gamma) e^{-\beta_{\mathrm o} (t-t\land \tau_{\gamma})}M_{\tau_\gamma\land t}^{(0,n)}} }                                                            \\
				& \leq  \exp\brc{-\kappa(\gamma) e^{-\beta_{\mathrm o} t}M_0^{(0,n)}} + \varepsilon \beta_{\mathrm c} e^{\lambda_{\mathrm o} t}  \int_0^{t} \int_{F^c} v_{t-r}^{(\emptyset, \mu^{(0,n)})}(y)^2 \mathrm{d} r\mathrm{d} y.
			\end{align}
			Finally, noticing that
			\[0\leq
			\kappa(\gamma) e^{-\beta_{\mathrm o} (t-t\land \tau_{\gamma})}M_{\tau_\gamma\land t}^{(0,n)}\leq  e^{-\beta_{\mathrm o} (t-t\land \tau_{\gamma})}M_{\tau_\gamma\land t}^{(0,n)}
			\]
			and that
			\begin{align}
				\tilde{\mathbb{E}}_{\varepsilon  \mathbf 1_U}\brk{\exp\brc{- e^{-\beta_{\mathrm o} (t-t\land \tau_{\gamma})}M_{\tau_\gamma\land t}^{(0,n)}} }
				\geq  \tilde{\mathbb{E}}_{\varepsilon  \mathbf 1_U}\brk{\exp\brc{- M_{ t}^{(0,n)}} } -\tilde{\mathbb{P}}_{\varepsilon  \mathbf 1_U}\pr{\tau_\gamma \leq t},
			\end{align}
			we conclude that
			\begin{align}
				& \tilde{\mathbb{E}}_{\varepsilon  \mathbf 1_U }\brk{\exp\brc{- \sum_{i=1}^n u_t(x_i)} }
				=\tilde{\mathbb{E}}_{\varepsilon  \mathbf 1_U}\brk{\exp\brc{- M_{ t}^{(0,n)}} }
				\\&\leq \exp\brc{-\kappa(\gamma) e^{-\beta_{\mathrm o} t}M_0^{(0,n)}}  + \tilde{\mathbb{P}}_{\varepsilon  \mathbf 1_U}\pr{\tau_\gamma \leq t} + \varepsilon \beta_{\mathrm c}e^{\lambda_{\mathrm o}  t} \int_0^{t} \int_{F^c} v_{t-r}^{(\emptyset, \mu^{(0,n)})}(y)^2 \mathrm{d} r\mathrm{d} y.
			\end{align}
			Combining the above inequality  and the fact that $v_t^{(0,\mu^{(0,n)})}$ converges to $v_t^{(\Lambda, \mu)}$
			(c.f.~ the argument below \cite{MR4698025}*{(2.18)}), we get \eqref{eq:LIU}.
		\end{proof}
		
		\section{Proof of Lemma \ref{lem: Prob-w}}\label{append-C}
		Let $\beta_\mathrm o$, $(p_k)_{k=0}^\infty$, $\beta_{\mathrm c}$ and $(q_k)_{k=0}^\infty$ be given as in \eqref{eq:OBR}--\eqref{eq:CB}.
		Suppose that \eqref{asp:A3}, \eqref{asp:A1} and \eqref{asp:A2} hold.
		Let $\Phi$ and $\Psi$ be given as in \eqref{Def-Phi} and \eqref{Def-Psi} respectively.
		Let $f$ be a measurable function on $\mathbb R$ which can be approximated by the elements of $\mathcal C(\mathbb R, [0,z^*])$ monotonically from below.
		Let $(u_t)_{t>0}$
		be the continuous $\mathcal C(\mathbb R, [0,z^*])$-valued process given as in Proposition \ref{prop:GI}, with  initial value  $u_0 = f$,  on a probability space whose probability measure will be denoted by $\tilde {\mathbb P}_f$.
		
		In this section, we prove Lemma \ref{lem: Prob-w} following the standard strategy of \cite{MR1339735}.
		From \eqref{eq:S1}, we can assume without loss of generality, c.f.~\cite{MR0958288}*{Proof of Lemma 2.4}, that there exists a stochastic basis $(\tilde \Omega, \tilde {\mathcal F}, (\tilde {\mathcal F}_t)_{t\geq 0}, \tilde {\mathbb P}_f, W)$ such that $(u_t)_{t> 0}$ is an $(\tilde {\mathcal F}_t)_{t\geq 0}$-adapted $\mathcal C(\mathbb R, [0,z^*])$-valued continuous process, and that, for each $\phi \in \mathcal C_\mathrm c^\infty(\mathbb R)$, almost surely,
		\begin{align}
			\int u_t(x) \phi(x) \mathrm{d} x & = \int f(x) \phi(x) \mathrm{d} x + \iint_0^t u_s(y) \frac{\phi''(y)}{2} \mathrm{d} s \mathrm{d} y - {}
			\\& \qquad \iint_0^t \Phi(u_s(y)) \phi(y) \mathrm{d} s \mathrm{d} y + \iint_0^t \Psi(u_s(y)) W(\mathrm ds \mathrm dy), \quad t\geq 0.
		\end{align}
		It is then standard, c.f.~\cite{MR1271224}*{Theorem 2.1}, that the mild form \eqref{eq:mild} holds almost surely for every $(t,x)\in (0,\infty)\times \mathbb R$.
		Define continuous random fields $(M(s,y))_{s\geq 0,y\in \mathbb R}$ and $(N(s,y))_{s\geq 0,y\in \mathbb R}$ such that for any $s>0$ and $y\in \R$,
		\begin{align}
			& M(s,y) :=  \iint_0^s p_{s-r}(y-z)  \Phi(u_{r}(z))\mathrm{d} r\mathrm{d} z , \\
			& N(s,y)
			:= u_s(y) -\int p_{s}(y-z)f(z)\mathrm dz + \iint_0^t p_{s-r}(y-z) \Phi (u_r(z))\mathrm dr\mathrm dz
		\end{align}
		and $M_{0}(y):=N_0(y)=0$.
		Note that, for every $s>0$ and $y\in \mathbb R$, almost surely
		\begin{equation} \label{eq:RN}
			N(s,y)=\iint_0^s p_{s-r}(y-z) \sqrt{\Psi(u_r(z))}W(\mathrm{d} r\mathrm{d} z).
		\end{equation}
		In the following, we take $p_{s}(x):=0$ when  $s\leq 0$  and $x\in \R$ for the sake of notation conveniences.

		\begin{lemma}\label{Heat-Kernel-estimate}
			For every $t>0$, uniformly for every $(s,y), (s',y')\in [0,t]\times \mathbb R$,
			\begin{equation}\label{Ineq-2}
				\mathcal{K}_{s,y;s'y'}^{(1)}:=\iint_0^\infty  \left|p_{s-r}(y-z) - p_{s'-r}(y'-z)\right| \mathrm{d}  z \mathrm{d}  r
				\lesssim |y-y'| +\sqrt{|s-s'|}
			\end{equation}
			and
			\begin{equation}\label{Ineq-1}
				\mathcal{K}_{s,y;s'y'}^{(2)}:= \iint_0^\infty  \pr{p_{s-r}(y-z) - p_{s'-r}(y'-z)}^2 \mathrm{d}  z \mathrm{d}  r \lesssim |y-y'| +\sqrt{|s-s'|}.
			\end{equation}
		\end{lemma}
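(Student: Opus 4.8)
The plan is to reduce both estimates to standard increment bounds for the one-dimensional Gaussian kernel, combined with a careful splitting of the $r$-integral. Fix $t>0$ and assume, without loss of generality, that $s'\le s$; write $\delta:=s-s'\in[0,t]$ and $h:=y-y'$. Since $p_u\equiv 0$ for $u\le 0$, the integrand in $\mathcal K^{(j)}$ ($j\in\{1,2\}$, with $|\cdot|$ for $j=1$ and $(\cdot)^2$ for $j=2$) vanishes for $r>s$, so it suffices to bound $\int_{s'}^{s}(\cdots)\diff r$ and $\int_0^{s'}(\cdots)\diff r$ separately. On the slab $r\in(s',s]$ one has $p_{s'-r}(y'-z)=0$, so only the first kernel contributes, and using $\|p_u\|_{L^1(\mathbb R)}=1$ and $\|p_u\|_{L^2(\mathbb R)}^2=p_{2u}(0)=(4\pi u)^{-1/2}$ for $u>0$ we get $\int_{s'}^{s}\|p_{s-r}\|_{L^1(\mathbb R)}\diff r=\delta\le\sqrt t\,\sqrt\delta$ and $\int_{s'}^{s}\|p_{s-r}\|_{L^2(\mathbb R)}^2\diff r=\pi^{-1/2}\sqrt\delta$, both of the claimed order $\sqrt{|s-s'|}$.

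For the slab $r\in(0,s']$ I would substitute $a:=s'-r$, so that $s'-r=a$, $s-r=a+\delta$, and $a$ ranges over $[0,s']$; the contribution becomes $\int_0^{s'}\!\int_{\mathbb R}\big(p_{a+\delta}(y-z)-p_a(y'-z)\big)\,\diff z\,\diff a$ in absolute value (resp.\ squared). Inserting the intermediate kernel $p_a(y-z)$ and using the triangle inequality (for $j=1$) or $(u-v)^2\le 2(u-w)^2+2(w-v)^2$ (for $j=2$) separates this into a \emph{time-increment} piece involving $\|p_{a+\delta}-p_a\|$ and a \emph{space-increment} piece involving $\|p_a(\cdot-h)-p_a\|$, the latter since $y'-z=(y-z)-h$. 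The remaining ingredients are the elementary bounds, obtained either by Brownian scaling $p_a(x)=a^{-1/2}p_1(a^{-1/2}x)$ together with $\|p_1'\|_{L^1(\mathbb R)}<\infty$ and $\|\partial_u p_u\|_{L^1(\mathbb R)}\lesssim u^{-1}$, or by Plancherel with $\widehat{p_a}(\xi)=e^{-a\xi^2/2}$ and the inequalities $1-\cos\theta\le\tfrac12\theta^2$, $0\le 1-e^{-x}\le x$: namely $\|p_a(\cdot-h)-p_a\|_{L^1(\mathbb R)}\lesssim\min(1,|h|a^{-1/2})$, $\|p_{a+\delta}-p_a\|_{L^1(\mathbb R)}\lesssim\min(1,\delta a^{-1})$, $\|p_a(\cdot-h)-p_a\|_{L^2(\mathbb R)}^2\lesssim\min(a^{-1/2},|h|^2a^{-3/2})$, and $\|p_{a+\delta}-p_a\|_{L^2(\mathbb R)}^2\lesssim\min(a^{-1/2},\delta^2a^{-5/2})$.

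It then remains to integrate these four bounds over $a\in[0,s']\subseteq[0,t]$, splitting the $a$-integral at the natural scale ($a=|h|^2$ for the space pieces, $a=\delta$ for the time pieces) and estimating each branch of the $\min$ separately. The space-increment pieces contribute $\lesssim|h|=|y-y'|$ (with an implicit constant depending on $t$, which is harmless since $t$ is fixed); the $L^2$ time-increment piece contributes $\lesssim\sqrt\delta$; and the $L^1$ time-increment piece contributes $\lesssim\delta\big(1+\log(t/\delta)\big)$, which is $\lesssim\sqrt\delta=\sqrt{|s-s'|}$ on $(0,t]$ because $r\mapsto r\big(1+\log(t/r)\big)$ is dominated there by a constant multiple of $\sqrt r$. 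Adding back the slab-$(s',s]$ contribution gives the lemma. I do not expect a genuinely hard step here; the only potentially fiddly points are bookkeeping the (inessential) $t$-dependence of the implicit constants and choosing the splitting scales so that each branch of the $\min$ yields exactly the needed power of $|h|$ or $\delta$.
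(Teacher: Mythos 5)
Your argument is correct. For the $L^1$ estimate \eqref{Ineq-2} your route is essentially the paper's: the paper also assumes $s'\le s$, splits into a space-increment piece $I_1$ and a time-increment piece $I_2$, absorbs the slab where only one kernel is alive into a trivial bound of order $\sqrt{|s-s'|}$ (their $I_{21}$, your slab $(s',s]$), and controls the remaining time increment through $\int_r^{r+\delta}\|\partial_a p_a\|_{L^1}\,\diff a\le\log\bigl(1+\delta/r\bigr)\le\delta/r$, splitting the $r$-integral at $\sqrt{\delta}$ — morally the same mechanism by which you absorb the $\delta\log(t/\delta)$ term into $\sqrt{\delta}$. The differences are cosmetic (you phrase the increment bounds as $\min$-type norms obtained by scaling/Plancherel and split the $a$-integral at the natural scales $|h|^2$ and $\delta$, whereas the paper differentiates the explicit Gaussian density directly), and your constants, like theirs, are allowed to depend on $t$, which the statement permits. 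The one genuine divergence is \eqref{Ineq-1}: the paper does not prove it, citing Shiga's Lemma 6.2(i) in \cite{MR1271224}, whereas you give a short self-contained Plancherel proof with the bounds $\|p_a(\cdot-h)-p_a\|_{L^2}^2\lesssim\min(a^{-1/2},|h|^2a^{-3/2})$ and $\|p_{a+\delta}-p_a\|_{L^2}^2\lesssim\min(a^{-1/2},\delta^2a^{-5/2})$; these are correct, the $a$-integrals close without any logarithm, and this makes your write-up more self-contained at the cost of a few extra lines.
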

		
		\begin{proof}
			For \eqref{Ineq-1}, see \cite{MR1271224}*{Lemma 6.2 (i)}.
			We now prove \eqref{Ineq-2}.
			Let  $t>0$ and let $(s,y), (s',y')\in [0,t]\times \mathbb R$ be arbitrary.
			Without loss of generality, let us assume that  $s'\leq s$ and define  $\delta_s:=s-s'$    and $\delta_y := |y-y'|$.
			Note that
			$\mathcal{K}_{s,y;s'y'}^{(1)} \leq  I_1 + I_2
			$ where
			\begin{equation}
				I_1:=\iint_0^\infty \abs{p_{s-r}(y-z) - p_{s-r}(y'-z)} \mathrm{d}  r \mathrm{d}  z  =\iint_0^s  \abs{ p_r (z+ \delta_y)- p_r(z) } \mathrm{d}  r \mathrm{d} z
			\end{equation}
			and
			\begin{equation}
				I_2:=\iint_0^\infty \abs{p_{s-r}(y'-z) - p_{s'-r}(y'-z)} \mathrm{d}  r \mathrm{d}  z
				=\iint_{-\delta_s}^{s'} \left|p_{\delta_s+r}(z) - p_{r}(z)\right| \mathrm{d}  r \mathrm{d}  z.
			\end{equation}
			For $I_1$, we get that uniformly for the arbitrary $(s,y), (s',y')\in [0,t]\times \mathbb R$,
			\begin{align}\label{I-1}
				& I_1
				\leq\iint_0^s  \int_0^{\delta_y} \frac{\left|\xi +z\right| }{\sqrt{2\pi r^3}}e^{-(\xi +z)^2/(2r)} \mathrm{d}  \xi \mathrm{d}  r \mathrm{d} z
				= \delta_y  \int \frac{|z|}{\sqrt{2\pi}} e^{-z^2/2}\mathrm{d}  z \int_0^s \frac{1}{\sqrt{r}}\mathrm{d}  r
				\lesssim \delta_y.
			\end{align}
			For $I_2$, we decompose it as $I_2=I_{21} +  I_{22}$ where  uniformly for   $(s,y), (s',y')\in [0,t]\times \mathbb R$,
			\begin{equation}\label{I-2-1}
				I_{21}:=\iint_{-\delta_s}^{\sqrt{\delta_s}\wedge s'} \abs{p_{\delta_s+r}(z) - p_{r}(z)} \mathrm{d}  r \mathrm{d}  z
				\leq 2\delta_s +2\sqrt{\delta_s} \wedge s' \lesssim \sqrt{\delta_s}
			\end{equation}
			and
			\begin{align}\label{step_13}
				& I_{22}
				:=\iint_{\sqrt{\delta_s} \wedge s'}^{s'} \abs{p_{\delta_s+r}(z) - p_{r}(z)} \mathrm{d}  r \mathrm{d}  z
				=\iint_{\sqrt{\delta_s}}^{s'\vee \sqrt{\delta_s}} \abs{p_{\delta_s+r}(z) - p_{r}(z)} \mathrm{d}  r \mathrm{d}  z
				\\& \leq \iint_{\sqrt{\delta_s}}^{s'\vee \sqrt{\delta_s}} \int_r^{\delta_s+r} \abs{-\frac{1}{2a}+\frac{z^2}{2a^2}}p_a(z) \mathrm{d} a \mathrm{d}  r \mathrm{d}  z
				\leq \int_{\sqrt{\delta_s}}^{s'\vee \sqrt{\delta_s}} \int_r^{\delta_s+r} \frac{1}{a} \mathrm{d} a \mathrm{d}  r
				\\& = \int_{\sqrt{\delta_s}}^{s'\vee \sqrt{\delta_s}} \log\pr{\frac{\delta_s+r}{r}}\mathrm{d}  r
				\leq \int_{\sqrt{\delta_s}}^{s'\vee \sqrt{\delta_s}} \frac{\delta_s}{r}\mathrm{d}  r
				\leq  \int_{\sqrt{\delta_s}}^{s'\vee \sqrt{\delta_s}} \sqrt{\delta_s}\mathrm{d}  r
				\lesssim \sqrt{\delta_s}.
			\end{align}
			We are done.
		\end{proof}

		\begin{lemma}\label{Moment-of-M-N}
			Let $U$ be an open interval and $\varepsilon\in (0,1)$.
			Let the initial value $f$ of the process $(u_t)_{t\geq 0}$ be given by $f= \varepsilon  \mathbf 1_U $.
			Then for any $p>1$ and $t>0$, uniformly for the arbitrary open interval $U$, the arbitrary parameter $\varepsilon \in (0,1)$, $s,s' \in (0,t)$ and $y, y'\in \R$, we have
			\begin{equation}
				\tilde{\mathbb{E}}_{\varepsilon  \mathbf 1_U }\brk{\abs{M(s,y)- M(s',y')}^{p}}\lesssim \varepsilon \pr{|y-y'| +\sqrt{|s-s'|}}^{p-1} \pr{\mathbf{P}_y(B_s\in U)+ \mathbf{P}_{y'}(B_{s'}\in U)},
			\end{equation}
			and
			\begin{equation}
				\tilde{\mathbb{E}}_{\varepsilon  \mathbf 1_U }\brk{\abs{N(s,y)- N(s',y')}^{2p}}
				\lesssim \varepsilon \pr{|y-y'| +\sqrt{|s-s'|}}^{p-1} \pr{\mathbf{P}_y(B_s\in U)+ \mathbf{P}_{y'}(B_{s'}\in U)}.
			\end{equation}
		\end{lemma}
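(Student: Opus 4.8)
The plan is to establish Lemma~\ref{Moment-of-M-N} by the standard Kolmogorov‑continuity scheme: linearize $\Phi$ and $\Psi$ via \eqref{eq: bounds-for-branching-mechanism}, reduce the $p$-th (resp.\ $2p$-th) moment of a deterministic (resp.\ stochastic) space–time convolution to the first moment of $u_r$ through a Jensen inequality together with the elementary bound $u_r(z)^p\le (z^*)^{p-1}u_r(z)\le 2^{p-1}u_r(z)$ (valid since $u$ is $[0,z^*]$-valued and $z^*<2$), control that first moment by Lemma~\ref{Upper-bound-w-t-x-2}, collapse the remaining heat-kernel integrals via the Chapman–Kolmogorov identity $\int p_{a}(y-z)p_{b}(z-w)\diff z=p_{a+b}(y-w)$, and finally invoke the heat-kernel difference estimates of Lemma~\ref{Heat-Kernel-estimate}. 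Throughout, the convention $p_a\equiv0$ for $a\le0$ keeps all integrals over $[0,\infty)$ well-defined.

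For the $M$-bound, first write $g(r,z):=\abs{p_{s-r}(y-z)-p_{s'-r}(y'-z)}$, so that by \eqref{eq: bounds-for-branching-mechanism},
$\abs{M(s,y)-M(s',y')}\le\iint_0^\infty g(r,z)\,\abs{\Phi(u_r(z))}\diff r\diff z\lesssim\iint_0^\infty g(r,z)\,u_r(z)\diff r\diff z$.
Since $K_1:=\iint_0^\infty g\,\diff r\diff z=\mathcal K^{(1)}_{s,y;s'y'}$ is finite, normalizing $g/K_1$ to a probability density and applying Jensen's inequality (convexity of $x\mapsto x^p$, $p\ge1$) gives $\bigl(\iint g\,u_r\bigr)^p\le K_1^{p-1}\iint g\,u_r^p$. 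Taking expectations (Tonelli, all terms non-negative), using $u_r^p\le 2^{p-1}u_r$ and then Lemma~\ref{Upper-bound-w-t-x-2} with $f=\varepsilon\mathbf 1_U$, one obtains
$\tilde{\mathbb E}_{\varepsilon\mathbf 1_U}\bigl[\iint g\,u_r^p\bigr]\le 2^{p-1}\varepsilon e^{\lambda_{\mathrm o}t}\iint_0^\infty g(r,z)\int_U p_r(z-w)\diff w\diff r\diff z$.
Bounding $g\le p_{s-r}(y-z)+p_{s'-r}(y'-z)$ and using Chapman–Kolmogorov followed by $\int_0^s\diff r=s\le t$, the last quantity is $\le 2^{p-1}\varepsilon e^{\lambda_{\mathrm o}t}t\bigl(\mathbf P_y(B_s\in U)+\mathbf P_{y'}(B_{s'}\in U)\bigr)$. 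Combined with $K_1^{p-1}\lesssim(\abs{y-y'}+\sqrt{\abs{s-s'}})^{p-1}$ from Lemma~\ref{Heat-Kernel-estimate}, and noting that the constants depend only on $p,t,\Phi$, this yields the claimed uniform bound for $M$.

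For the $N$-bound, the representation \eqref{eq:RN}, the predictability of $(r,z)\mapsto(p_{s-r}(y-z)-p_{s'-r}(y'-z))\sqrt{\Psi(u_r(z))}$, and the square-integrability guaranteed by $\mathcal K^{(2)}_{s,y;s'y'}<\infty$ and the boundedness of $\Psi(u_r(z))$ allow $N(s,y)-N(s',y')$ to be written as a single stochastic integral against $W$; the Burkholder–Davis–Gundy inequality then gives
$\tilde{\mathbb E}_{\varepsilon\mathbf 1_U}\bigl[\abs{N(s,y)-N(s',y')}^{2p}\bigr]\lesssim\tilde{\mathbb E}_{\varepsilon\mathbf 1_U}\bigl[\bigl(\iint_0^\infty h(r,z)\Psi(u_r(z))\diff r\diff z\bigr)^p\bigr]$
with $h:=(p_{s-r}(y-z)-p_{s'-r}(y'-z))^2$. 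Since $K_2:=\iint h\,\diff r\diff z=\mathcal K^{(2)}_{s,y;s'y'}$, the same Jensen step, $\Psi(u)\lesssim u$, $u^p\le 2^{p-1}u$, Tonelli and Lemma~\ref{Upper-bound-w-t-x-2} reduce the bound to $K_2^{p-1}\varepsilon e^{\lambda_{\mathrm o}t}\iint_0^\infty h(r,z)\int_U p_r(z-w)\diff w\diff r\diff z$. Now use $h\le 2p_{s-r}(y-z)^2+2p_{s'-r}(y'-z)^2$ together with $p_a(x)^2\le(2\pi a)^{-1/2}p_a(x)$; Chapman–Kolmogorov and $\int_0^s(s-r)^{-1/2}\diff r=2\sqrt s\le 2\sqrt t$ then bound this integral by a constant (depending on $t$) times $\mathbf P_y(B_s\in U)+\mathbf P_{y'}(B_{s'}\in U)$. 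Combining with $K_2^{p-1}\lesssim(\abs{y-y'}+\sqrt{\abs{s-s'}})^{p-1}$ from Lemma~\ref{Heat-Kernel-estimate} completes the argument.

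The one point that genuinely needs care is the $N$-part: justifying that $N(s,y)-N(s',y')$ is a single stochastic integral so that BDG applies (via \eqref{eq:RN}, the convention $p_a\equiv0$ for $a\le0$, predictability, and $L^2$-integrability), and then verifying that none of the constants accumulated through Jensen, BDG, Lemma~\ref{Upper-bound-w-t-x-2} and Lemma~\ref{Heat-Kernel-estimate} depend on the open interval $U$, on $\varepsilon\in(0,1)$, or on $s,s'\in(0,t)$ and $y,y'\in\mathbb R$. The $M$-part is then entirely routine.
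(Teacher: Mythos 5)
Your proposal is correct and follows essentially the same route as the paper's proof: BDG for the stochastic term, the linearized bounds $|\Phi(u)|\lesssim u$ and $\Psi(u)\lesssim u$, the first-moment estimate of Lemma \ref{Upper-bound-w-t-x-2}, the Chapman--Kolmogorov collapse of the heat-kernel integrals, and Lemma \ref{Heat-Kernel-estimate} for the factors $\mathcal K^{(1)}$, $\mathcal K^{(2)}$. The only cosmetic difference is in the $M$-part, where you extract the $(\mathcal K^{(1)})^{p-1}$ factor via Jensen's inequality with the normalized kernel and $u^p\le 2^{p-1}u$, while the paper uses the almost-sure bound $|M(s,y)-M(s',y')|\lesssim\mathcal K^{(1)}$ together with the first moments of $M(s,y)$ and $M(s',y')$; both give the same uniform constants.
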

		\begin{proof}
			From \eqref{eq: bounds-for-branching-mechanism} we know that the random field $\Psi(u)$ is bounded by a deterministic constant.
			Therefore, uniformly for  the arbitrary open interval $U$, the arbitrary parameter $\varepsilon \in (0,1)$, $s,s' \in (0,t)$ and $y, y'\in \R$, we have
			\begin{equation}
				\left|M(s,y)- M(s', y')\right| \leq \iint_0^\infty \left| p_{s-r}(y-z) - p_{s'-r}(y'-z)\right| |\Phi(u_r(z))|\mathrm{d} r\mathrm{d} z 			\lesssim \mathcal{K}_{s,y;s',y'}^{(1)}.
			\end{equation}
			From \eqref{eq: bounds-for-branching-mechanism}, Lemma \ref{Upper-bound-w-t-x-2}, and the Markov property of the Brownian motion that, uniformly for  the arbitrary open interval $U$, the arbitrary parameter $\varepsilon \in (0,1)$, and $(s,y)\in [0,t]\times \mathbb R$,
			\begin{align}
				& \tilde{\mathbb{E}}_{\varepsilon  \mathbf 1_U } \brk{|M(s,y)|}
				\leq  \iint_0^s p_{s-r}(y-z)\tilde{\mathbb{E}}_{\varepsilon \mathbf 1_U } \brk{\abs{\Phi(u_r(z))} }\mathrm{d}  r \mathrm{d} z \\
				& \lesssim  \iint_0^s p_{s-r}(y-z)\tilde{\mathbb{E}}_{\varepsilon  \mathbf 1_U } \brk{u_r(z) }\mathrm{d} r \mathrm{d}  z
				\lesssim \varepsilon \iint_0^s p_{s-r}(y-z) \mathbf{P}_z(B_{r}\in U)\mathrm{d} r\mathrm{d} z
				\\& = \varepsilon \int_0^s \mathbf E_y \brk{\mathbf{P}_{B_{s-r}}(B_{r}\in U)} \mathrm{d} r = \varepsilon s\mathbf P_y(B_s\in U).
			\end{align}
			Therefore, for every $p> 1$, from Lemma \ref{Heat-Kernel-estimate}, uniformly for  the arbitrary open interval $U$, the arbitrary parameter $\varepsilon \in (0,1)$, and $(s,y), (s',y')\in [0,t]\times \mathbb R$,
			\begin{align}
				& \tilde{\mathbb{E}}_{\varepsilon \mathbf 1_U} \brk{\abs{ M(s,y)- M(s', y')}^p}
				\lesssim
				\pr{ \mathcal{K}_{s,y;s',y'}^{(1)}}^{p-1}
				\pr{ \tilde{\mathbb{E}}_{\varepsilon \mathbf 1_U } \brk{|M(s,y)|} + \tilde{\mathbb{E}}_{\varepsilon  \mathbf 1_U } \brk{|M(s', y')|} }
				\\&\lesssim \varepsilon \pr{|y-y'| +\sqrt{|s-s'|}}^{p-1}   \pr{\mathbf{P}_y (B_s \in U) + \mathbf{P}_{y'} (B_{s'} \in U) }
			\end{align}
			as desired for  the first inequality.
			
			For the second inequality, by \eqref{eq:RN}, Burkholder-Davis-Gundy's inequality, \eqref{eq: bounds-for-branching-mechanism}, the fact that the random field $u$ is bounded by 2, and the trivial inequality $(a-b)^2 \leq 2(a^2+b^2)$, we verify that, for every $p>1$, and uniformly for  the arbitrary open interval $U$, the arbitrary parameter $\varepsilon \in (0,1)$, and $(s,y), (s',y')\in [0,t]\times \mathbb R$,
			\begin{align}
				& \tilde{\mathbb{E}}_{\varepsilon  \mathbf 1_U}\brk{\abs{N(s, y)- N(s', y')}^{2p} }
				\\&= \tilde{\mathbb{E}}_{\varepsilon \mathbf 1_U }\brk{\abs{\iint_0^\infty  \pr{p_{s-r}(y-z)- p_{s'-r}(y'-z)} \sqrt{\Psi(u_r(z))}W(\mathrm{d} r\mathrm{d} z)}^{2p}} \\
				& \lesssim \tilde{\mathbb{E}}_{\varepsilon  \mathbf 1_U }\brk{ \pr{ \iint_0^\infty  \pr{p_{s-r}(y-z)- p_{s'-r}(y'-z)} ^2 \Psi(u_{r}(z)) \mathrm{d}  r\mathrm{d} z}^p}                           \\
				& \lesssim \tilde{\mathbb{E}}_{\varepsilon  \mathbf 1_U }\brk{ \pr{ \iint_0^\infty  \pr{p_{s-r}(y-z)- p_{s'-r}(y'-z)} ^2 u_{r}(z) \mathrm{d}  r\mathrm{d} z}^p}                                 \\
				& \lesssim \pr{\mathcal{K}_{s,y;s',y'}^{(2)}}^{p-1} \iint_0^\infty  \pr{p_{s-r}(y-z)- p_{s'-r}(y'-z)} ^2 \tilde{\mathbb{E}}_{\varepsilon  \mathbf 1_U}\brk{ u_{r}(z)} \mathrm{d}  r\mathrm{d} z
				\\& \lesssim \pr{\mathcal{K}_{s,y;s',y'}^{(2)}}^{p-1} \iint_0^\infty  \pr{p_{s-r}(y-z)^2+p_{s'-r}(y'-z)^2} \tilde{\mathbb{E}}_{\varepsilon  \mathbf 1_U}\brk{ u_{r}(z)} \mathrm{d}  r\mathrm{d} z.
			\end{align}
			From Lemma \ref{Upper-bound-w-t-x-2}, we have uniformly for  the arbitrary open interval $U$, the arbitrary parameter $\varepsilon \in (0,1)$, and $(s,y)\in [0,t]\times \mathbb R$,
			\begin{align}\label{Moment-of-N-2}
				& \iint_0^\infty  p_{s-r}(y-z) ^2 \tilde{\mathbb{E}}_{\varepsilon  \mathbf 1_U }\brk{u_{r}(z)} \mathrm{d}  r\mathrm{d} z
				\lesssim \varepsilon \iint_0^\infty  p_{s-r}(y-z) ^2 \mathbf P_z(B_r\in U) \mathrm{d}  r\mathrm{d} z
				\\&\leq \varepsilon \iint_0^\infty  \frac{1}{\sqrt{2\pi (s-r)}}p_{s-r}(y-z) \mathbf P_z(B_r\in U) \mathrm{d}  r\mathrm{d} z
				\\&= \varepsilon \int_0^s  \frac{1}{\sqrt{2\pi (s-r)}} \mathbf P_y(B_s\in U) \mathrm{d}  r
				\lesssim \varepsilon \mathbf P_y(B_s\in U).
			\end{align}
			Now, from above and Lemma \ref{Heat-Kernel-estimate}, for every $p>1$, uniformly for  the arbitrary open interval $U$, the arbitrary parameter $\varepsilon \in (0,1)$, and $(s,y), (s',y')\in [0,t]\times \mathbb R$, we have
			\begin{align}
				& \tilde{\mathbb{E}}_{\varepsilon  \mathbf 1_U}\brk{\abs{N(s, y)- N(s', y')}^{2p} }
				\lesssim \varepsilon \pr{\mathcal{K}_{s,y;s',y'}^{(2)}}^{p-1} \pr{\mathbf P_y(B_s\in U) + \mathbf P_{y'}(B_{s'}\in U)}
				\\& \lesssim \varepsilon \pr{|y-y'| + \sqrt{|s-s'|}}^{p-1} \pr{\mathbf P_y(B_s\in U) + \mathbf P_{y'}(B_{s'}\in U)},
			\end{align}
			which implies the second inequality.
			We are done.
		\end{proof}
		
		\begin{proof}[Proof of Lemma  \ref{lem: Prob-w}]
			
			Let us first show \eqref{eq:UFtg}.
			Let $\varepsilon \in (0,\gamma/2)$ be arbitrary, and assume that $f$, the initial value of the process $(u_t)_{t\geq 0}$, is given by $\varepsilon \mathbf 1_U$.   By
			\eqref{eq:mild},
			we see that
			\begin{align}
				u_{s}(y)\leq \varepsilon +
				\abs{M(s,y)}+ \abs{N(s,y)}, \quad (s,y)\in [0,\infty)\times \mathbb R, \text{a.s.}
			\end{align}
			Therefore, we only need to prove that
			uniformly for the arbitrary $\varepsilon\in (0, \gamma/2)$,
			\begin{align}\label{eq:DC}
				\tilde{\mathbb{P}}_{\varepsilon \mathbf 1_U }\pr{\sup_{s\leq t, y\in F}
					\pr{
						\abs{M(s,y)} + \abs{N(s,y)}}> \frac{\gamma}{2}}
				\lesssim \varepsilon.
			\end{align}
			
			Without loss of generality, we assume that $\tilde F=F \setminus \{\sup(F)\}$ is non-empty.
			In the following, we construct a dyadic approximation of the time-space region $[0,t)\times \tilde F$.
			Note that there exists an (at most) countable set $F_0$ and a constant $\delta \in (0,1]$ such that $\tilde F$ is the disjoint union $\cup_{y\in F_0 } [y,y+\delta).$
			Define a sequence of time-space lattices $(L_m)_{m\in \mathbb Z_+}$, so that
			$L_0:= \{(0,y):y\in F_0\}$,
			and inductively,
			\begin{equation}
				L_m:= \bigcup_{(s,y)\in L_{m-1}}\{(s,y),(s+ t2^{-m},y),(s,y+\delta 2^{-m}), (s+ t2^{-m},y+\delta 2^{-m})\}, \quad m\in \mathbb Z_+.
			\end{equation}
			For each $m\in \mathbb Z_+$, define a map $\Gamma_m$ from $[0,t)\times \tilde F$ to the lattice $L_m$ such that
			for every $(s,y)\in [0,t)\times \tilde F$,
			$(s_m,y_m)= \Gamma_m(s,y)$ is the unique element in $L_m$ satisfying that $s_m \leq s < s_m+t2^{-m}$ and $y_m \leq y < y_m + \delta 2^{-m}.$
			It is not hard to observe that
			\begin{itemize}
				\item[\eq\label{eq:LC}]
				$\Gamma_{m} \circ \Gamma_m = \Gamma_{m}$ and $\Gamma_{m-1} \circ \Gamma_m = \Gamma_{m-1}$ for each $m\in \mathbb N$;
			\end{itemize}
			and that
			\begin{itemize}
				\item[\eq\label{eq:GC}] $\Gamma_m(s,y)$ converges to $(s,y)$ when $m\uparrow \infty$ for every $(s,y)\in [0,t)\times \tilde F$.
			\end{itemize}
			Moreover,  for every $m\in \mathbb N,$ and $(s,y)\in [0,t)\times \mathbb R$,
			\begin{equation}\label{eq:ND}
				|y_m-y_{m-1}| + \sqrt{|s_m-s_{m-1}|} \leq t2^{-{(m-1)}} + \sqrt{\delta 2^{-(m-1)}}
				\leq (2t+\sqrt{2\delta}) 2^{-m/2}
			\end{equation}
			provided $(s_m,y_m)=\Gamma_{m}(s,y)$ and $(s_{m-1},y_{m-1})=\Gamma_{m-1}(s,y)$.
			
			Let us consider an event on which the fluctuations of the random fields $M$ and $N$ on the dyadic lattices $(L_m)_{m=0}^\infty$ are delicately controlled.
			That is, we consider the event
			\begin{equation} \label{eq:DfA}
				A := \bigcap_{m=1}^\infty \bigcap_{(s,y)\in L_m}  \bigcap_{k=1}^2 A^k_{m}(s,y)
			\end{equation}
			where, for any $(s,y)\in [0,t)\times \tilde F$,
			\begin{align}
				& A_{m}^1(s,y) := \brc{\abs{(M\circ \Gamma_m -M\circ \Gamma_{m-1})(s,y)}\leq \gamma_0 2^{-m/10}},
				\\&A_{m}^2(s,y) := \brc{\abs{(N\circ \Gamma_m-N\circ\Gamma_{m-1})(s,y)}\leq \gamma_0 2^{-m/10}}
			\end{align}
			and $\gamma_0>0$ is a constant determined so that $\gamma_0 \sum_{m=1}^\infty 2^{-m/10} = \gamma/4$.
			Now, almost surely on the event $A$, from \eqref{eq:GC}, the fact that $M$ and $N$ are continuous random fields, that $N\circ \Gamma_0 = M\circ \Gamma_0=0$ on $[0,t)\times \tilde F$, and \eqref{eq:LC}, we have, for every $(s,y)\in [0,t)\times \tilde F$,
			\begin{align}
				& |M(s,y)| = \abs{\sum_{m=1}^\infty \pr{M \circ \Gamma_{m}
						- M \circ \Gamma_{m-1}}(s,y)}
				\\&= \abs{\sum_{m=1}^\infty \pr{M \circ \Gamma_{m} - M \circ \Gamma_{m}} \circ \Gamma_{m-1} (s,y)}
				\leq \sum_{m=1}^\infty \gamma_0 2^{-m/10} = \gamma/4,
			\end{align}
			and similarly, $|N(s,y)| \leq \gamma/4$.
			In particular, the event in \eqref{eq:DC} is contained in $A^\mathrm c$.
			Therefore, to show \eqref{eq:DC}, it suffices to show that
			\begin{itemize}
				\item[\eq\label{eq:AE}]
				uniformly for the arbitrary $\varepsilon\in (0, \gamma/2)$, $
				\tilde{\mathbb{P}}_{\varepsilon \mathbf 1_U } (A^\mathrm c) \lesssim \varepsilon.$
			\end{itemize}
			
			Define $\Theta_U(s,y):=\mathbf P_y(B_s\in U)$ the probability that a Brownian motion initiated at location $y\in \mathbb R$ is in the interval $U$ at time $s\geq 0$.
			By the Markov inequality, \eqref{eq:ND}, and Lemma \ref{Moment-of-M-N}, uniformly for the arbitrary open interval $U$, the arbitrary closed interval $F$ satisfying that $U\cap F$ is bounded, the arbitrary parameter $\varepsilon \in (0,\gamma/2)$, $m\in \mathbb N$, and $(s,y)\in L_m$, we have
			\begin{align}
				& \tilde{\mathbb{P}}_{\varepsilon \mathbf 1_U }\pr{A_m^1(s,y)^{\mathrm c}}
				\leq \tilde{\mathbb E}_{\varepsilon \mathbf 1_U}\brk{\abs{(M\circ \Gamma_m-M\circ \Gamma_{m-1})(s,y)}^{20}} /  (\gamma_0 2^{-m/10})^{20}
				\\&\lesssim \varepsilon \pr{(2t + \sqrt{2\delta}) 2^{-m/2}}^{19} (\Theta_U\circ \Gamma_m + \Theta_U \circ \Gamma_{m-1})(s,y) /  (\gamma_0 2^{-m/10})^{20}
				\\ &\lesssim \varepsilon 2^{- 15m/2}  (\Theta_U \circ \Gamma_m + \Theta_U \circ \Gamma_{m-1})(s,y)
			\end{align}
			and
			\begin{align}
				& \tilde{\mathbb{P}}_{\varepsilon \mathbf 1_U }\pr{A_m^2(s,y)^{\mathrm c}}
				\leq \tilde{\mathbb E}_{\varepsilon \mathbf 1_U}\brk{\abs{(N \circ \Gamma_m-N\circ \Gamma_{m-1})(s,y)}^{40}} /  (\gamma_0 2^{-m/10})^{40}
				\\&\lesssim \varepsilon \pr{(2t + \sqrt{2\delta}) 2^{-m/2}}^{19} (\Theta_U \circ \Gamma_m + \Theta_U \circ \Gamma_{m-1})(s,y)
				/  (\gamma_0 2^{-m/10})^{40}
				\\ &\lesssim \varepsilon 2^{- 11m/2} (\Theta_U \circ \Gamma_m + \Theta_U \circ \Gamma_{m-1})(s,y).
			\end{align}
			Now, from \eqref{eq:DfA}, uniformly for the arbitrary open interval $U$, the arbitrary closed interval $F$ satisfying that $U\cap F$ is bounded, the arbitrary parameter $\varepsilon \in (0,\gamma/2)$, we have
			\begin{align}
				& \tilde{\mathbb P}_{\varepsilon \mathbf 1_{U}}(A^\mathrm c)
				\leq \sum_{m=1}^\infty \sum_{(s,y)\in L_m} \pr{\tilde{\mathbb P}_{\varepsilon \mathbf 1_{U}}\pr{A_m^1(s,y)^{\mathrm c}}+ \tilde{\mathbb P}_{\varepsilon \mathbf 1_{U}}\pr{A_m^2(s,y)^{\mathrm c}} }
				\\&\lesssim \varepsilon \sum_{m=1}^\infty 2^{- 11m/2}  \sum_{(s,y)\in L_m} (\Theta_U \circ \Gamma_m + \Theta_U \circ \Gamma_{m-1})(s,y)
				\\&\lesssim \varepsilon \sum_{m=1}^\infty 2^{- 7m/2}  \frac{1}{t2^{-m}\delta 2^{-m}}\sum_{(s,y)\in L_m} (\Theta_U \circ \Gamma_m+ \Theta_U \circ \Gamma_{m-1})(s,y)
				\\&= \varepsilon \sum_{m=1}^\infty 2^{- 7m/2} \int_{F}\int_0^t (\Theta_U \circ \Gamma_m+ \Theta_U \circ \Gamma_{m-1})(s,y) \mathrm{d} s \mathrm{d} y
				\\&\lesssim \varepsilon \sum_{m=0}^\infty 2^{- 7m/2} \int_{F}\int_0^t (\Theta_U \circ \Gamma_m)(s,y) \mathrm{d} s \mathrm{d} y. \label{eq:UI}
			\end{align}
			Since $U \cap F$ is bounded, it is not hard to verify the following analytic results:
			\[
			\int_{F}\int_0^t (\Theta_U \circ \Gamma_m)(s,y) \mathrm{d} s \mathrm{d} y<\infty, \quad m\in \mathbb Z_+
			\]
			and
			\begin{equation}
				\lim_{m\to \infty} \int_{F}\int_0^t (\Theta_U \circ \Gamma_m)(s,y) \mathrm{d} s \mathrm{d} y=\int_{F}\int_0^t \Theta_U(s,y) \mathrm{d} s \mathrm{d} y<\infty,
			\end{equation}
			which together imply that
			\begin{equation}
				\sum_{m=0}^\infty 2^{- 7m/2} \int_{F}\int_0^t (\Theta_U \circ \Gamma_m)(s,y) \mathrm{d} s \mathrm{d} y<\infty.
			\end{equation}
			Now, from \eqref{eq:UI} we have \eqref{eq:AE}, and therefore, the desired \eqref{eq:UFtg}.
			
			Note that $\Cr{c:UFtg}(U,F,t,\gamma)$ is increasing in $t>0$.
			Therefore, $\limsup_{t \downarrow 0} \Cr{c:UFtg}(U,F, t,\gamma) <\infty.$
			
			Finally, let us take $\tilde U$ to be an arbitrary open interval such that its intersection with $F_K:=[K,\infty)$ is bounded for every $K\in \mathbb R$.
			From \eqref{eq:UI}, we can verify that, uniformly for every $K\in \mathbb R$,
			\begin{equation}
				\Cr{c:UFtg}(\tilde U,F_K, t,\gamma)
				\lesssim \sum_{m=0}^\infty 2^{- 7m/2} \int_{K}^\infty \int_0^t (\Theta_{\tilde U} \circ \Gamma_m)(s,y) \mathrm{d} s \mathrm{d} y < \infty.
			\end{equation}
			By the monotone convergence theorem, we have $\lim_{K\uparrow \infty}\Cr{c:UFtg}(\tilde U,F_K, t,\gamma) = 0$ as desired. We are done.
		\end{proof}
		
		\section{Proofs of Lemmas  \ref{lemma:general-vague-convergence1},  \ref{lem:IM}, Propositions \ref{Prop:measurability-of-Z}, and Lemma \ref{lem:SRM}}\label{append-D}
		
		\begin{proof}[Proof of Lemma \ref{lemma:general-vague-convergence1}]
			It is clear that there exists $K>0$ such that $g(x) = 1$ for every $|x|>K$ and $\nu(\{-K,K\}) =0$.
			Therefore, cf.~ \cite{MR3642325}*{proof of Lemma 4.12}, we have $\lim_{m\to \infty}\nu_m((-K,K)) = \nu((-K,K))$. Since $\nu\in \mathcal N$,
			there exists a finite integer $\kappa \geq 0$, a finite list of distinct points $(z_i)_{i=1}^{\kappa}$ in $\mathbb R$, and a finite list $(l_i)_{i=1}^{\kappa}$ in $\mathbb N$, such that
			$\mathbf 1_{(-K,K)}\nu=\sum_{i=1}^{\kappa} l_i\delta_{z_i}$.
			Note that $\nu((-K,K)) = \sum_{i=1}^{\kappa} l_i$.
			Let $\epsilon >0$ be small enough so that, for every $i\neq j$ in $\{1,\dots, \kappa\}$, $[z_i-\epsilon, z_i+\epsilon]$ and $[z_j-\epsilon, z_j+\epsilon]$ are disjoint subsets of $(-K,K)$.
			Now for every $i\in \brc{1,\dots, \kappa}$, since $\nu(\{z_i-\epsilon, z_i+\epsilon\}) = 0$, we have, cf.~ \cite{MR3642325}*{proof of Lemma 4.12} again,
			\begin{align}\label{step_3}
				\lim_{m\to\infty} \nu_m([z_i-\epsilon, z_i+\epsilon]) = \nu([z_i-\epsilon, z_i+\epsilon]) = \nu(\{z_i\}) = l_i.
			\end{align}
			Denoting
			$A := (-K,K) \setminus \pr{\bigcup_{i=1}^\kappa [z_i-\epsilon, z_i+\epsilon]}.$
			Since for any subset $B\subset \R$ and $m\in \mathbb N$, $\nu_m(B)$ takes non-negative integer values, it follows that
			Therefore, there exists $M_0 > 0$  such that for all $m\geq M_0$, we have $\nu_m(A) = 0$ and
			\begin{align}\label{step_4}
				\nu_m([z_i-\epsilon, z_i+\epsilon])= l_i, \quad i\in \{1,\dots, \kappa\}.
			\end{align}
			So, for every $m\geq M_0$ and $i\in \{1,\dots, \kappa\}$, there is a finite list $(z^{(m)}_{i,j})_{j=1}^{l_i}$ in $[z_i - \epsilon, z_i+\epsilon]$ such that
			$
			\mathbf 1_{[z_i- \epsilon, z_i + \epsilon]} \nu_m = \sum_{j=1}^{l_i} \delta_{z^{(m)}_{i,j}}.
			$
			In particular, for every $m\geq M_0$, we have
			$
			\mathbf 1_{(-K,K)}\nu_m = \sum_{i=1}^\kappa \sum_{j=1}^{l_i} \delta_{z_{i,j}^{(m)}}.$
			Now, for every $m\geq M_0$,
			\begin{align}
				& \abs{  \prod_{z\in \mathbb R} g(z)^{\nu_m(\{z\})}- \prod_{z\in \mathbb R} g(z)^{\nu(\{z\})}}
				=   \abs{ \prod_{i=1}^{\kappa} \prod_{j=1}^{l_i} g(z_{i,j}^{(m)})- \prod_{i=1}^{\kappa} \prod_{j=1}^{l_i} g(z_{i})}
				\\&\leq \sum_{i=1}^\kappa \sum_{j=1}^{l_{i}}\abs{g(z_{i,j}^{(m)})-g(z_{i})}
				\leq \sum_{i=1}^\kappa \sum_{j=1}^{l_{i}}\sup \brc{\abs{g(y)-g(z_{i})}:y\in [z_i-\epsilon, z_i+\epsilon]},
			\end{align}
			where the first inequality follows from  \cite{MR3930614}*{Lemma 3.4.3}.
			Taking $m\to\infty$ in the above inequality, we get
			\begin{equation}
				\limsup_{m\to \infty} \abs{  \prod_{z\in \mathbb R} g(z)^{\nu_m(\{z\})}- \prod_{z\in \mathbb R} g(z)^{\nu(\{z\})}}
				\leq \sum_{i=1}^\kappa \sum_{j=1}^{l_{i}}\sup \brc{\abs{g(y)-g(z_{i})}:y\in [z_i-\epsilon, z_i+\epsilon]}.
			\end{equation}
			From the fact that $\epsilon>0$ can be taken arbitrarily small, $\kappa$ and $(l_i)_{i=1}^\kappa$ do not depend on $\epsilon$ when $\epsilon \to 0$, and that $g$ is a continuous function, we have
			\begin{equation}
				\limsup_{m\to \infty} \abs{  \prod_{z\in \mathbb R} g(z)^{\nu_m(\{z\})}- \prod_{z\in \mathbb R } g(z)^{\nu(\{z\})}}= 0.
			\end{equation}
			Now the desired result  \eqref{step_6} holds.
		\end{proof}

		\begin{proof}[Proof of Lemma \ref{lem:IM}]
			Recall that $z^*\in [1,2)$.
			Define $I:=\{i \in \mathbb N: z_i \in (1,z^*]\}$ and $J:= \{i\in \mathbb N: z_i = 1\}$.
			
			\firststep
			Suppose that $J \neq \emptyset$.
			Without loss of generality, we assume that $z_1 = 1$.
			Clearly, the right hand side of \eqref{eq:UM} is $0$.
			Since $\abs{\prod_{i=1}^k \pr{1-z_i^{(m)}}}
			\leq |1-z_1^{(m)}|$, taking  $k\to \infty$ first, and then $m\to \infty$, we obtain that
			$\lim_{m\to \infty} \abs{\prod_{i=1}^\infty \pr{1-z_i^{(m)}}}  = 0.$
			This implies the desired \eqref{eq:UM} in this case.

			\nextstep
			Suppose that $|I| = \infty$.
			It is clear that the right hand side of \eqref{eq:UM} is $0$.
			In this case, for any $k\in \mathbb N$, there exists $N_k \in \mathbb N$ such that $\abs{I_k} = k$ where $I_k:= \{i\in \mathbb N: i\leq N_k, z_i\in (1,z^*]\}$. Note that, for each $k\in \mathbb N$, there exists an $M_k\in \mathbb N$, such that for any $m\geq M_k$ and $i\in I_k$, $z_i^{(m)} \in (1,z^*]$.
			So for any $k\in \mathbb N$ and $m\geq M_k$,
			\[
			\abs{\prod_{i=1}^\infty (1-z_i^{(m)})} \leq \prod_{i\in I_k} \abs{1-z_i^{(m)}}
			\leq |1-z^*|^k.
			\]
			Taking $m\to \infty$ first, and then $k\to \infty$, we obtain that
			$\lim_{m\to \infty} \abs{\prod_{i=1}^\infty \pr{1-z_i^{(m)}}}  = 0$,
			which implies \eqref{eq:UM} in this case.
			
			\nextstep Suppose that $|J| = 0$ and $|I| = K$ for some finite integer $K$.
			Without loss of generality, we assume that $I = \{1,\dots, K\}$. Then, for any $m\in \mathbb N$ and integer $i >K$, $0\leq z_i^{(m)} \leq z_i < 1$.
			Therefore,
			\begin{align}
				\prod_{i=1}^{K+l} \pr{1-z_i^{(m)}} = \pr{\prod_{i=1}^K \pr{1-z_i^{(m)}}  }\exp\brc{ -\sum_{i=K+1}^{K+l}-\log \pr{1-z_{i}^{(m)}}}.
			\end{align}
			Taking $l\to \infty$ first, and then $m\to \infty$, by the monotone convergence theorem, we have
			\[
			\lim_{m\to \infty} \prod_{i=1}^{\infty} \pr{1-z_i^{(m)}}
			= \pr{\prod_{i=1}^K \pr{1-z_i}  }\exp\brc{ -\sum_{i=K+1}^{\infty}-\log (1-z_{i})}.
			\]
			So $\eqref{eq:UM}$ also holds in this case.
			We are done.
		\end{proof}
		
		\begin{proof}[Proof of Proposition \ref{Prop:measurability-of-Z}]
			Without loss of generality, we assume that $g$ is non-negative.
			Also, it suffices to show that the process $(\tilde Z_t(g))_{t\in [a,b)}$, restricted to a fixed arbitrary interval $[a,b) \subset (0,\infty)$, has a measurable version.
			
			\firststep \label{S4}
			Let us consider the dyadic discretization of the interval $[a,b)$.
			That is, for each $t\in [a,b)$ and $m\in \mathbb N$, we define
			\[
			t_m : = \inf\brc{s > t: \exists k\in \mathbb N, s = a + \frac{k}{2^m}(b-a)}.
			\]
			In particular, $0 < t_m - t \leq 1/2^m$. Define $X_t^{(m)}:= \tilde Z_{t_m}(g)$ for every $t\in [a,b)$ and $m\in \mathbb N$.
			Clearly, for every $w\in \Omega$ and $m\in \mathbb N$, the map $t\mapsto X_t^{(m)}(\omega)$ is measurable on $[a,b)$ w.r.t.~the Borel $\sigma$-field $\mathcal B_{[a,b)}$.
			This allows us to define, for each $m\in \mathbb N$, a measurable map $X^{(m)}: (t,\omega) \mapsto X_t^{(m)}(\omega)$ on the product measurable space $([a,b)\times \Omega, \mathcal B_{[a,b)} \otimes \mathcal F)$, which will be equipped with a product probability measure
			\[M_{[a,b)}(\mathrm{d} x,\mathrm{d} \omega):= \frac{\mathbf{1}_{[a,b)}(x)\mathrm{d} x}{b-a} \otimes \mathbb{P}_{(\Lambda, \mu)}(\mathrm{d} \omega),\quad (x,\omega) \in [a,b)\times \Omega.\]

			\nextstep
			We investigate the limit of the measurable map $X^{(m)}$ when $m\uparrow \infty$.
			Fix an arbitrary $\epsilon > 0$ and define
			$
			G_{l,m}(t):= \mathbb{P}_{(\Lambda, \mu)}( | X_t^{(l)} -X_t^{(m)}| >\epsilon )$ for every $t\in [a,b)$ and $l,m\in \mathbb{N}.
			$
			By Lemma \ref{lem:SCg}, we get that for each $t\in [a,b)$,
			\begin{align}\label{Conv-in-prob-3}
				& \sup_{l,m \geq N} G_{l,m}(t)                                                                                                                                                                         \\
				& \leq \sup_{l\geq N}\mathbb{P}_{(\Lambda,\mu)}\pr{ | X_t^{(l)} -\tilde{Z}_t(g)| >\frac{\epsilon}{2} }+\sup_{m\geq N}\mathbb{P}_{(\Lambda,\mu)}\pr{ | X_t^{(m)} -\tilde{Z}_t(g)| >\frac{\epsilon}{2} }
				\xrightarrow[N\to \infty]{}0.
			\end{align}
			From Fubini's theorem and bounded convergence theorem, we see that
			\begin{align}
				& \sup_{l,m\geq N} M_{[a,b)}\pr{ | X^{(l)} -X^{(m)}| >\epsilon }
				=  \sup_{l,m\geq N} \frac{1}{b-a} \int_a^b G_{l,n}(t)\mathrm{d} t
				\\ &\leq \frac{1}{b-a} \int_a^b \sup_{l,m\geq N} G_{l,m}(t)\mathrm{d} t
				\xrightarrow[N\to \infty]{} 0.
			\end{align}
			Therefore,
			\begin{align}
				& \limsup_{N\to \infty}\sup_{l,m\geq N}\int \pr{\abs{X^{(l)}_t(\omega) - X^{(m)}_t(\omega)} \wedge 1 } M_{[a,b)}(\mathrm{d} t, \mathrm{d} \omega)
				\\& \leq \epsilon + \limsup_{N\to \infty}\sup_{l,m\geq N} M_{[a,b)}\pr{ | X^{(l)} -X^{(m)}| >\epsilon } = \epsilon.
			\end{align}
			Since $\epsilon > 0$ is arbitrarily chosen, we have
			\begin{equation}
				\sup_{l,m\geq N}\int \pr{\abs{X^{(l)}_t(\omega) - X^{(m)}_t(\omega)} \wedge 1 } M_{[a,b)}(\mathrm{d} t, \mathrm{d} \omega)  \xrightarrow[N\to \infty]{} 0.
			\end{equation}
			Therefore, $(X^{(m)})_{m\in \mathbb N}$ is a Cauchy sequence in probability w.r.t.~$M_{[a,b)}$ in the sense of \cite{MR4226142}*{p.~104}.
			The limit, denoted by $\hat Y^g$, is clearly a measurable function on $([a,b)\times \Omega, \mathcal B_{[a,b)}\otimes \mathcal F)$.

			\nextstep
			By Step \ref{S4}  and \cite{MR4226142}*{Lemma 5.2}, there exists a strictly increasing sequence $(m_k)_{k=1}^\infty$ in $\mathbb N$ such that $(X^{(m_k)})_{k\in \mathbb N}$ converges almost surely to $\hat Y^g$ w.r.t.~$M_{[a,b)}$.
			Let $\Xi$ be a measurable null subset of the probability space $([a,b)\times \Omega, \mathcal B_{[a,b)}\otimes \mathcal F, M_{[a,b)})$ which contains all elements $(t,\omega)$ such that $X^{(m_k)}_t(\omega)$ does not converge to $\hat Y^g(t,\omega)$ when $k\uparrow \infty$.
			By Fubini's theorem, the Lebesgue measure of the Borel measurable set $K:= \{t\in [a,b): \int \mathbf 1_{\Xi}(t,\omega) \mathbb P_{\Lambda, \mu}(\mathrm{d} \omega)>0\}$ is $0$.
			Now for each $t\in [a,b)$, define a random variable $\tilde Y^g_t$ on $\Omega$ such that $\tilde Y^g_t(\omega) = \tilde Z_t(g)(\omega) \mathbf 1_{K}(t) + \hat Y^g(t,\omega)\mathbf 1_{K^c}(t)$ for every $\omega \in \Omega$.
			It is clear that $(\tilde Y^g_t)_{t\in [a,b)}$ is a measurable process.
			
			\nextstep
			We finish the proof by showing that $(\tilde Y_t^{g})_{t\in [a,b)}$ is a version of $(\tilde Z_t(g))_{t\in [a,b)}$.
			Note that for any $t\in K$, we already have $\mathbb P_{(\Lambda, \mu)}(\tilde Y_t^{g}=\tilde Z_t(g)) = 1$ according to how $\tilde Y_t^{g}$ is defined.
			Therefore, we only have to consider the case when $t\in [a,b)\setminus K$.
			In this case, we have $\int \mathbf 1_{\Xi}(t,\omega)\mathbb P_{(\Lambda, \mu)}(\mathrm{d} \omega) = 0$, which implies that $X_t^{(m_k)}$ converges to $\tilde Y_t^g$ almost surely when $k\uparrow \infty$ w.r.t.~$\mathbb P_{(\Lambda, \mu)}$.
			From Lemma \ref{lem:SCg}, we have that $X_t^{(m_k)}$ converges to $\tilde Z_t(g)$ in probability.
			So we must have  $\mathbb P_{(\Lambda, \mu)}(\tilde Y_t^{g}=\tilde Z_t(g)) = 1$ as desired in this case. We are done.
		\end{proof}
		
		\begin{proof}[Proof of Lemma \ref{lem:SRM}]
			
			\firststep
			\label{step:Zt}
			We will show in this step that there exists an $\mathcal N$-valued process $(\hat X_t)_{t\geq t_0}$ such that almost surely, for every $t\geq t_0$ and every strictly decreasing sequence $(q_m)_{m\in \mathbb N}$ in $\mathbb Q$ which converges to $t$, $(\tilde X_{q_m})_{m\in \mathbb N}$ converges to $\hat X_t$ in $\mathcal N$.
			We will also show that, almost surely, for every strictly increasing sequence $(q_m)_{m\in \mathbb N}$ in $\mathbb Q\cap [t_0,\infty)$,  $(\tilde X_{q_m})_{m\in \mathbb N}$ converges in $\mathcal N$.
			
			Recall that $\mathcal N$ is equipped with the complete metric $d_{\mathcal N}$ which is defined using a sequence $(h_i)_{i\in \mathbb N}$ in $\mathcal C_{\mathrm c}^\infty (\mathbb R)$.
			For each $i\in \mathbb N$, by hypothesis, the real-valued process $(\tilde X_t(h_i))_{t\geq t_0}$ admits a c\`adl\`ag modification; let $(Y^{h_i}_t)_{t\geq t_0}$ denote such a modification, so that almost surely $Y^{h_i}_q = \tilde X_q(h_i)$ for every $q\in \mathbb Q\cap [t_0,\infty)$.
			Therefore, almost surely, for every strictly decreasing, or strictly increasing, sequence $(q_m)_{m\in \mathbb N}$ in $\mathbb Q\cap [t_0,\infty)$,
			\begin{align}
				& \sup_{m,l\geq N} d_\mathcal N\pr{\tilde X_{q_m},\tilde X_{q_l}}
				= \sup_{m,l\geq N}  \sum_{i=1}^\infty \frac{1}{2^i}\pr{1\land \abs{\tilde X_{q_m} (h_i)- \tilde X_{q_l}(h_i) }}
				\\&= \sup_{m,l\geq N}  \sum_{i=1}^\infty \frac{1}{2^i}\pr{1\land \abs{Y^{h_i}_{q_m}- Y^{h_i}_{q_l} }}
				\leq  \sum_{i=1}^\infty \frac{1}{2^i}\pr{1\land \sup_{m,l\geq N}  \abs{Y^{h_i}_{q_m}- Y^{h_i}_{q_l} }}
				\xrightarrow[]{N\to \infty} 0.
			\end{align}
			Here, in the last step, we used the fact that almost surely for every $i\in \mathbb N$, $Y_s^{h_i}$ is c\`adl\`ag in $s>0$.
			Now, the event
			\begin{itemize}
				\item
				for every strictly decreasing, or strictly increasing, sequence $(q_m)_{m\in \mathbb N}$ in $\mathbb Q\cap [t_0,\infty)$,  $(\tilde X_{q_m})_{m\in \mathbb N}$ is a Cauchy sequence with respect to the metric $d_\mathcal N$
			\end{itemize}
			has probability 1.
			Since $(\mathcal N, d_{\mathcal N})$ is complete, on the above full-measure event, every strictly decreasing (respectively, strictly increasing) rational sequence converges in $\mathcal N$.
			Moreover, on this full-measure event, any two strictly decreasing rational sequences converging to the same $t$ have a common Cauchy subsequence, and hence share the same limit.
			The desired result of this step follows.
			
			\nextstep \label{step:cadlag}
			We argue that $(\hat X_t)_{t\geq t_0}$ is an $\mathcal N$-valued c\`adl\`ag process.
			On one hand, from Step \ref{step:Zt},   we know that almost surely, for every $t\geq t_0$ and every strictly decreasing sequence $(t_m)_{m\in \mathbb N}$ in $\mathbb R$ which converges to $t$, since there exists a strictly decreasing sequence $(q_m)_{m\in \mathbb N}$ in $\mathbb Q$ such that
			$q_1 > t_1 > q_2 > t_2 > \dots$ and that $d_\mathcal N (\tilde X_{q_m}, \hat X_{t_m}) \leq 1/m$ for every $m\in \mathbb N$, we have \[d_\mathcal N(\hat X_{t_m}, \hat X_t) \leq d_\mathcal N(\hat X_{t_m}, \tilde X_{q_m}) + d_\mathcal N(\tilde X_{q_m}, \hat X_t) \xrightarrow[]{m\to \infty} 0.\]
			On the other hand, from Step \ref{step:Zt} again, almost surely, for every strictly increasing sequence $(t_m)_{m\in \mathbb N}$ in $[t_0,\infty)$, since there exists a strictly increasing sequence $(q_m)_{m\in \mathbb N}$ in $\mathbb Q$ such that $t_1 < q_1 < t_2 < q_2 < \dots$ and that $d_\mathcal N(\tilde X_{q_m}, \hat X_{t_m}) \leq 1/m$ for every $m\in \mathbb N$, we have
			\begin{align}
				& \sup_{l,m\geq N} d_{\mathcal N}(\hat X_{t_m}, \hat X_{t_l})
				\leq \sup_{l,m\geq N} \pr{ d_{\mathcal N}(\hat X_{t_m}, \tilde X_{q_m}) + d_{\mathcal N}(\tilde X_{q_m}, \tilde X_{q_l}) + d_{\mathcal N}( \tilde X_{q_l}, \hat X_{t_l}) }
				\\&\leq \frac{2}{N} + \sup_{l,m \geq N} d_{\mathcal N}(\tilde X_{q_m}, \tilde X_{q_l}) \xrightarrow[]{N \to \infty} 0.
			\end{align}
			Now, the desired result for this step holds.
			
			\nextstep \label{step:modification}
			We argue that the process $(\hat X_t)_{t\geq t_0}$ is a modification of $(\tilde X_t)_{t\geq t_0}$.
			Let us fix an arbitrary $t\geq t_0$.
			Let $(q_m)_{m\in \mathbb N}$ be a strictly decreasing sequence in $\mathbb Q$ which converges to $t$.
			Note that almost surely for each $m,i\in \mathbb N$, $Y^{h_i}_{q_m} = \tilde X_{q_m}(h_i)$.
			Combined with Step \ref{step:Zt},   almost surely, for each $i\in \mathbb N$, we have
			\begin{equation}
				\hat X_t(h_i) = \lim_{m\to \infty} \tilde X_{q_m}(h_i) = \lim_{m\to \infty} Y^{h_i}_{q_m} = Y^{h_i}_t
			\end{equation}
			where in the last step we used the fact that almost surely $Y^{h_i}_s$ is c\`adl\`ag in $s>0$.
			Therefore, we have almost surely $\hat X_t(h_i) = \tilde X_t(h_i)$ for each $i\in \mathbb N$, which further implies that almost surely $d_\mathcal N (\hat X_t, \tilde X_t) = 0$.
			The desired result for this step now follows.
			
			Combining Steps \ref{step:cadlag} and \ref{step:modification}, we conclude that $(\tilde X_t)_{t\geq t_0}$ admits a c\`adl\`ag modification.
		\end{proof}
		
	\end{appendix}
	
	\begin{bibdiv}
		\begin{biblist}
			
			\bib{MR4020312}{article}{
				author={Addario-Berry, Louigi},
				author={Berestycki, Julien},
				author={Penington, Sarah},
				title={Branching Brownian motion with decay of mass and the nonlocalFisher-KPP equation},
				journal={Comm. Pure Appl. Math.},
				volume={72},
				date={2019},
				number={12},
				pages={2487--2577},
				issn={0010-3640},
				review={\MR{4020312}},
				doi={10.1002/cpa.21827},
			}

			\bib{MR1673235}{article}{
				author={Aldous, David J.},
				title={Deterministic and stochastic models for coalescence (aggregation
					and coagulation): a review of the mean-field theory for probabilists},
				journal={Bernoulli},
				volume={5},
				date={1999},
				number={1},
				pages={3--48},
				issn={1350-7265},
				review={\MR{1673235}},
				doi={10.2307/3318611},
			}
			
			\bib{MR2892958}{article}{
				author={Angel, Omer},
				author={Berestycki, Nathana\"el},
				author={Limic, Vlada},
				title={Global divergence of spatial coalescents},
				journal={Probab. Theory Related Fields},
				volume={152},
				date={2012},
				number={3-4},
				pages={625--679},
				issn={0178-8051},
				review={\MR{2892958}},
				doi={10.1007/s00440-010-0332-5},
			}

			\bib{MR2698239}{book}{
				author={Athreya, Siva Ramachandran},
				title={Probability and semilinear partial differential equations},
				note={Thesis (Ph.D.)--University of Washington},
				publisher={ProQuest LLC, Ann Arbor, MI},
				date={1998},
				pages={73},
				isbn={978-0599-05711-1},
				review={\MR{2698239}},
			}

			\bib{MR1813840}{article}{
				author={Athreya, Siva},
				author={Tribe, Roger},
				title={Uniqueness for a class of one-dimensional stochastic PDEs using
					moment duality},
				journal={Ann. Probab.},
				volume={28},
				date={2000},
				number={4},
				pages={1711--1734},
				issn={0091-1798},
				review={\MR{1813840}},
				doi={10.1214/aop/1019160504},
			}

			\bib{MRbaguley2024structure}{article}{
				title={The structure of entrance and exit at infinity for time-changed L\'evy processes},
				author={Baguley, Samuel},
				author={D{\"o}ring, Leif},
				author={Shi, Quan},
				journal={arXiv preprint},
				eprint={https://arxiv.org/abs/2410.07664},
				year={2024},
			}

			\bib{MR4698025}{article}{
				author={Barnes, Clayton},
				author={Mytnik, Leonid},
				author={Sun, Zhenyao},
				title={On the coming down from infinity of coalescing Brownian motions},
				journal={Ann. Probab.},
				volume={52},
				date={2024},
				number={1},
				pages={67--92},
				issn={0091-1798},
				review={\MR{4698025}},
				doi={10.1214/23-aop1640},
			}
			
			\bib{MR4828440}{article}{
				author={Barnes, Clayton},
				author={Mytnik, Leonid},
				author={Sun, Zhenyao},
				title={Effect of small noise on the speed of reaction-diffusion equations
					with non-Lipschitz drift},
				language={English, with English and French summaries},
				journal={Ann. Inst. Henri Poincar\'e{} Probab. Stat.},
				volume={60},
				date={2024},
				number={4},
				pages={2382--2414},
				issn={0246-0203},
				review={\MR{4828440}},
				doi={10.1214/23-AIHP1393},
			}
			
			\bib{MRBMS24}{article}{
				author={Barnes, Clayton},
				author={Mytnik, Leonid},
				author={Sun, Zhenyao},
				title={Wright-Fisher stochastic heat equations with irregular drifts},
				journal={Probab. Theory Related Fields},
				date={2025},
				doi={10.1007/s00440-025-01398-1},
			}
			
			\bib{MRBerestycki2014Topics}{article}{
				author={Berestycki, J.},
				title={Topics on branching Brownian motion},
				journal={Lecture notes},
				date={2014},
				eprint={https://www.stats.ox.ac.uk/~berestyc/articles.html},
			}

			\bib{MR2599198}{article}{
				author={Berestycki, Julien},
				author={Berestycki, Nathana\"el},
				author={Limic, Vlada},
				title={The $\Lambda$-coalescent speed of coming down from infinity},
				journal={Ann. Probab.},
				volume={38},
				date={2010},
				number={1},
				pages={207--233},
				issn={0091-1798},
				review={\MR{2599198}},
				doi={10.1214/09-AOP475},
			}

			\bib{MR1367959}{book}{
				author={Borkar, Vivek S.},
				title={Probability Theory: An Advanced Course},
				series={},
				volume={},
				edition={},
				publisher={Springer, New York},
				date={1995},
				pages={xiv+138},
				isbn={0-387-94558-X},
				review={\MR{1367959}},
				doi={10.1007/978-1-4612-0791-7},
			}

			\bib{MR0494541}{article}{
				author={Bramson, Maury D.},
				title={Maximal displacement of branching Brownian motion},
				journal={Comm. Pure Appl. Math.},
				volume={31},
				date={1978},
				number={5},
				pages={531--581},
				issn={0010-3640},
				review={\MR{0494541}},
				doi={10.1002/cpa.3160310502},
			}

			\bib{MR2152573}{book}{
				author={Chung, Kai Lai},
				author={Walsh, John B.},
				title={Markov processes, Brownian motion, and time symmetry},
				series={Grundlehren der mathematischen Wissenschaften [Fundamental
					Principles of Mathematical Sciences]},
				volume={249},
				edition={2},
				publisher={Springer, New York},
				date={2005},
				pages={xii+431},
				isbn={978-0387-22026-0},
				isbn={0-387-22026-7},
				review={\MR{2152573}},
				doi={10.1007/0-387-28696-9},
			}

			\bib{MR3236753}{book}{
				author={Da Prato, Giuseppe},
				author={Zabczyk, Jerzy},
				title={Stochastic equations in infinite dimensions},
				series={Encyclopedia of Mathematics and its Applications},
				volume={152},
				edition={2},
				publisher={Cambridge University Press, Cambridge},
				date={2014},
				pages={xviii+493},
				isbn={978-1-107-05584-1},
				review={\MR{3236753}},
				doi={10.1017/CBO9781107295513},
			}
			
			\bib{MR3930614}{book}{
				author={Durrett, Rick},
				title={Probability---theory and examples},
				series={Cambridge Series in Statistical and Probabilistic Mathematics},
				volume={49},
				edition={5},
				publisher={Cambridge University Press, Cambridge},
				date={2019},
				pages={xii+419},
				isbn={978-1-108-47368-2},
				review={\MR{3930614}},
				doi={10.1017/9781108591034},
			}
			
			\bib{MR3582808}{article}{
				author={Durrett, Rick},
				author={Fan, Wai-Tong},
				title={Genealogies in expanding populations},
				journal={Ann. Appl. Probab.},
				volume={26},
				date={2016},
				number={6},
				pages={3456--3490},
				issn={1050-5164},
				review={\MR{3582808}},
				doi={10.1214/16-AAP1181},
			}

			\bib{MR1235414}{article}{
				author={Dynkin, E. B.},
				title={Superprocesses and partial differential equations},
				journal={Ann. Probab.},
				volume={21},
				date={1993},
				number={3},
				pages={1185--1262},
				issn={0091-1798},
				review={\MR{1235414}},
			}

			\bib{MR4278798}{article}{
				author={Fan, Wai-Tong Louis},
				title={Stochastic PDEs on graphs as scaling limits of discrete
					interacting systems},
				journal={Bernoulli},
				volume={27},
				date={2021},
				number={3},
				pages={1899--1941},
				issn={1350-7265},
				review={\MR{4278798}},
				doi={10.3150/20-bej1296},
			}

			\bib{MRFanTough2023quasi}{article}{
				author={Fan, Wai-Tong Louis},
				author={Tough, Oliver},
				title={Quasi-stationary behavior of the stochastic FKPP equation on the circle},
				journal={arXiv preprint},
				year={2023},
				eprint={https://arxiv.org/abs/2309.10998},
			}
			
			\bib{MR1681462}{book}{
				author={Folland, Gerald B.},
				title={Real analysis},
				series={Pure and Applied Mathematics (New York)},
				edition={2},
				note={Modern techniques and their applications;
					A Wiley-Interscience Publication},
				publisher={John Wiley \& Sons, Inc., New York},
				date={1999},
				pages={xvi+386},
				isbn={0-471-31716-0},
				review={\MR{1681462}},
			}

			\bib{MR3940763}{article}{
				author={Foucart, Cl\'ement},
				title={Continuous-state branching processes with competition: duality andreflection at infinity},
				journal={Electron. J. Probab.},
				volume={24},
				date={2019},
				pages={Paper No. 33, 38},
				review={\MR{3940763}},
				doi={10.1214/19-EJP299},
			}
			
			\bib{MR4255235}{article}{
				author={Foucart, Cl\'ement},
				author={Li, Pei-Sen},
				author={Zhou, Xiaowen},
				title={Time-changed spectrally positive L\'evy processes started frominfinity},
				journal={Bernoulli},
				volume={27},
				date={2021},
				number={2},
				pages={1291--1318},
				issn={1350-7265},
				review={\MR{4255235}},
				doi={10.3150/20-bej1274},
			}
			
			\bib{MR4212931}{article}{
				author={Hammer, Matthias},
				author={Ortgiese, Marcel},
				author={V\"ollering, Florian},
				title={Entrance laws for annihilating Brownian motions and the
					continuous-space voter model},
				journal={Stochastic Process. Appl.},
				volume={134},
				date={2021},
				pages={240--264},
				issn={0304-4149},
				review={\MR{4212931}},
				doi={10.1016/j.spa.2021.01.002},
			}

			\bib{MR2162813}{article}{
				author={Hobson, Tim},
				author={Tribe, Roger},
				title={On the duality between coalescing Brownian particles and the heatequation driven by Fisher-Wright noise},
				journal={Electron. Comm. Probab.},
				volume={10},
				date={2005},
				pages={136--145},
				issn={1083-589X},
				review={\MR{2162813}},
				doi={10.1214/ECP.v10-1143},
			}

			\bib{MR1011252}{book}{
				author={Ikeda, Nobuyuki},
				author={Watanabe, Shinzo},
				title={Stochastic differential equations and diffusion processes},
				series={North-Holland Mathematical Library},
				volume={24},
				edition={2},
				publisher={North-Holland Publishing Co., Amsterdam; Kodansha, Ltd.,Tokyo},
				date={1989},
				pages={xvi+555},
				isbn={0-444-87378-3},
				review={\MR{1011252}},
			}
			
			\bib{MR3642325}{book}{
				author={Kallenberg, Olav},
				title={Random measures, theory and applications},
				series={Probability Theory and Stochastic Modelling},
				volume={77},
				publisher={Springer, Cham},
				date={2017},
				pages={xiii+694},
				isbn={978-3-319-41596-3},
				isbn={978-3-319-41598-7},
				review={\MR{3642325}},
				doi={10.1007/978-3-319-41598-7},
			}
			
			\bib{MR4226142}{book}{
				author={Kallenberg, Olav},
				title={Foundations of modern probability},
				series={Probability Theory and Stochastic Modelling},
				volume={99},
				edition={3},
				publisher={Springer, Cham},
				date={2021},
				pages={xii+946},
				isbn={978-3-030-61871-1},
				isbn={978-3-030-61870-4},
				review={\MR{4226142}},
				doi={10.1007/978-3-030-61871-1},
			}

			\bib{MR0958288}{article}{
				author={Konno, N.},
				author={Shiga, T.},
				title={Stochastic partial differential equations for some measure-valued
					diffusions},
				journal={Probab. Theory Related Fields},
				volume={79},
				date={1988},
				number={2},
				pages={201--225},
				issn={0178-8051},
				review={\MR{0958288}},
				doi={10.1007/BF00320919},
			}

			\bib{MR0671034}{article}{
				author={Kingman, J. F. C.},
				title={The coalescent},
				journal={Stochastic Process. Appl.},
				volume={13},
				date={1982},
				number={3},
				pages={235--248},
				issn={0304-4149},
				review={\MR{0671034}},
				doi={10.1016/0304-4149(82)90011-4},
			}
			
			\bib{MR0633178}{article}{
				author={Kingman, J. F. C.},
				title={On the genealogy of large populations},
				note={Essays in statistical science},
				journal={J. Appl. Probab.},
				date={1982},
				number={Speci},
				pages={27--43},
				issn={0021-9002},
				review={\MR{0633178}},
			}

			\bib{MR2134113}{article}{
				author={Lambert, Amaury},
				title={The branching process with logistic growth},
				journal={Ann. Appl. Probab.},
				volume={15},
				date={2005},
				number={2},
				pages={1506--1535},
				issn={1050-5164},
				review={\MR{2134113}},
				doi={10.1214/105051605000000098},
			}

			\bib{MR1429263}{article}{
				author={Le Gall, Jean-Fran\c cois},
				title={A probabilistic approach to the trace at the boundary for
					solutions of a semilinear parabolic partial differential equation},
				journal={J. Appl. Math. Stochastic Anal.},
				volume={9},
				date={1996},
				number={4},
				pages={399--414},
				issn={1048-9533},
				review={\MR{1429263}},
				doi={10.1155/S1048953396000354},
			}

			\bib{MR4807290}{article}{
				author={Li, Bo},
				author={Zhou, Xiaowen},
				title={Entrance laws for continuous-state nonlinear branching processes
					coming down from infinity},
				conference={
					title={2021--2022 MATRIX annals},
				},
				book={
					series={MATRIX Book Ser.},
					volume={5},
					publisher={Springer, Cham},
				},
				isbn={978-3-031-47416-3},
				isbn={978-3-031-47417-0},
				date={2024},
				pages={491--503},
				review={\MR{4807290}},
				doi={10.1007/978-3-031-47417-0\_25},
			}
			
			\bib{MR3983343}{article}{
				author={Li, Pei-Sen},
				author={Yang, Xu},
				author={Zhou, Xiaowen},
				title={A general continuous-state nonlinear branching process},
				journal={Ann. Appl. Probab.},
				volume={29},
				date={2019},
				number={4},
				pages={2523--2555},
				issn={1050-5164},
				review={\MR{3983343}},
				doi={10.1214/18-AAP1459},
			}
			
			\bib{MR2760602}{book}{
				author={Li, Zenghu},
				title={Measure-valued branching Markov processes},
				series={Probability and its Applications (New York)},
				publisher={Springer, Heidelberg},
				date={2011},
				pages={xii+350},
				isbn={978-3-642-15003-6},
				review={\MR{2760602}},
				doi={10.1007/978-3-642-15004-3},
			}
			
			\bib{MR2223040}{article}{
				author={Limic, Vlada},
				author={Sturm, Anja},
				title={The spatial $\Lambda$-coalescent},
				journal={Electron. J. Probab.},
				volume={11},
				date={2006},
				pages={no. 15, 363--393},
				issn={1083-6489},
				review={\MR{2223040}},
				doi={10.1214/EJP.v11-319},
			}

			\bib{MR3568046}{article}{
				author={Maillard, Pascal},
				title={Speed and fluctuations of $N$-particle branching Brownian motionwith spatial selection},
				journal={Probab. Theory Related Fields},
				volume={166},
				date={2016},
				number={3-4},
				pages={1061--1173},
				issn={0178-8051},
				review={\MR{3568046}},
				doi={10.1007/s00440-016-0701-9},
			}
			
			\bib{MR4751871}{article}{
				author={Maillard, Pascal},
				author={Penington, Sarah},
				title={Branching random walk with non-local competition},
				journal={J. Lond. Math. Soc. (2)},
				volume={109},
				date={2024},
				number={6},
				pages={Paper No. e12919, 78 pp},
				issn={0024-6107},
				review={\MR{4751871}},
				doi={10.1112/jlms.12919},
			}

			\bib{MR1658392}{article}{
				author={Marcus, Moshe},
				author={V\'eron, Laurent},
				title={The boundary trace of positive solutions of semilinear elliptic
					equations: the subcritical case},
				journal={Arch. Rational Mech. Anal.},
				volume={144},
				date={1998},
				number={3},
				pages={201--231},
				issn={0003-9527},
				review={\MR{1658392}},
				doi={10.1007/s002050050116},
			}
			
			\bib{MR1697494}{article}{
				author={Marcus, Moshe},
				author={V\'eron, Laurent},
				title={Initial trace of positive solutions of some nonlinear parabolic
					equations},
				journal={Comm. Partial Differential Equations},
				volume={24},
				date={1999},
				number={7-8},
				pages={1445--1499},
				issn={0360-5302},
				review={\MR{1697494}},
				doi={10.1080/03605309908821471},
			}

			\bib{MR3719541}{article}{
				author={Mourrat, Jean-Christophe},
				author={Weber, Hendrik},
				title={The dynamic $\Phi^4_3$ model comes down from infinity},
				journal={Comm. Math. Phys.},
				volume={356},
				date={2017},
				number={3},
				pages={673--753},
				issn={0010-3616},
				review={\MR{3719541}},
				doi={10.1007/s00220-017-2997-4},
			}

			\bib{MR2793860}{article}{
				author={Mueller, Carl},
				author={Mytnik, Leonid},
				author={Quastel, Jeremy},
				title={Effect of noise on front propagation in reaction-diffusion
					equations of KPP type},
				journal={Invent. Math.},
				volume={184},
				date={2011},
				number={2},
				pages={405--453},
				issn={0020-9910},
				review={\MR{2793860}},
				doi={10.1007/s00222-010-0292-5},
			}

			\bib{MR4259374}{article}{
				author={Mueller, Carl},
				author={Mytnik, Leonid},
				author={Ryzhik, Lenya},
				title={The speed of a random front for stochastic reaction-diffusion
					equations with strong noise},
				journal={Comm. Math. Phys.},
				volume={384},
				date={2021},
				number={2},
				pages={699--732},
				issn={0010-3616},
				review={\MR{4259374}},
				doi={10.1007/s00220-021-04084-0},
			}
			
			\bib{MR0400428}{article}{
				author={McKean, H. P.},
				title={Application of Brownian motion to the equation of
					Kolmogorov-Petrovskii-Piskunov},
				journal={Comm. Pure Appl. Math.},
				volume={28},
				date={1975},
				number={3},
				pages={323--331},
				issn={0010-3640},
				review={\MR{0400428}},
				doi={10.1002/cpa.3160280302},
			}

			\bib{MR4929072}{article}{
				author={Berzunza Ojeda, Gabriel},
				author={Pardo, Juan Carlos},
				title={Branching processes with pairwise interactions},
				journal={ALEA Lat. Am. J. Probab. Math. Stat.},
				volume={22},
				date={2025},
				number={1},
				pages={711--748},
				review={\MR{4929072}},
				doi={10.30757/alea.v22-28},
			}

			\bib{MR0958914}{book}{
				author={Sharpe, Michael},
				title={General theory of Markov processes},
				series={Pure and Applied Mathematics},
				volume={133},
				publisher={Academic Press, Inc., Boston, MA},
				date={1988},
				pages={xii+419},
				isbn={0-12-639060-6},
				review={\MR{0958914}},
			}

			\bib{MR1736720}{article}{
				author={Schweinsberg, Jason},
				title={A necessary and sufficient condition for the $\Lambda$-coalescentto come down from infinity},
				journal={Electron. Comm. Probab.},
				volume={5},
				date={2000},
				pages={1--11},
				issn={1083-589X},
				review={\MR{1736720}},
				doi={10.1214/ECP.v5-1013},
			}

			\bib{MR3492932}{article}{
				author={Pain, Michel},
				title={Velocity of the $L$-branching Brownian motion},
				journal={Electron. J. Probab.},
				volume={21},
				date={2016},
				pages={Paper No. 28, 28},
				review={\MR{3492932}},
				doi={10.1214/16-EJP4639},
			}

			\bib{MR1779100}{book}{
				author={Etheridge, Alison M.},
				title={An introduction to superprocesses},
				series={University Lecture Series},
				volume={20},
				publisher={American Mathematical Society, Providence, RI},
				date={2000},
				pages={xii+187},
				isbn={0-8218-2706-5},
				review={\MR{1779100}},
				doi={10.1090/ulect/020},
			}
			
			\bib{MR3098078}{article}{
				author={Quastel, Jeremy},
				title={Introduction to KPZ},
				conference={
					title={Current developments in mathematics, 2011},
				},
				book={
					publisher={Int. Press, Somerville, MA},
				},
				isbn={978-1-57146-239-8},
				date={2012},
				pages={125--194},
				review={\MR{3098078}},
			}
			
			\bib{MR0924157}{book}{
				author={Rudin, Walter},
				title={Real and complex analysis},
				edition={3},
				publisher={McGraw-Hill Book Co., New York},
				date={1987},
				pages={xiv+416},
				isbn={0-07-054234-1},
				review={\MR{0924157}},
			}

			\bib{MR1271224}{article}{
				author={Shiga, Tokuzo},
				title={Two contrasting properties of solutions for one-dimensional
					stochastic partial differential equations},
				journal={Canad. J. Math.},
				volume={46},
				date={1994},
				number={2},
				pages={415--437},
				issn={0008-414X},
				review={\MR{1271224}},
				doi={10.4153/CJM-1994-022-8},
			}

			\bib{MR0948717}{article}{
				author={Shiga, Tokuzo},
				title={Stepping stone models in population genetics and population
					dynamics},
				conference={
					title={Stochastic processes in physics and engineering},
					address={Bielefeld},
					date={1986},
				},
				book={
					series={Math. Appl.},
					volume={42},
					publisher={Reidel, Dordrecht},
				},
				isbn={90-277-2659-0},
				date={1988},
				pages={345--355},
				review={\MR{0948717}},
			}

			\bib{MR1339735}{article}{
				author={Tribe, Roger},
				title={Large time behavior of interface solutions to the heat equation
					with Fisher-Wright white noise},
				journal={Probab. Theory Related Fields},
				volume={102},
				date={1995},
				number={3},
				pages={289--311},
				issn={0178-8051},
				review={\MR{1339735}},
				doi={10.1007/BF01192463},
			}
			
			\bib{MR0838085}{article}{
				author={Joffe, Aryeh},
				author={M\'etivier, Michel},
				title={Weak convergence of sequences of semimartingales},
				journal={Adv. in Appl. Probab.},
				volume={18},
				date={1986},
				number={1},
				pages={20--65},
				issn={0001-8600},
				review={\MR{0838085}},
				doi={10.2307/1427240},
			}
			
			\bib{MR0876085}{article}{
				author={Walsh, John B.},
				title={An introduction to stochastic partial differential equations},
				conference={
					title={\'Ecole d'\'et\'e{} de probabilit\'es de Saint-Flour,
						XIV---1984},
				},
				book={
					series={Lecture Notes in Math.},
					volume={1180},
					publisher={Springer, Berlin},
				},
				isbn={3-540-16441-3},
				date={1986},
				pages={265--439},
				review={\MR{0876085}},
				doi={10.1007/BFb0074920},
			}
			
			\bib{MRYan1998Lectures}{book}{
				author={Yan, Jia-An},
				title={Lectures on measure theory},
				series={Lecture Notes of Chinese Academy of Sciences},
				publisher={Science Press, Beijing},
				language={in Chinese},
				date={2004},
			}
			
		\end{biblist}
	\end{bibdiv}
	
\end{document}